\begin{document}
\newcommand {\emptycomment}[1]{} 

\newcommand{\tabincell}[2]{\begin{tabular}{@{}#1@{}}#2\end{tabular}}

\newcommand{\nc}{\newcommand}
\newcommand{\delete}[1]{}

\nc{\mlabel}[1]{\label{#1}}  
\nc{\mcite}[1]{\cite{#1}}  
\nc{\mref}[1]{\ref{#1}}  
\nc{\meqref}[1]{~\eqref{#1}} 
\nc{\mbibitem}[1]{\bibitem{#1}} 

\delete{
\nc{\mlabel}[1]{\label{#1}  
{\hfill \hspace{1cm}{\bf{{\ }\hfill(#1)}}}}
\nc{\mcite}[1]{\cite{#1}{{\bf{{\ }(#1)}}}}  
\nc{\mref}[1]{\ref{#1}{{\bf{{\ }(#1)}}}}  
\nc{\meqref}[1]{~\eqref{#1}{{\bf{{\ }(#1)}}}} 
\nc{\mbibitem}[1]{\bibitem[\bf #1]{#1}} 
}

\newtheorem{thm}{Theorem}[section]
\newtheorem{lem}[thm]{Lemma}
\newtheorem{cor}[thm]{Corollary}
\newtheorem{pro}[thm]{Proposition}
\newtheorem{conj}[thm]{Conjecture}
\theoremstyle{definition}
\newtheorem{defi}[thm]{Definition}
\newtheorem{ex}[thm]{Example}
\newtheorem{rmk}[thm]{Remark}
\newtheorem{pdef}[thm]{Proposition-Definition}
\newtheorem{condition}[thm]{Condition}

\renewcommand{\labelenumi}{{\rm(\alph{enumi})}}
\renewcommand{\theenumi}{\alph{enumi}}
\renewcommand{\labelenumii}{{\rm(\roman{enumii})}}
\renewcommand{\theenumii}{\roman{enumii}}

\nc{\tred}[1]{\textcolor{red}{#1}}
\nc{\tblue}[1]{\textcolor{blue}{#1}}
\nc{\tgreen}[1]{\textcolor{green}{#1}}
\nc{\tpurple}[1]{\textcolor{purple}{#1}}
\nc{\btred}[1]{\textcolor{red}{\bf #1}}
\nc{\btblue}[1]{\textcolor{blue}{\bf #1}}
\nc{\btgreen}[1]{\textcolor{green}{\bf #1}}
\nc{\btpurple}[1]{\textcolor{purple}{\bf #1}}


\newcommand{\End}{\text{End}}

\nc{\calb}{\mathcal{B}}
\nc{\call}{\mathcal{L}}
\nc{\calo}{\mathcal{O}}
\nc{\frakg}{\mathfrak{g}}
\nc{\frakh}{\mathfrak{h}}
\nc{\ad}{\mathrm{ad}}

\nc{\ccred}[1]{\tred{\textcircled{#1}}}

\nc{\daop}{dual a-$\mathcal{O}$-operator\xspace}

\nc{\aop}{a-$\mathcal{O}$-operator\xspace} \nc{\sapp}{special
apre-perm algebra\xspace} \nc{\sapps}{special apre-perm
algebras\xspace} \nc{\sappb}{special apre-perm bialgebra\xspace}

\nc{\sappbs}{special apre-perm bialgebras\xspace}
\nc{\app}{apre-perm algebra\xspace} \nc{\apps}{apre-perm
algebras\xspace} \nc{\da}{APP algebra\xspace} \nc{\das}{APP
algebra\xspace} \nc{\sa}{SAPP algebra\xspace} \nc{\sas}{SAPP
algebras\xspace}
\nc{\sappsubs}{special apre-perm subalgebras\xspace}
\nc{\appsubs}{apre-perm subalgebras\xspace}
\nc{\appb}{apre-perm bialgebra\xspace}
\nc{\appbs}{apre-perm bialgebras\xspace}

\title[\delete{Averaging commutative and cocommutative ASI and SAPP bialgebras}]
{A further study on  averaging commutative and cocommutative infinitesimal bialgebras and special apre-perm bialgebras}

\author{Quan Zhao}
\address{Chern Institute of Mathematics \& LPMC, Nankai University, Tianjin 300071, China}
\email{zhaoquan@mail.nankai.edu.cn}

\author{Guilai Liu}
\address{Chern Institute of Mathematics \& LPMC, Nankai University, Tianjin 300071, China}
\email{liugl@mail.nankai.edu.cn}

\delete{
\author{Chengming Bai}
\address{Chern Institute of Mathematics \& LPMC, Nankai University, Tianjin 300071, China}
\email{baicm@nankai.edu.cn}

\author{Li Guo}
\address{Department of Mathematics and Computer Science, Rutgers University, Newark, NJ 07102, USA}
\email{liguo@rutgers.edu}
}

\date{\today}%

\begin{abstract}
	In order to generalize the fact that an averaging commutative algebra gives rise to a perm algebra to the bialgebra level, the notion of a special apre-perm algebra was introduced as a new splitting of perm algebras, and it has been shown that an averaging commutative and cocommutative infinitesimal bialgebra gives rise to a special apre-perm bialgebra.
	In this paper, we give a further study on averaging commutative and cocommutative infinitesimal bialgebras and special apre-perm bialgebras.
A solution of the averaging associative Yang-Baxter equation whose symmetric part is invariant gives rise to an averaging commutative and cocommutative infinitesimal bialgebra that is called quasi-triangular, and such solutions can be equivalently characterized as  $\mathcal{O}$-operators of admissible averaging commutative algebras  with weights.
Moreover assuming the symmetric parts of such solutions to be zero or nondegenerate, we obtain typical subclasses of quasi-triangular averaging commutative and cocommutative infinitesimal bialgebras, namely the triangular and factorizable ones respectively. Both of them are shown to closely relate to symmetric averaging Rota-Baxter Frobenius commutative algebras.
There is a parallel procedure developed for \sappbs.
In particular, the fact that an averaging commutative and cocommutative infinitesimal bialgebra gives rise to a special apre-perm bialgebra is still available when these bialgebras are limited to the quasi-triangular cases.
\end{abstract}

\subjclass[2020]{
    17A36,  
    17A40,  
    17B10, 
    17B38, 
    17D25,  
    18M70.  
}

\keywords{Averaging commutative and cocommutative infinitesimal bialgebra, \sappb, Yang-Baxter equation, Rota-Baxter operator, $\mathcal{O}$-operator}
\maketitle

\tableofcontents
\allowdisplaybreaks

\section{Introduction}

\subsection{The previous study on averaging commutative and cocommutative infinitesimal bialgebras and special apre-perm bialgebras}\

Let $\mathcal{P}$ be a binary operad and $(A,\star_{A})$ be a $\mathcal{P}$-algebra. If there is a linear map $P:A\rightarrow A$ satisfying  
\begin{equation}\label{eq:Ao}
    P(x)\star_{A}P(y)=P\big(P(x)\star_{A}y\big)=P\big(x\star_{A} P(y)\big),\;\forall x,y\in A,
\end{equation}
then $P$ is called an {\bf averaging operator} of $(A,\star_{A})$ and  $(A,\star_{A},P)$ is called an {\bf averaging $\mathcal{P}$-algebra}. 
The notion of an averaging operator was implicitly studied in the famous paper of O. Reynolds
~\mcite{Re} in connection with the theory of turbulence and explicitly defined by Kolmogoroff and Kamp\'e de
F\'eriet in probability~\mcite{Kam}. It later attracted the attentions of other
well-known mathematicians including G. Birkhoff, Miller and Rota \cite{Bi,Mil,R2} with motivation
from quantum physics and combinatorics. Recently it has found diverse applications in
many areas such as combinatorics, number
theory, operads, cohomology and deformation
theory \mcite{DS,GuK,GZ,PBGN2,PG,WZ,ZG}.

 On the other hand, a {\bf perm algebra} \mcite{Chap2001} is a vector
 space $A$ together with a binary operation  $\circ _{A}:A\otimes
 A\rightarrow A$ satisfying
 \begin{equation}\mlabel{eq:perm}
 	x\circ _{A}(y\circ _{A}z)=(x\circ _{A}y)\circ _{A}z=(y\circ
 	_{A}x)\circ _{A}z,\;\;\forall x,y,z\in A.
 \end{equation}
 Perm algebras play an important role in algebraic operad theory,
 since their operad is the Koszul dual to the operad of pre-Lie
 algebras \mcite{LV}, as well as the duplicator of the operad of
 commutative algebras \mcite{GuK,PBGN}.
 The latter fact also leads to the following construction of perm algebras from averaging commutative algebras.
 \begin{pro}\mlabel{ex:comm aver}\mcite{Agu2000*}
 	Let $P$ be an averaging operator on a commutative algebra
 	$(A,\cdot_{A})$. Then there is a perm algebra $(A,\circ_{A})$
 	given by
 	\begin{equation}\mlabel{eq:perm from aver op}
 		x\circ_{A} y=P(x)\cdot_{A} y, \;\;\forall x,y\in A.
 	\end{equation}
 \end{pro}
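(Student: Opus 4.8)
The plan is to verify the two defining identities of a perm algebra in \eqref{eq:perm} for the operation $\circ_{A}$ by expanding every occurrence of $\circ_{A}$ through its definition \eqref{eq:perm from aver op} and then reducing each of the three resulting expressions to a single common normal form. Concretely, I would aim to show that all of $x\circ_{A}(y\circ_{A}z)$, $(x\circ_{A}y)\circ_{A}z$, and $(y\circ_{A}x)\circ_{A}z$ equal $P(x)\cdot_{A}(P(y)\cdot_{A}z)$; once this is established, the proposition follows at once. The only ingredients available and needed are the averaging condition \eqref{eq:Ao}, the associativity of $\cdot_{A}$, and its commutativity, so the whole argument is a direct computation.

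For the equality of the outer two terms, expanding the definitions gives
\begin{equation*}
    (x\circ_{A}y)\circ_{A}z=P\big(P(x)\cdot_{A}y\big)\cdot_{A}z.
\end{equation*}
The averaging condition \eqref{eq:Ao}, in the form $P\big(P(x)\cdot_{A}y\big)=P(x)\cdot_{A}P(y)$, rewrites the inner term, and then associativity of $\cdot_{A}$ yields $\big(P(x)\cdot_{A}P(y)\big)\cdot_{A}z=P(x)\cdot_{A}(P(y)\cdot_{A}z)$, which is exactly $x\circ_{A}(y\circ_{A}z)$ after re-expanding the latter. Thus the left and middle terms of \eqref{eq:perm} coincide using only the averaging property and associativity, with no appeal to commutativity.

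For the remaining equality I would expand $(y\circ_{A}x)\circ_{A}z=P\big(P(y)\cdot_{A}x\big)\cdot_{A}z$ and again invoke \eqref{eq:Ao} to obtain $\big(P(y)\cdot_{A}P(x)\big)\cdot_{A}z$. The point I expect to be the crux is that matching this against the normal form requires precisely the commutativity of $\cdot_{A}$, namely $P(y)\cdot_{A}P(x)=P(x)\cdot_{A}P(y)$; this is the one place where the commutativity hypothesis on $(A,\cdot_{A})$ is essential, in contrast to the first equality. After this swap, associativity again reduces the expression to $P(x)\cdot_{A}(P(y)\cdot_{A}z)$, completing the verification. Since every step is a routine substitution, the only real care needed is in selecting the correct one of the two equalities packaged in \eqref{eq:Ao} at each stage and in tracking where commutativity, as opposed to associativity, does the work.
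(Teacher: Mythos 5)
Your proof is correct. The paper itself gives no proof of this proposition (it is quoted from Aguiar's work \cite{Agu2000*}), and your direct verification---reducing all three terms of \eqref{eq:perm} to the common normal form $P(x)\cdot_{A}\big(P(y)\cdot_{A}z\big)$ using \eqref{eq:Ao}, associativity, and commutativity---is exactly the standard argument; your observation that commutativity is needed only for the equality $(x\circ_{A}y)\circ_{A}z=(y\circ_{A}x)\circ_{A}z$, while the other equality uses only the averaging identity and associativity, is also accurate.
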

  
  A  bialgebra structure is a vector space equipped with both an
  algebra structure and a coalgebra structure satisfying certain
  compatible conditions. Some known examples of such structures
  are Lie bialgebras  \cite{Cha,Dri} 
  that are closely related to Poisson-Lie groups and
  play an important role in the infinitesimalization of quantum
  groups, 
   and antisymmetric
  infinitesimal bialgebras  \cite{Agu2000, Agu2001,
  	Agu2004,Bai2010}
  which
  render symmetric Frobenius algebras and thus find applications
  in 2d topological and string theory.
  Both of them have a common property that they can be 
  equivalently characterized as Manin triples associated to
  nondegenerate bilinear forms on the corresponding algebras
  satisfying certain conditions. Explicitly, a Lie bialgebra is
  equivalent to a Manin triple of Lie algebras (associated to the
  nondegenerate symmetric invariant bilinear form) and an ASI
  bialgebra is equivalent to a double construction of Frobenius
  algebra, or equivalently, a Manin triple of associative algebras associated to
  the nondegenerate symmetric invariant bilinear form.
  Such an approach of studying bialgebra structures from the viewpoint of Manin triples can be successively applied to many other types of
  algebras such as pre-Lie algebras \mcite{Bai2008} and Poisson
  algebras \mcite{NB}.
  Moreover,  
   the above bialgebra theory of algebra structures has recently been successfully extended to the
  context of operated algebras through the Manin triple approach, such as Rota-Baxter associative
  algebras and Lie algebras \mcite{BGLM,BGM}, differential
  associative algebras \mcite{LLB} and endo Lie algebras
  \mcite{BGS}.
   
 In \cite{Bai2024}, the authors considered the natural idea on generalizing the fact in \cite{Agu2000*} that an averaging commutative algebra gives rise to a perm algebra to the level of bialgebras.
An admissible condition on an averaging commutative algebra was introduced from the
representations of averaging commutative  algebras on
the dual spaces, and it was shown that a symmetric Frobenius commutative algebra with an averaging operator gives rise to an admissible averaging commutative algebra.
Then the notion of an averaging commutative and cocommutative infinitesimal bialgebra was introduced, which was shown to be equivalent to a double
construction of averaging Frobenius commutative algebras.

  Moreover, the authors showed that a nondegenerate symmetric invariant bilinear form on an averaging commutative algebra is left-invariant on the induced perm algebra. The notion of \sapps was introduced as a typical subclass of apre-perm algebras that give a new splitting of perm algebras. There is a one-to-one correspondence between quadratic \sapps and perm algebras with nondegenerate symmetric left-invariant bilinear forms. It was also shown that an admissible averaging commutative  
   algebra induces a \sapp.
   
   Furthermore,  the notions of a Manin triple of perm algebras associated to the nondegenerate
   symmetric left-invariant bilinear form, a Manin triple
   of \sapps and a \sappb were introduced, and the equivalences among these notions were
   established. Generalizing the fact that a Frobenius commutative algebra with an averaging operator renders a quadratic \sapp to the Manin triple level, it was shown that a double construction of averaging
   Frobenius commutative algebra induces a Manin triple of \sapps. Equivalently from the bialgebra
   viewpoint, an averaging commutative and cocommutative infinitesimal
   bialgebra renders a \sappb.

  \subsection{Our further study on averaging commutative and cocommutative infinitesimal bialgebras and special apre-perm bialgebras}\
   
   This paper gives a further study on averaging commutative and cocommutative infinitesimal bialgebras and special apre-perm bialgebras, especially on the construction theory of these bialgebras.
   
   As the first part of the paper, we study quasi-triangular averaging commutative and cocommutative infinitesimal bialgebras. We introduce the notion of the averaging associative Yang-Baxter equation (AAYBE), and show that a solution of the AAYBE whose symmetric part is invariant gives rise to an averaging commutative and cocommutative infinitesimal bialgebra which is called quasi-triangular. Moreover, we introduce the notion of $\mathcal{O}$-operators of admissible averaging commutative algebras, which serve as the interpretation of the AAYBE in terms of operator forms. 
   
   Moreover assuming symmetric parts of solutions of the AAYBE to be zero or bijective, we obtain  typical subclasses of quasi-triangular averaging commutative and cocommutative infinitesimal bialgebras, namely triangular ones and factorizable ones. 
   On the one hand, we introduce the notion of admissible averaging Zinbiel algebras, which supply $\mathcal{O}$-operators of the sub-adjacent admissible averaging commutative algebras.
   Therefore we obtain skew-symmetric solutions of the AAYBE in the semi-direct product admissible averaging commutative algebras and hence triangular averaging commutative and cocommutative infinitesimal bialgebras.
   On the other hand, we show that a factorizable averaging commutative and cocommutative infinitesimal bialgebra leads to a factorization of the underlying averaging commutative algebras. 
   We also show that there is a canonical factorizable
   averaging commutative and cocommutative infinitesimal bialgebra structure on the double space of an arbitrary averaging commutative and cocommutative infinitesimal bialgebra.
   
   Furthermore, we introduce the notion of a
     symmetric averaging Rota-Baxter Frobenius commutative algebra, which is a
     symmetric Rota-Baxter Frobenius commutative algebra \cite{SW} simultaneously equipped with an averaging operator that commutes with the Rota-Baxter operator.
      We show that such structures give rise to triangular averaging commutative and cocommutative infinitesimal bialgebras when the weight is $0$, and are in one-to-one correspondence with  factorizable averaging commutative and cocommutative infinitesimal bialgebras when the weight is $-1$.

  As the second part of the paper, we study quasi-triangular \sappbs. The main approach is similar to that of quasi-triangular averaging commutative and cocommutative infinitesimal bialgebras. We summarize the invariant condition of a $2$-tensor on a \sapp from a quadratic \sapp. A solution of the special apre-perm Yang-Baxter equation (SAPP-YBE) whose symmetric part is invariant leads to to a \sappb. The \sappb obtained in this way is called   quasi-triangular. In particular, we show that a solution of the AAYBE (whose symmetric part is invariant) in an averaging commutative algebra is also a solution of the SAPP-YBE (whose symmetric part is invariant) in the induced \sapp. Therefore a quasi-triangular averaging commutative and cocommutative infinitesimal bialgebra gives rise to a quasi-triangular \sappb.
  Moreover, solutions of the SAPP-YBE can also be characterized as $\mathcal{O}$-operators of \sapps. 
  
  There are also typical subclasses of quasi-triangular \sappbs, including triangular \sappbs where the solutions of the SAPP-YBE are skew-symmetric, and factorizable \sappbs where the symmetric parts of such solutions are bijective. 
    On the one hand, we introduce the notion of pre-special apre-perm algebras, which render triangular \sappbs in a  way similar to obtaining triangular averaging commutative and cocommutative infinitesimal bialgebras from admissible averaging Zinbiel algebras as aforementioned.
   In particular, admissible averaging Zinbiel algebras give rise to pre-special apre-perm algebras. Therefore we have the following commutative diagram.
    {\small
    	\begin{equation*}
    		\begin{split}
    			\xymatrix{
    				\txt{admissible averaging Zinbiel\\ algebras}
    				\ar@{=>}[r]^-{
    				}
    				\ar@{=>}[d]^-{
    				}
    				& \txt{triangular averaging commutative  and\\ cocommutative  
    					infinitesimal bialgebras
    				}
    				\ar@{=>}[d]^-{
    				}
    				\\  
    				\txt{pre-special apre-perm algebras}
    				\ar@{=>}[r]^-{
    				}
    				& \txt{triangular special apre-perm bialgebras}
    			}
    		\end{split}
    	\end{equation*}
    }
    
    On the other hand, a factorizable special apre-perm bialgebra also leads to a factorization of the underlying special apre-perm algebras, and arises 
    on the double space of an arbitrary special apre-perm bialgebra.
    	Furthermore,  we introduce the notion of a quadratic Rota-Baxter \sapp. We show that a quadratic Rota-Baxter \sapp of weight $0$ gives rise to a triangular \sappb, and there is a one-to-one correspondence between quadratic Rota-Baxter \sapps of weight $-1$ and factorizable \sappbs. We also show that a symmetric averaging Rota-Baxter Frobenius commutative algebra gives rise to a quadratic Rota-Baxter \sapp with the same weight. 
      Therefore we have the following commutative diagram.
  
  {\footnotesize
 	\begin{equation*}
 		\begin{split}
 			\xymatrix{
 				\txt{triangular averaging\\ commutative  and\\ cocommutative  
 					infinitesimal\\ bialgebras}
 				\ar@{<=}[r]^-{\lambda=0 
 				}
 				\ar@{=>}[d]^-{
 				}
 				& \txt{symmetric averaging\\ Rota-Baxter Frobenius\\ commutative algebras}
 				\ar@<.4ex>@{<=>}[r]^-{\lambda=-1 
 				}
 				\ar@{=>}[d]^-{
 				}
 				& \txt{factorizable averaging\\ commutative  and\\ cocommutative  
 					infinitesimal\\ bialgebras
 				}
 				\ar@{=>}[d]^-{ 
 				}
 				\\  
 				\txt{triangular special \\  apre-perm bialgebras}
 				\ar@{<=}[r]^-{\lambda=0
 				}
 				&\txt{quadratic Rota-Baxter\\ special apre-perm algebras}	\ar@{<=>}[r]^-{\lambda=-1 
 				} & \txt{factorizable  special \\ apre-perm bialgebras}
 			}
 		\end{split}
 	\end{equation*}
 }

  In a summary, this paper strengthens the relationship from averaging commutative and cocommutative bialgebras to \sappbs by studying their construction theory in several aspects, such as quasi-triangular, triangular and factorizable cases, Yang-Baxter equations, $\mathcal{O}$-operators and quadratic Rota-Baxter algebras.

\subsection{Organization and convention of the paper} This paper is organized as follows.

In Section \ref{sec:2}, we study quasi-triangular averaging commutative and cocommutative infinitesimal bialgebras which are constructed from solutions of the AAYBE whose symmetric parts are invariant. Such solutions are translated into operator forms, namely $\mathcal{O}$-operators of admissible averaging commutative  algebras. We also study triangular and factorizable averaging commutative and cocommutative infinitesimal bialgebras as the typical subclasses.

In Section \ref{sec:5}, we study quasi-triangular \sappbs which are constructed from solutions of the SAPP-YBE whose symmetric parts are invariant. Such solutions can also be translated into $\mathcal{O}$-operators of \sapps. In particular, we show that quasi-triangular \sappbs naturally arise from quasi-triangular averaging commutative and cocommutative infinitesimal bialgebras. We also study typical subclasses of quasi-triangular \sappbs, namely triangular and factorizable \sappbs.

Throughout this paper, unless otherwise specified, all the vector spaces and algebras are finite-dimensional over an algebraically
closed field $\mathbb {K}$ of characteristic zero, although many results and notions remain valid in the infinite-dimensional case.
By a commutative algebra, we mean a commutative associative algebra not necessarily having a unit. For vector spaces $A$ and $V$, we fix the following notations.
\begin{enumerate}
	\item Define linear maps $\tau:A\otimes A\rightarrow A\otimes A$ and $\xi:A\otimes A\otimes A\rightarrow A\otimes A\otimes A$ by 
	\begin{eqnarray}
		\tau(x\otimes y)=y\otimes x,\;
		\xi(x\otimes y\otimes z)=y\otimes z\otimes x,\;\forall x,y,z\in A .
	\end{eqnarray}
	\item Let $\circ_{ A }: A \otimes A \rightarrow A $ be a multiplication on $A$.
	Define linear maps $\mathcal{L}_{\circ_{ A }},\mathcal{R}_{\circ_{ A }}: A \rightarrow \mathrm{End}_{\mathbb K}(A)$ by 
	\begin{eqnarray}
		\mathcal{L}_{\circ_{ A }}(x)y=x\circ_{ A }y=\mathcal{R}_{\circ_{ A }}(y)x,\;\forall x,y\in A .
	\end{eqnarray}
	\item Let  $P: A \rightarrow A $ be a linear map. Define a linear map $P^{*}: A ^{*}\rightarrow A ^{*}$ by
	\begin{eqnarray}
		\langle P^{*}(a^{*}), x\rangle=\langle a^{*},P(x)\rangle,\;\forall x\in A , a^{*}\in A ^{*},
	\end{eqnarray}
	where $\langle-,-\rangle$ is the ordinary pair between $ A $ and the dual space $ A ^{*}$.
	\item Let $\mu: A \rightarrow\mathrm{End}_{\mathbb K}(V)$ be a linear map. Define a linear map $\mu^{*}: A \rightarrow\mathrm{End}_{\mathbb K}(V^{*})$ by
	\begin{eqnarray}
		\langle \mu^{*}(x)u^{*},v\rangle=
		\langle \big(\mu(x)\big)^{*}u^{*},v\rangle=
		\langle u^{*},\mu(x)v\rangle,\;\forall x\in A ,u^{*}\in V^{*},v\in V.
	\end{eqnarray}
\item  Let  $P :A\rightarrow A$ and $\alpha:V \rightarrow V $ be linear maps.
Define a linear map $P +\alpha: A\oplus V \rightarrow A\oplus V$ by 
\begin{eqnarray}
(P+\alpha)(x+u)=P(x)+\alpha(u),\;\forall x\in A, u\in V.
\end{eqnarray}
	\item An element $r\in V \otimes A$ is identified as a linear map
	$r^{\sharp}: V ^{*}\rightarrow A$ by 
	\begin{eqnarray}
		\langle r^{\sharp}(u^{*}),a^{*}\rangle=\langle r,u^{*}\otimes a^{*}\rangle,\;\forall u^{*}\in V ^{*}, a^{*}\in A^{*}.
	\end{eqnarray}
Conversely, a linear map $T:V\rightarrow A$  is identified as an element $T_{\sharp}\in V^{*}\otimes A$ by
\begin{equation}
	\langle T_{\sharp}, v\otimes a^{*}\rangle=\langle T(v),a^{*}\rangle,\;\forall v\in V,a^{*}\in A^{*}.
\end{equation}
	\item A bilinear form $\mathcal{B}$ on $ A $ is identified as a linear map $\mathcal{B}^{\natural}: A \rightarrow A ^{*}$ by 
\begin{eqnarray}
	\langle \mathcal{B}^{\natural}(x),y\rangle=\mathcal{B}(x,y),\;\forall x,y\in A .
\end{eqnarray}
Moreover, $\mathcal B$ is
nondegenerate if and only if $\mathcal B^\natural$ is invertible. In this case, we define a 2-tensor $\phi_{\mathcal{B}}\in A\otimes A$ by
\begin{equation}\label{eq:2-tensor}
	\langle \phi_{\mathcal{B}},a^{*}\otimes b^{*}\rangle=\langle {\mathcal{B}
		^{\natural}}^{-1}(a^{*}),b^{*}\rangle,\;\forall a^{*},b^{*}\in A^{*}.
\end{equation}
\item Let $\mathcal{B}$ be a nondegenerate symmetric bilinear form on $A$ and $P:A\rightarrow A$ be a linear map. The adjoint map $\hat{P}:A\rightarrow A$ of $P$ with respect to $\mathcal{B}$ is given by
\begin{eqnarray}
	\mathcal{B}\big( \hat{P}(x),y \big)=\mathcal{B}\big( x,P(y) \big),\;\forall x,y\in A.
\end{eqnarray}
\end{enumerate}

\section{Averaging commutative and cocommutative infinitesimal bialgebras}\label{sec:2}\

We study quasi-triangular averaging commutative and cocommutative infinitesimal bialgebras which arise from solutions of the AAYBE whose symmetric parts are invariant. We study $\mathcal{O}$-operators of admissible averaging commutative  algebras which characterize the AAYBE in terms of operator forms. Then we study triangular and factorizable averaging commutative and cocommutative infinitesimal bialgebras. They serve as subclasses of quasi-triangular averaging commutative and cocommutative infinitesimal bialgebras and are closely related to symmetric Rota-Baxter Frobenius commutative algebras with averaging operators satisfying the commutativity conditions.

\subsection{Quasi-triangular averaging commutative and cocommutative infinitesimal bialgebras, the averaging associative Yang-Baxter equation and $\mathcal{O}$-operators}\

We recall some basic conclusions on commutative and cocommutative infinitesimal bialgebras.

\begin{defi}\cite{Bai2010}
	A {\bf cocommutative (coassociative) coalgebra} is a pair $(A,\Delta)$, where $A$ is
	a vector space and $\Delta:A\rightarrow A\otimes A$ is a
	co-multiplication such that the following equations hold:
	\begin{equation}
		\Delta=\tau\Delta,\;(\Delta\otimes\mathrm{id})\Delta=(\mathrm{id}\otimes\Delta)\Delta.
	\end{equation}
	A {\bf commutative and cocommutative infinitesimal
		bialgebra}
	is a triple $(A,\cdot_{A},\Delta)$ such that $(A,\cdot_{A})$ is a
	commutative algebra, $(A,\Delta)$ is a cocommutative coalgebra and the
	following equation holds:
	\begin{equation}\label{eq:bib}
		\Delta(x\cdot_{A}y)=\big(\mathcal{L}_{\cdot_{A}}(x)\otimes\mathrm{id}\big)\Delta(y)+\big(\mathrm{id}\otimes\mathcal{L}_{\cdot_{A}}(y)\big)\Delta(x),\;\forall x,y\in A.
	\end{equation}
\end{defi}

Let $(A,\cdot_{A})$ be a commutative  algebra and $r=\sum\limits_{i}u_{i}\otimes v_{i}\in A\otimes A$.
Let $\Delta_{r}:A\rightarrow A\otimes A$ be a co-multiplication given by
\begin{equation}\label{eq:comul1}
    \Delta_{r}(x)=\big(\mathrm{id}\otimes\mathcal{L}_{\cdot_{A}}(x)-\mathcal{L}_{\cdot_{A}}(x)\otimes\mathrm{id}\big)r ,\;\forall x\in A.
\end{equation}
By \cite{Bai2010}, $(A,\cdot_{A},\Delta_{r})$ is a commutative and cocommutative infinitesimal bialgebra if and only if the following equations hold:
\begin{eqnarray}
&&\big(\mathcal{L}_{\cdot_{A}}(x)\otimes\mathrm{id}-\mathrm{id}\otimes\mathcal{L}_{\cdot_{A}}(x)\big)\big( \mathrm{id}\otimes\mathcal{L}_{\cdot_{A}}(y)-\mathcal{L}_{\cdot_{A}}(y)\otimes\mathrm{id} \big)\big(r+\tau(r)\big)=0,\label{501}\\
&&\big( \mathrm{id}\otimes\mathrm{id}\otimes\mathcal{L}_{\cdot_{A}}(x)-\mathcal{L}_{\cdot_{A}}(x)\otimes\mathrm{id}\otimes\mathrm{id}  \big){\bf A}(r)=0.\label{502}
\end{eqnarray}
Here ${\bf A}(r)\in A\otimes A\otimes A$ is given by
\begin{equation}
{\bf A}(r)=\sum_{i,j} u_{i}\cdot_{A}u_{j}\otimes v_{i}\otimes v_{j}- u_{i}\otimes u_{j}\cdot_{A}v_{i}\otimes v_{j}+u_{i}\otimes u_{j}\otimes v_{i}\cdot_{A}v_{j}
\end{equation}
and the equation ${\bf A}(r)=0$ is called the {\bf associative Yang-Baxter equation} (or {\bf AYBE} in short) in $(A,\cdot_{A})$.

Let $(A,\cdot_{A})$ be a commutative  algebra and $s\in A\otimes A$. If the following equation holds:
\begin{equation*}
    \big(\mathrm{id}\otimes\mathcal{L}_{\cdot_{A}}(x)-\mathcal{L}_{\cdot_{A}}(x)\otimes\mathrm{id}\big)s=0,\;\forall x\in A,
\end{equation*}
then we say $s$ is {\bf invariant} on $(A,\cdot_{A})$. Therefore by the above discussion,
if $r$ is a solution of the AYBE in $(A,\cdot_{A})$ and the symmetric part of $r$ is invariant, that is,
\begin{equation*}
    \big(\mathrm{id}\otimes\mathcal{L}_{\cdot_{A}}(x)-\mathcal{L}_{\cdot_{A}}(x)\otimes\mathrm{id}\big)\big(r+\tau(r)\big)=0,\;{\bf A}(r)=0,
\end{equation*}
then $(A,\cdot_{A},\Delta_{r})$ is a commutative and cocommutative infinitesimal bialgebra which is called {\bf quasi-triangular} \cite{SW}.

The notions of admissible averaging commutative  algebras and averaging commutative and cocommutative infinitesimal bialgebras are introduced as follows.

\begin{defi}\cite{Bai2024}
An {\bf admissible averaging commutative  algebra} is a quadruple $(A,\cdot_{A},P,Q)$, where $(A,\cdot_{A},P)$ is an averaging commutative  algebra and $Q:A\rightarrow A$ is a linear map such that
\begin{eqnarray}
P(x)\cdot_{A} Q(y)=Q\big(P(x)\cdot_{A} y\big)=Q\big(x\cdot_{A}Q(y)\big),\;\forall x,y\in A.\label{eq:ao pair}
\end{eqnarray}
\end{defi}

\begin{defi}\label{defi:ave com ASI bialgebra}\cite{Bai2024}
Let $A$ be a vector space together with linear maps
\begin{equation*}
    \cdot_{A}:A\otimes A\rightarrow A,\; \Delta:A\rightarrow A\otimes A, \; P,Q:A\rightarrow A.
\end{equation*}
Suppose that the following conditions hold:
\begin{enumerate}
    \item the triple $(A,\cdot_{A},\Delta)$ is a commutative and cocommutative infinitesimal bialgebra.
    \item $(A,\cdot_{A},P,Q)$ is an admissible averaging commutative  algebra.
    \item the following equations hold:
    \begin{eqnarray}
    &&(Q\otimes Q)\Delta(x)=(Q\otimes\mathrm{id})\Delta\big(Q(x)\big),\label{eq:aoco1}\\
    &&(Q\otimes P)\Delta(x)=(Q\otimes\mathrm{id})\Delta\big(P(x)\big)=(\mathrm{id}\otimes P)\Delta\big(P(x)\big),\;\forall x\in A. \label{eq:aoco2}
\end{eqnarray}
\end{enumerate}
Then we say $(A,\cdot_{A},\Delta,P,Q)$ is an {\bf averaging commutative and cocommutative infinitesimal bialgebra}.
\end{defi}


Now we study the construction theory of averaging commutative and cocommutative infinitesimal bialgebras.

\begin{pro}\label{pro:coao}
    Let $(A,\cdot_{A},P,Q)$ be an admissible averaging commutative  algebra and $r=\sum\limits_{i} u_{i}\otimes v_{i} \in A\otimes A$.
    Let $\Delta_{r}:A\rightarrow A\otimes A$ be a co-multiplication given by \eqref{eq:comul1}.
    \begin{enumerate}
        \item\label{pro:coao1} $\Delta_{r}$ satisfies \eqref{eq:aoco1} if and only if the following equation holds:
            {\small
        \begin{equation}\label{eq:coao1}
            \big(\mathrm{id}\otimes Q\mathcal{L}_{\cdot_{A}}(x)-\mathrm{id}\otimes\mathcal{L}_{\cdot_{A}} (Qx) \big)(Q\otimes\mathrm{id}-\mathrm{id}\otimes P)r+\big(Q\mathcal{L}_{\cdot_{A}}(x)\otimes\mathrm{id}\big)(P\otimes\mathrm{id}-\mathrm{id}\otimes Q)r=0,\;\forall x\in A.
        \end{equation}}
        \item\label{pro:coao2}  $\Delta_{r}$ satisfies \eqref{eq:aoco2} if and only if the following equations hold:
        \begin{align}
    &\big(\mathrm{id}\otimes P\mathcal{L}_{\cdot_{A}}(x)+Q\mathcal{L}_{\cdot_{A}}(x)\otimes\mathrm{id}-\mathrm{id}\otimes\mathcal{L}_{\cdot_{A}}(Px)\big)(Q\otimes\mathrm{id}-\mathrm{id}\otimes P)r=0,\label{eq:coao2}\\
    &\big(\mathrm{id}\otimes P\mathcal{L}_{\cdot_{A}}(x)+Q\mathcal{L}_{\cdot_{A}}(x)\otimes\mathrm{id}- \mathcal{L}_{\cdot_{A}}(Px)\otimes\mathrm{id}\big)(Q\otimes\mathrm{id}-\mathrm{id}\otimes P)r=0,\;\forall x\in A.\label{eq:coao3}
        \end{align}
    \end{enumerate}
\end{pro}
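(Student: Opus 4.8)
The plan is to directly compute both sides of the coalgebra conditions \eqref{eq:aoco1} and \eqref{eq:aoco2} using the explicit formula \eqref{eq:comul1} for $\Delta_r$, and then repeatedly apply the admissibility relations \eqref{eq:ao pair} to rewrite everything into the stated forms. Each condition in the proposition is an equivalence, so for each item I would expand the two sides of the defining coalgebra equation as 2-tensors built from $r = \sum_i u_i \otimes v_i$, subtract them, and show that the difference equals (up to the operators $P, Q, \mathcal{L}_{\cdot_A}$ applied tensorially) the left-hand side of the target equation.

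Let me think about this more carefully.The plan is to prove both equivalences by directly expanding each coalgebra condition against the explicit formula \eqref{eq:comul1} and then reducing the resulting $2$-tensor identity to the stated equation using the averaging and admissibility relations \eqref{eq:ao pair}. Throughout I write $r=\sum_i u_i\otimes v_i$, so that $\Delta_r(x)=\sum_i u_i\otimes(x\cdot_{A}v_i)-(x\cdot_{A}u_i)\otimes v_i$, and I record the two families of identities that do all the work, both combined freely with commutativity of $\cdot_{A}$: the averaging relations $P(x\cdot_{A}P(y))=P(x)\cdot_{A}P(y)=P(P(x)\cdot_{A}y)$, and the admissibility relations $Q(x\cdot_{A}Q(y))=P(x)\cdot_{A}Q(y)=Q(P(x)\cdot_{A}y)$.

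For item \eqref{pro:coao1}, I would compute $(Q\otimes Q)\Delta_r(x)$ and $(Q\otimes\mathrm{id})\Delta_r(Q(x))$ term by term and subtract, obtaining a sum of four monomials in the $u_i,v_i$. Separately I expand the left-hand side of \eqref{eq:coao1}: in the first summand the two terms arising from the factor $\mathrm{id}\otimes P$ in $(Q\otimes\mathrm{id}-\mathrm{id}\otimes P)r$ cancel, because admissibility together with commutativity gives $Q(x\cdot_{A}P(v_i))=Q(x)\cdot_{A}P(v_i)$. The surviving monomials, together with those from the second summand $\big(Q\mathcal{L}_{\cdot_{A}}(x)\otimes\mathrm{id}\big)(P\otimes\mathrm{id}-\mathrm{id}\otimes Q)r$, match the expansion of $(Q\otimes Q)\Delta_r(x)-(Q\otimes\mathrm{id})\Delta_r(Q(x))$ provided one identifies $Q(Q(x)\cdot_{A}u_i)=Q(x\cdot_{A}P(u_i))$, which is again exactly admissibility plus commutativity. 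Since every step is an identity, this yields the asserted equivalence.

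For item \eqref{pro:coao2}, I would use that the chain of equalities \eqref{eq:aoco2} is equivalent to the two separate equalities $(Q\otimes P)\Delta_r(x)=(Q\otimes\mathrm{id})\Delta_r(P(x))$ and $(Q\otimes P)\Delta_r(x)=(\mathrm{id}\otimes P)\Delta_r(P(x))$. Expanding the first difference and comparing with the expansion of \eqref{eq:coao2}, the pair of ``double-$P$'' terms $-u_i\otimes P(x\cdot_{A}P(v_i))$ and $+u_i\otimes(P(x)\cdot_{A}P(v_i))$ cancel by the averaging identity, while $Q(x\cdot_{A}Q(u_i))\otimes v_i$ rewrites as $Q(P(x)\cdot_{A}u_i)\otimes v_i$ by admissibility; after these reductions \eqref{eq:coao2} coincides term-for-term with the first difference set to zero. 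The same mechanism matches \eqref{eq:coao3} with $(Q\otimes P)\Delta_r(x)-(\mathrm{id}\otimes P)\Delta_r(P(x))=0$, where now $P(P(x)\cdot_{A}v_i)=P(x)\cdot_{A}P(v_i)$ and $Q(x\cdot_{A}Q(u_i))=P(x)\cdot_{A}Q(u_i)$ supply the cancellations. Together these two equalities are equivalent to the full chain \eqref{eq:aoco2}.

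The main obstacle is not conceptual but organizational: the two items reduce to bookkeeping among many structurally similar monomials, and obtaining the equivalence rather than a single implication requires choosing the right pair among the three pairwise equalities encoded in \eqref{eq:aoco2} and applying the averaging versus admissibility identities to precisely the correct tensor factor. The real risk is sign and factor-placement errors when distributing the operators $\mathrm{id}\otimes\mathcal{L}_{\cdot_{A}}(x)$, $\mathcal{L}_{\cdot_{A}}(x)\otimes\mathrm{id}$, $P$, and $Q$ across $r$; I would control this by always reducing each monomial to the canonical shape $Q(\cdots)\otimes(\cdots)$ (respectively $(\cdots)\otimes P(\cdots)$) before comparing the two expansions.
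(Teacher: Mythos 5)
Your proposal is correct and takes essentially the same route as the paper: both arguments expand the difference of the two sides of each coalgebra condition as monomials in the $u_i,v_i$ and use the admissibility identities \eqref{eq:ao pair} (plus commutativity, and the averaging identity for the ``double-$P$'' terms) to show this difference equals the left-hand side of the corresponding target equation, whence the equivalence is immediate. The only cosmetic difference is organizational: the paper inserts the cancelling pair $-u_{i}\otimes Q\big(x\cdot_{A}P(v_{i})\big)+u_{i}\otimes P(v_{i})\cdot_{A}Q(x)$ into the expanded difference and regroups, whereas you cancel the same pair inside the expansion of \eqref{eq:coao1}; likewise the paper leaves item (\ref{pro:coao2}) as ``similar'', which your splitting of the chain \eqref{eq:aoco2} into the two pairwise equalities matched with \eqref{eq:coao2} and \eqref{eq:coao3} carries out correctly.
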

\begin{proof}
For all $x\in A$, we have
\begin{align*}
&(Q\otimes Q)\Delta_{r}(x)-(Q\otimes\mathrm{id})\Delta_{r}\big(Q(x)\big)\\
&=\sum_{i} Q(u_{i})\otimes Q(x\cdot_{A}v_{i})-Q(x\cdot_{A}u_{i})\otimes Q(v_{i})-Q(u_{i})\otimes Q(x)\cdot_{A}v_{i}+Q\big(Q(x)\cdot_{A}u_{i}\big)\otimes v_{i}\\
&\overset{\eqref{eq:ao pair}}{=}\sum_{i} Q(u_{i})\otimes Q(x\cdot_{A}v_{i})-u_{i}\otimes Q\big(x\cdot_{A}P(v_{i})\big)+u_{i}\otimes P(v_{i})\cdot_{A}Q(x)\\
&\ \ -Q(x\cdot_{A}u_{i})\otimes Q(v_{i})-Q(u_{i})\otimes Q(x)\cdot_{A}v_{i}+Q\big(x\cdot_{A}P(u_{i})\big)\otimes v_{i}\\
&=\big(\mathrm{id}\otimes Q\mathcal{L}_{\cdot_{A}}(x)\big)(Q\otimes\mathrm{id}-\mathrm{id}\otimes P)r+
\big(  Q\mathcal{L}_{\cdot_{A}}(x)\otimes\mathrm{id}\big)(P\otimes\mathrm{id}-\mathrm{id}\otimes Q)r\\
&\ \ +\big(\mathrm{id}\otimes\mathcal{L}_{\cdot_{A}}(Qx)\big)(\mathrm{id}\otimes P-Q\otimes\mathrm{id})r.
\end{align*}
Hence (\ref{pro:coao1}) holds. Similarly we get (\ref{pro:coao2}).
\end{proof}

\begin{defi}
Let $(A,\cdot_{A},P,Q)$ be an admissible averaging commutative  algebra.
If $r=\sum\limits_{i} u_{i}\otimes v_{i}\in A\otimes A$ satisfies ${\bf A}(r)=0$ and the following equations:
\begin{eqnarray}
    && (P\otimes\mathrm{id}-\mathrm{id}\otimes Q)r=0,\;\label{eq:AAYBE1}\\
    && (Q\otimes\mathrm{id}-\mathrm{id}\otimes P)r=0,\;\label{eq:AAYBE2}
\end{eqnarray}
then we say $r$ is a solution of  the {\bf averaging associative Yang-Baxter equation} (or {\bf AAYBE} in short) in $(A,\cdot_{A},P,Q)$.
\end{defi}

\begin{thm}\label{thm:coao}
    Let $(A,\cdot_{A},P,Q)$ be an admissible averaging commutative  algebra and $r\in A\otimes A$.
    Let $\Delta_{r}:A\rightarrow A\otimes A$ be a co-multiplication given by \eqref{eq:comul1}.
    Then $(A,\cdot_{A},\Delta_{r},P,Q)$ is an averaging commutative and cocommutative infinitesimal bialgebra if and only if \eqref{501}, \eqref{502}, \eqref{eq:coao1}-\eqref{eq:coao3} hold. In particular, 
   if $r+\tau(r)$ is invariant on $(A,\cdot_{A})$ and $r$ is a solution of the AAYBE in $(A,\cdot_{A},P,Q)$, then $(A,\cdot_{A},\Delta_{r},P,Q)$ is an averaging commutative and cocommutative infinitesimal bialgebra.
\end{thm}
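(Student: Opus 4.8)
The plan is to decompose the statement along the three defining conditions for an averaging commutative and cocommutative infinitesimal bialgebra in Definition~\ref{defi:ave com ASI bialgebra}, and then translate each condition into the stated tensor equations using results already established in the excerpt. First, for the equivalence, I would note that condition~(b) of Definition~\ref{defi:ave com ASI bialgebra}, namely that $(A,\cdot_{A},P,Q)$ is an admissible averaging commutative algebra, is part of the hypotheses and so requires no verification. For condition~(a), I would invoke the recalled result of \cite{Bai2010}: with $\Delta_{r}$ given by \eqref{eq:comul1}, the triple $(A,\cdot_{A},\Delta_{r})$ is a commutative and cocommutative infinitesimal bialgebra if and only if \eqref{501} and \eqref{502} hold. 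For condition~(c), I would apply Proposition~\ref{pro:coao}: part~(\ref{pro:coao1}) gives that $\Delta_{r}$ satisfies \eqref{eq:aoco1} if and only if \eqref{eq:coao1} holds, and part~(\ref{pro:coao2}) gives that $\Delta_{r}$ satisfies \eqref{eq:aoco2} if and only if \eqref{eq:coao2} and \eqref{eq:coao3} hold. Assembling these three ingredients, $(A,\cdot_{A},\Delta_{r},P,Q)$ is an averaging commutative and cocommutative infinitesimal bialgebra precisely when \eqref{501}, \eqref{502} and \eqref{eq:coao1}--\eqref{eq:coao3} all hold, which is the first assertion.

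For the ``in particular'' statement, I would verify that the AAYBE together with the invariance of $r+\tau(r)$ forces each of the five equations directly. The invariance of $r+\tau(r)$ means $\big(\mathrm{id}\otimes\mathcal{L}_{\cdot_{A}}(y)-\mathcal{L}_{\cdot_{A}}(y)\otimes\mathrm{id}\big)(r+\tau(r))=0$ for all $y$, which is exactly the inner factor appearing in \eqref{501}, so \eqref{501} vanishes; and $\mathbf{A}(r)=0$ makes \eqref{502} vanish immediately, since ${\bf A}(r)$ appears as a right factor there. The operator relations \eqref{eq:AAYBE1} and \eqref{eq:AAYBE2} read $(P\otimes\mathrm{id}-\mathrm{id}\otimes Q)r=0$ and $(Q\otimes\mathrm{id}-\mathrm{id}\otimes P)r=0$. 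Since the two summands of \eqref{eq:coao1} carry $(Q\otimes\mathrm{id}-\mathrm{id}\otimes P)r$ and $(P\otimes\mathrm{id}-\mathrm{id}\otimes Q)r$ respectively as right factors, \eqref{eq:coao1} vanishes; and since \eqref{eq:coao2} and \eqref{eq:coao3} each carry $(Q\otimes\mathrm{id}-\mathrm{id}\otimes P)r$ as a right factor, they vanish as well. Hence all of \eqref{501}, \eqref{502} and \eqref{eq:coao1}--\eqref{eq:coao3} hold, and the first assertion yields the conclusion.

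Because this argument is purely an assembly of the earlier structural results (Proposition~\ref{pro:coao} and the recalled characterization from \cite{Bai2010}) followed by direct substitution, I do not anticipate a genuine obstacle. The only care needed is bookkeeping: matching the operator factors that occur in \eqref{eq:coao1}--\eqref{eq:coao3} against the two AAYBE relations so that each term is manifestly seen to contain a vanishing factor, and likewise reading off \eqref{501} and \eqref{502} from the invariance and $\mathbf{A}(r)=0$ hypotheses. The substantive computational content has already been discharged in the proof of Proposition~\ref{pro:coao}, so the present proof is essentially a reduction to that proposition and the cited result.
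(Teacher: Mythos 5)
Your proposal is correct and follows essentially the same route as the paper, whose entire proof is the single line ``It follows from Proposition \ref{pro:coao}'': the equivalence is exactly the assembly of the recalled \cite{Bai2010} characterization (conditions \eqref{501}, \eqref{502}) with Proposition \ref{pro:coao} (conditions \eqref{eq:coao1}--\eqref{eq:coao3}), and the ``in particular'' part is the direct substitution you describe, with invariance of $r+\tau(r)$ killing \eqref{501}, ${\bf A}(r)=0$ killing \eqref{502}, and the factors $(Q\otimes\mathrm{id}-\mathrm{id}\otimes P)r$ and $(P\otimes\mathrm{id}-\mathrm{id}\otimes Q)r$ killing \eqref{eq:coao1}--\eqref{eq:coao3}. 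Your write-up simply makes explicit the bookkeeping the paper leaves implicit.
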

\begin{proof}
    It follows from Proposition \ref{pro:coao}.
\end{proof}

\begin{defi}
 Let $(A,\cdot_{A},P,Q)$ be an admissible averaging commutative  algebra. Suppose that  $r\in A\otimes A$ is a solution of the AAYBE in $(A,\cdot_{A},P,Q)$ and $r+\tau(r)$ is invariant on $(A,\cdot_{A})$. Then we say the resulting averaging commutative and cocommutative infinitesimal bialgebra $(A,\cdot_{A},\Delta_{r},P,Q)$ obtained from Theorem \ref{thm:coao} is {\bf quasi-triangular}.
\end{defi}

\delete{
\begin{pro}
    Let $(A,\cdot_{A},P,Q)$ be an admissible averaging commutative  algebra.
    Then we have
    \begin{eqnarray*}
        (P\otimes \mathrm{id}-\mathrm{id}\otimes Q)\big(-\tau(r)\big)&=&\tau(Q\otimes\mathrm{id}-\mathrm{id}\otimes P)r,\\
        (Q\otimes \mathrm{id}-\mathrm{id}\otimes P)\big(-\tau(r)\big)&=&\tau(P\otimes\mathrm{id}-\mathrm{id}\otimes Q)r,\\
        {\bf A}\big(-\tau(r)\big)&=&\tau_{13}{\bf A}(r),
    \end{eqnarray*}
    where $\tau_{13}(x\otimes y\otimes z)=z\otimes y\otimes x$ for all $x,y,z\in A$.
    Consequently, $r$ is a solution of the AAYBE in $(A,\cdot_{A},P,Q)$ if and only if $-\tau(r)$ is a solution of the AAYBE in $(A,\cdot_{A},P,Q)$.
    Furthermore, if $(A,\cdot_{A},\Delta_{r},P,Q)$ is a quasi-triangular averaging bicommutative infinitesimal bialgebra, then
    $(A,\cdot_{A},\Delta_{-\tau(r)},P,Q)$ is also a quasi-triangular averaging bicommutative infinitesimal bialgebra.
\end{pro}}


Now we study representations of admissible averaging commutative algebras.

\begin{defi}\cite{Bai2024}
A {\bf representation} of an averaging commutative  algebra $(A,\cdot_{A},$
$P)$ is a triple $(\mu,\alpha,V)$, such that $(\mu,V)$ is a representation of $(A,\cdot_{A})$, that is, $\mu:A\rightarrow\mathrm{End}_{\mathbb K}(V)$ is a linear map satisfying
\begin{equation*}
\mu (x\cdot_{A}y)v=\mu (x)\mu (y)v,\;\forall x,y\in A,v\in V,
\end{equation*}
and $\alpha:V\rightarrow V$ is a linear map satisfying
\begin{equation}\label{eq:rep ao}
    \mu(Px)\alpha(v)=\alpha\big(\mu(Px)v\big)=\alpha\big(\mu(x)\alpha(v)\big),\;\forall x\in A, v\in V.
\end{equation}
\end{defi}


\begin{defi}
Let $(A,\cdot_{A},P,Q)$ be an admissible averaging commutative  algebra.
Suppose that $(\mu,\alpha,V)$ is a representation of the averaging commutative  algebra $(A,\cdot_{A},P)$.
If there is a linear map $\beta:V\rightarrow V$ such that
\begin{eqnarray}
&&\mu(Px)\beta(v)=\beta\big(\mu(Px)v\big)=\beta\big(\mu(x)\beta(v)\big),\label{eq:aver pair rep}\\
&&\mu(Qx)\alpha(v)=\beta\big(\mu(x)\alpha(v)\big)=\beta\big(\mu(Qx)v\big),\;\forall x\in A, v\in V,\label{eq:ex rep}
\end{eqnarray}
then we say the quadruple $(\mu,\alpha,\beta,V)$ is a {\bf representation} of $(A,\cdot_{A},P,Q)$.
\end{defi}

\begin{ex}
Let $(A,\cdot_{A},P,Q)$ be an admissible averaging commutative  algebra.
Then $(\mathcal{L}_{\cdot_{A}},P,Q,A)$ is a representation of $(A,\cdot_{A},P,Q)$ which is called the {\bf adjoint representation} of $(A,\cdot_{A},P,Q)$.
\end{ex}

\delete{
For vector spaces $V_1$ and $V_2$ and linear maps
$\phi_1:V_1\rightarrow V_1$ and $\phi_2:V_2\rightarrow V_2$, let
$\phi_1+\phi_2$ denote the linear map:
\begin{equation}\label{eq:pro:SD RB Lie2}
\phi_{V_1\oplus V_2}:V_1\oplus V_2\rightarrow V_1\oplus
V_2,\;\;\phi_{V_1\oplus
V_2}(v_1+v_2)=\phi_1(v_1)+\phi_2(v_2),\;\;\forall v_1\in
V_1,v_2\in V_2.
\end{equation}}

\begin{pro}
  Let $(A,\cdot_{A},P,Q)$ be an admissible averaging commutative  algebra.
  Let $V$ be a vector space and $\mu:A\rightarrow\mathrm{End}_{\mathbb K}(V),\;\alpha,\beta:V\rightarrow V$ be linear maps.
  Then $(\mu,\alpha,\beta,V)$ is a representation of $(A,\cdot_{A},P,Q)$ if and only if there is a commutative  algebra structure on the direct sum $A\oplus V$ of vector spaces given by
  \begin{equation*}
    (x+u)\cdot_{d}(y+v)=x\cdot_{A}y+\mu(x)v+\mu(y)u,\;\forall  x,y\in A, u,v\in V,
  \end{equation*}
  such that $(A\oplus V,\cdot_{d},P+\alpha,Q+\beta)$ is an admissible averaging commutative  algebra. In this case, we denote the admissible averaging commutative  algebra $(A\oplus V,\cdot_{d},P+\alpha,Q+\beta)$ by $(A\ltimes_{\mu}V,P+\alpha,Q+\beta)$ and call it the {\bf semi-direct product admissible averaging commutative  algebra of $(A,\cdot_{A},P,Q)$ with respect to $(\mu,\alpha,\beta,V)$}.
\end{pro}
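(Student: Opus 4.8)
The plan is to exploit the layered structure of the definition of an admissible averaging commutative algebra and to show that, under the semi-direct product multiplication $\cdot_d$, each layer of the structure on $A\oplus V$ corresponds to one layer of the representation axioms. First I would recall that $(A\oplus V,\cdot_d,P+\alpha)$ is an averaging commutative algebra if and only if $(\mu,\alpha,V)$ is a representation of the averaging commutative algebra $(A,\cdot_A,P)$; this is the semi-direct product construction for averaging commutative algebras established in \cite{Bai2024}. Here commutativity of $\cdot_d$ is automatic from the commutativity of $\cdot_A$ and the symmetric definition of $\cdot_d$, associativity of $\cdot_d$ is equivalent to $(\mu,V)$ being a representation of $(A,\cdot_A)$, and the averaging identity \eqref{eq:Ao} for $P+\alpha$ is equivalent to the averaging axiom \eqref{eq:rep ao} for $\alpha$. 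This reduces the problem to showing that, granting these facts, the admissibility relation \eqref{eq:ao pair} for the pair $(P+\alpha,Q+\beta)$ on $A\oplus V$ holds if and only if the remaining representation axioms \eqref{eq:aver pair rep} and \eqref{eq:ex rep} for $\beta$ hold.

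For this reduced step, I would take arbitrary elements $X=x+u$ and $Y=y+v$ with $x,y\in A$ and $u,v\in V$, and expand the three expressions $(P+\alpha)(X)\cdot_d(Q+\beta)(Y)$, $(Q+\beta)\big((P+\alpha)(X)\cdot_d Y\big)$ and $(Q+\beta)\big(X\cdot_d(Q+\beta)(Y)\big)$ using the definitions of $\cdot_d$, $P+\alpha$ and $Q+\beta$. Projecting onto the two summands, the $A$-components of all three expressions coincide precisely because $(A,\cdot_A,P,Q)$ is admissible, so \eqref{eq:ao pair} contributes no new constraint. The $V$-components are linear in the pair $(u,v)$, and since $u$ and $v$ range independently, the equality of the $V$-components for all $X,Y$ splits into two families of identities: setting $u=0$ isolates the terms in $v$ and yields \eqref{eq:aver pair rep}, while setting $v=0$ isolates the terms in $u$ and yields \eqref{eq:ex rep}. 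Reading this chain of equivalences in both directions gives the claimed biconditional, and the final sentence of the statement is then just the naming convention.

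The computation is routine bookkeeping rather than a genuine obstacle; the only point demanding care is the clean separation of the $V$-valued identity into the two independent families \eqref{eq:aver pair rep} and \eqref{eq:ex rep} via the specializations $u=0$ and $v=0$, together with the verification that the $A$-valued components impose nothing beyond the admissibility of $(A,\cdot_A,P,Q)$ already in force.
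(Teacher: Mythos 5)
Your proof is correct, but it is organized differently from the paper's. The paper does not verify this proposition directly: it obtains it as the special case of the more general Proposition \ref{552} on $(A,\cdot_{A},P,Q)$-representation algebras, taking the commutative multiplication $\cdot_{V}$ on $V$ to be zero (and the proof of Proposition \ref{552} is itself left as a straightforward computation). You instead build the statement up in two layers: first you invoke the semi-direct product characterization for averaging commutative algebras from \cite{Bai2024}, namely that $(A\oplus V,\cdot_{d},P+\alpha)$ is an averaging commutative algebra if and only if $(\mu,\alpha,V)$ is a representation of $(A,\cdot_{A},P)$, and then you check by direct expansion that the remaining admissibility identity \eqref{eq:ao pair} for the pair $(P+\alpha,Q+\beta)$ on $A\oplus V$ is equivalent to \eqref{eq:aver pair rep} together with \eqref{eq:ex rep}. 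Your component computation is sound: the $A$-components of the three expressions in \eqref{eq:ao pair} agree by the standing hypothesis that $(A,\cdot_{A},P,Q)$ is admissible, the $V$-components are linear in $(u,v)$, and the specializations $u=0$ and $v=0$ cleanly separate the two chains of identities (each equality in \eqref{eq:ao pair} contributing one equality in each of \eqref{eq:aver pair rep} and \eqref{eq:ex rep}), so the biconditional follows. What each route buys: your argument is self-contained and makes the verification explicit, which is valuable since the paper never writes it out; the paper's route is more economical and simultaneously establishes the general representation-algebra statement, which it needs elsewhere (for instance in the construction of $(\mathcal{L}^{*}_{\cdot_{A}},Q^{*},P^{*},A^{*},\lozenge_{s})$).
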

\begin{proof}
    It is the special case of Proposition \ref{552} when the commutative multiplication on $V$ is taken to be zero.
\end{proof}

\delete{
Let $\beta:W\rightarrow V$ be a linear map. The linear map $\beta^{*}:V^{*}\rightarrow W^{*}$ is defined by
\begin{equation*}
\langle\beta^{*}(u^{*}),w\rangle=\langle u^{*},\beta(w)\rangle,\;\forall u^{*}\in V^{*},w\in W.
\end{equation*}
Moreover, for a linear map $\mu:A\rightarrow\mathrm{End}_{\mathbb
K}(V)$, we denote the linear map
$\mu^{*}:A\rightarrow\mathrm{End}_{\mathbb K}(V^{*})$ by
\begin{equation*}
\langle \mu^{*}(x)u^{*},v\rangle=\langle u^{*},\mu(x)v\rangle,\;\forall x\in
A, u^{*}\in V^{*}, v\in V.
\end{equation*}}

\begin{pro}\label{pro:3.7}
    Let $(A,\cdot_{A},P,Q)$ be an admissible averaging commutative  algebra.
    If $(\mu,\alpha,\beta,V)$ is a representation of $(A,\cdot_{A},P,Q)$, then $(\mu^{*},\beta^{*},\alpha^{*},V^{*})$ is also a representation of $(A,\cdot_{A},P,Q)$.
    In particular, $(\mathcal{L}^{*}_{\cdot_{A}},Q^{*},P^{*},A^{*} )$ is a representation of $(A,\cdot_{A},P,Q)$.
\end{pro}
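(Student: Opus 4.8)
The plan is to verify directly that $(\mu^{*},\beta^{*},\alpha^{*},V^{*})$ satisfies the two-layered defining conditions of a representation of $(A,\cdot_{A},P,Q)$, by transposing the four operator identities satisfied by $(\mu,\alpha,\beta,V)$. The entire argument rests on the elementary fact that taking adjoints reverses the order of a composite: $(\mu(x)f)^{*}=f^{*}\mu^{*}(x)$ and $(f\mu(x)g)^{*}=g^{*}\mu^{*}(x)f^{*}$ for linear maps $f,g:V\rightarrow V$ and $x\in A$, so each operator identity among the original data transfers verbatim to one among the dual data, read from right to left. The one genuine subtlety is bookkeeping: in the dual representation the roles of $\alpha$ and $\beta$ are interchanged, so I must keep careful track of which dual map occupies which slot of the target definition.

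First I would establish that $(\mu^{*},\beta^{*},V^{*})$ is a representation of the averaging commutative algebra $(A,\cdot_{A},P)$. This has two parts. For the underlying module structure, I compute $\langle \mu^{*}(x)\mu^{*}(y)u^{*},v\rangle=\langle u^{*},\mu(y)\mu(x)v\rangle$, and since $A$ is commutative, $\mu(y)\mu(x)=\mu(yx)=\mu(xy)$, so $\mu^{*}(x)\mu^{*}(y)=\mu^{*}(xy)$; this is the unique point where commutativity of $(A,\cdot_{A})$ is actually used. For the averaging condition, I transpose \eqref{eq:aver pair rep}, whose operator form reads $\mu(Px)\beta=\beta\mu(Px)=\beta\mu(x)\beta$; taking adjoints yields $\beta^{*}\mu^{*}(Px)=\mu^{*}(Px)\beta^{*}=\beta^{*}\mu^{*}(x)\beta^{*}$, which is exactly \eqref{eq:rep ao} with $(\mu,\alpha)$ replaced by $(\mu^{*},\beta^{*})$. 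Thus $\beta^{*}$ plays the role of ``$\alpha$'' correctly.

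It remains to check that $\alpha^{*}$ plays the role of ``$\beta$'' for $(\mu^{*},\beta^{*},V^{*})$, i.e. the analogues of \eqref{eq:aver pair rep} and \eqref{eq:ex rep}. For the first, I transpose the original condition \eqref{eq:rep ao} on $\alpha$, whose operator form $\mu(Px)\alpha=\alpha\mu(Px)=\alpha\mu(x)\alpha$ gives $\mu^{*}(Px)\alpha^{*}=\alpha^{*}\mu^{*}(Px)=\alpha^{*}\mu^{*}(x)\alpha^{*}$, which is \eqref{eq:aver pair rep} with $\beta$ replaced by $\alpha^{*}$. For the second, I transpose \eqref{eq:ex rep} itself: its operator form $\mu(Qx)\alpha=\beta\mu(x)\alpha=\beta\mu(Qx)$ transposes to $\mu^{*}(Qx)\beta^{*}=\alpha^{*}\mu^{*}(x)\beta^{*}=\alpha^{*}\mu^{*}(Qx)$, which is precisely \eqref{eq:ex rep} with $(\mu,\alpha,\beta)$ replaced by $(\mu^{*},\beta^{*},\alpha^{*})$. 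This exhausts all defining conditions, so $(\mu^{*},\beta^{*},\alpha^{*},V^{*})$ is a representation of $(A,\cdot_{A},P,Q)$. The ``in particular'' claim then follows by applying the result to the adjoint representation $(\mathcal{L}_{\cdot_{A}},P,Q,A)$, whose dual is $(\mathcal{L}^{*}_{\cdot_{A}},Q^{*},P^{*},A^{*})$.

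The main obstacle is not any hard computation but the role-swapping bookkeeping: because the target statement pairs $\mu^{*}$ with $\beta^{*}$ (not $\alpha^{*}$) in the ``$\alpha$''-slot, one must scrupulously match each transposed identity to the correct slot of the definition. In particular, \eqref{eq:rep ao} and \eqref{eq:aver pair rep} have the same shape but involve different maps, and after dualization they land in different slots ($\alpha^{*}$ and $\beta^{*}$ respectively), so the verification succeeds only because the dualization swaps $\alpha$ and $\beta$ in exactly the way the definition requires.
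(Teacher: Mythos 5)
Your proof is correct and follows essentially the same route as the paper: transpose each defining operator identity, using \eqref{eq:aver pair rep} to place $\beta^{*}$ in the ``$\alpha$''-slot, \eqref{eq:rep ao} to place $\alpha^{*}$ in the ``$\beta$''-slot, and \eqref{eq:ex rep} for the mixed condition. The only difference is that you verify directly (via commutativity and dualizing \eqref{eq:aver pair rep}) that $(\mu^{*},\beta^{*},V^{*})$ is a representation of $(A,\cdot_{A},P)$, whereas the paper cites this step from \cite{Bai2024}.
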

\begin{proof}
 By \cite{Bai2024}, $(\mu^{*},\beta^{*},V^{*})$ is a representation of $(A,\cdot_{A},P)$.
    For all $x\in A, u^{*}\in V^{*},v\in V$, we have
    \begin{eqnarray*}
        \langle \mu^{*}(Px)\alpha^{*}(u^{*}),v\rangle&=&\langle u^{*},\alpha\big(\mu(Px)v\big)\rangle,\\
        \langle \alpha^{*}\big(\mu^{*}(Px)u^{*}\big),v\rangle&=&\langle u^{*},\mu(Px)\alpha(v)\rangle,\\
        \langle \alpha^{*}\big(\mu^{*}(x)\alpha^{*}(u^{*})\big),v\rangle&=&\langle u^{*},\alpha\big(\mu(x)\alpha(v)\big)\rangle.
    \end{eqnarray*}
    Hence by \eqref{eq:rep ao}, we have 
    \begin{equation*}
        \mu^{*}(Px)\alpha^{*}(u^{*})=\alpha^{*}\big(\mu^{*}(Px)u^{*}\big)=\alpha^{*}\big(\mu^{*}(x)\alpha^{*}(u^{*})\big).
    \end{equation*}
    Similarly by \eqref{eq:ex rep}, we have
    \begin{equation*}
        \mu^{*}(Qx)\beta^{*}(u^{*})=\alpha^{*}\big( \mu^{*}(x)\beta^{*}(u^{*})\big)=\alpha^{*}\big( \mu^{*}(Qx)u^{*}\big).
    \end{equation*}
    Hence $(\mu^{*},\beta^{*},\alpha^{*},V^{*})$ is a representation of $(A,\cdot_{A},P,Q)$.
    \end{proof}

\delete{
Now we introduce the notion of an $(A,\cdot_{A},P,Q)$-representation algebra, where $(A,\cdot_{A},P,Q)$ is an admissible averaging commutative  algebra.}

\begin{defi}
 Let $(A,\cdot_{A},P,Q)$ and $(V,\cdot_{V},\alpha,\beta)$ be admissible averaging commutative  algebras.
 If there is a linear map $\mu:A\rightarrow\mathrm{End}_{\mathbb K}(V)$ such that $(\mu,\alpha,\beta,V)$ is a representation of $(A,\cdot_{A},P,Q)$ and the following equation holds:
 \begin{equation}
     \mu(x)(u\cdot_{V}v)=\mu(x)u\cdot_{V}v,\;\forall x\in A, u,v\in V,
 \end{equation}
 then we say $(\mu,\alpha,\beta,V,\cdot_{V})$ is an {\bf $(A,\cdot_{A},P,Q)$-representation algebra}.
\end{defi}

\begin{pro}\label{552}
    Let $(A,\cdot_{A},P,Q)$ be an admissible averaging commutative  algebra.
    Suppose that $V$ is a vector space and
    \begin{equation*}
        \mu:A\rightarrow\mathrm{End}_{\mathbb K}(V),\; \alpha,\beta:V\rightarrow V,\; \cdot_{V}:V\otimes V\rightarrow V
    \end{equation*}
    are linear maps.
    Then $(\mu,\alpha,\beta,V,\cdot_{V})$ is an $(A,\cdot_{A},P,Q)$-representation algebra  if and only if $(A\oplus V,\cdot_{d},P+\alpha,Q+\beta)$ is an admissible averaging commutative  algebra, where the multiplication $\cdot_{d}$ on 
  $A\oplus V$ is given by
    \begin{equation}
        (x+u)\cdot_{d}(y+v)=x\cdot_{A}y+\mu(x)v+\mu(y)u+u\cdot_{V}v,\;\forall x,y\in A, u,v\in V.
    \end{equation}
\end{pro}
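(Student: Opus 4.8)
The plan is to prove Proposition \ref{552} by a direct verification that splits into two logically independent tasks: first checking that $(A\oplus V, \cdot_d)$ is a commutative algebra precisely when $\mu$ is a representation of $(A,\cdot_A)$ and $\cdot_V$ is a commutative algebra structure compatible with $\mu$, and second checking that the pair of maps $(P+\alpha, Q+\beta)$ satisfies the admissible averaging conditions \eqref{eq:Ao} and \eqref{eq:ao pair} precisely when $(\mu,\alpha,\beta,V)$ is a representation of $(A,\cdot_A,P,Q)$ together with the compatibility $\mu(x)(u\cdot_V v)=\mu(x)u\cdot_V v$. The overall strategy is thus to expand each defining identity on $A\oplus V$ using the formula for $\cdot_d$, collect the terms lying in $A$ and the terms lying in $V$ separately, and match them against the corresponding axioms.

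First I would handle the algebra structure. Taking general elements $x+u, y+v, z+w$, commutativity of $\cdot_d$ is immediate from commutativity of $\cdot_A$ and the symmetric appearance of $\mu(x)v+\mu(y)u$; it forces $u\cdot_V v = v\cdot_V u$. For associativity, I would compute $\big((x+u)\cdot_d(y+v)\big)\cdot_d(z+w)$ and its bracketing counterpart, project onto $A$ (recovering associativity of $\cdot_A$) and onto $V$. The $V$-component yields, after cancellation, the representation identity $\mu(x\cdot_A y)w=\mu(x)\mu(y)w$, the module-compatibility $\mu(x)(v\cdot_V w)=\mu(x)v\cdot_V w$, its symmetric partner, and associativity of $\cdot_V$. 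These are exactly the conditions packaged in the definitions of a representation $(\mu,V)$ and of the compatibility equation in the \textbf{$(A,\cdot_A,P,Q)$-representation algebra}.

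Next I would verify the averaging and admissibility conditions for $P+\alpha$ and $Q+\beta$ on $(A\oplus V,\cdot_d)$. Evaluating $(P+\alpha)(x+u)\cdot_d (P+\alpha)(y+v)$ and the two expressions $(P+\alpha)\big((P+\alpha)(x+u)\cdot_d(y+v)\big)$ and $(P+\alpha)\big((x+u)\cdot_d(P+\alpha)(y+v)\big)$, and likewise the mixed versions with $Q+\beta$, I would again separate $A$- and $V$-components. The $A$-part reproduces that $(A,\cdot_A,P,Q)$ is itself admissible averaging. The $V$-part, organized by the type of term ($\mu$-terms versus $\cdot_V$-terms), reproduces exactly the averaging condition for $\alpha$ on $(V,\cdot_V)$, the admissible pairing for $\beta$, and the representation equations \eqref{eq:rep ao}, \eqref{eq:aver pair rep}, \eqref{eq:ex rep} that relate $\alpha,\beta$ to $\mu,P,Q$. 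Since both directions of the equivalence come from reading these equalities forwards and backwards, no separate converse argument is needed once the term-matching is set up carefully.

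The only real subtlety I anticipate is bookkeeping: the $V$-component of each averaging identity mixes contributions of two different shapes, namely $\mu(Px)\alpha(v)$-type terms coming from the $A$-on-$V$ action and $\alpha(u)\cdot_V\alpha(v)$-type terms coming from the internal product on $V$, and one must confirm these do not interfere so that each axiom is isolated cleanly. Because the claim is stated as a biconditional, the main care is to ensure that setting $u=0$ or $v=0$ (respectively $x=0$ or $y=0$) isolates each individual equation, so that the full system of conditions on $(\mu,\alpha,\beta,V,\cdot_V)$ is equivalent to, and not merely implied by, admissibility of the direct sum. I expect this separation-of-variables step to be the crux; the remaining computations are routine expansions.
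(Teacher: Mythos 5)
Your proposal is correct and matches the paper's approach: the paper's proof of this proposition is exactly the "straightforward computation" you describe, namely expanding each averaging/admissibility identity on $A\oplus V$, splitting into $A$- and $V$-components, and noting that the resulting term-by-term identities are precisely the axioms of an $(A,\cdot_{A},P,Q)$-representation algebra, with the converse obtained by specializing variables to zero. The term-matching you flag as the crux does go through without interference, so the plan is sound as stated.
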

\begin{proof}
    It follows from a straightforward computation.
\end{proof}

\delete{
Let $A$ be a vector space. We can identify a $2$-tensor $r\in A\otimes A$ as
a linear map $r^{\sharp}:A^{*}\rightarrow A$ by
\begin{equation*}
    \langle r^{\sharp}(a^{*}),b^{*}\rangle=\langle r, a^{*}\otimes
b^{*}\rangle,\;\forall a^{*},b^{*}\in A^{*}.
\end{equation*}}

\begin{pro}
    Let $(A,\cdot_{A},P,Q)$ be an admissible averaging commutative  algebra.
    Let $s\in A\otimes A$ be symmetric and invariant on $(A,\cdot_{A})$.
    If the following equation holds:
    \begin{equation}\label{eq:op:cond}
        (P\otimes\mathrm{id}-\mathrm{id}\otimes Q)s=0,
    \end{equation}
    then there is an $(A,\cdot_{A},P,Q)$-representation algebra $(\mathcal{L}^{*}_{\cdot_{A}},Q^{*},P^{*},A^{*},\lozenge_{s})$ with the commutative  multiplication $\lozenge_{s}$ on $A^{*}$ given by
    \begin{equation}
        a^{*}\lozenge_{s}b^{*}=\mathcal{L}^{*}_{\cdot_{A}}\big(s^{\sharp}(a^{*})\big)b^{*},\;\forall a^{*},b^{*}\in A^{*}.
    \end{equation}
\end{pro}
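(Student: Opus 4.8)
The plan is to translate every hypothesis on $s$ into an operator identity for the induced map $s^\sharp:A^*\to A$, and then to read each defining property of an $(A,\cdot_A,P,Q)$-representation algebra off the representation axioms already established for $(\mathcal{L}^*_{\cdot_A},Q^*,P^*,A^*)$ in Proposition \ref{pro:3.7}. Writing $\mu=\mathcal{L}^*_{\cdot_A}$, I would first record that the symmetry of $s$ is equivalent to $s^\sharp$ being self-adjoint, $\langle s^\sharp(a^*),b^*\rangle=\langle a^*,s^\sharp(b^*)\rangle$; that the invariance of $s$ on $(A,\cdot_A)$ is equivalent to the intertwining relation
\begin{equation*}
  \mathcal{L}_{\cdot_A}(x)\,s^\sharp=s^\sharp\,\mathcal{L}^*_{\cdot_A}(x),\qquad\forall x\in A;
\end{equation*}
and that \eqref{eq:op:cond} is equivalent to $s^\sharp P^*=Q\,s^\sharp$. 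Applying $\tau$ to \eqref{eq:op:cond} and using $\tau(s)=s$ then upgrades this to the companion relation $s^\sharp Q^*=P\,s^\sharp$, which is the form I will actually invoke.

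The unifying observation is that $s^\sharp$ is an algebra homomorphism from $(A^*,\lozenge_s)$ to $(A,\cdot_A)$: since $a^*\lozenge_s b^*=\mu(s^\sharp(a^*))b^*$, applying $s^\sharp$ and the intertwining relation with $x=s^\sharp(a^*)$ gives $s^\sharp(a^*\lozenge_s b^*)=s^\sharp(a^*)\cdot_A s^\sharp(b^*)$. From this, commutativity of $\lozenge_s$ follows by pairing $a^*\lozenge_s b^*$ against an arbitrary $z\in A$, rewriting $s^\sharp(a^*)\cdot_A z$ through the intertwining relation and symmetrizing in $a^*,b^*$ via self-adjointness of $s^\sharp$; associativity is immediate because both $(a^*\lozenge_s b^*)\lozenge_s c^*$ and $a^*\lozenge_s(b^*\lozenge_s c^*)$ collapse to $\mu(s^\sharp(a^*))\mu(s^\sharp(b^*))c^*$, using the homomorphism property together with the fact that $\mu$ is a representation.

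Next I would verify that $(A^*,\lozenge_s,Q^*,P^*)$ is an admissible averaging commutative algebra and that the compatibility holds. For the averaging identity of $Q^*$, substituting $s^\sharp Q^*=P\,s^\sharp$ turns the three terms $Q^*(a^*)\lozenge_s Q^*(b^*)$, $Q^*(Q^*(a^*)\lozenge_s b^*)$, $Q^*(a^*\lozenge_s Q^*(b^*))$ into $\mu(Pz)\alpha(b^*)$, $\alpha(\mu(Pz)b^*)$, $\alpha(\mu(z)\alpha(b^*))$ respectively, with $z=s^\sharp(a^*)$ and $\alpha=Q^*$; their equality is exactly \eqref{eq:rep ao} for $(\mathcal{L}^*_{\cdot_A},Q^*,P^*,A^*)$. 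The admissibility identity between $Q^*$ and $P^*$ reduces in the same way, with $\beta=P^*$, to \eqref{eq:aver pair rep}. Finally the compatibility $\mu(x)(a^*\lozenge_s b^*)=(\mu(x)a^*)\lozenge_s b^*$ holds because the left side equals $\mu(x\cdot_A s^\sharp(a^*))b^*$ since $\mu$ is a representation, while the right side equals $\mu(s^\sharp(\mu(x)a^*))b^*=\mu(x\cdot_A s^\sharp(a^*))b^*$ by the intertwining relation.

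The computations are all formal, so the main obstacle is not a hard estimate but the correct dualization and bookkeeping: one must translate each tensor condition into the right statement about $s^\sharp$, in particular noticing that the symmetry of $s$ promotes \eqref{eq:op:cond} to the identity $s^\sharp Q^*=P\,s^\sharp$ that the argument needs, and one must correctly match the roles of the averaging operator $\alpha=Q^*$ and its admissible partner $\beta=P^*$ in the axioms of Proposition \ref{pro:3.7} against those in the target structure $(A^*,\lozenge_s,Q^*,P^*)$. Once the homomorphism property of $s^\sharp$ is isolated, each required equation collapses to a single application of a representation axiom.
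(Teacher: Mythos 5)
Your proof is correct: all of your operator translations are accurate (in particular, the symmetry of $s$ does upgrade \eqref{eq:op:cond} from $s^{\sharp}P^{*}=Qs^{\sharp}$ to the companion identity $s^{\sharp}Q^{*}=Ps^{\sharp}$ that your reductions actually use), and every axiom of an $(A,\cdot_{A},P,Q)$-representation algebra is verified. Your route differs from the paper's in organization rather than in substance. The paper imports the commutativity and associativity of $\lozenge_{s}$, together with the compatibility $\mathcal{L}^{*}_{\cdot_{A}}(x)(a^{*}\lozenge_{s}b^{*})=\mathcal{L}^{*}_{\cdot_{A}}(x)a^{*}\lozenge_{s}b^{*}$, from \cite{BGGZ}, and then checks the averaging identity for $Q^{*}$ and the pair identity for $(Q^{*},P^{*})$ by a direct chain of pairings against elements of $A$, invoking \eqref{eq:op:cond} in tensor form and the admissibility identity \eqref{eq:ao pair} on $A$. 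You instead isolate the homomorphism property $s^{\sharp}(a^{*}\lozenge_{s}b^{*})=s^{\sharp}(a^{*})\cdot_{A}s^{\sharp}(b^{*})$ furnished by invariance, prove the algebra structure and the compatibility condition from scratch, and then collapse the averaging and admissibility identities of $(A^{*},\lozenge_{s},Q^{*},P^{*})$ to the representation axioms \eqref{eq:rep ao} and \eqref{eq:aver pair rep} for $(\mathcal{L}^{*}_{\cdot_{A}},Q^{*},P^{*},A^{*})$ established in Proposition \ref{pro:3.7}. These two verifications are dual to each other: \eqref{eq:rep ao} for this dual representation dualizes exactly to \eqref{eq:ao pair}, and \eqref{eq:aver pair rep} dualizes to the averaging identity for $P$, so the paper's pairing computation and your axiom reductions contain the same mathematics. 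What your packaging buys is self-containedness (no appeal to \cite{BGGZ} for the algebra structure) and modularity: once the three operator identities for $s^{\sharp}$ are recorded, each required equation is a one-line application of an already established fact, which also makes transparent why the hypothesis \eqref{eq:op:cond} together with symmetry is precisely what is needed.
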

\begin{proof}
By \cite{BGGZ}, $(A^{*},\lozenge_{s})$ is a commutative  algebra and the following equation holds:
\begin{equation*}
\mathcal{L}^{*}_{\cdot_{A}}(x)(a^{*}\lozenge_{s} b^{*})=\mathcal{L}^{*}_{\cdot_{A}}(x)a^{*}\lozenge_{s} b^{*},\;\forall x\in A, a^{*},b^{*}\in A^{*}.
\end{equation*}
 By Proposition \ref{pro:3.7}, $(\mathcal{L}^{*}_{\cdot_{A}},Q^{*},P^{*},A^{*})$ is a representation of $(A,\cdot_{A}, P,Q)$.
For all $x\in A$,
$a^{*},b^{*}\in A^{*}$, we have
{\small
\begin{eqnarray*}
 \langle Q^{*}(a^{*})\lozenge_{s} Q^{*}(b^{*}),x\rangle&=&\langle \mathcal{L}^{*}_{\cdot_{A}}\Big(s^{\sharp}\big(Q^{*}(a^{*})\big)\Big) Q^{*}(b^{*}),x\rangle=\langle Q^{*}(b^{*}), s^{\sharp}\big(Q^{*}(a^{*})\big)\cdot_{A}x\rangle\\
&=&\langle\mathcal{L}^{*}_{\cdot_{A}}(x)Q^{*}(b^{*}),s^{\sharp}\big(Q^{*}(a^{*})\big)\rangle=\langle s, Q^{*}(a^{*})\otimes\mathcal{L}^{*}_{\cdot_{A}}(x)Q^{*}(b^{*})\rangle\\
&=&\langle\big(Q\otimes Q\mathcal{L}_{\cdot_{A}}(x)\big)s, a^{*}\otimes b^{*}\rangle 
 \overset{\eqref{eq:op:cond}}{=}\langle \big(\mathrm{id}\otimes Q\mathcal{L}_{\cdot_{A}}(x)P\big)s, a^{*}\otimes b^{*}\rangle\\
 &
\overset{\eqref{eq:ao pair}}{=}&\langle \Big(\mathrm{id}\otimes\mathcal{L}_{\cdot_{A}}\big(Q(x)\big)P\Big)s, a^{*}\otimes b^{*}\rangle 
 \overset{\eqref{eq:op:cond}}{=}\langle \Big(Q\otimes\mathcal{L}_{\cdot_{A}}\big(Q(x)\big)\Big)s, a^{*}\otimes b^{*}\rangle\\
 &=&\langle s, Q^{*}(a^{*})\otimes \mathcal{L}^{*}_{\cdot_{A}}\big(Q(x)\big)b^{*}\rangle 
=\langle s^{\sharp}\big(Q^{*}(a^{*})\big),\mathcal{L}^{*}_{\cdot_{A}}\big(Q(x)\big)b^{*}\rangle\\
&=&
 \langle b^{*}, s^{\sharp}\big(Q^{*}(a^{*})\big)\cdot_{A}Q(x)\rangle 
=\langle\mathcal{L}^{*}_{\cdot_{A}}\Big( s^{\sharp}\big(Q^{*}(a^{*})\big)\Big)b^{*},Q(x)\rangle\\
&=&\langle Q^{*}(a^{*})\lozenge_{s}b^{*}, Q(x)\rangle
=\langle Q^{*}\big(Q^{*}(a^{*})\lozenge_{s}b^{*}\big),x\rangle.
\end{eqnarray*}}Hence  $(A^{*},\lozenge_{s},Q^{*})$ is an averaging commutative algebra.
 Similarly, \eqref{eq:ao pair} holds for $(A^{*},\lozenge_{s}$,
 $Q^{*}$,
 $P^{*})$. Thus $(A^{*},\lozenge_{s},$
$Q^{*},P^{*})$ is an admissible averaging commutative  algebra.
Therefore $(\mathcal{L}^{*}_{\cdot_{A}},Q^{*},P^{*},$
$A^{*},\lozenge_{s})$ is an $(A,\cdot_{A},P,Q)$-representation algebra.
\end{proof}

We introduce the notion of $\mathcal{O}$-operators with weights of admissible averaging commutative  algebras.

\begin{defi}
Let $(A,\cdot_{A},P,Q)$ be an admissible averaging commutative  algebra.
Suppose that $(\mu,\alpha,\beta,V,\cdot_{V})$ is an $(A,\cdot_{A},P,Q)$-representation algebra.
A linear map $T:V\rightarrow A$ is called an {\bf $\mathcal{O}$-operator of weight $\lambda\in\mathbb{K}$ of $(A,\cdot_{A},P,Q)$ associated to} $(\mu,\alpha,\beta,V,\cdot_{V})$ if the following equations hold:
\begin{eqnarray}
    Tu\cdot_{A}Tv&=&T\big(\mu(Tu)v+\mu(Tv)u+\lambda u\cdot_{V}v\big),\;\forall u,v\in V,\label{eq:oop1}\\
    PT&=&T\alpha,\label{eq:oop2}\\
    QT&=&T\beta.\label{eq:oop3}
\end{eqnarray}
In particular, if $V$ is equipped with the zero multiplication
in the sense that $u\cdot_{V}v=0$ for all $u,v\in V$, 
then we simply say $T:V\rightarrow A$ is an {\bf $\mathcal{O}$-operator of $(A,\cdot_{A},P,Q)$ associated to $(\mu,\alpha,\beta,V)$}.
\end{defi}

\delete{u\cdot_{V}v=0}

\begin{thm}\label{thm:rt oop}
Let $(A,\cdot_{A},P,Q)$ be an admissible averaging commutative  algebra.
Suppose that $r\in A\otimes A$ and $r+\tau(r)$ is invariant on $(A,\cdot_{A})$.
Then the following conditions are equivalent:
\begin{enumerate}
    \item $r$ is a solution of the AAYBE in $(A,\cdot_{A},P,Q)$ such that $(A,\cdot_{A},\Delta_{r},P,Q)$ is a quasi-triangular averaging commutative and cocommutative infinitesimal bialgebra.
    \item $r^{\sharp}$ is an $\mathcal{O}$-operator of weight $-1$ of $(A,\cdot_{A},P,Q)$ associated to the $(A,\cdot_{A},P,Q)$-representation algebra $(\mathcal{L}^{*}_{\cdot_{A}},Q^{*},P^{*},A^{*},\lozenge_{r+\tau(r)})$, where  the multiplication $\lozenge_{r+\tau(r)}$ is given by
    \begin{equation}
        a^{*}\lozenge_{r+\tau(r)} b^{*}=\mathcal{L}^{*}_{\cdot_{A}}\Big( \big(r+\tau(r)\big)^{\sharp}(a^{*}) \Big)b^{*},\;\forall a^{*},b^{*}\in A^{*}.
    \end{equation}
    That is, the following equations hold:
    \begin{eqnarray}
        r^{\sharp}(a^{*})\cdot_{A} r^{\sharp}(b^{*})&=& r^{\sharp}\Big( \mathcal{L}^{*}_{\cdot_{A}}\big( r^{\sharp}(a^{*})  \big)b^{*}+
        \mathcal{L}^{*}_{\cdot_{A}}\big( r^{\sharp}(b^{*})  \big)a^{*}-a^{*}\lozenge_{r+\tau(r)}b^{*}\Big),\label{eq:rt oop1}\\
        Pr^{\sharp}&=&r^{\sharp}Q^{*},\label{eq:rt oop2}\\
        Qr^{\sharp}&=&r^{\sharp}P^{*}.\label{eq:rt oop3}
    \end{eqnarray}
\end{enumerate}
\end{thm}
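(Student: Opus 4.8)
The plan is to show that, under the identification $r\mapsto r^{\sharp}$ of a $2$-tensor in $A\otimes A$ with a linear map $A^{*}\to A$, the three defining relations of the AAYBE---namely ${\bf A}(r)=0$ together with \eqref{eq:AAYBE1} and \eqref{eq:AAYBE2}---correspond term by term to the three $\mathcal{O}$-operator relations \eqref{eq:oop1}--\eqref{eq:oop3} written out as \eqref{eq:rt oop1}--\eqref{eq:rt oop3}. Since $\sharp$ is a linear isomorphism in the finite-dimensional setting, matching the relations in pairs yields the desired equivalence of (a) and (b). At the outset I would also observe that, because $r+\tau(r)$ is assumed invariant, the clause ``such that $(A,\cdot_{A},\Delta_{r},P,Q)$ is quasi-triangular'' in (a) is automatic by Theorem \ref{thm:coao}, so (a) amounts simply to the statement that $r$ is a solution of the AAYBE.

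First I would dispose of the two averaging relations, which pass through $\sharp$ directly. Writing $r=\sum_{i}u_{i}\otimes v_{i}$, so that $r^{\sharp}(a^{*})=\sum_{i}\langle a^{*},u_{i}\rangle v_{i}$, a one-line computation gives the four identities
\begin{equation*}
\big((Q\otimes\mathrm{id})r\big)^{\sharp}=r^{\sharp}Q^{*},\quad \big((\mathrm{id}\otimes P)r\big)^{\sharp}=Pr^{\sharp},\quad \big((P\otimes\mathrm{id})r\big)^{\sharp}=r^{\sharp}P^{*},\quad \big((\mathrm{id}\otimes Q)r\big)^{\sharp}=Qr^{\sharp}.
\end{equation*}
Applying $\sharp$ to \eqref{eq:AAYBE2} thus produces $r^{\sharp}Q^{*}-Pr^{\sharp}=0$, which is \eqref{eq:rt oop2}, and applying it to \eqref{eq:AAYBE1} produces $r^{\sharp}P^{*}-Qr^{\sharp}=0$, which is \eqref{eq:rt oop3}. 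Because $\sharp$ is injective, each of these is an equivalence, so \eqref{eq:AAYBE2} is equivalent to \eqref{eq:rt oop2} and \eqref{eq:AAYBE1} is equivalent to \eqref{eq:rt oop3}.

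It remains to match ${\bf A}(r)=0$ with \eqref{eq:rt oop1}. This is precisely the classical correspondence, for a commutative algebra $(A,\cdot_{A})$ and a $2$-tensor $r$ with $r+\tau(r)$ invariant, between solutions of the associative Yang-Baxter equation and $\mathcal{O}$-operators of weight $-1$ associated to $(\mathcal{L}^{*}_{\cdot_{A}},A^{*},\lozenge_{r+\tau(r)})$; it involves neither $P$ nor $Q$ and is available from \cite{SW} (see also \cite{BGGZ}). To be sure the term $a^{*}\lozenge_{r+\tau(r)}b^{*}$ genuinely arises from a representation algebra, I would invoke the preceding proposition with $s:=r+\tau(r)$: this $s$ is symmetric, invariant by hypothesis, and satisfies \eqref{eq:op:cond}. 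The last point follows from the two averaging relations, since
\begin{equation*}
\big(P\otimes\mathrm{id}-\mathrm{id}\otimes Q\big)\tau(r)=-\tau\big((Q\otimes\mathrm{id}-\mathrm{id}\otimes P)r\big),
\end{equation*}
so that adding this to \eqref{eq:AAYBE1} and using \eqref{eq:AAYBE2} gives $\big(P\otimes\mathrm{id}-\mathrm{id}\otimes Q\big)(r+\tau(r))=0$.

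Assembling the three equivalences shows that (a) holds if and only if (b) holds. I expect the main obstacle to be establishing that ${\bf A}(r)=0$ is equivalent to \eqref{eq:rt oop1} in the event that the Yang-Baxter--$\mathcal{O}$-operator correspondence must be proved rather than merely cited: one expands ${\bf A}(r)$ as a $3$-tensor, pairs it with $a^{*}\otimes b^{*}\otimes c^{*}$, and rewrites each summand through $r^{\sharp}$ and the defining identity for $\mathcal{L}^{*}_{\cdot_{A}}$, at which point the invariance of $r+\tau(r)$ is exactly what lets the symmetric contribution be collected into $a^{*}\lozenge_{r+\tau(r)}b^{*}$. By contrast, the averaging half of the statement is a purely formal consequence of the bijectivity of $\sharp$.
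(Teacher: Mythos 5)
Your proof is correct and takes essentially the same approach as the paper: both handle the equivalence of ${\bf A}(r)=0$ with \eqref{eq:rt oop1} by citing the known AYBE--$\mathcal{O}$-operator correspondence from \cite{BGGZ}, and both translate the averaging conditions \eqref{eq:AAYBE1} and \eqref{eq:AAYBE2} through $\sharp$ into \eqref{eq:rt oop3} and \eqref{eq:rt oop2}, your operator identities being just a repackaging of the paper's pairing computation. Your additional check that $s=r+\tau(r)$ satisfies \eqref{eq:op:cond}, so that the representation algebra in (b) is well-defined, is a detail the paper leaves implicit.
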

\begin{proof}
 By the assumption, it follows from \cite{BGGZ} that $r\in A\otimes A$ is a solution of the AYBE in $(A,\cdot_{A})$ if and only if \eqref{eq:rt oop1} holds.
 Now suppose that $(A,\cdot_{A},P,Q)$ is an admissible averaging commutative  algebra.
 For all $a^{*},b^{*}\in A^{*}$, we have
 {\small
 \begin{equation*}
  \langle (P\otimes\mathrm{id}-\mathrm{id}\otimes Q)r, a^{*}\otimes b^{*}\rangle=\langle r, P^{*}(a^{*})\otimes b^{*}-a^{*}\otimes Q^{*}(b^{*})\rangle=\langle r^{\sharp}P^{*}(a^{*})-Q\big(r^{\sharp}(a^{*})\big),b^{*}\rangle.
 \end{equation*}}Hence \eqref{eq:AAYBE1} holds if and only if \eqref{eq:rt oop3} holds.
 Similarly,  \eqref{eq:AAYBE2} holds if and only if \eqref{eq:rt oop2} holds.
 Therefore the conclusion follows.
\end{proof}

\delete{
Let $\mathcal{B}$ be a nondegenerate bilinear form on a vector space $A$.
Then there is a bijection   $\mathcal{B}^{\natural}:A\rightarrow A^{*}$ given by
\begin{equation}\label{eq:B}
    \mathcal{B}(x,y)=\langle\mathcal{B}^{\natural}(x),y\rangle,\;\forall x,y\in A.
\end{equation} 
Let $V$ be a vector space. Then the isomorphism ${\rm
Hom}_{\mathbb K}(V\otimes V,\mathbb K)\cong {\rm Hom}_{\mathbb
K}(V, V^*)$ identifies a bilinear form  $\mathcal{B}:V\otimes
V\rightarrow \mathbb K$ on V as a linear map $\mathcal
B^\natural:V\rightarrow V^*$ by
\begin{equation}\label{eq:B}
\mathcal B(u,v)=\langle \mathcal B^\natural (u),
v\rangle,\;\;\forall u,v\in V.
\end{equation}
Moreover, $\mathcal B$ is
nondegenerate if and only if $\mathcal B^\natural$ is invertible.}
 Now we apply Theorem \ref{thm:rt oop} to the case of symmetric Frobenius commutative algebras.

\begin{pro}\label{pro:3.14}
Let $(A,\cdot_{A},P,Q)$ be an admissible averaging commutative  algebra.
Suppose that there is a nondegenerate symmetric invariant bilinear form $\mathcal{B}$ on $(A,\cdot_{A})$, 
that is, $(A,\cdot_{A},\mathcal{B})$ is a symmetric Frobenius commutative algebra.
Assume that $r\in A\otimes A$ and $r+\tau(r)$ is invariant on $(A,\cdot_{A})$.
Define a linear map $R:A\rightarrow A$ by
	\begin{equation}\label{Br,Brt}
		R(x)=r^{\sharp}\mathcal{B}^{\natural}(x),\;\forall x\in A.
	\end{equation}
Then
$r$ is a solution of the AAYBE in $(A,\cdot_{A},P,Q)$ if and only if the following equations hold:
\begin{eqnarray}
    R(x)\cdot_{A}R(y)&=& R\big(R(x)\cdot_{A}y+x\cdot_{A}R(y)-x\cdot_{A}\big(r+\tau(r)\big)^{\sharp}\mathcal{B}^{\natural}(y)\big),\;\forall x,y\in A,\label{eq:rb1}\\
    PR&=&R \hat{Q},\label{eq:rb2}\\
    QR&=&R\hat{P}.\label{eq:rb3}
\end{eqnarray}
\delete{
where $\hat{P},\hat{Q}:A\rightarrow A$ are the adjoint maps of $P$ and $Q$ with respect to $\mathcal{B}$ respectively, that is,
\begin{equation*} \mathcal{B}\big(P(x),y\big)=\mathcal{B}\big(x,\hat{P}(y)\big),\;\mathcal{B}\big(Q(x),y\big)=\mathcal{B}\big(x,\hat{Q}(y)\big),\;\forall x,y\in A.
\end{equation*}}
\end{pro}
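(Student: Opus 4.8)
The plan is to transport the operator characterization of Theorem \ref{thm:rt oop} across the nondegenerate form $\mathcal{B}$. Under the standing hypothesis that $r+\tau(r)$ is invariant on $(A,\cdot_{A})$, Theorem \ref{thm:rt oop} says that $r$ is a solution of the AAYBE in $(A,\cdot_{A},P,Q)$ if and only if the three operator identities \eqref{eq:rt oop1}, \eqref{eq:rt oop2} and \eqref{eq:rt oop3} hold for $r^{\sharp}:A^{*}\rightarrow A$. Since $R=r^{\sharp}\mathcal{B}^{\natural}$ and $\mathcal{B}^{\natural}:A\rightarrow A^{*}$ is a bijection, the plan is to rewrite each of these three identities, after the substitution $a^{*}=\mathcal{B}^{\natural}(x)$ and $b^{*}=\mathcal{B}^{\natural}(y)$, as the corresponding equation \eqref{eq:rb1}, \eqref{eq:rb2} and \eqref{eq:rb3}; bijectivity of $\mathcal{B}^{\natural}$ guarantees that every pair $(a^{*},b^{*})$ arises this way, so the translation is an equivalence in both directions.

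Two elementary identities drive the dualization. First, using that $\mathcal{B}$ is symmetric and invariant together with the commutativity of $\cdot_{A}$, I would check from the definition of $\mathcal{L}^{*}_{\cdot_{A}}$ that $\mathcal{L}^{*}_{\cdot_{A}}(x)\mathcal{B}^{\natural}(y)=\mathcal{B}^{\natural}(x\cdot_{A}y)$ for all $x,y\in A$; pairing both sides with an arbitrary $z\in A$ reduces this to $\mathcal{B}(y,x\cdot_{A}z)=\mathcal{B}(x\cdot_{A}y,z)$, which is exactly commutativity plus invariance of $\mathcal{B}$. Second, from the definitions of the dual map, the adjoint map and $\mathcal{B}^{\natural}$ one gets $P^{*}\mathcal{B}^{\natural}=\mathcal{B}^{\natural}\hat{P}$ and $Q^{*}\mathcal{B}^{\natural}=\mathcal{B}^{\natural}\hat{Q}$, since $\langle Q^{*}\mathcal{B}^{\natural}(x),y\rangle=\mathcal{B}(x,Q(y))=\mathcal{B}(\hat{Q}(x),y)=\langle\mathcal{B}^{\natural}\hat{Q}(x),y\rangle$, and similarly for $P$.

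With these in hand the translation is routine. For \eqref{eq:rt oop2}, composing $Pr^{\sharp}=r^{\sharp}Q^{*}$ on the right with $\mathcal{B}^{\natural}$ and using $Q^{*}\mathcal{B}^{\natural}=\mathcal{B}^{\natural}\hat{Q}$ gives $PR=r^{\sharp}\mathcal{B}^{\natural}\hat{Q}=R\hat{Q}$, which is \eqref{eq:rb2}; likewise \eqref{eq:rt oop3} becomes \eqref{eq:rb3}. For \eqref{eq:rt oop1}, substituting $a^{*}=\mathcal{B}^{\natural}(x)$, $b^{*}=\mathcal{B}^{\natural}(y)$ and applying the first identity term by term turns the left-hand side into $R(x)\cdot_{A}R(y)$ and the three summands inside $r^{\sharp}$ into $\mathcal{B}^{\natural}(R(x)\cdot_{A}y)$, $\mathcal{B}^{\natural}(x\cdot_{A}R(y))$ and $\mathcal{B}^{\natural}(S(x)\cdot_{A}y)$, where $S=\big(r+\tau(r)\big)^{\sharp}\mathcal{B}^{\natural}$; since $r^{\sharp}\mathcal{B}^{\natural}=R$, the whole right-hand side collapses to $R\big(R(x)\cdot_{A}y+x\cdot_{A}R(y)-S(x)\cdot_{A}y\big)$.

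The one point that needs care, and which I expect to be the main obstacle, is matching this last term with the form $x\cdot_{A}S(y)$ appearing in \eqref{eq:rb1}. Here I would invoke that $(A^{*},\lozenge_{r+\tau(r)})$ is commutative, which holds because $r+\tau(r)$ is symmetric and invariant: commutativity reads $\mathcal{L}^{*}_{\cdot_{A}}(S(x))\mathcal{B}^{\natural}(y)=\mathcal{L}^{*}_{\cdot_{A}}(S(y))\mathcal{B}^{\natural}(x)$, that is, $\mathcal{B}^{\natural}(S(x)\cdot_{A}y)=\mathcal{B}^{\natural}(S(y)\cdot_{A}x)$, and since $\mathcal{B}^{\natural}$ is injective and $\cdot_{A}$ commutative this yields $S(x)\cdot_{A}y=x\cdot_{A}S(y)$. (Alternatively, one can prove $S(x\cdot_{A}y)=x\cdot_{A}S(y)=S(x)\cdot_{A}y$ directly from the invariance of $r+\tau(r)$ and of $\mathcal{B}$.) Hence the right-hand side equals $R\big(R(x)\cdot_{A}y+x\cdot_{A}R(y)-x\cdot_{A}S(y)\big)$, which is precisely \eqref{eq:rb1}. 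Thus the three operator identities of Theorem \ref{thm:rt oop} are equivalent to \eqref{eq:rb1}-\eqref{eq:rb3}, and the proposition follows; apart from this identification $S(x)\cdot_{A}y=x\cdot_{A}S(y)$, everything is a direct dualization through $\mathcal{B}^{\natural}$.
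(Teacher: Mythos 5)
Your proof is correct and takes essentially the same route as the paper's: both reduce the statement to Theorem \ref{thm:rt oop} and dualize the three operator identities \eqref{eq:rt oop1}--\eqref{eq:rt oop3} through $\mathcal{B}^{\natural}$, using exactly the two identities $\mathcal{L}^{*}_{\cdot_{A}}(x)\mathcal{B}^{\natural}(y)=\mathcal{B}^{\natural}(x\cdot_{A}y)$ and $Q^{*}\mathcal{B}^{\natural}=\mathcal{B}^{\natural}\hat{Q}$ (resp.\ $P^{*}\mathcal{B}^{\natural}=\mathcal{B}^{\natural}\hat{P}$). The only difference is cosmetic: the swap $S(x)\cdot_{A}y=x\cdot_{A}S(y)$ that you justify explicitly via commutativity of $\lozenge_{r+\tau(r)}$ is used implicitly in the paper's proof when it identifies $\mathcal{L}^{*}_{\cdot_{A}}\big(\big(r+\tau(r)\big)^{\sharp}(b^{*})\big)a^{*}$ with $a^{*}\lozenge_{r+\tau(r)}b^{*}$.
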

\begin{proof}
    Let $x,y,z\in A$ and $a^{*}=\mathcal{B}^{\natural}(x), b^{*}=\mathcal{B}^{\natural}(y)$.
    We first observe that
    \begin{equation*}
\langle \mathcal{B}^{\natural}\big(x\cdot_{A}\mathcal{B}^{\natural^{-1}}(b^{*})\big),z\rangle=\mathcal{B}\big(x\cdot_{A}\mathcal{B}^{\natural^{-1}}(b^{*}),z\big)
=\mathcal{B}\big(\mathcal{B}^{\natural^{-1}}(b^{*}),x\cdot_{A}z\big)
=\langle\mathcal{L}^{*}_{\cdot_{A}}(x)b^{*},z\rangle,
    \end{equation*}
    that is,
    \begin{equation}\label{eq:b}
        \mathcal{B}^{\natural}\big( x\cdot_{A}  \mathcal{B}^{\natural^{-1}}(b^{*})\big)=\mathcal{L}^{*}_{\cdot_{A}}(x)b^{*}.
    \end{equation}
    Then we have
    \begin{eqnarray*}
        R(x)\cdot_{A}R(y)&=&r^{\sharp}(a^{*})\cdot_{A} r^{\sharp}(b^{*}),\\
        R\big(R(x)\cdot_{A}y\big)&=&r^{\sharp}\mathcal{B}^{\natural}\big( r^{\sharp}(a^{*})\cdot_{A}\mathcal{B}^{\natural^{-1}}(b^{*})\big)\overset{\eqref{eq:b}}{=}r^{\sharp}\Big( \mathcal{L}^{*}_{\cdot_{A}}\big(r^{\sharp}(a^{*})\big)b^{*}  \Big),\\
        R\big(x\cdot_{A}R(y)\big)&=&r^{\sharp}\mathcal{B}^{\natural}\big( r^{\sharp}(b^{*})\cdot_{A}\mathcal{B}^{\natural^{-1}}(a^{*})\big)\overset{\eqref{eq:b}}{=}r^{\sharp}\Big( \mathcal{L}^{*}_{\cdot_{A}}\big(r^{\sharp}(b^{*})\big)a^{*}  \Big),\\
        -R\Big(x\cdot_{A}\big(r+\tau(r)\big)^{\sharp}\mathcal{B}^{\natural}(y)\Big)&=&-r^{\sharp}\mathcal{B}^{\natural}\Big( \mathcal{B}^{\natural^{-1}}(a^{*})\cdot_{A}\big(r+\tau(r)\big)^{\sharp}(b^{*}) \Big)\\
        &\overset{\eqref{eq:b}}{=}&-r^{\sharp}\bigg( \mathcal{L}^{*}_{\cdot_{A}}\Big(  \big(r+\tau(r)\big)^{\sharp}(b^{*})\Big)a^{*}\bigg)\\
        &=&-r^{\sharp}(a^{*}\lozenge_{r+\tau(r)}b^{*}).
    \end{eqnarray*}
    Hence \eqref{eq:rb1} holds if and only if \eqref{eq:rt oop1} holds.
    Moreover, noticing that $\mathcal{B}^{\natural}\hat{Q}=Q^{*}\mathcal{B}^{\natural}$, we have
    \begin{equation*}
        (PR-R\hat{Q})x=Pr^{\sharp}\mathcal{B}^{\natural}(x)-r^{\sharp}\mathcal{B}^{\natural}\hat{Q}(x)=Pr^{\sharp}\mathcal{B}^{\natural}(x)-r^{\sharp}Q^{*}\mathcal{B}^{\natural}(x)=(Pr^{\sharp}-r^{\sharp}Q^{*})a^{*}.
    \end{equation*}
    Hence  \eqref{eq:rb2} holds if and only if \eqref{eq:rt oop2} holds, and similarly  \eqref{eq:rb3} holds if and only if \eqref{eq:rt oop3} holds.
    Therefore the conclusion follows from Theorem \ref{thm:rt oop}.
\end{proof}

\subsection{Triangular averaging commutative and cocommutative infinitesimal bialgebras}\

Let $(A,\cdot_{A},P,Q)$ be an admissible averaging commutative
algebra and 
  $r$ be a skew-symmetric
solution of the AAYBE in $(A,\cdot_{A}$,
$P,Q)$. Then by
Theorem \ref{thm:coao}, $(A,\cdot_{A},\Delta_{r}$,
$P$,
$Q)$ is an averaging commutative and cocommutative infinitesimal bialgebra, where $\Delta_{r}$ is given by \eqref{eq:comul1}.
In this case, we say $(A,\cdot_{A},\Delta_{r},P,Q)$ is {\bf triangular}.

\begin{pro}
    Let $(A,\cdot_{A},P,Q)$ be an admissible averaging commutative  algebra and $r\in A\otimes A$ be skew-symmetric.
    Then \eqref{eq:rt oop2} holds if and only if \eqref{eq:rt oop3} holds.
    Moreover,
     $r$ is a solution of the AAYBE in $(A,\cdot_{A},P,Q)$ if and only if $r^{\sharp}$ is an $\mathcal{O}$-operator of $(A,\cdot_{A},P,Q)$
    associated to $(\mathcal{L}^{*}_{\cdot_{A}},Q^{*},P^{*},A^{*})$.
\end{pro}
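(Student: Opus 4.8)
The plan is to derive both assertions from Theorem \ref{thm:rt oop} together with one direct duality computation, exploiting the fact that skew-symmetry of $r$ collapses the auxiliary multiplication $\lozenge_{r+\tau(r)}$ to zero.

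I would first establish the equivalence of \eqref{eq:rt oop2} and \eqref{eq:rt oop3}. The key observation is that skew-symmetry of $r$ is exactly the identity $\langle r^{\sharp}(a^{*}),b^{*}\rangle=-\langle r^{\sharp}(b^{*}),a^{*}\rangle$ for all $a^{*},b^{*}\in A^{*}$, which says informally that $r^{\sharp}$ is skew-adjoint. Assuming \eqref{eq:rt oop2}, that is $Pr^{\sharp}=r^{\sharp}Q^{*}$, I would evaluate $\langle Qr^{\sharp}(a^{*}),b^{*}\rangle$ by transferring $Q$ onto the second argument as $Q^{*}$, applying skew-adjointness to interchange the two arguments, substituting \eqref{eq:rt oop2}, and then interchanging back; this yields $\langle r^{\sharp}P^{*}(a^{*}),b^{*}\rangle$, so that \eqref{eq:rt oop3} holds. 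As every step is reversible, the same chain run in reverse gives the converse, and the equivalence follows.

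For the $\mathcal{O}$-operator characterization I would invoke Theorem \ref{thm:rt oop}. Since $r$ is skew-symmetric we have $r+\tau(r)=0$, hence the multiplication $\lozenge_{r+\tau(r)}$ on $A^{*}$ vanishes identically. Thus the weight $-1$ relation \eqref{eq:rt oop1} loses its final term and becomes exactly the $\mathcal{O}$-operator relation \eqref{eq:oop1} for the zero multiplication, while \eqref{eq:rt oop2} and \eqref{eq:rt oop3} are precisely \eqref{eq:oop2} and \eqref{eq:oop3} for the representation $(\mathcal{L}^{*}_{\cdot_{A}},Q^{*},P^{*},A^{*})$. Theorem \ref{thm:rt oop} already records that ${\bf A}(r)=0$ is equivalent to \eqref{eq:rt oop1}, that \eqref{eq:AAYBE1} is equivalent to \eqref{eq:rt oop3}, and that \eqref{eq:AAYBE2} is equivalent to \eqref{eq:rt oop2}. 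Combining these equivalences with the first assertion, which shows that \eqref{eq:rt oop2} and \eqref{eq:rt oop3} are interchangeable, the three defining conditions of the AAYBE become equivalent to the three $\mathcal{O}$-operator conditions, as claimed.

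The computation is largely bookkeeping; the only genuinely new ingredient is the skew-adjointness identity, and the only delicate point is tracking the adjoints $P^{*},Q^{*}$ correctly across the pairing. I expect the main, and fairly minor, obstacle to be arranging the logical dependence so that the first assertion is used to merge the two ``averaging'' conditions \eqref{eq:AAYBE1}--\eqref{eq:AAYBE2} of the AAYBE into a matched pair with the two operator conditions \eqref{eq:oop2}--\eqref{eq:oop3}.
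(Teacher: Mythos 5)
Your proposal is correct and takes essentially the same approach as the paper: the first assertion via the duality pairing and skew-symmetry of $r$ (the paper shows directly that $\langle (Pr^{\sharp}-r^{\sharp}Q^{*})a^{*},b^{*}\rangle=\langle (Qr^{\sharp}-r^{\sharp}P^{*})b^{*},a^{*}\rangle$, which is your reversible computation in one line), and the second assertion by specializing Theorem \ref{thm:rt oop} to $r+\tau(r)=0$ so that $\lozenge_{r+\tau(r)}$ vanishes. The only superfluous element is your use of the first assertion to match the conditions in the second part; Theorem \ref{thm:rt oop} already pairs \eqref{eq:AAYBE1} with \eqref{eq:rt oop3} and \eqref{eq:AAYBE2} with \eqref{eq:rt oop2}, so no merging is needed.
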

\begin{proof}
For all $a^{*}, b^{*}\in A^{*}$, we have
\begin{eqnarray*}
    \langle (Pr^{\sharp}-r^{\sharp}Q^{*})a^{*},b^{*}\rangle&=&\langle r^{\sharp}(a^{*}),P^{*}(b^{*})\rangle-\langle r^{\sharp}Q^{*}(a^{*}),b^{*}\rangle\\
    &=&\langle r, a^{*}\otimes P^{*}(b^{*})-Q^{*}(a^{*})\otimes b^{*}\rangle\\
    &=&\langle r, b^{*}\otimes Q^{*}(a^{*})-P^{*}(b^{*})\otimes a^{*}\rangle\\
    &=&\langle (Qr^{\sharp}-r^{\sharp}P^{*})b^{*},a^{*}\rangle.
\end{eqnarray*}
Hence the first half part holds.
    The second half part  follows from Theorem \ref{thm:rt oop} by observing that $r+\tau(r)=0$.
\end{proof}

Let $A$ be a vector space with a multiplication $\star_{A}:A\otimes A\rightarrow A$.
If  $R:A\rightarrow A$ is a linear map satisfying
\begin{equation}
R(x)\star_{A} R(y)=R\big(R(x)\star_{A} y+x\star_{A}R(y)+\lambda x\star_{A}y\big),\;\forall x,y\in A,
\end{equation}
then we say $R$ is a {\bf Rota-Baxter operator on $(A,\star_{A})$ of weight $\lambda$}.

\begin{defi} \cite{SW}
Let $(A,\cdot_{A},\mathcal{B})$ be a symmetric  Frobenius commutative algebra.
If there is a Rota-Baxter operator $R$ on $(A,\cdot_{A})$ of weight $\lambda$ such that
\begin{equation}\label{strong}
\mathcal{B}\big(R(x),y\big)+\mathcal{B}\big(x,R(y)\big)+\lambda\mathcal{B}(x,y)=0,\;\forall x,y\in A,
\end{equation}
then we say $(A,\cdot_{A},R,\mathcal{B})$ is a {\bf symmetric Rota-Baxter Frobenius commutative algebra of weight $\lambda$}.
\end{defi}

\begin{pro}\label{pro:3.16}
Let $R$ be a Rota-Baxter operator of weight $\lambda$ on a commutative  algebra $(A,\cdot_{A})$. Then
\begin{equation*}
    \big(A\ltimes_{\mathcal{L}_{\cdot_{A}}^{*}}A^{*},\mathcal{B}_{d}, R-(R+\lambda\mathrm{id}_{A})^{*}\big)
\end{equation*}
is a symmetric Rota-Baxter Frobenius commutative algebra of weight $\lambda$, where   $\mathcal{B}_{d}$ is the natural nondegenerate symmetric bilinear form on $A\oplus A^{*}$  given by
\begin{equation}\label{eq:bfds}
	\mathcal{B}_{d}(x+a^{*},y+b^{*})=\langle x,b^{*}\rangle+\langle a^{*},y\rangle,\;\forall x,y\in A, a^{*},b^{*}\in A^{*}.
\end{equation}
\end{pro}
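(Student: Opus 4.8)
The plan is to verify, in turn, the three ingredients required by the definition of a symmetric Rota-Baxter Frobenius commutative algebra of weight $\lambda$. First, I would recall the standard double construction: since $(\mathcal{L}_{\cdot_A}^*,A^*)$ is a representation of $(A,\cdot_A)$, the multiplication $\cdot_d$ turns $A\oplus A^*$ into a commutative algebra, and the pairing $\mathcal{B}_d$ of \eqref{eq:bfds} is manifestly symmetric and nondegenerate; its invariance $\mathcal{B}_d(X\cdot_d Y,Z)=\mathcal{B}_d(X,Y\cdot_d Z)$ follows at once from $\langle \mathcal{L}_{\cdot_A}^*(p)f,q\rangle=\langle f,p\cdot_A q\rangle$. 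Hence $(A\ltimes_{\mathcal{L}_{\cdot_A}^*}A^*,\cdot_d,\mathcal{B}_d)$ is already a symmetric Frobenius commutative algebra. Writing $\mathcal{R}:=R-(R+\lambda\mathrm{id}_A)^*$, so that $\mathcal{R}(x+a^*)=R(x)-(R+\lambda\mathrm{id}_A)^*(a^*)$ by the convention for $P+\alpha$, it then remains to check that $\mathcal{R}$ is a Rota-Baxter operator of weight $\lambda$ on $(A\oplus A^*,\cdot_d)$ and that it satisfies the compatibility condition \eqref{strong}.

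The compatibility condition \eqref{strong} is the quick step. Substituting $X=x+a^*$ and $Y=y+b^*$, expanding via \eqref{eq:bfds}, and using $\langle (R+\lambda\mathrm{id}_A)^*(a^*),y\rangle=\langle a^*,R(y)\rangle+\lambda\langle a^*,y\rangle$, the two summands $\mathcal{B}_d(\mathcal{R}(X),Y)$ and $\mathcal{B}_d(X,\mathcal{R}(Y))$ produce a pair $\langle R(x),b^*\rangle$ and $-\langle b^*,R(x)\rangle$ that cancels and a pair $\mp\langle a^*,R(y)\rangle$ that cancels, leaving exactly $-\lambda(\langle a^*,y\rangle+\langle b^*,x\rangle)$; this is annihilated by the remaining term $\lambda\mathcal{B}_d(X,Y)$.

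The substance of the proof is the Rota-Baxter relation $\mathcal{R}(X)\cdot_d\mathcal{R}(Y)=\mathcal{R}\big(\mathcal{R}(X)\cdot_d Y+X\cdot_d\mathcal{R}(Y)+\lambda X\cdot_d Y\big)$, which I would split into its $A$- and $A^*$-components. On the $A$-component the left side equals $R(x)\cdot_A R(y)$ while the right side equals $R\big(R(x)\cdot_A y+x\cdot_A R(y)+\lambda x\cdot_A y\big)$, so the relation reduces directly to the hypothesis that $R$ is a weight-$\lambda$ Rota-Baxter operator on $A$. For the $A^*$-component I would expand $\cdot_d$ by the semidirect product formula, apply $-(R+\lambda\mathrm{id}_A)^*$ to the six resulting terms, and then pair the claimed identity against an arbitrary $z\in A$, converting everything through $\langle \mathcal{L}_{\cdot_A}^*(p)f,z\rangle=\langle f,p\cdot_A z\rangle$ and $\langle (R+\lambda\mathrm{id}_A)^*(f),z\rangle=\langle f,R(z)+\lambda z\rangle$ into pairings $\langle a^*,-\rangle$ and $\langle b^*,-\rangle$ with elements of $A$. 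Collecting the $b^*$-terms and expanding $(R+\lambda\mathrm{id}_A)\big(x\cdot_A(R+\lambda\mathrm{id}_A)z\big)$, every $\lambda$- and $\lambda^2$-term cancels and the identity collapses to $R(x)\cdot_A R(z)=R\big(R(x)\cdot_A z+x\cdot_A R(z)+\lambda x\cdot_A z\big)$, the Rota-Baxter relation on $A$ again; the $a^*$-terms reduce to the same relation with $x$ replaced by $y$.

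The main obstacle is precisely this $A^*$-component bookkeeping: one must track the six terms and verify that the weight corrections buried inside $(R+\lambda\mathrm{id}_A)^*$ recombine to reproduce the weight-$\lambda$ Rota-Baxter identity on $A$ exactly, rather than a shifted or twisted relation. The computation confirms that no assumption on $R$ beyond being a weight-$\lambda$ Rota-Baxter operator is needed, so the whole Rota-Baxter relation for $\mathcal{R}$ is equivalent to that for $R$, completing the verification.
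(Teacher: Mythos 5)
Your verification is correct, and it is exactly the argument the paper intends: the paper's own proof of Proposition \ref{pro:3.16} is simply ``It follows from a straightforward computation,'' and your three steps (invariance of $\mathcal{B}_{d}$ on the semidirect product, the condition \eqref{strong} for $R-(R+\lambda\mathrm{id}_{A})^{*}$, and the component-wise reduction of the Rota--Baxter identity on $A\oplus A^{*}$ to the weight-$\lambda$ identity for $R$ on $A$) are precisely that computation carried out in full. In particular, your key observation that the $A^{*}$-component collapses, after pairing with $z\in A$ and cancelling all $\lambda$- and $\lambda^{2}$-terms, back to $R(x)\cdot_{A}R(z)=R\big(R(x)\cdot_{A}z+x\cdot_{A}R(z)+\lambda x\cdot_{A}z\big)$ is the crux, and it checks out.
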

\begin{proof}
    It follows from a straightforward computation.
\end{proof}

\begin{pro}\label{pro:317}
Let $(A,\cdot_{A},R,\mathcal{B})$ be a symmetric Rota-Baxter Frobenius commutative algebra of weight $0$.
Suppose that $(A,\cdot_{A},P,Q)$ is an admissible averaging commutative  algebra satisfying \eqref{eq:rb2} and \eqref{eq:rb3}.
Then there is a triangular averaging commutative and cocommutative infinitesimal bialgebra $(A,\cdot_{A},\Delta_{r},P,Q)$,
where $r\in A\otimes A$ is given through the operator form $r^{\sharp}:A^{*}\rightarrow A$ by \eqref{Br,Brt}, that is,
\begin{equation}\label{eq:thm:quadratic to fact}
r^{\sharp}(a^{*})=R\mathcal{B}^{{\natural}^{-1}}(a^{*}),\;\;\forall a^{*}\in A^{*}.
\end{equation}
\end{pro}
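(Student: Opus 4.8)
The plan is to verify that the $2$-tensor $r$ determined by \eqref{eq:thm:quadratic to fact} is a \emph{skew-symmetric} solution of the AAYBE in $(A,\cdot_A,P,Q)$; the claimed triangular averaging commutative and cocommutative infinitesimal bialgebra then follows immediately from the definition of triangularity preceding this proposition (equivalently from Theorem \ref{thm:coao}, since a skew-symmetric $r$ makes $r+\tau(r)=0$ invariant). So there are really two things to establish: the skew-symmetry of $r$, and the three operator identities \eqref{eq:rb1}, \eqref{eq:rb2}, \eqref{eq:rb3} which, by Proposition \ref{pro:3.14}, are equivalent to $r$ being a solution of the AAYBE.

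First I would prove that $r$ is skew-symmetric. Writing $x=\mathcal{B}^{\natural^{-1}}(a^*)$ and $y=\mathcal{B}^{\natural^{-1}}(b^*)$ and using $r^{\sharp}=R\mathcal{B}^{\natural^{-1}}$ together with the symmetry of $\mathcal{B}$, one computes
\begin{equation*}
\langle r, a^*\otimes b^*\rangle=\mathcal{B}\big(R(x),y\big),\qquad \langle \tau(r), a^*\otimes b^*\rangle=\mathcal{B}\big(x,R(y)\big).
\end{equation*}
Adding these and invoking the strong condition \eqref{strong} with $\lambda=0$ yields $\langle r+\tau(r), a^*\otimes b^*\rangle=0$ for all $a^*,b^*\in A^*$, hence $r+\tau(r)=0$. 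In particular $r+\tau(r)$ is (trivially) invariant on $(A,\cdot_A)$, so the hypotheses of Proposition \ref{pro:3.14} are met.

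Given skew-symmetry, the rest is short. Since $(A,\cdot_A,\mathcal{B})$ is a symmetric Frobenius commutative algebra and $r+\tau(r)=0$ is invariant, Proposition \ref{pro:3.14} applies and reduces ``$r$ solves the AAYBE'' to the conjunction of \eqref{eq:rb1}, \eqref{eq:rb2}, \eqref{eq:rb3}. Identities \eqref{eq:rb2} and \eqref{eq:rb3} are precisely the standing hypotheses imposed on $(A,\cdot_A,P,Q)$. For \eqref{eq:rb1}, the term $x\cdot_A(r+\tau(r))^{\sharp}\mathcal{B}^{\natural}(y)$ vanishes because $r+\tau(r)=0$, so \eqref{eq:rb1} collapses to $R(x)\cdot_A R(y)=R\big(R(x)\cdot_A y+x\cdot_A R(y)\big)$, which is exactly the weight-$0$ Rota-Baxter identity satisfied by $R$ as part of the data of a symmetric Rota-Baxter Frobenius commutative algebra. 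Thus all three conditions hold, $r$ is a skew-symmetric solution of the AAYBE, and the desired triangular structure follows.

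The only genuinely computational step is the skew-symmetry verification, which amounts to carefully threading the identifications $r^{\sharp}=R\mathcal{B}^{\natural^{-1}}$ and $\langle\mathcal{B}^{\natural}(y),z\rangle=\mathcal{B}(y,z)$ together with the action of $\tau$ through the pairing; I expect this to be the main (and only) obstacle, and it is mild. Everything else is a matter of assembling earlier results: Proposition \ref{pro:3.14} carries the AYBE part, and the two averaging compatibilities \eqref{eq:rb2}, \eqref{eq:rb3} are supplied directly by hypothesis.
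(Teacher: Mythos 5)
Your proposal is correct and follows essentially the same route as the paper: the paper's proof consists exactly of the skew-symmetry computation via \eqref{strong} with $\lambda=0$, followed by the remark that the conclusion then follows from Proposition \ref{pro:3.14}. Your additional details — noting that \eqref{eq:rb2} and \eqref{eq:rb3} hold by hypothesis and that \eqref{eq:rb1} collapses to the weight-$0$ Rota-Baxter identity once $r+\tau(r)=0$ — are precisely the unstated content of that final appeal to Proposition \ref{pro:3.14}.
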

\begin{proof}
Let $x,y\in A$ and $a^{*}=\mathcal{B}^{\natural}(x), b^{*}=\mathcal{B}^{\natural}(y)$.
Then we have
\begin{eqnarray*}
  \langle r+\tau(r), a^{*}\otimes b^{*}\rangle&=&\langle r^{\sharp}(a^{*}),b^{*}\rangle+\langle r^{\sharp}(b^{*}), a^{*}\rangle\\
    &=&\langle R(x),\mathcal{B}^{\natural}(y)\rangle+\langle R(y),\mathcal{B}^{\natural}(x)\rangle\\
    &=&\mathcal{B}\big(R(x),y\big)+\mathcal{B}\big(x,R(y)\big)\\
    &=&0.
\end{eqnarray*}
Hence $r$ is skew-symmetric and the conclusion follows from Proposition \ref{pro:3.14}.
\end{proof}

\begin{defi}
Let	$(A,\cdot_{A},R,\mathcal{B})$ be a  symmetric Rota-Baxter Frobenius commutative algebra of weight $\lambda$. 
Suppose that $P$ is an averaging operator of $(A,\cdot_{A})$ which satisfies the commutativity condition
\begin{equation}\label{eq:commute}
	PR=RP.
\end{equation}
Then we say $(A,\cdot_{A},P,R,\mathcal{B})$ is a {\bf symmetric averaging Rota-Baxter Frobenius commutative algebra of weight $\lambda$}.
\end{defi}

Next we explore the relationship between symmetric averaging Rota-Baxter Frobenius commutative algebras of weight $0$ and triangular averaging commutative and cocommutative infinitesimal bialgebras.

\begin{cor}\label{cor:2.24}
Let $(A,\cdot_{A},P,R,\mathcal{B})$ be a symmetric averaging  Rota-Baxter Frobenius commutative algebra of weight $0$.
Then there is a triangular averaging commutative and cocommutative infinitesimal bialgebra $(A,\cdot_{A}$,
$\Delta_{r},$
$P,\hat{P})$ where $r\in A\otimes A$ is given through the operator form $r^{\sharp}:A^{*}\rightarrow A$ by  \eqref{eq:thm:quadratic to fact}.
\end{cor}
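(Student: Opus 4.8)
The plan is to derive this corollary as a direct specialization of Proposition~\ref{pro:317}, taking $Q=\hat{P}$, the $\mathcal{B}$-adjoint of the averaging operator $P$. Since $\mathcal{B}$ is nondegenerate, $\hat{P}$ is well defined, so it remains to check the three hypotheses of Proposition~\ref{pro:317} for the pair $(P,\hat{P})$: that $(A,\cdot_{A},P,\hat{P})$ is an admissible averaging commutative algebra, and that \eqref{eq:rb2} and \eqref{eq:rb3} hold with $Q=\hat{P}$.

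The computational heart is verifying admissibility, i.e. that $\hat{P}$ satisfies \eqref{eq:ao pair}: $P(x)\cdot_{A}\hat{P}(y)=\hat{P}(P(x)\cdot_{A}y)=\hat{P}(x\cdot_{A}\hat{P}(y))$. Since $\mathcal{B}$ is nondegenerate, I would test each term against an arbitrary $z\in A$ under $\mathcal{B}(-,z)$. Using the invariance and symmetry of $\mathcal{B}$ together with the averaging identities $P(a)\cdot_{A}P(b)=P(P(a)\cdot_{A}b)=P(a\cdot_{A}P(b))$ and the defining relation $\mathcal{B}(\hat{P}(a),b)=\mathcal{B}(a,P(b))$, each of the three expressions $\mathcal{B}(P(x)\cdot_{A}\hat{P}(y),z)$, $\mathcal{B}(\hat{P}(P(x)\cdot_{A}y),z)$, and $\mathcal{B}(\hat{P}(x\cdot_{A}\hat{P}(y)),z)$ collapses to the single quantity $\mathcal{B}(y,P(x)\cdot_{A}P(z))$. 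Nondegeneracy then forces the three terms of \eqref{eq:ao pair} to coincide. Each individual manipulation is routine, but this is the one step requiring genuine care.

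For the compatibility conditions, I would first use that $\mathcal{B}$ is symmetric, so the adjoint is an involution and $\hat{\hat{P}}=P$. Then \eqref{eq:rb2} reads $PR=R\hat{\hat{P}}=RP$, which is precisely the commutativity hypothesis \eqref{eq:commute}. For \eqref{eq:rb3} I would exploit that the weight is $0$: condition \eqref{strong} then reads $\mathcal{B}(R(x),y)+\mathcal{B}(x,R(y))=0$, that is, $\hat{R}=-R$. Taking $\mathcal{B}$-adjoints of $PR=RP$ and using that adjunction reverses composition yields $\hat{R}\hat{P}=\hat{P}\hat{R}$; substituting $\hat{R}=-R$ gives $R\hat{P}=\hat{P}R$, which is exactly \eqref{eq:rb3}.

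With all hypotheses of Proposition~\ref{pro:317} verified for $Q=\hat{P}$, that proposition produces the triangular averaging commutative and cocommutative infinitesimal bialgebra $(A,\cdot_{A},\Delta_{r},P,\hat{P})$ with $r$ determined by \eqref{eq:thm:quadratic to fact}, which is the asserted conclusion. The skew-adjointness $\hat{R}=-R$ coming from weight $0$ is the feature that makes \eqref{eq:rb3} fall out for free; the only delicate point is the admissibility verification, and the remaining steps are short adjoint manipulations.
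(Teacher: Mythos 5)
Your proposal is correct and takes essentially the same route as the paper: both reduce the corollary to Proposition \ref{pro:317} with $Q=\hat{P}$, using $\hat{\hat{P}}=P$ so that \eqref{eq:rb2} becomes the commutativity hypothesis \eqref{eq:commute}, and establishing $R\hat{P}=\hat{P}R$ to get \eqref{eq:rb3}. The only differences are presentational: the paper cites \cite{Bai2024} for the admissibility of $(A,\cdot_{A},P,\hat{P})$ where you verify it directly (your collapse of all three terms of \eqref{eq:ao pair} to $\mathcal{B}\big(y,P(x)\cdot_{A}P(z)\big)$ is correct), and the paper proves $R\hat{P}=\hat{P}R$ by an explicit chain of $\mathcal{B}$-identities using \eqref{strong}, which is equivalent to your derivation from the skew-adjointness $\hat{R}=-R$.
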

\begin{proof}
By \cite{Bai2024}, $(A,\cdot_{A},P,\hat{P})$ is an admissible averaging commutative  algebra. For all $x,y\in A$, we have
    \begin{eqnarray*}
        &&\mathcal{B}\big(x,R\hat{P}(y)\big)\overset{\eqref{strong}}{=}-\mathcal{B}\big(R(x),\hat{P}(y)\big) =-\mathcal{B}\big(PR(x),y\big) \\
        &&=-\mathcal{B}\big(RP(x),y\big) \overset{\eqref{strong}}{=}\mathcal{B}\big(P(x),R(y)\big)=\mathcal{B}\big(x,\hat{P}R(y)\big).
    \end{eqnarray*}
    That is, $R\hat{P}=\hat{P}R$.
    Hence \eqref{eq:rb2} and \eqref{eq:rb3} holds for $Q=\hat{P}$.
    Therefore the conclusion follows from Proposition \ref{pro:317} by taking $Q=\hat{P}$.
\end{proof}

\begin{thm}\label{thm:3.16}
    Let $(A,\cdot_{A},P,Q)$ be an admissible averaging commutative  algebra and $(\mu,\alpha,\beta,V)$ be a representation of $(A,\cdot_{A},P,Q)$.
    Suppose that $T:V\rightarrow A$ is a linear map which is identified as
    \begin{equation*}
        T_{\sharp}\in V^{*}\otimes A\subset (A\ltimes_{\mu^{*}}V^{*})\otimes (A\ltimes_{\mu^{*}}V^{*}).
    \end{equation*}
    Then $r=T_{\sharp}-\tau(T_{\sharp})$ is a skew-symmetric solution of the AAYBE in $( A\ltimes_{\mu^{*}}V^{*},P+\beta^{*},Q+\alpha^{*} )$
    if and only if
    $T$ is an $\mathcal{O}$-operator of $(A,\cdot_{A},P,Q)$ associated to  $(\mu,\alpha,\beta,V)$.
\end{thm}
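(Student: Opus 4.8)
The plan is to decompose the AAYBE into its three defining pieces --- the associative Yang--Baxter equation ${\bf A}(r)=0$ together with \eqref{eq:AAYBE1} and \eqref{eq:AAYBE2} --- and to match each piece against the corresponding $\mathcal{O}$-operator condition among \eqref{eq:oop1}--\eqref{eq:oop3}. Throughout I work inside the admissible averaging commutative algebra $(A\ltimes_{\mu^{*}}V^{*},P+\beta^{*},Q+\alpha^{*})$, whose very existence rests on Proposition \ref{pro:3.7} (so that $(\mu^{*},\beta^{*},\alpha^{*},V^{*})$ is again a representation) and the semi-direct product construction. Fixing a basis $\{e_i\}$ of $V$ with dual basis $\{e_i^{*}\}$, I would write $T_{\sharp}=\sum_i e_i^{*}\otimes T(e_i)$, so that $r=T_{\sharp}-\tau(T_{\sharp})=\sum_i\big(e_i^{*}\otimes T(e_i)-T(e_i)\otimes e_i^{*}\big)$ is visibly skew-symmetric; thus only the three AAYBE equations need checking.

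For the two averaging conditions I would use the elementary identities $(\gamma^{*}\otimes\mathrm{id})T_{\sharp}=(T\gamma)_{\sharp}$ and $(\mathrm{id}\otimes\delta)T_{\sharp}=(\delta T)_{\sharp}$ for linear maps $\gamma:V\to V$ and $\delta:A\to A$, both immediate from the $\sharp$-identification. Since $r\in V^{*}\otimes A\oplus A\otimes V^{*}$, and since $P+\beta^{*}$ (resp.\ $Q+\alpha^{*}$) acts as $P$ (resp.\ $Q$) on the $A$-factor and as $\beta^{*}$ (resp.\ $\alpha^{*}$) on the $V^{*}$-factor, this grading is preserved and the two homogeneous components must vanish independently. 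A short computation then yields
\begin{equation*}
\big((P+\beta^{*})\otimes\mathrm{id}-\mathrm{id}\otimes(Q+\alpha^{*})\big)r=(T\beta-QT)_{\sharp}-\tau\big((PT-T\alpha)_{\sharp}\big),
\end{equation*}
so that \eqref{eq:AAYBE1} holds if and only if both $QT=T\beta$ and $PT=T\alpha$ hold, that is, \eqref{eq:oop2} and \eqref{eq:oop3}; the entirely parallel computation for $\big((Q+\alpha^{*})\otimes\mathrm{id}-\mathrm{id}\otimes(P+\beta^{*})\big)r$ shows that \eqref{eq:AAYBE2} is equivalent to exactly the same pair.

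The remaining piece ${\bf A}(r)=0$ is where I would invoke the known correspondence (as in \cite{BGGZ}, already used in the proof of Theorem \ref{thm:rt oop}) between $\mathcal{O}$-operators of a commutative algebra and skew-symmetric solutions of the associative Yang--Baxter equation in the semi-direct product: with the zero multiplication on $V$, the equation ${\bf A}(r)=0$ in $A\ltimes_{\mu^{*}}V^{*}$ is equivalent to \eqref{eq:oop1}. Combining the three equivalences, $r$ is a skew-symmetric solution of the AAYBE in $(A\ltimes_{\mu^{*}}V^{*},P+\beta^{*},Q+\alpha^{*})$ precisely when \eqref{eq:oop1}--\eqref{eq:oop3} all hold, i.e.\ when $T$ is an $\mathcal{O}$-operator of $(A,\cdot_{A},P,Q)$ associated to $(\mu,\alpha,\beta,V)$. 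I expect the only genuine hazard to be the bookkeeping of the $\alpha$--$\beta$ swap forced by passing to the dual representation $(\mu^{*},\beta^{*},\alpha^{*},V^{*})$: it is exactly this swap that makes $\beta^{*}$ pair with $P$ and $\alpha^{*}$ pair with $Q$ in the displayed formula, thereby producing \eqref{eq:oop2} and \eqref{eq:oop3} in their correct form.
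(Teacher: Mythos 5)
Your proof is correct and follows essentially the same route as the paper: both decompose the AAYBE into ${\bf A}(r)=0$ plus the two averaging conditions, handle ${\bf A}(r)=0$ via the known $\mathcal{O}$-operator/AYBE correspondence in the semi-direct product (equivalent to \eqref{eq:oop1} since $V$ carries the zero multiplication), and match the averaging conditions with \eqref{eq:oop2}--\eqref{eq:oop3}. The only difference is that where the paper cites \cite{BGM} for that last equivalence, you carry out the tensor computation explicitly (your displayed formula and the grading argument are correct, and you treat both averaging equations rather than just one), which is a matter of self-containedness rather than of method.
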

\begin{proof}
By \cite{Bai2010}, $r$ is a solution of the AYBE in $A\ltimes_{\mu^{*}}V^{*}$ if and only if the following equation holds:
\begin{equation*}
   Tu\cdot_{A}Tv=T\big(\mu(Tu)v+\mu(Tv)u\big),\;\forall u,v\in V.
\end{equation*}
By \cite{BGM}, $r$ satisfies
\begin{equation*}
    \big( (P+\beta^{*})\otimes\mathrm{id}\big)r=\big( \mathrm{id}\otimes(Q+\alpha^{*}) \big)r
\end{equation*}
if and only if \eqref{eq:oop2} and \eqref{eq:oop3} hold.
Hence the conclusion follows.
\end{proof}

Recall a \textbf{Zinbiel algebra} \cite{Lod3}  is a  vector space $A$ together with the  multiplication $\star_{A}:A\otimes A\rightarrow A$ such that the following equation holds:
    \begin{equation}\label{eq:Zinb}
        x\star_{A}(y\star_{A} z)=(x\star_{A} y)\star_{A} z+(y\star_{A} x)\star_{A} z, \;\;\forall x,y,z\in A.
    \end{equation}
    There is consequently a commutative  algebra $(A,\cdot_{A})$ with a multiplication $\cdot_{A}:A\otimes A\rightarrow A$ defined by
    \begin{equation*}
        x\cdot_{A}y=x\star_{A}y+y\star_{A}x
    \end{equation*}
    which is called the {\bf sub-adjacent commutative  algebra} of $(A,\star_{A})$. Moreover, $(\mathcal{L}_{\star_{A}},A)$ is a representation of $(A,\cdot_{A})$.

\begin{defi}
   Let $P:A\rightarrow A$ be an averaging operator of a Zinbiel algebra $(A,\star_{A})$, that is, the following equation holds:
\begin{equation*}
    P(x)\star_{A}P(y)=P\big(P(x)\star_{A} y\big)=P\big(x\star_{A}P(y)\big),\;\forall x,y\in A.
\end{equation*}
Suppose that $Q:A\rightarrow A$ is a linear map satisfying the following equations:
\begin{eqnarray}
&&Q\big(P(x)\star_{A} y\big)=P(x)\star_{A}Q(y)=Q\big(x\star_{A}Q(y)\big),\\
&&Q\big(Q(x)\star_{A} y\big)=Q(x)\star_{A}P(y)=Q\big(x\star_{A}P(y)\big).
\end{eqnarray}
Then we say $(A,\star_{A},P,Q)$ is an {\bf admissible averaging Zinbiel algebra}.
\end{defi}

\begin{lem}\label{lem:3.18}
    Let $(A,\star_{A},P,Q)$ be an admissible averaging Zinbiel algebra and $(A,\cdot_{A})$ be the sub-adjacent commutative  algebra of $(A,\star_{A})$. Then $(A,\cdot_{A},P,Q)$ is an admissible averaging commutative  algebra.
    Moreover, $(\mathcal{L}_{\star_{A}},P,Q,A)$ is a representation of $(A,\cdot_{A},P,Q)$ and the identity map $\mathrm{id}:A\rightarrow A$ is an $\mathcal{O}$-operator of $(A,\cdot_{A},P,Q)$ associated to  $(\mathcal{L}_{\star_{A}},P,Q,A)$.
\end{lem}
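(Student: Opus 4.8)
The plan is to prove all three assertions by directly unwinding the definitions, the central device being the expansion $x\cdot_{A}y=x\star_{A}y+y\star_{A}x$ of the sub-adjacent product; after this substitution each required axiom matches one of the defining relations of the admissible averaging Zinbiel algebra termwise.

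First I would establish that $(A,\cdot_{A},P,Q)$ is an admissible averaging commutative algebra. To see that $P$ is an averaging operator of $(A,\cdot_{A})$, I expand $P(x)\cdot_{A}P(y)=P(x)\star_{A}P(y)+P(y)\star_{A}P(x)$ and, using the three-term equalities $P(a)\star_{A}P(b)=P(P(a)\star_{A}b)=P(a\star_{A}P(b))$ defining the averaging operator on $(A,\star_{A})$, rewrite both $P(P(x)\cdot_{A}y)$ and $P(x\cdot_{A}P(y))$ as this same sum; commutativity of $\cdot_{A}$ is automatic from the sub-adjacent construction. The admissible relation $P(x)\cdot_{A}Q(y)=Q(P(x)\cdot_{A}y)=Q(x\cdot_{A}Q(y))$ is obtained the same way, this time feeding the two sets of relations for $Q$ (swapping the roles of the arguments wherever a factor of the form $\star_{A}P(x)$ appears) into the expansions of $P(x)\cdot_{A}Q(y)$, $Q(P(x)\cdot_{A}y)$ and $Q(x\cdot_{A}Q(y))$, each of which collapses to $P(x)\star_{A}Q(y)+Q(y)\star_{A}P(x)$.

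Next, for the representation claim, I would invoke the fact recorded just before the definition of an admissible averaging Zinbiel algebra, namely that $(\mathcal{L}_{\star_{A}},A)$ is already a representation of the commutative algebra $(A,\cdot_{A})$, so $\mu=\mathcal{L}_{\star_{A}}$ satisfies $\mu(x\cdot_{A}y)=\mu(x)\mu(y)$. The remaining conditions are then literal translations: writing out $\mu(Px)\alpha(v)=\alpha(\mu(Px)v)=\alpha(\mu(x)\alpha(v))$ with $\alpha=P$ reproduces exactly the averaging relation for $P$ on $\star_{A}$, while the two conditions on $\beta=Q$ in the definition of a representation of $(A,\cdot_{A},P,Q)$ become precisely the first and second sets of defining relations of the admissible averaging Zinbiel algebra after matching terms. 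Hence $(\mathcal{L}_{\star_{A}},P,Q,A)$ is a representation.

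Finally, the $\mathcal{O}$-operator claim is immediate: with $V=A$ carrying the zero multiplication, the condition $Tu\cdot_{A}Tv=T(\mu(Tu)v+\mu(Tv)u)$ for $T=\mathrm{id}$ and $\mu=\mathcal{L}_{\star_{A}}$ reads $u\cdot_{A}v=u\star_{A}v+v\star_{A}u$, which is the definition of the sub-adjacent product, and the conditions $PT=T\alpha$, $QT=T\beta$ collapse to $P=P$ and $Q=Q$. I do not anticipate a genuine obstacle here; the only thing requiring care is the bookkeeping in the first part, namely keeping track of which three-term equality (and which choice of arguments) is applied to each summand, so that the competing expansions really do coincide.
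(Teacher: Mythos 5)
Your proof is correct, and it fills in exactly the routine verification that the paper compresses into ``It is straightforward'': expanding $x\cdot_{A}y=x\star_{A}y+y\star_{A}x$ and matching each axiom of an admissible averaging commutative algebra, of a representation, and of an $\mathcal{O}$-operator against the defining identities of the admissible averaging Zinbiel algebra. Your bookkeeping of which $Q$-relation (with which arguments swapped) handles each summand is accurate, so nothing is missing.
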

\begin{proof}
    It is straightforward.
\end{proof}

\begin{pro}\label{pro:2.28}
Suppose that $(A,\star_{A},P,Q)$ is an admissible averaging Zinbiel algebra and $(A,\cdot_{A})$ is the sub-adjacent commutative  algebra of $(A,\star_{A})$.
Let $\{e_{1},\cdots, e_{n}\}$ be a basis of $A$ and $\{e^{*}_{1},\cdots, e^{*}_{n}\}$ be the dual basis.
Then
\begin{equation}\label{1109}
    r=\sum_{i=1}^{n}e^{*}_{i}\otimes e_{i}-e_{i}\otimes e^{*}_{i}
\end{equation}
is a skew-symmetric solution of the AAYBE in $(A\ltimes_{\mathcal{L}^{*}_{\star_{A}}}A^{*}, P+Q^{*},Q+P^{*})$.
Therefore there is a triangular averaging commutative and cocommutative infinitesimal bialgebra
$$ (A\ltimes_{\mathcal{L}^{*}_{\star_{A}}}A^{*} ,\Delta_{r}, P+Q^{*},Q+P^{*}),  $$
where the linear map $\Delta_{r}$ is given by \eqref{eq:comul1}.
\end{pro}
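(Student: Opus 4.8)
The plan is to recognize the distinguished tensor $r$ in \eqref{1109} as the antisymmetrization of the element $(\mathrm{id})_{\sharp}$ attached to the identity $\mathcal{O}$-operator, and then let Theorem \ref{thm:3.16} and the triangular construction do all the work. The proof is therefore a bookkeeping assembly rather than a computation, so I would organize it to make the two invocations transparent.

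First I would apply Lemma \ref{lem:3.18}. It supplies exactly the data needed to enter Theorem \ref{thm:3.16}: the quadruple $(A,\cdot_{A},P,Q)$ is an admissible averaging commutative algebra, the triple-with-operators $(\mathcal{L}_{\star_{A}},P,Q,A)$ is a representation of it, and the identity map $\mathrm{id}\colon A\rightarrow A$ is an $\mathcal{O}$-operator of $(A,\cdot_{A},P,Q)$ associated to $(\mathcal{L}_{\star_{A}},P,Q,A)$. Thus I take $(\mu,\alpha,\beta,V)=(\mathcal{L}_{\star_{A}},P,Q,A)$ and $T=\mathrm{id}$ in Theorem \ref{thm:3.16}.

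Next I would identify the tensor $T_{\sharp}$. Writing $T_{\sharp}=\sum_{i,j}c_{ij}\,e^{*}_{i}\otimes e_{j}\in A^{*}\otimes A$ and pairing against $e_{k}\otimes e^{*}_{l}$, the defining relation $\langle T_{\sharp},e_{k}\otimes e^{*}_{l}\rangle=\langle \mathrm{id}(e_{k}),e^{*}_{l}\rangle$ forces $c_{kl}=\delta_{kl}$, so $(\mathrm{id})_{\sharp}=\sum_{i}e^{*}_{i}\otimes e_{i}$ and hence $r=(\mathrm{id})_{\sharp}-\tau\big((\mathrm{id})_{\sharp}\big)=\sum_{i}e^{*}_{i}\otimes e_{i}-e_{i}\otimes e^{*}_{i}$, which is precisely \eqref{1109}. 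Since here $\mu^{*}=\mathcal{L}^{*}_{\star_{A}}$, $\beta^{*}=Q^{*}$ and $\alpha^{*}=P^{*}$, the ambient semi-direct product $A\ltimes_{\mu^{*}}V^{*}$ together with its operator pair $(P+\beta^{*},Q+\alpha^{*})$ specializes to $(A\ltimes_{\mathcal{L}^{*}_{\star_{A}}}A^{*},P+Q^{*},Q+P^{*})$. The equivalence in Theorem \ref{thm:3.16} then yields directly that $r$ is a skew-symmetric solution of the AAYBE in $(A\ltimes_{\mathcal{L}^{*}_{\star_{A}}}A^{*},P+Q^{*},Q+P^{*})$, because $\mathrm{id}$ is an $\mathcal{O}$-operator. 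Finally, as $r$ is skew-symmetric we have $r+\tau(r)=0$, which is trivially invariant on the sub-adjacent commutative algebra, so Theorem \ref{thm:coao} (i.e.\ the definition of triangular) produces the triangular averaging commutative and cocommutative infinitesimal bialgebra $(A\ltimes_{\mathcal{L}^{*}_{\star_{A}}}A^{*},\Delta_{r},P+Q^{*},Q+P^{*})$ with $\Delta_{r}$ as in \eqref{eq:comul1}.

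There is essentially no genuine obstacle, since the heavy lifting was done in Lemma \ref{lem:3.18} and Theorem \ref{thm:3.16}. The one point demanding care is the bookkeeping in matching the operator pair $(P+\beta^{*},Q+\alpha^{*})$ of Theorem \ref{thm:3.16} with the stated pair $(P+Q^{*},Q+P^{*})$: one must track that $\alpha=P$ and $\beta=Q$ get dualized in the crossed order, so that it is $\beta^{*}=Q^{*}$ (not $P^{*}$) that accompanies $P$ on the first factor. Verifying $(\mathrm{id})_{\sharp}=\sum_{i}e^{*}_{i}\otimes e_{i}$ against the sign convention of $r=T_{\sharp}-\tau(T_{\sharp})$ is the only other place where a small error could creep in.
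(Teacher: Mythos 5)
Your proposal is correct and follows essentially the same route as the paper: Lemma \ref{lem:3.18} supplies the identity map as an $\mathcal{O}$-operator of $(A,\cdot_{A},P,Q)$ associated to $(\mathcal{L}_{\star_{A}},P,Q,A)$, and Theorem \ref{thm:3.16} (with $\alpha=P$, $\beta=Q$, hence the crossed pair $(P+Q^{*},Q+P^{*})$) yields that $r=\mathrm{id}_{\sharp}-\tau(\mathrm{id}_{\sharp})$ is a skew-symmetric solution of the AAYBE, after which skew-symmetry gives the triangular bialgebra by definition. Your explicit basis verification of $\mathrm{id}_{\sharp}=\sum_{i}e^{*}_{i}\otimes e_{i}$ and the attention to the order of the dualized operators are exactly the bookkeeping points the paper's proof relies on implicitly.
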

\begin{proof}
    By Lemma \ref{lem:3.18}, $(A,\cdot_{A},P,Q)$ is an admissible averaging commutative  algebra and $(\mathcal{L}_{\star_{A}},P,Q,A)$ is a representation of $(A,\cdot_{A},P,Q)$.
    Moreover, the identity map $\mathrm{id}:A\rightarrow A$ is an $\mathcal{O}$-operator of $(A,\cdot_{A},P,Q)$ associated to $(\mathcal{L}_{\star_{A}},P,Q,A)$.
    Hence by Theorem \ref{thm:3.16},
    \begin{eqnarray}\label{1121}
    r=\mathrm{id}_{\sharp}-\tau(\mathrm{id}_{\sharp})=\sum\limits_{i=1}^{n}e^{*}_{i}\otimes e_{i}-e_{i}\otimes e^{*}_{i}
    \end{eqnarray}
    is a skew-symmetric solution of the AAYBE in $(A\ltimes_{\mathcal{L}^{*}_{\star_{A}}}A^{*},P+Q^{*},Q+P^{*})$.
\end{proof}

\subsection{Factorizable averaging commutative and cocommutative infinitesimal bialgebras}\

Let $(A,\cdot_{A},\Delta_{r})$ be a quasi-triangular commutative and cocommutative infinitesimal bialgebra.
If $\big(r+\tau(r)\big)^{\sharp}:A^{*}\rightarrow A$ is a bijection, then we say  $(A,\cdot_{A},\Delta_{r})$ is {\bf factorizable} \cite{Bai2011,SW}.
Now we generalize the above notion to averaging commutative and cocommutative infinitesimal bialgebras.

\begin{defi}
A {\bf factorizable averaging commutative and cocommutative infinitesimal bialgebra} is a quasi-triangular averaging commutative and cocommutative infinitesimal bialgebra $(A,\cdot_{A},\Delta_{r},P,Q)$ such that $\big( r+\tau(r)\big)^{\sharp}$ is bijective.
\end{defi}


\begin{defi}
Let $(A,\cdot_{A},P)$ and $(A',\cdot_{A'},P')$ be two averaging commutative algebras. A linear map $\psi:A\rightarrow A'$ is called {\bf an isomorphism of averaging commutative algebras}, if $\psi$ is a linear 
isomorphism of vector spaces such that 
\begin{equation*}
\psi(x\cdot_{A}y)=\psi(x)\cdot_{A'}\psi(y),\;
\psi\big(P(x)\big)=P'\big(\psi(x)\big),\;\forall x,y\in A.
\end{equation*}
Moreover, let $(A,\cdot_{A},P,Q)$ and $(A',\cdot_{A'},P',Q')$ be two admissible averaging commutative algebras. A linear map $\psi:A\rightarrow A'$ is called {\bf an isomorphism of admissible averaging commutative algebras}, if
$\psi$ is an isomorphism of averaging commutative algebras such that
\begin{equation*}
\psi\big(Q(x)\big)=Q'\big(\psi(x)\big),\;\forall x\in A.
\end{equation*}
\end{defi}

\begin{defi}\label{defi:aFca}\cite{Bai2024}
Let $\big( (A\oplus
A^{*},\cdot_{d},\mathcal{B}_{d}),(A,\cdot_{A}),(A^{*},\cdot_{A^{*}})
\big)$ be a double construction of Frobenius commutative algebra \cite{Bai2010}, that is, there exists a commutative algebra structure
$(A\oplus A^{*},\cdot_{d})$ on $A\oplus A^{*}$ such that it contains $(A,\cdot_{A})$ and $(A^{*},\cdot_{A^{*}})$ as commutative subalgebras and the bilinear form $\mathcal{B}_{d}$ given by \eqref{eq:bfds} is invariant on $(A\oplus A^{*},\cdot_{d})$.
Let $P:A\rightarrow A$ be an averaging operator on
$(A,\cdot_{A})$ and $Q^{*}:A^{*}\rightarrow A^{*}$ be an averaging
operator on $(A^{*},\cdot_{A^{*}})$. If $P+Q^{*}$ is an averaging
operator on $(A\oplus A^{*},\cdot_{d})$, then we call $\big(
(A\oplus A^{*},\cdot_{d},P+Q^{*},\mathcal{B}_{d}), (A,\cdot_{A},P),(A^{*},\cdot_{A^{*}},Q^{*})
\big)$ a {\bf double construction of an averaging Frobenius
commutative algebra}. 
\end{defi}

\begin{lem}\label{lem:2.11}\cite{Bai2024}
	Let $(A,\cdot_{A},P)$ and $(A^{*},\cdot_{A^{*}},Q^{*})$ be averaging commutative algebras and $\Delta:A\rightarrow A\otimes A$ be the linear dual of $\cdot_{A^{*}}$, that is,\begin{eqnarray}
		\langle \Delta(x),a^{*}\otimes b^{*}\rangle=\langle x, a^{*}\cdot_{A^{*}}b^{*}\rangle,\;\forall x\in A, a^{*},b^{*}\in A^{*}.
	\end{eqnarray}
Then there is a double construction of an averaging Frobenius commutative algebra
$\big( (A\oplus A^{*},\cdot_{d},P+Q^{*},\mathcal{B}_{d}),(A,\cdot_{A},P),(A^{*},\cdot_{A^{*}},Q^{*}) \big)$  if and only if $(A,\cdot_{A},\Delta,P,Q)$ is an averaging commutative and cocommutative infinitesimal bialgebra. In this case, 
$(A,\cdot_{A},P,Q)$ and $(A^{*},\cdot_{A^{*}},Q^{*},P^{*})$ are both admissible averaging commutative algebras, and 
we have
\begin{equation}\label{eq:commassomul}
(x+a^{*})\cdot_{d}(y+b^{*})=x\cdot_{A}y+\mathcal{L}^{*}_{\cdot_{A^{*}}}(b^{*})x
+\mathcal{L}^{*}_{\cdot_{A^{*}}}(a^{*})y+
a^{*}\cdot_{A^{*}}b^{*}+\mathcal{L}^{*}_{\cdot_{A}}(y)a^{*}
+\mathcal{L}^{*}_{\cdot_{A}}(x)b^{*}
\end{equation}
for all $x,y\in A, a^{*},b^{*}\in A^{*}$.
\end{lem}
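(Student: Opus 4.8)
The plan is to bootstrap from the non-averaging case already settled in \cite{Bai2010} and then install the averaging data separately. First I would recall that $Q:A\rightarrow A$ is determined by $Q^{*}$ as its linear dual (under the identification $A^{**}=A$), so all the operators in the statement are fixed by the given data. By \cite{Bai2010}, there is a double construction of a Frobenius commutative algebra $\big((A\oplus A^{*},\cdot_{d},\mathcal{B}_{d}),(A,\cdot_{A}),(A^{*},\cdot_{A^{*}})\big)$ if and only if $(A,\cdot_{A},\Delta)$ is a commutative and cocommutative infinitesimal bialgebra, and in that case the requirements that $A$ and $A^{*}$ be subalgebras and that $\mathcal{B}_{d}$ be invariant force $\cdot_{d}$ to be exactly \eqref{eq:commassomul}. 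This disposes of the first item of Definition \ref{defi:ave com ASI bialgebra} as well as the closed formula in the statement; what remains is purely the averaging bookkeeping.

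Next I would isolate where the averaging hypothesis actually bites. Since $A$ and $A^{*}$ are isotropic subalgebras and $P+Q^{*}$ restricts to $P$ on $A$ and to $Q^{*}$ on $A^{*}$, the hypotheses that $P$ and $Q^{*}$ are averaging operators of $(A,\cdot_{A})$ and $(A^{*},\cdot_{A^{*}})$ already make \eqref{eq:Ao} hold whenever both arguments lie in the same factor. Hence the content of demanding that $P+Q^{*}$ be an averaging operator of $(A\oplus A^{*},\cdot_{d})$ is concentrated entirely in the mixed products, with one argument in $A$ and the other in $A^{*}$.

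The heart of the argument is then to expand \eqref{eq:Ao} for $P+Q^{*}$ with $X=x+a^{*}$ and $Y=y+b^{*}$, substitute \eqref{eq:commassomul}, and project onto the $A$- and $A^{*}$-components. I expect the mixed terms $\mathcal{L}^{*}_{\cdot_{A^{*}}}(b^{*})x$ and $\mathcal{L}^{*}_{\cdot_{A^{*}}}(a^{*})y$ to dualize, via the defining relation between $\Delta$ and $\cdot_{A^{*}}$, precisely into the admissibility relation \eqref{eq:ao pair} for $(A,\cdot_{A},P,Q)$, while the terms built from $\mathcal{L}^{*}_{\cdot_{A}}$ reorganize the pairing $\langle(Q\otimes Q)\Delta(x)-(Q\otimes\mathrm{id})\Delta(Q(x)),a^{*}\otimes b^{*}\rangle$ and its analogue into the cocompatibility conditions \eqref{eq:aoco1} and \eqref{eq:aoco2}. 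Collecting these, $P+Q^{*}$ is an averaging operator of $(A\oplus A^{*},\cdot_{d})$ exactly when $(A,\cdot_{A},P,Q)$ is an admissible averaging commutative algebra and \eqref{eq:aoco1}-\eqref{eq:aoco2} hold, which together with the first step gives the asserted equivalence.

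Finally, the symmetric statement that both $(A,\cdot_{A},P,Q)$ and $(A^{*},\cdot_{A^{*}},Q^{*},P^{*})$ are admissible follows by running the same mixed-term computation with the roles of $A$ and $A^{*}$ interchanged: the canonical involution of the Manin triple swapping the two isotropic factors sends $P\mapsto Q^{*}$ and $Q\mapsto P^{*}$, so admissibility on one side transports to the other. The main obstacle will be the bookkeeping in this mixed-term expansion, namely sorting the six summands of \eqref{eq:commassomul} through the three averaging identities and two components and recognizing which combination of the dual maps $P^{*},Q^{*}$ and the operators $\mathcal{L}^{*}_{\cdot_{A}},\mathcal{L}^{*}_{\cdot_{A^{*}}}$ reproduces each of \eqref{eq:ao pair}, \eqref{eq:aoco1} and \eqref{eq:aoco2}; once this dictionary is pinned down the remaining computation is routine.
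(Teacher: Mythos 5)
The paper offers no proof to compare against here: Lemma \ref{lem:2.11} is recalled verbatim from \cite{Bai2024}. Judged on its own merits, your overall route is the natural one and its skeleton is correct: the reduction to \cite{Bai2010} (which also forces the formula \eqref{eq:commassomul}), the observation that the same-factor instances of \eqref{eq:Ao} for $P+Q^{*}$ are exactly the hypotheses that $P$ and $Q^{*}$ are averaging, and the plan to expand the mixed products, project onto the two isotropic components, and dualize.

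The gap is in your claimed dictionary, and it would stall the forward implication. Expanding \eqref{eq:Ao} for $X=x\in A$, $Y=b^{*}\in A^{*}$ via \eqref{eq:commassomul} yields exactly two chains of equalities, one per component. The $A^{*}$-component, built from $\mathcal{L}^{*}_{\cdot_{A}}$, reads
\begin{equation*}
\mathcal{L}^{*}_{\cdot_{A}}(Px)Q^{*}(b^{*})=Q^{*}\big(\mathcal{L}^{*}_{\cdot_{A}}(Px)b^{*}\big)=Q^{*}\big(\mathcal{L}^{*}_{\cdot_{A}}(x)Q^{*}(b^{*})\big),
\end{equation*}
and pairing it with $y\in A$ turns it into precisely \eqref{eq:ao pair}, i.e.\ admissibility of $(A,\cdot_{A},P,Q)$; the $A$-component, built from $\mathcal{L}^{*}_{\cdot_{A^{*}}}$, pairs with $a^{*}\in A^{*}$ into admissibility of $(A^{*},\cdot_{A^{*}},Q^{*},P^{*})$, which, rewritten through $\Delta$, is exactly \eqref{eq:aoco2}. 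So your assignment of roles is reversed, and --- more seriously --- the mixed terms do not produce \eqref{eq:aoco1} at all: the two component equations are entirely used up by \eqref{eq:ao pair} and \eqref{eq:aoco2}. Condition \eqref{eq:aoco1} is instead the dualization, using cocommutativity of $\Delta$, of the statement that $Q^{*}$ is an averaging operator on $(A^{*},\cdot_{A^{*}})$; it lives in the same-factor $A^{*}$-products that you set aside as automatic. Your plan as written therefore leaves \eqref{eq:aoco1} underived in the direction ``double construction $\Rightarrow$ bialgebra''. The fix is one line --- \eqref{eq:aoco1} is the standing hypothesis on $Q^{*}$ in disguise --- and with it, plus the corrected dictionary, your argument goes through; your final symmetry remark about $(A^{*},\cdot_{A^{*}},Q^{*},P^{*})$ is then not a separate step but literally the $A$-component computation.
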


We have the following proposition which justifies the terminology of factorizable averaging commutative and cocommutative infinitesimal bialgebras.

\begin{pro}
Let $(A,\cdot_{A},\Delta_{r},P,Q)$ be a factorizable averaging commutative and cocommutative infinitesimal bialgebra, and $\big( (D=A\oplus A^{*},\cdot_{d},P+Q^{*},\mathcal{B}_{d}),(A,\cdot_{A},P),
(A^{*},\cdot_{r}$,
$Q^{*}) \big)$ be a double construction of averaging Frobenius commutative algebra which is equivalent to $(A,\cdot_{A},\Delta_{r},P,Q)$. Define a linear map $\psi:D=A\oplus A^{*}\rightarrow A\oplus A$  by
\begin{equation}\label{eq:factorizable}
	\psi(x)=(x,x), \;\psi(a^{*})=\Big(r^{\sharp}(a^{*}),\big(-\tau(r)\big)^{\sharp}(a^{*})\Big),\;\forall x\in A, a^{*}\in A^{*}.
\end{equation}
Then $\psi$ gives the admissible averaging commutative algebra isomorphism between $(D,\cdot_{d},P+Q^{*},Q+P^{*})$ and the direct sum $A\oplus A$ of admissible averaging commutative algebras. In particular, $\psi|_{A^{*}}$ gives the admissible averaging commutative algebra isomorphism between $(A^{*},\cdot_{r},Q^{*})$ and $\mathrm{Im}(r^{\sharp}\oplus \big(-\tau(r)\big)^{\sharp})$ as an admissible averaging commutative subalgebra of $A\oplus A$. Moreover, for any $x\in A$, there is a unique decomposition
	$x=x_{1}-x_{2},$
	where $(x_{1},x_{2})\in\mathrm{Im}(r^{\sharp}\oplus \big(-\tau(r)\big)^{\sharp}).$
\end{pro}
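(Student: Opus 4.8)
The plan is to verify that the explicitly defined map $\psi$ in \eqref{eq:factorizable} is simultaneously (i) a bijection of vector spaces, (ii) a homomorphism of commutative algebras, and (iii) compatible with the averaging operators $P+Q^{*}$ and $Q+P^{*}$ on $D$ versus $P\oplus P$ and $Q\oplus Q$ on $A\oplus A$. Once these three facts are established, the restriction and decomposition statements will follow by elementary bookkeeping.

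First I would establish bijectivity. On the subspace $A$ the map is the diagonal embedding $x\mapsto(x,x)$, and on $A^{*}$ it is $a^{*}\mapsto\big(r^{\sharp}(a^{*}),(-\tau(r))^{\sharp}(a^{*})\big)$. The crucial input is the factorizability hypothesis that $\big(r+\tau(r)\big)^{\sharp}$ is bijective; since $r^{\sharp}-(-\tau(r))^{\sharp}=\big(r+\tau(r)\big)^{\sharp}$, the difference of the two components of $\psi|_{A^{*}}$ is invertible. From this one checks that $\psi(A)$ and $\psi(A^{*})$ intersect trivially and together span $A\oplus A$ by a dimension count, so $\psi$ is a linear isomorphism; simultaneously this shows that any $x\in A$ decomposes uniquely as $x_1-x_2$ with $(x_1,x_2)\in\mathrm{Im}(r^{\sharp}\oplus(-\tau(r))^{\sharp})$, giving the last assertion.

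Next I would verify that $\psi$ is an algebra homomorphism. On the diagonal $\psi(A)$ this is automatic since $A$ is a subalgebra and $A\oplus A$ carries the componentwise product. The real content is checking multiplicativity on the mixed and $A^{*}$-$A^{*}$ products, using the explicit double-construction multiplication \eqref{eq:commassomul} for $\cdot_d$ together with the $\mathcal{O}$-operator relation \eqref{eq:rt oop1} from Theorem~\ref{thm:rt oop}, which governs how $r^{\sharp}$ and $\lozenge_{r+\tau(r)}$ interact with $\cdot_A$. The compatibility with the averaging operators then reduces to the operator identities \eqref{eq:rt oop2} and \eqref{eq:rt oop3}, namely $Pr^{\sharp}=r^{\sharp}Q^{*}$ and $Qr^{\sharp}=r^{\sharp}P^{*}$ (and their analogues for $-\tau(r)$), which hold precisely because $r$ solves the AAYBE and so $(A,\cdot_A,\Delta_r,P,Q)$ is quasi-triangular.

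I expect the main obstacle to be the multiplicativity check on the $A^{*}\cdot_d A^{*}$ summand, where both the commutative product $\cdot_r$ on $A^{*}$ and the left-multiplication-dual terms $\mathcal{L}^{*}_{\cdot_A}$ appearing in \eqref{eq:commassomul} must be translated through the two components $r^{\sharp}$ and $(-\tau(r))^{\sharp}$ and matched against the componentwise product in $A\oplus A$; this is exactly the place where the $\mathcal{O}$-operator equation \eqref{eq:rt oop1} does the heavy lifting, and some care is needed because the two components are governed by $r$ and $-\tau(r)$ separately. Once this core computation is in place, the statements about $\psi|_{A^{*}}$ being an isomorphism onto $\mathrm{Im}(r^{\sharp}\oplus(-\tau(r))^{\sharp})$ and the uniqueness of the decomposition $x=x_1-x_2$ follow immediately from the bijectivity already obtained.
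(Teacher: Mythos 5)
Your proposal is correct, and its skeleton --- show that $\psi$ is bijective, multiplicative, and intertwines the averaging operators --- is the same as the paper's; the difference lies in what gets delegated. The paper does not reprove bijectivity, the multiplicativity on the $A^{*}\cdot_{d}A^{*}$ part, or the unique decomposition $x=x_{1}-x_{2}$: all three are imported from the non-averaging factorizable theory of \cite{SW}, and only the two genuinely new points are verified, namely the mixed products $\psi(x\cdot_{d}a^{*})=\psi(x)\cdot\psi(a^{*})$ (via \eqref{eq:tau3} and \eqref{eq:tau4}) and the operator compatibility $\psi(P+Q^{*})=(P\oplus P)\psi$, $\psi(Q+P^{*})=(Q\oplus Q)\psi$ (via \eqref{eq:rt oop2} and \eqref{eq:taur1}, the latter coming from \eqref{eq:AAYBE1}). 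You instead prove everything from scratch: bijectivity by a dimension count from the invertibility of $\big(r+\tau(r)\big)^{\sharp}$, and the $A^{*}$-$A^{*}$ products from the $\mathcal{O}$-operator identity \eqref{eq:rt oop1} together with its analogue for $-\tau(r)$; this is a valid, self-contained route, and is in substance what the cited result of \cite{SW} amounts to, so your version buys independence from that reference at the cost of redoing its computation. Two calibration remarks. First, for the mixed products the identity that actually does the work is not \eqref{eq:rt oop1} but the mere duality between $\cdot_{r}$ and $\Delta_{r}$: unwinding \eqref{eq:comul1} gives $\mathcal{L}^{*}_{\cdot_{r}}(a^{*})x=x\cdot_{A}r^{\sharp}(a^{*})-r^{\sharp}\big(\mathcal{L}^{*}_{\cdot_{A}}(x)a^{*}\big)$, which is exactly \eqref{eq:tau3}; no Yang--Baxter input is needed there. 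Second, the point you flagged as needing care is real but resolves cleanly: $-\tau(r)$ also solves the AYBE and its symmetric part $-\big(r+\tau(r)\big)$ is invariant, and since $\mathcal{L}^{*}_{\cdot_{A}}\Big(\big(-\tau(r)\big)^{\sharp}(a^{*})\Big)b^{*}+a^{*}\lozenge_{r+\tau(r)}b^{*}=\mathcal{L}^{*}_{\cdot_{A}}\big(r^{\sharp}(a^{*})\big)b^{*}$, the $\mathcal{O}$-operator identity for $-\tau(r)$ reproduces the same product $\cdot_{r}$, so the second component of $\psi|_{A^{*}}$ is multiplicative as well.
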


\begin{proof}
By \cite{SW}, $\psi|_{A^{*}}$ gives the commutative algebra isomorphism between $(A^{*},\cdot_{r})$ and $\mathrm{Im}(r^{\sharp}\oplus \big(-\tau(r)\big)^{\sharp})$ as a commutative subalgebra of $A\oplus A$, that is, we have
\begin{eqnarray*}
\psi(a^{*}\cdot_{r}b^{*})=\psi(a^{*})\cdot\psi(b^{*}),\;\forall a^{*},b^{*}\in A^{*},
\end{eqnarray*}
where $\cdot$ denotes the commutative multiplication on $A\oplus A$.
For all $x\in A,a^{*},b^{*}\in A^{*}$,
\begin{eqnarray*}
\langle r^{\sharp}\big(\mathcal{L}^{*}_{\cdot_{A}}(x)a^{*}\big)
+\mathcal{L}^{*}_{\cdot_{r}}(a^{*})x,b^{*}\rangle
&=&\langle r, \mathcal{L}^{*}_{\cdot_{A}}(x)a^{*}\otimes b^{*}\rangle+\langle x, a^{*}\cdot_{r}b^{*}\rangle\\
&=&\langle \big(\mathcal{L}_{\cdot_{A}}(x)\otimes\mathrm{id}\big)r,a^{*}\otimes b^{*}\rangle+\langle \Delta_{r}(x), a^{*}\otimes b^{*}\rangle\\
&=&\langle 
\big(\mathrm{id}\otimes\mathcal{L}_{\cdot_{A}}(x)\big)r,a^{*}\otimes b^{*}\rangle\\
&=&\langle x\cdot_{A}r^{\sharp}(a^{*}), b^{*}\rangle,
\end{eqnarray*}
that is, 
\begin{equation}\label{eq:tau3} 
	r^{\sharp}\big(\mathcal{L}^{*}_{\cdot_{A}}(x)a^{*}\big)
	+\mathcal{L}^{*}_{\cdot_{r}}(a^{*})x=x\cdot_{A}r^{\sharp}(a^{*}).
\end{equation}
Similarly, we have
\begin{equation}\label{eq:tau4} 
	\big(-\tau(r)\big)^{\sharp}\big(\mathcal{L}^{*}_{\cdot_{A}}(x)a^{*}\big)
	+\mathcal{L}^{*}_{\cdot_{r}}(a^{*})x=x\cdot_{A}\big(-\tau(r)\big)^{\sharp}(a^{*}).
\end{equation}
Thus
\begin{eqnarray*}
	\psi(x\cdot_{D} a^{*})&=&\psi\big(\mathcal{L}^{*}_{\cdot_{A}}(x)a^{*}
	+\mathcal{L}^{*}_{\cdot_{r}}(a^{*})x\big)\\	&=&\Big(r^{\sharp}\big(\mathcal{L}^{*}_{\cdot_{A}}(x)a^{*}\big)
	+\mathcal{L}^{*}_{\cdot_{r}}(a^{*})x,
	\big(-\tau(r)\big)^{\sharp}\big(\mathcal{L}^{*}_{\cdot_{A}}(x)a^{*}\big)
	+\mathcal{L}^{*}_{\cdot_{r}}(a^{*})x\Big)\\ 
	&\overset{\eqref{eq:tau3},\eqref{eq:tau4}}{=}&
	\big(x\cdot_{A}r^{\sharp}(a^{*}), x\cdot_{A}\big(-\tau(r)\big)^{\sharp}(a^{*})\big)\\
	&=&\psi(x)\cdot \psi(a^{*}).
\end{eqnarray*}
Hence $\psi:D\rightarrow A\oplus A$ is an isomorphism of commutative algebras.
Noticing that \eqref{eq:AAYBE1} gives rise to
\begin{equation}\label{eq:taur1}
P\big(-\tau(r)\big)^{\sharp}(a^{*})=\big(-\tau(r)\big)^{\sharp}Q^{*}(a^{*}),
\end{equation}
we have
\begin{eqnarray*}
\psi|_{A^{*}}Q^{*}(a^{*})
&=&\Big(r^{\sharp}Q^{*}(a^{*}),
\big(-\tau(r)\big)^{\sharp}Q^{*}(a^{*})\Big)\\
&\overset{\eqref{eq:rt oop2},\eqref{eq:taur1}}{=}&
\Big(Pr^{\sharp}(a^{*}),
P\big(-\tau(r)\big)^{\sharp}(a^{*})\Big)\\
&=&(P\oplus P)\psi|_{A^{*}}(a^{*}).
\end{eqnarray*}
Similarly, we obtain $\psi|_{A^{*}}P^{*}(a^{*})=(Q\oplus Q)\psi|_{A^{*}}(a^{*})$, and more generally
\begin{equation}
\psi(P+Q^{*})=(P\oplus P)\psi,\;
\psi(Q+P^{*})=(Q\oplus Q)\psi.
\end{equation}
Therefore,  $\psi:D\rightarrow A\oplus A$ is an admissible averaging commutative algebra isomorphism, and $\mathrm{Im}(r^{\sharp}\oplus \big(-\tau(r)\big)^{\sharp})$ is isomorphic to $(A^{*},\cdot_{r},Q^*)$ as admissible averaging commutative subalgebras.
The last part also follows from \cite{SW}. Hence the proof is finished.
\delete{
We have
\begin{eqnarray*}
&&\psi(P+Q^{*})(x+a^{*})=\psi\big(P(x)+Q^{*}(a^{*})\big)
=\Big(P(x)+r^{\sharp}Q^{*}(a^{*}),
P(x)+\big(-\tau(r)\big)^{\sharp}Q^{*}(a^{*})\Big),\\
&&(P\oplus P)\psi(x+a^{*})=(P\oplus P)\Big(x+r^{\sharp}(a^{*}),x+\big(-\tau(r)\big)^{\sharp}(a^{*})\Big)
=\Big(P(x)+Pr^{\sharp}(a^{*}),
P(x)+P\big(-\tau(r)\big)^{\sharp}(a^{*})\Big).
\end{eqnarray*}
Thus by \eqref{eq:rt oop2} and \eqref{eq:taur1}, we obtain
\begin{equation}
\psi(P+Q^{*})=(P\oplus P)\psi.
\end{equation}
Similarly, we also obtain
\begin{equation}
\psi(Q+P^{*})=(Q\oplus Q)\psi.
\end{equation}}
\end{proof}

\delete{
	\delete{
		For all $a^{*},b^{*}\in A^{*}$, we have
		\begin{eqnarray*}
			0&=&\langle (P\otimes\mathrm{id}-\mathrm{id}\otimes Q)r,a^{*}\otimes b^{*}\rangle\\
			&=&\langle \tau(P\otimes\mathrm{id}-\mathrm{id}\otimes Q)r,a^{*}\otimes b^{*}\rangle\\
			&=&\langle (\mathrm{id}\otimes P-Q\otimes\mathrm{id})\tau(r),a^{*}\otimes b^{*}\rangle\\
			&=&\langle \tau(r),a^{*}\otimes P^*(b^{*})-Q^*(a^{*})\otimes b^{*}\rangle\\
			&=&\langle P\big(\tau(r)\big)^{\sharp}(a^{*})-\big(\tau(r)\big)^{\sharp}Q^*(a^{*}) ,b^{*}\rangle,
		\end{eqnarray*}
		that is,}
	Similarly, we also have 
	\begin{equation}\label{eq:taur2}
		Q\big(-\tau(r)\big)^{\sharp}(a^{*})=\big(-\tau(r)\big)^{\sharp}P^{*}(a^{*}).
\end{equation}}

In the following, we show that there is a factorizable averaging commutative and cocommutative infinitesimal bialgebra structure on an arbitrary double construction of averaging Frobenius commutative algebra.

\begin{thm}
Let $\big( (A\oplus A^{*},\cdot_{d},P+Q^{*},\mathcal{B}_{d}),(A,\cdot_{A},P),(A^{*},\cdot_{A^{*}},Q^{*}) \big)$ be a double construction of averaging Frobenius commutative algebra. Suppose that
	$\{e_{1}$, $\cdots$, $e_{n}\}$ is a basis of $A$ and  $\{e^{*}_{1},\cdots,e^{*}_{n}\}$ is the dual basis. Set 
	\begin{eqnarray}\label{1306}
	r=\sum\limits_{i=1}^{n}e^{*}_{i}\otimes e_{i}\in A^{*}\otimes A\subset D\otimes D.
	\end{eqnarray}
Then $(D,\cdot_{d},\Delta_{r},P+Q^*,Q+P^*)$ with $\Delta_{r}$ defined by \eqref{eq:comul1} is a factorizable averaging commutative and cocommutative infinitesimal bialgebra.
\end{thm}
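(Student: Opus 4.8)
The plan is to show that $r=\sum_i e_i^*\otimes e_i$ is a solution of the AAYBE in $(D,\cdot_d,P+Q^*,Q+P^*)$ whose symmetric part $r+\tau(r)$ is invariant on $(D,\cdot_d)$, and that $\big(r+\tau(r)\big)^\sharp$ is bijective. Granting that $(D,\cdot_d,P+Q^*,Q+P^*)$ is an admissible averaging commutative algebra (which is the ambient structure coming from Lemma \ref{lem:2.11} and the construction of the double in \cite{Bai2024}, and is already used in the preceding proposition), Theorem \ref{thm:coao} will then produce the quasi-triangular structure, and the bijectivity will upgrade it to a factorizable one. Concretely I must verify four things: ${\bf A}(r)=0$; the invariance of $r+\tau(r)$; the two averaging conditions \eqref{eq:AAYBE1} and \eqref{eq:AAYBE2}; and the bijectivity of $\big(r+\tau(r)\big)^\sharp$.

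For the first, second and fourth items I would peel off the averaging operators and recognize $(D,\cdot_d,\Delta_r)$ as the canonical factorizable commutative and cocommutative infinitesimal bialgebra attached to the symmetric Frobenius algebra $(D,\cdot_d,\mathcal{B}_d)$, which is the classical construction of \cite{SW} (compare \cite{Bai2010,Bai2011}). The key identification is that, under $D^*\cong A^*\oplus A$, the symmetric part $r+\tau(r)=\sum_i\big(e_i^*\otimes e_i+e_i\otimes e_i^*\big)$ is exactly the $2$-tensor $\phi_{\mathcal{B}_d}$ of \eqref{eq:2-tensor}; equivalently $\big(r+\tau(r)\big)^\sharp=\mathcal{B}_d^{\natural^{-1}}$, the swap map on $A^*\oplus A$. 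Since $\mathcal{B}_d$ is a nondegenerate symmetric invariant bilinear form on $(D,\cdot_d)$, this yields at once that $r+\tau(r)$ is symmetric and invariant and that $\big(r+\tau(r)\big)^\sharp$ is bijective, while ${\bf A}(r)=0$ is precisely the cited statement that the canonical element of the double solves the AYBE.

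It then remains to check the two averaging conditions, which is the computational heart. Expanding $P+Q^*$ and $Q+P^*$ on $A\oplus A^*$ and using that the first tensor leg of $r$ lies in $A^*$ and the second in $A$, equation \eqref{eq:AAYBE1} becomes $\sum_i Q^*(e_i^*)\otimes e_i=\sum_i e_i^*\otimes Q(e_i)$ and \eqref{eq:AAYBE2} becomes $\sum_i P^*(e_i^*)\otimes e_i=\sum_i e_i^*\otimes P(e_i)$. Each of these is the standard dual-basis transpose identity: for any linear map $f:A\rightarrow A$ one has $\sum_i f^*(e_i^*)\otimes e_i=\sum_i e_i^*\otimes f(e_i)$, seen by expanding $f(e_j)=\sum_k\langle e_k^*,f(e_j)\rangle e_k$, using $\langle f^*(e_i^*),e_j\rangle=\langle e_i^*,f(e_j)\rangle$, and relabelling indices. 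Taking $f=Q$ and $f=P$ gives \eqref{eq:AAYBE1} and \eqref{eq:AAYBE2} respectively.

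Assembling these, $r$ is a solution of the AAYBE in $(D,\cdot_d,P+Q^*,Q+P^*)$ with invariant symmetric part, so Theorem \ref{thm:coao} shows $(D,\cdot_d,\Delta_r,P+Q^*,Q+P^*)$ is a quasi-triangular averaging commutative and cocommutative infinitesimal bialgebra, and the bijectivity of $\big(r+\tau(r)\big)^\sharp$ makes it factorizable. The only genuinely delicate bookkeeping is keeping the pairing conventions straight in the identification $r+\tau(r)=\phi_{\mathcal{B}_d}$ under $D^*\cong A^*\oplus A$; the averaging conditions and the admissibility of the ambient algebra are routine dual-basis manipulations, the latter being inherited from the admissibility of the two halves recorded in Lemma \ref{lem:2.11}.
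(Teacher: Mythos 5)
Your proposal is correct and follows essentially the same route as the paper's proof: the paper likewise invokes \cite{SW} to get that $(D,\cdot_{d},\Delta_{r})$ is a factorizable commutative and cocommutative infinitesimal bialgebra (covering ${\bf A}(r)=0$, the invariance of $r+\tau(r)$, and the bijectivity of $\big(r+\tau(r)\big)^{\sharp}$), and then checks the two averaging conditions by exactly your dual-basis computation, $\big((P+Q^*)\otimes\mathrm{id}-\mathrm{id}\otimes (Q+P^*)\big)r=\sum_{i}Q^*(e^{*}_{i})\otimes e_{i}-e^{*}_{i}\otimes Q(e_{i})=0$ and its analogue with $P$. Your additional remarks — identifying $r+\tau(r)$ with $\phi_{\mathcal{B}_d}$ and noting that admissibility of $(D,\cdot_d,P+Q^*,Q+P^*)$ is inherited from Lemma \ref{lem:2.11} — are accurate elaborations of points the paper leaves implicit.
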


\begin{proof}
By \cite{SW}, $(D,\cdot_{d},\Delta_{r})$ is a factorizable commutative and cocommutative infinitesimal bialgebra. 
Moreover, we have
\begin{equation*}
\big((P+Q^*)\otimes\mathrm{id}-\mathrm{id}\otimes (Q+P^*)\big)r=\sum\limits_{i=1}^{n}Q^*(e^{*}_{i})\otimes e_{i}-e^{*}_{i}\otimes Q(e_{i})=0,
\end{equation*}
and similarly
\begin{equation*}
	\big((Q+P^*)\otimes\mathrm{id}-\mathrm{id}\otimes (P+Q^*)\big)r=0.
\end{equation*}
Hence $(D,\cdot_{d},\Delta_{r},P+Q^*,Q+P^*)$ is a factorizable averaging commutative and cocommutative infinitesimal bialgebra.
\end{proof}

\delete{
	Similarly, we also have
	\delete{Let $k,l\in\{1,\cdots, n\}$, then we have
		\begin{eqnarray*}
			&&\langle \big((P+Q^*)\otimes\mathrm{id}-\mathrm{id}\otimes (Q+P^*)\big)r,
			e_{k}\otimes e^{*}_{l}\rangle\\
			&=&\langle \sum\limits_{i=1}^{n}Q^*(e^{*}_{i})\otimes e_{i}-e^{*}_{i}\otimes Q(e_{i}), e_{k}\otimes e^{*}_{l}\rangle\\
			&=&\langle Q^*(e^{*}_{k})\otimes e_{k}-e^{*}_{k}\otimes Q(e_{k}), e_{k}\otimes e^{*}_{l}\rangle+\langle Q^*(e^{*}_{l})\otimes e_{l}-e^{*}_{l}\otimes Q(e_{l}), e_{k}\otimes e^{*}_{l}\rangle\\
			&=&\langle Q^*(e^{*}_{k}),e_{k}\rangle\langle e_{k},e^{*}_{l}\rangle
			-\langle Q(e_{l}),e^{*}_{l}\rangle\langle e_{k},e^{*}_{l}\rangle\\
			&=&0.
		\end{eqnarray*}
		Thus we have }
	\begin{eqnarray*}
		&&\langle \big((Q+P^*)\otimes\mathrm{id}-\mathrm{id}\otimes (P+Q^*)\big)r,
		e_{k}\otimes e^{*}_{l}\rangle\\
		&=&\langle \sum_{i}P^*(e^{*}_{i})\otimes e_{i}-e^{*}_{i}\otimes P(e_{i}), e_{k}\otimes e^{*}_{l}\rangle\\
		&=&\langle P^*(e^{*}_{l})\otimes e_{l}-e^{*}_{l}\otimes P(e_{l}), e_{k}\otimes e^{*}_{l}\rangle+\langle P^*(e^{*}_{k})\otimes e_{k}-e^{*}_{k}\otimes P(e_{k}), e_{k}\otimes e^{*}_{l}\rangle\\
		&=&\langle P^*(e^{*}_{k}),e_{k}\rangle\langle e_{k},e^{*}_{l}\rangle
		-\langle P(e_{l}),e^{*}_{l}\rangle\langle e_{k},e^{*}_{l}\rangle\\
		&=&0.
\end{eqnarray*}}

\begin{lem}\label{lem:3.23}\cite{SW}
Let $(A,\cdot_{A},R,\mathcal{B})$ be a symmetric Rota-Baxter Frobenius commutative algebra of weight $-1$.
Then there is a factorizable commutative and cocommutative infinitesimal bialgebra $(A,\cdot_{A},\Delta_{r})$ with $r$ given  through the operator form $r^{\sharp}$ by \eqref{eq:thm:quadratic to fact}.
Conversely, let $(A,\cdot_{A},\Delta_{r})$ be a factorizable commutative and cocommutative infinitesimal bialgebra.
Then there is a symmetric Rota-Baxter Frobenius commutative algebra $(A,\cdot_{A},R,\mathcal{B} )$ of weight $-1$ with $R$ given by
\begin{equation}\label{eq:fact to quadratic0}
    R(x)=r^{\sharp}\big(r+\tau(r)\big)^{\sharp^{-1}}(x)
\end{equation}
and $\mathcal{B}$ given by
\begin{equation}\label{eq:fact to quadratic}
\mathcal{B}(x,y)=\langle {\big(r+\tau(r)\big)^{\sharp}}^{-1}(x),y\rangle,\; \forall x,y\in A.
\end{equation}
\end{lem}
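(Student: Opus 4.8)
The plan is to reduce both implications to a single identity: under either correspondence one always has $\big(r+\tau(r)\big)^{\sharp}=\mathcal{B}^{\natural^{-1}}$, equivalently $r+\tau(r)=\phi_{\mathcal{B}}$ in the sense of \eqref{eq:2-tensor}. Once this is available, the equivalence between the AYBE ${\bf A}(r)=0$ and the weight $-1$ Rota-Baxter relation is immediate from Proposition \ref{pro:3.14}. Indeed, taking $P=Q=\mathrm{id}_{A}$ turns $(A,\cdot_{A},\mathrm{id}_{A},\mathrm{id}_{A})$ into an admissible averaging commutative algebra, reduces the AAYBE to ${\bf A}(r)=0$, and makes \eqref{eq:rb2}--\eqref{eq:rb3} vacuous (since the $\mathcal{B}$-adjoint of $\mathrm{id}_{A}$ is $\mathrm{id}_{A}$); so for any $r$ with $r+\tau(r)$ invariant, Proposition \ref{pro:3.14} gives ${\bf A}(r)=0$ if and only if \eqref{eq:rb1} holds. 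When moreover $\big(r+\tau(r)\big)^{\sharp}\mathcal{B}^{\natural}=\mathrm{id}_{A}$, the correction term $x\cdot_{A}\big(r+\tau(r)\big)^{\sharp}\mathcal{B}^{\natural}(y)$ in \eqref{eq:rb1} becomes $x\cdot_{A}y$, so \eqref{eq:rb1} is exactly the defining relation of a Rota-Baxter operator of weight $-1$.

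For the forward implication I would start from $(A,\cdot_{A},R,\mathcal{B})$ of weight $-1$, define $r$ by $r^{\sharp}=R\mathcal{B}^{\natural^{-1}}$ as in \eqref{eq:thm:quadratic to fact}, and first compute $\big(r+\tau(r)\big)^{\sharp}$. Using that $\tau(r)^{\sharp}$ is the transpose of $r^{\sharp}$, one has $\langle\big(r+\tau(r)\big)^{\sharp}(a^{*}),b^{*}\rangle=\langle r^{\sharp}(a^{*}),b^{*}\rangle+\langle r^{\sharp}(b^{*}),a^{*}\rangle$; writing $a^{*}=\mathcal{B}^{\natural}(x)$, $b^{*}=\mathcal{B}^{\natural}(y)$ and using symmetry of $\mathcal{B}$ this equals $\mathcal{B}\big(R(x),y\big)+\mathcal{B}\big(x,R(y)\big)$, which is $\mathcal{B}(x,y)$ by \eqref{strong} at $\lambda=-1$. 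Hence $\big(r+\tau(r)\big)^{\sharp}=\mathcal{B}^{\natural^{-1}}$, so it is bijective and $r+\tau(r)=\phi_{\mathcal{B}}$ is invariant on $(A,\cdot_{A})$ because $\mathcal{B}$ is invariant. Since $\big(r+\tau(r)\big)^{\sharp}\mathcal{B}^{\natural}=\mathrm{id}_{A}$, the weight $-1$ Rota-Baxter relation coincides with \eqref{eq:rb1}, so by the previous paragraph ${\bf A}(r)=0$; thus $(A,\cdot_{A},\Delta_{r})$ is quasi-triangular with bijective $\big(r+\tau(r)\big)^{\sharp}$, i.e. factorizable.

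For the converse I would start from a factorizable $(A,\cdot_{A},\Delta_{r})$ and define $\mathcal{B}$ by \eqref{eq:fact to quadratic}, i.e. $\mathcal{B}^{\natural}=\big(r+\tau(r)\big)^{\sharp^{-1}}$, and $R$ by \eqref{eq:fact to quadratic0}, i.e. $R=r^{\sharp}\mathcal{B}^{\natural}$. Nondegeneracy of $\mathcal{B}$ is the assumed bijectivity of $\big(r+\tau(r)\big)^{\sharp}$, symmetry of $\mathcal{B}$ follows from symmetry of the $2$-tensor $r+\tau(r)$, and invariance of $\mathcal{B}$ is the invariance of $\phi_{\mathcal{B}}=r+\tau(r)$; so $(A,\cdot_{A},\mathcal{B})$ is a symmetric Frobenius commutative algebra. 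By construction $\big(r+\tau(r)\big)^{\sharp}\mathcal{B}^{\natural}=\mathrm{id}_{A}$, so the equivalence above converts ${\bf A}(r)=0$ into the weight $-1$ Rota-Baxter relation for $R$. It remains to check \eqref{strong} at $\lambda=-1$, which I would obtain by pairing: $\mathcal{B}\big(R(x),y\big)+\mathcal{B}\big(x,R(y)\big)=\langle r+\tau(r),\mathcal{B}^{\natural}(x)\otimes\mathcal{B}^{\natural}(y)\rangle=\mathcal{B}(x,y)$, the last equality using $\big(r+\tau(r)\big)^{\sharp}\mathcal{B}^{\natural}=\mathrm{id}_{A}$. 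This completes the identification.

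The main obstacle is bookkeeping rather than conceptual. One must track carefully that $\tau(r)^{\sharp}$ is the transpose of $r^{\sharp}$ and that $\mathcal{B}^{\natural}$ is symmetric when moving between the tensor form $r+\tau(r)$ and the operator form $R=r^{\sharp}\mathcal{B}^{\natural}$, and one must invoke the standard equivalence between invariance of the bilinear form $\mathcal{B}$ on $(A,\cdot_{A})$ and invariance of the associated $2$-tensor $\phi_{\mathcal{B}}$; this is the one identification on which everything else rests. All remaining content is the specialization of Proposition \ref{pro:3.14} at $P=Q=\mathrm{id}_{A}$ described in the first paragraph.
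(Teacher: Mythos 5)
Your proposal is correct, and it is essentially the approach this paper takes for such results. Note that the paper offers no proof of Lemma \ref{lem:3.23} itself (it is imported from \cite{SW}), so the fair comparison is with the paper's proof of the parallel statement for \sapps, Theorem \ref{thm:quadratic to fact}; your argument matches it step for step. There, the key identity $r+\tau(r)=\phi_{\mathcal{B}}$, equivalently $\big(r+\tau(r)\big)^{\sharp}=\mathcal{B}^{\natural^{-1}}$, is supplied by Lemma \ref{lem:bf} --- which is stated for an arbitrary vector space with a nondegenerate symmetric bilinear form, so you could cite it instead of recomputing it in both directions --- the passage between invariance of $\mathcal{B}$ and invariance of the $2$-tensor $\phi_{\mathcal{B}}$ plays the role of Proposition \ref{pro:2.24}, and the conversion of the Yang--Baxter equation into the weight $-1$ Rota--Baxter identity is handled by Proposition \ref{pro1-6}, whose commutative counterpart is exactly your use of Proposition \ref{pro:3.14}. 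The one reduction you add, and it is valid, is the specialization $P=Q=\mathrm{id}_{A}$: this does make $(A,\cdot_{A},P,Q)$ an admissible averaging commutative algebra, it renders \eqref{eq:AAYBE1} and \eqref{eq:AAYBE2} vacuous so that the AAYBE collapses to ${\bf A}(r)=0$, and it trivializes \eqref{eq:rb2}--\eqref{eq:rb3} because the $\mathcal{B}$-adjoint of $\mathrm{id}_{A}$ is $\mathrm{id}_{A}$; combined with $\big(r+\tau(r)\big)^{\sharp}\mathcal{B}^{\natural}=\mathrm{id}_{A}$, this correctly identifies \eqref{eq:rb1} with the weight $-1$ Rota--Baxter relation in both implications.
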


\begin{thm}\label{thm:commute}
Let $(A,\cdot_{A},P,R,\mathcal{B})$ be a symmetric averaging Rota-Baxter Frobenius commutative algebra of weight $-1$.
Then there is a factorizable averaging commutative and cocommutative infinitesimal bialgebra $(A,\cdot_{A},\Delta_{r},P,\hat{P})$
with $r$ given  through the operator form $r^{\sharp}$ by \eqref{eq:thm:quadratic to fact}.
Conversely, let $(A,\cdot_{A},\Delta_{r},P,Q)$ be a factorizable averaging commutative and cocommutative infinitesimal bialgebra.
Then there is a symmetric averaging Rota-Baxter Frobenius commutative algebra $(A,\cdot_{A},P,R,\mathcal{B})$  of weight $-1$
given by \eqref{eq:fact to quadratic0} and \eqref{eq:fact to quadratic}
such that  $Q=\hat{P}$.
\end{thm}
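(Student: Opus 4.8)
The plan is to reduce both implications to the non-averaging correspondence recorded in Lemma \ref{lem:3.23}, and then to control how the averaging operators interact with $R$ and $\mathcal{B}$. The pivotal remark is that, for weight $-1$, the compatibility \eqref{strong} says exactly that the $\mathcal{B}$-adjoint of $R$ is $\hat R=\mathrm{id}_A-R$. Since $\mathcal{B}$ is symmetric, $\widehat{\,\cdot\,}$ reverses composition and satisfies $\hat{\hat P}=P$; consequently $PR=RP$ forces $\hat P\hat R=\hat R\hat P$, and because $\hat R=\mathrm{id}_A-R$ this is equivalent to $\hat PR=R\hat P$. Thus $P$ commutes with $R$ if and only if $\hat P$ does, which is what will transport the commutativity condition across the adjoint.

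For the forward implication, I would first forget the averaging data and apply Lemma \ref{lem:3.23} to $(A,\cdot_A,R,\mathcal{B})$, producing a factorizable commutative and cocommutative infinitesimal bialgebra $(A,\cdot_A,\Delta_r)$ with $r$ as in \eqref{eq:thm:quadratic to fact}; this already gives that $r$ solves the AYBE, that $r+\tau(r)$ is invariant, and that $(r+\tau(r))^\sharp$ is bijective. By \cite{Bai2024}, $(A,\cdot_A,P,\hat P)$ is an admissible averaging commutative algebra. To upgrade $r$ to a solution of the AAYBE I would invoke Proposition \ref{pro:3.14} with $Q=\hat P$: its equation \eqref{eq:rb1} is precisely the AYBE already in hand, \eqref{eq:rb2} reads $PR=R\hat{\hat P}=RP$ which is the hypothesis \eqref{eq:commute}, and \eqref{eq:rb3} reads $\hat PR=R\hat P$ which holds by the pivotal remark. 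Hence $(A,\cdot_A,\Delta_r,P,\hat P)$ is quasi-triangular, and bijectivity of $(r+\tau(r))^\sharp$ makes it factorizable.

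For the converse, given a factorizable averaging commutative and cocommutative infinitesimal bialgebra $(A,\cdot_A,\Delta_r,P,Q)$, I would apply the converse half of Lemma \ref{lem:3.23} to obtain a symmetric Rota-Baxter Frobenius commutative algebra $(A,\cdot_A,R,\mathcal{B})$ of weight $-1$ with $R$, $\mathcal{B}$ defined by \eqref{eq:fact to quadratic0}, \eqref{eq:fact to quadratic}; in particular $\mathcal{B}^\natural=(s^\sharp)^{-1}$ where $s=r+\tau(r)$. The two facts requiring the averaging structure are $Q=\hat P$ and \eqref{eq:commute}. For $Q=\hat P$, I would show $(P\otimes\mathrm{id}-\mathrm{id}\otimes Q)s=0$: the $r$-summand vanishes by \eqref{eq:AAYBE1}, while the $\tau(r)$-summand equals $-\tau$ applied to $(Q\otimes\mathrm{id}-\mathrm{id}\otimes P)r$, which vanishes by \eqref{eq:AAYBE2}. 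In operator form this reads $Qs^\sharp=s^\sharp P^*$, and composing with $\mathcal{B}^\natural=(s^\sharp)^{-1}$ gives $Q=s^\sharp P^*(s^\sharp)^{-1}=\hat P$, the defining relation of the $\mathcal{B}$-adjoint. With $Q=\hat P$ in hand we have $\hat Q=P$, so Proposition \ref{pro:3.14} applied to the AAYBE solution $r$ yields \eqref{eq:rb2}, namely $PR=R\hat Q=RP$, which is \eqref{eq:commute}. Therefore $(A,\cdot_A,P,R,\mathcal{B})$ is a symmetric averaging Rota-Baxter Frobenius commutative algebra of weight $-1$ with $Q=\hat P$.

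The two applications of Lemma \ref{lem:3.23} and the admissibility from \cite{Bai2024} are routine. I expect the main obstacle to be the converse identification $Q=\hat P$: it requires translating the tensor conditions \eqref{eq:AAYBE1}--\eqref{eq:AAYBE2} into operator identities on the symmetric part $s^\sharp$ and recognizing $Qs^\sharp=s^\sharp P^*$ as the definition of the $\mathcal{B}$-adjoint, where the symmetry of $\mathcal{B}$ (so that $(\mathcal{B}^\natural)^*=\mathcal{B}^\natural$ and $\hat{\hat P}=P$) must be tracked carefully so the adjoint bookkeeping is consistent. The weight $-1$ nature of \eqref{strong}, through $\hat R=\mathrm{id}_A-R$, is precisely what makes the commutativity transfer between $R$ and $\hat P$ come out cleanly.
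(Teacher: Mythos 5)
Your proposal is correct and follows essentially the same route as the paper: both directions reduce to Lemma \ref{lem:3.23} together with Proposition \ref{pro:3.14}, with the averaging operators transported across the $\mathcal{B}$-adjoint exactly as in the paper's argument (your tensor computation showing $(P\otimes\mathrm{id}-\mathrm{id}\otimes Q)\big(r+\tau(r)\big)=0$ and hence $Q=\hat{P}$ is the paper's computation in operator form). Your only deviations are streamlinings rather than a different method — obtaining $R\hat{P}=\hat{P}R$ from $\hat{R}=\mathrm{id}_A-R$ and anti-multiplicativity of the adjoint instead of the paper's direct calculation with \eqref{strong}, and reading off $PR=RP$ from \eqref{eq:rb2} with $\hat{Q}=P$ rather than recomputing it in tensor form — and both shortcuts are valid.
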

\begin{proof}
    Suppose that $(A,\cdot_{A},P,R,\mathcal{B})$ is a symmetric averaging Rota-Baxter Frobenius commutative algebra of weight $-1$.
    Then by \cite{Bai2024}, $(A,\cdot_{A},P,\hat{P})$ is an admissible averaging commutative  algebra.
    For all $x,y\in A$, we have
    \begin{eqnarray*}
        &&\mathcal{B}\big(x,R\hat{P}(y)\big)\overset{\eqref{strong}}{=}-\mathcal{B}\big(R(x),\hat{P}(y)\big)+\mathcal{B}\big(x,\hat{P}(y)\big)=-\mathcal{B}\big(PR(x),y\big)+\mathcal{B}\big(P(x),y\big)\\
        &&=-\mathcal{B}\big(RP(x),y\big)+\mathcal{B}\big(P(x),y\big)\overset{\eqref{strong}}{=}\mathcal{B}\big(P(x),R(y)\big)=\mathcal{B}\big(x,\hat{P}R(y)\big).
    \end{eqnarray*}
    That is, $R\hat{P}=\hat{P}R$.
    Hence \eqref{eq:rb2} and \eqref{eq:rb3} hold  for $Q=\hat{P}$.
    Thus by Proposition \ref{pro:3.14} and Lemma \ref{lem:3.23},
    $(A,\cdot_{A},\Delta_{r},P,\hat{P})$  is a factorizable averaging commutative and cocommutative infinitesimal bialgebra.

    Conversely, let $(A,\cdot_{A},\Delta_{r},P,Q)$ be a factorizable averaging commutative and cocommutative infinitesimal bialgebra, and $(A,\cdot_{A},R,\mathcal{B})$ be the corresponding symmetric Rota-Baxter Frobenius commutative algebra of weight $-1$
given by \eqref{eq:fact to quadratic0} and \eqref{eq:fact to quadratic}.
Let $a^{*},b^{*}\in A^{*}$ and $x=\big(r+\tau(r)\big)^{\sharp}a^{*}, y=\big(r+\tau(r)\big)^{\sharp}b^{*}$.
Then we have
\begin{eqnarray*}
&&\mathcal{B}\big(P(x),y\big)=\langle\mathcal{B}^{\natural}(y),P(x)\rangle=\langle b^{*},P(x)\rangle=\langle P^{*}(b^{*}), x\rangle\\
&&=\langle P^{*}(b^{*}), \big(r+\tau(r)\big)^{\sharp}a^{*}\rangle=\langle r+\tau(r), a^{*}\otimes P^{*}(b^{*})\rangle=\langle (\mathrm{id}\otimes P)\big(r+\tau(r)\big), a^{*}\otimes b^{*}\rangle\\
&&=\langle (Q\otimes\mathrm{id})\big(r+\tau(r)\big), a^{*}\otimes b^{*}\rangle=\langle r+\tau(r), Q^{*}(a^{*})\otimes b^{*}\rangle=\langle \big(r+\tau(r)\big)^{\sharp}b^{*}, Q^{*}(a^{*})\rangle\\
&&=\langle y,Q^{*}(a^{*})\rangle=\langle Q(y),a^{*}\rangle=\langle \mathcal{B}^{\natural}(x),Q(y)\rangle=\mathcal{B}\big(x,Q(y)\big).
\end{eqnarray*}
Hence $Q=\hat{P}$, that is, $\mathcal{B}^{\natural}P=Q^{*}\mathcal{B}^{\natural}$.
Moreover, we have
\begin{eqnarray*}
&&\langle RP(x), a^{*}\rangle=\langle r^{\sharp}\big(r+\tau(r)\big)^{\sharp^{-1}}P(x), a^{*}\rangle=\langle r, \big(r+\tau(r)\big)^{\sharp^{-1}}P(x)\otimes a^{*}\rangle\\
&&=\langle r, \mathcal{B}^{\natural}P(x)\otimes a^{*}\rangle=\langle r, Q^{*}\mathcal{B}^{\natural}(x)\otimes a^{*}\rangle=\langle (Q\otimes \mathrm{id})r, \mathcal{B}^{\natural}(x)\otimes a^{*}\rangle\\
&&=\langle (\mathrm{id}\otimes P)r, \mathcal{B}^{\natural}(x)\otimes a^{*}\rangle=\langle r,\mathcal{B}^{\natural}(x)\otimes P^{*}(a^{*})\rangle=\langle r^{\sharp}\mathcal{B}^{\natural}(x),P^{*}(a^{*})\rangle\\
&&=\langle r^{\sharp}\big(r+\tau(r)\big)^{\sharp^{-1}}(x), P^{*}(a^{*})\rangle
=\langle R(x), P^{*}(a^{*})\rangle=\langle PR(x),a^{*}\rangle.
\end{eqnarray*}
That is, \eqref{eq:commute} holds. Hence the conclusion follows.
\end{proof}

By Theorem \ref{thm:commute}, there is a one-to-one correspondence between  symmetric averaging Rota-Baxter Frobenius commutative algebras of weight $-1$ and factorizable averaging commutative and cocommutative infinitesimal bialgebras.
Now we give an explicit example of a factorizable averaging commutative and cocommutative infinitesimal bialgebra, starting from a symmetric averaging Rota-Baxter Frobenius commutative algebra of weight $-1$.

\begin{ex}\label{ex:3.25}
Let $(A=\mathrm{span}\{e_{1},e_{2}\},\cdot_{A})$ be a $2$-dimensional commutative  algebra defined   by the following nonzero products:
\begin{equation}\label{eq:2-dim}
    e_{1}\cdot_{A}e_{1}=e_{1},\; e_{1}\cdot_{A}e_{2}=e_{2}.
\end{equation}
The identity map $\mathrm{id}_{A}$ is a Rota-Baxter operator on $(A,\cdot_{A})$ of weight $-1$.
By Proposition \ref{pro:3.16},
\begin{equation*}
    \big(A\ltimes_{\mathcal{L}^{*}_{\cdot_{A}}}A^{*},\mathcal{B}_{d}, R=\mathrm{id}_{A}-(\mathrm{id}_{A}-\mathrm{id}_{A}  )^{*}=\mathrm{id}_{A}  \big)
\end{equation*}
is a symmetric Rota-Baxter Frobenius commutative algebra of weight $-1$,
where the nonzero products of $A\ltimes_{\mathcal{L}^{*}_{\cdot_{A}}}A^{*}$ are given by \eqref{eq:2-dim} and
\begin{equation*}
    e_{1}\cdot_{d}e^{*}_{1}=e^{*}_{1},\;  e_{1}\cdot_{d}e^{*}_{2}=e^{*}_{2},\; e_{2}\cdot_{d}e^{*}_{2}=e^{*}_{1}.
\end{equation*}
Moreover, $P=\mathrm{id}_{A}$ is also an averaging operator of $A\ltimes_{\mathcal{L}^{*}_{\cdot_{A}}}A^{*}$ and clearly commutes with $R$, and
$\hat{P}=\mathrm{id}_{A^{*}}$.
By Theorem \ref{thm:commute}, there is a factorizable averaging commutative and cocommutative infinitesimal bialgebra
\begin{equation*}
    ( A\ltimes_{\mathcal{L}^{*}_{\cdot_{A}}}A^{*}, \Delta_{r}, P, \hat{P}  ),
\end{equation*}
where $r$ is given through the operator form $r^{\sharp}$ by \eqref{eq:thm:quadratic to fact}.
Explicitly, we have
\begin{equation*}
    r^{\sharp}(x+a^{*})=R\mathcal{B}_{d}^{\natural^{-1}}(x+a^{*})=R(x+a^{*})=x,
\end{equation*}
and hence
\begin{equation*}
    r=e^{*}_{1}\otimes e_{1}+e^{*}_{2}\otimes e_{2}.
\end{equation*}
The non-zero co-multiplications are given by
\begin{equation*}
    \Delta_{r}(e^{*}_{1})=e^{*}_{1}\otimes e^{*}_{1},\; \Delta_{r}(e^{*}_{2})=e^{*}_{1}\otimes e^{*}_{2}+e^{*}_{2}\otimes e^{*}_{1}.
\end{equation*}
\end{ex}

\delete{
\section{Apre-perm bialgebras}\label{sec:4}\

\begin{defi}
Another Definition:
Let $(A,\triangleright_{A},\triangleleft_{A})$ be a \app and $(A,\circ_{A})$ be
the sub-adjacent perm algebra. Let $V$ be a vector space and
$l_{\triangleright_{A}},r_{\triangleright_{A}},
l_{\triangleleft_{A}},r_{\triangleleft_{A}}:A\rightarrow\mathrm{End}_{\mathbb
K}(V)$ be linear maps. Set
\begin{eqnarray}
l_{\circ_{A}}=l_{\triangleright_{A}}+l_{\triangleleft_{A}},\;\;
r_{\circ_{A}}=r_{\triangleright_{A}}+r_{\triangleleft_{A}},\;\;
l_{\bullet_{A}}=l_{\triangleright_{A}}+r_{\triangleleft_{A}},\;\;
r_{\bullet_{A}}=r_{\triangleright_{A}}+l_{\triangleleft_{A}}.\label{eq:sum linear}
\end{eqnarray}
If $(l_{\circ_{A}},r_{\circ_{A}},V)$ is a representation of $(A,\circ_{A})$ and the following equations hold:
\begin{eqnarray}
&&l_{\bullet_{A}}(x\circ_{A}y)v=l_{\bullet_{A}}(x)l_{\bullet_{A}}(y)v
=l_{\bullet_{A}}(y)l_{\bullet_{A}}(x)v,\label{eq:dpprep1}\\
&&r_{\bullet_{A}}(x)l_{\circ_{A}}(y)v=l_{\bullet_{A}}(y)r_{\bullet_{A}}
(x)v=r_{\bullet_{A}}(y\bullet_{A}x)v,\label{eq:dpprep2}\\
&&r_{\bullet_{A}}(x)r_{\circ_{A}}(y)v=r_{\bullet_{A}}(y\bullet_{A}x)v
=l_{\bullet_{A}}(y)r_{\bullet_{A}}(x)v,\label{eq:dpprep3}\\
&&l_{\triangleleft_{A}}(x)l_{\circ_{A}}(y)v=-r_{\triangleleft_{A}}(y)
l_{\triangleleft_{A}}(x)v=l_{\bullet_{A}}(y)l_{\triangleleft_{A}}(x)v
=l_{\triangleleft_{A}}(y\bullet_{A}x)v,\label{eq:dpprep4}\\
&&l_{\triangleleft_{A}}(x)r_{\circ_{A}}(y)v=-l_{\triangleleft_{A}}
(x\triangleleft_{A}y)v=r_{\bullet_{A}}(x\triangleleft_{A}y)v
=r_{\triangleleft_{A}}(y)r_{\bullet_{A}}(x)v,\label{eq:dpprep5}\\
&&r_{\triangleleft_{A}}(x\circ_{A}y)v=-r_{\triangleleft_{A}}
(x)r_{\triangleleft_{A}}(y)v=l_{\bullet_{A}}(x)r_{\triangleleft_{A}}(y)v
=r_{\triangleleft_{A}}(y)l_{\bullet_{A}}(x)v,\;\forall x,y\in A, v\in V,\ \ \ \ \label{eq:dpprep6}
\end{eqnarray}
then we say $(l_{\triangleright_{A}},r_{\triangleright_{A}},
l_{\triangleleft_{A}},r_{\triangleleft_{A}},V)$ is a {\bf representation} of $(A,\triangleright_{A},\triangleleft_{A})$.
\end{defi}

\begin{pro}
Another Pro:
Let $(A,\triangleright_{A},\triangleleft_{A})$ be a \app and $(A,\circ_{A})$ be
the sub-adjacent perm algebra. If
$(l_{\triangleright_{A}},r_{\triangleright_{A}},
l_{\triangleleft_{A}},r_{\triangleleft_{A}},V)$ is a
representation of $(A,\triangleright_{A},\triangleleft_{A})$, then
$(l^{*}_{\circ_{A}}+r^{*}_{\circ_{A}},r^{*}_{\triangleright_{A}},$
$-l^{*}_{\triangleleft_{A}}-r^{*}_{\triangleright_{A}},
-r^{*}_{\circ_{A}},V^{*})$ is also a representation of $(A,\triangleright_{A},\triangleleft_{A})$.
In particular, $(\mathcal{L}^{*}_{\circ_{A}}+\mathcal{R}^{*}_{\circ_{A}},\mathcal{R}^{*}_{\triangleright_{A}},$
$-\mathcal{L}^{*}_{\triangleleft_{A}}-\mathcal{R}^{*}_{\triangleright_{A}},
-\mathcal{R}^{*}_{\circ_{A}},A^{*})$ is a representation of $(A,\triangleright_{A},\triangleleft_{A})$, which is called the {\bf coadjoint representation} of $(A,\triangleright_{A},\triangleleft_{A})$.
\end{pro}

\begin{proof}
It is clear that $l^{*}_{\bullet_{A}}$ satisfies \eqref{eq:rep1}.
For all $x,y\in A,u^{*}\in V^{*},v\in V$, we have
{\small
\begin{eqnarray*}
&&\langle -r^{*}_{\triangleleft_{A}}(x\circ_{A}y)u^{*},v\rangle =\langle u^{*
},-r _{\triangleleft_{A}}(x\circ _{A}y)v\rangle ,\;\langle r_{\triangleleft_{A}}^{* }(y)r_{\triangleleft_{A}}^{* }(x)u^{*
},v\rangle =\langle u^{* },r_{\triangleleft_{A}}(x)r_{\triangleleft_{A}}(y)v\rangle , \\
&&\langle -r_{\triangleleft_{A}}^{* }(y)l_{\bullet_{A}}^{* }(x)u^{* },v\rangle =\langle u^{*
},-l _{\bullet_{A}}(x)r_{\triangleleft_{A}}(y)v\rangle ,\;\langle -l_{\bullet_{A}}^{* }(x)r_{\triangleleft_{A}}^{* }(y)u^{* },v\rangle
=\langle u^{* },-r_{\triangleleft_{A}}(y)l_{\bullet_{A}}(x)v\rangle .
\end{eqnarray*}}Hence by \eqref{eq:dpprep6}, we have
\begin{equation*}
-r_{\triangleleft_{A}}^{* }(x\circ _{A}y)u^{* }=r_{\triangleleft_{A}}^{* }(y)r_{\triangleleft_{A}}^{* }(x)u^{* }=-r_{\triangleleft_{A}}^{*
}(y)l_{\bullet_{A}}^{* }(x)u^{* }=-l_{\bullet_{A}}^{* }(x)r_{\triangleleft_{A}}^{* }(y)u^{* }.
\end{equation*}
Thus $$ (l^{*}_{\bullet_{A}},-r^{*}_{\triangleleft_{A}},V^{*})=(l^{*}_{\circ_{A}}+r^{*}_{\circ_{A}}
-l^{*}_{\triangleleft_{A}}-r^{*}_{\triangleright_{A}},r^{*}_{\triangleright_{A}}-r^{*}_{\circ_{A}},V^{*})$$
is a representation of $(A,\circ_{A})$.
Similarly,\eqref{eq:dpprep1}-\eqref{eq:dpprep6} hold for
$(l^{*}_{\circ_{A}}+r^{*}_{\circ_{A}},r^{*}_{\triangleright_{A}},
-l^{*}_{\triangleleft_{A}}-r^{*}_{\triangleright_{A}},-r^{*}_{\circ_{A}},$
$V^{*})$.
\delete{
\begin{eqnarray*}
&&-r_{\circ_{A}}^{* }(x\circ _{A}y)u^{* }=-l_{\circ_{A}}^{* }(x)r_{\circ_{A}}^{* }(y)u^{* }=-r_{\circ_{A}}^{*
}(x)r_{\circ_{A}}^{* }(y)u^{* }=-r_{\circ_{A}}^{* }(y)l_{\circ_{A}}^{* }(x)u^{* }, \\
&&-r_{\circ_{A}}^{* }(x\triangleleft_{A}y)u^{* }=-r_{\circ_{A}}^{* }(y)l_{\triangleleft_{A}}^{* }(x)u^{* }=l_{\triangleleft_{A}}^{*
}(x\triangleleft_{A}y)u^{* }.
\end{eqnarray*}}
Thus $(l^{*}_{\circ_{A}}+r^{*}_{\circ_{A}},r^{*}_{\triangleright_{A}},
-l^{*}_{\triangleleft_{A}}-r^{*}_{\triangleright_{A}},-r^{*}_{\circ_{A}},V^{*})$ is a representation of $(A,\triangleright_{A},\triangleleft_{A})$.
\end{proof}

\begin{defi}
Let $(A,\triangleright_{A},\triangleleft_{A})$ be a \app and $(A,\circ_{A})$ be
the sub-adjacent perm algebra. Let $V$ be a vector space and
$l_{\triangleright_{A}},r_{\triangleright_{A}},
l_{\triangleleft_{A}},r_{\triangleleft_{A}}:A\rightarrow\mathrm{End}_{\mathbb
K}(V)$ be linear maps. Set
\begin{eqnarray}
l_{\circ_{A}}=l_{\triangleright_{A}}+l_{\triangleleft_{A}},\;\;
r_{\circ_{A}}=r_{\triangleright_{A}}+r_{\triangleleft_{A}},\;\;
l_{\bullet_{A}}=l_{\triangleright_{A}}+r_{\triangleleft_{A}},\;\;
r_{\bullet_{A}}=r_{\triangleright_{A}}+l_{\triangleleft_{A}}.\label{eq:sum linear}
\end{eqnarray}
If $(l_{\circ_{A}},r_{\circ_{A}},V)$ is a representation of $(A,\circ_{A})$ and the following equations hold:
\begin{eqnarray}
&&l_{\triangleright_{A}}(x)l_{\triangleleft_{A}}(y)v
+2r_{\triangleleft_{A}}(x)l_{\triangleleft_{A}}(y)v=0,\label{eq:dpprep1}\\
&&l_{\triangleright_{A}}(x)r_{\triangleleft_{A}}(y)v
+2r_{\triangleleft_{A}}(x)r_{\triangleleft_{A}}(y)v=0,\label{eq:dpprep2}\\
&&r_{\triangleright_{A}}(x\triangleleft_{A}y)v
+2l_{\triangleleft_{A}}(x\triangleleft_{A}y)v=0,\label{eq:dpprep3}\\
&&l_{\triangleleft_{A}}(x)l_{\circ_{A}}(y)v=-r_{\triangleleft_{A}}(y)
l_{\triangleleft_{A}}(x)v=l_{\triangleleft_{A}}(y\bullet_{A}x)v,\label{eq:dpprep4}\\
&&l_{\triangleleft_{A}}(x)r_{\circ_{A}}(y)v
=-l_{\triangleleft_{A}}(x\triangleleft_{A}y)v
=r_{\triangleleft_{A}}(y)r_{\bullet_{A}}(x)v,\label{eq:dpprep5}\\
&&r_{\triangleleft_{A}}(x\circ_{A}y)v
=-r_{\triangleleft_{A}}(x)r_{\triangleleft_{A}}(y)v
=r_{\triangleleft_{A}}(y)l_{\bullet_{A}}(x)v,\label{eq:dpprep6}\\
&&l_{\triangleright_{A}}(x\circ_{A}y)v
=l_{\triangleright_{A}}(y)l_{\bullet_{A}}(x)v
=l_{\bullet_{A}}(x)l_{\triangleright_{A}}(y)v,\label{eq:dpprep7}\\
&&r_{\triangleright_{A}}(x)l_{\circ_{A}}(y)v
=r_{\triangleright_{A}}(y\bullet_{A}x)v
=l_{\bullet_{A}}(y)r_{\triangleright_{A}}(x)v,\label{eq:dpprep8}\\
&&r_{\triangleright_{A}}(x)r_{\circ_{A}}(y)v
=l_{\triangleright_{A}}(y)r_{\bullet_{A}}(x)v
=r_{\bullet_{A}}(y\triangleright_{A}x)v
,\;\forall x,y\in A, v\in V,\label{eq:dpprep9}
\end{eqnarray}
then we say $(l_{\triangleright_{A}},r_{\triangleright_{A}},
l_{\triangleleft_{A}},r_{\triangleleft_{A}},V)$ is a {\bf representation} of $(A,\triangleright_{A},\triangleleft_{A})$.
Two representations $(l _{\triangleright_{A}},r _{\triangleright_{A}},l _{\triangleleft_{A}},r_{\triangleleft_{A}},V)$ and $(l' _{\triangleright_{A}},r' _{\triangleright_{A}},l' _{\triangleleft_{A}},r'_{\triangleleft_{A}},V')$ of $(A,\triangleright_{A},\triangleleft_{A})$ are called
\textbf{equivalent} if there exists a linear isomorphism $\phi:V \rightarrow V'$ such that the following equations hold:
\begin{eqnarray*}
\phi l_{\triangleright_{A}}(x)=l'_{\triangleright_{A}}(x)\phi,\;\phi r_{\triangleright_{A}}(x)=r'_{\triangleright_{A}}(x)\phi,\;\phi l_{\triangleleft_{A}} (x)=l'_{\triangleleft_{A}}(x)\phi, \;r_{\triangleleft_{A}}(x)=r'_{\triangleleft_{A}}(x)\phi,\;\forall x\in A.
\end{eqnarray*}
\end{defi}

In fact, for a vector space $V$ and linear maps
$l_{\triangleright_{A}},r_{\triangleright_{A}},
l_{\triangleleft_{A}},r_{\triangleleft_{A}}:A\rightarrow \mathrm{
End}(V)$, the quadruple
$(l_{\triangleright_{A}},r_{\triangleright_{A}},
l_{\triangleleft_{A}},r_{\triangleleft_{A}},V)$ is a
representation of the \app $(A,\triangleright_{A},\triangleleft_{A})$ if and only
if there is a \app structure on $A\oplus V$ given by
\begin{eqnarray}
&&(x+u)\triangleright_{d} (y+v)=x\triangleright_{A}y+l_{\triangleright_{A}}(x)v+r_{\triangleright_{A}}(y)u,\label{eq:sd SDPP1}\\
&&(x+u)\triangleleft_{d} (y+v)=x\triangleleft_{A}y+l_{\triangleleft_{A}}(x)v+r_{\triangleleft_{A}}(y)u,\;\forall x,y\in A,u,v\in V.
\label{eq:sd SDPP2}
\end{eqnarray}
We denote the \app structure on $A\oplus V$ by $A\ltimes
_{l_{\triangleright_{A}},r_{\triangleright_{A}},l_{\triangleleft_{A}},r_{\triangleleft_{A}}}V$.

\begin{ex}
Let $(A,\triangleright_{A},\triangleleft_{A})$ be a \app. Then
$(\mathcal{L}_{\triangleright_{A}},\mathcal{R}_{\triangleright_{A}},
\mathcal{L}_{\triangleleft_{A}},\mathcal{R}_{\triangleleft_{A}},A)$
is a representation of $(A,\triangleright_{A},\triangleleft_{A} )$, which is called
the \textbf{adjoint representation}.
\end{ex}

\begin{pro}
Let $(A,\triangleright_{A},\triangleleft_{A})$ be a \app and $(A,\circ_{A})$ be
the sub-adjacent perm algebra. If
$(l_{\triangleright_{A}},r_{\triangleright_{A}},
l_{\triangleleft_{A}},r_{\triangleleft_{A}},V)$ is a
representation of $(A,\triangleright_{A},\triangleleft_{A})$, then
$(l^{*}_{\circ_{A}}+r^{*}_{\circ_{A}},r^{*}_{\triangleright_{A}},$
$-l^{*}_{\triangleleft_{A}}-r^{*}_{\triangleright_{A}},
-r^{*}_{\circ_{A}},V^{*})$ is also a representation of $(A,\triangleright_{A},\triangleleft_{A})$.
In particular, $(\mathcal{L}^{*}_{\circ_{A}}+\mathcal{R}^{*}_{\circ_{A}},\mathcal{R}^{*}_{\triangleright_{A}},$
$-\mathcal{L}^{*}_{\triangleleft_{A}}-\mathcal{R}^{*}_{\triangleright_{A}},
-\mathcal{R}^{*}_{\circ_{A}},A^{*})$ is a representation of $(A,\triangleright_{A},\triangleleft_{A})$, which is called the {\bf coadjoint representation} of $(A,\triangleright_{A},\triangleleft_{A})$.
\end{pro}

\begin{proof}
For all $x,y\in A,u^{*}\in V^{*},v\in V$, we have
\begin{eqnarray*}
\langle l^{*}_{\bullet_{A}}(x\circ_{A}y)u^{*},v\rangle =\langle u^{*},l _{\bullet_{A}}(x\circ_{A}y)v\rangle ,\;
\langle l^{*}_{\bullet_{A}}(x)l^{*}_{\bullet_{A}}(y)u^{*},v\rangle =\langle u^{*},l_{\bullet_{A}}(y)l_{\bullet_{A}}(x)v\rangle.
\end{eqnarray*}
Hence by \eqref{eq:dpprep6} and \eqref{eq:dpprep7}, we have
\begin{eqnarray*}
l_{\bullet_{A}}(x\circ_{A}y)v&=&l_{\bullet_{A}}(y)l_{\bullet_{A}}(x)v
=l_{\bullet_{A}}(x)l_{\triangleright_{A}}(y)v
-r_{\triangleleft_{A}}(x)r_{\triangleleft_{A}}(y)v\\
&\overset{\eqref{eq:dpprep2}}{=}&l_{\bullet_{A}}(x)l_{\triangleright_{A}}(y)v
+l_{\bullet_{A}}(x)r_{\triangleleft_{A}}(y)v=l_{\bullet_{A}}(x)l_{\bullet_{A}}(y)v.
\end{eqnarray*}
Thus $l^{*}_{\bullet_{A}}$ satisfies \eqref{eq:rep1}.
We also have
{\small
\begin{eqnarray*}
&&\langle -r^{*}_{\triangleleft_{A}}(x\circ_{A}y)u^{*},v\rangle =\langle u^{*
},-r _{\triangleleft_{A}}(x\circ_{A}y)v\rangle ,\;\langle r_{\triangleleft_{A}}^{* }(y)r_{\triangleleft_{A}}^{* }(x)u^{*
},v\rangle =\langle u^{* },r_{\triangleleft_{A}}(x)r_{\triangleleft_{A}}(y)v\rangle , \\
&&\langle -r_{\triangleleft_{A}}^{* }(y)l_{\bullet_{A}}^{* }(x)u^{* },v\rangle =\langle u^{*
},-l _{\bullet_{A}}(x)r_{\triangleleft_{A}}(y)v\rangle ,\;\langle -l_{\bullet_{A}}^{* }(x)r_{\triangleleft_{A}}^{* }(y)u^{* },v\rangle
=\langle u^{* },-r_{\triangleleft_{A}}(y)l_{\bullet_{A}}(x)v\rangle .
\end{eqnarray*}}Hence by \eqref{eq:dpprep2} and \eqref{eq:dpprep6}, we have
\begin{equation*}
-r_{\triangleleft_{A}}^{* }(x\circ _{A}y)u^{* }=r_{\triangleleft_{A}}^{* }(y)r_{\triangleleft_{A}}^{* }(x)u^{* }=-r_{\triangleleft_{A}}^{*
}(y)l_{\bullet_{A}}^{* }(x)u^{* }=-l_{\bullet_{A}}^{* }(x)r_{\triangleleft_{A}}^{* }(y)u^{* }.
\end{equation*}
Thus $$ (l^{*}_{\bullet_{A}},-r^{*}_{\triangleleft_{A}},V^{*})=(l^{*}_{\circ_{A}}+r^{*}_{\circ_{A}}
-l^{*}_{\triangleleft_{A}}-r^{*}_{\triangleright_{A}},r^{*}_{\triangleright_{A}}-r^{*}_{\circ_{A}},V^{*})$$
is a representation of $(A,\circ_{A})$.
Similarly, \eqref{eq:dpprep1}-\eqref{eq:dpprep9} hold.
\delete{
\begin{eqnarray*}
&&-r_{\circ_{A}}^{* }(x\circ _{A}y)u^{* }=-l_{\circ_{A}}^{* }(x)r_{\circ_{A}}^{* }(y)u^{* }=-r_{\circ_{A}}^{*
}(x)r_{\circ_{A}}^{* }(y)u^{* }=-r_{\circ_{A}}^{* }(y)l_{\circ_{A}}^{* }(x)u^{* }, \\
&&-r_{\circ_{A}}^{* }(x\triangleleft_{A}y)u^{* }=-r_{\circ_{A}}^{* }(y)l_{\triangleleft_{A}}^{* }(x)u^{* }=l_{\triangleleft_{A}}^{*
}(x\triangleleft_{A}y)u^{* }.
\end{eqnarray*}}
Thus $(l^{*}_{\circ_{A}}+r^{*}_{\circ_{A}},r^{*}_{\triangleright_{A}},
-l^{*}_{\triangleleft_{A}}-r^{*}_{\triangleright_{A}},-r^{*}_{\circ_{A}},V^{*})$ is a representation of $(A,\triangleright_{A},\triangleleft_{A})$.
\end{proof}

\begin{pro}\label{pro:336}
Let $(A,\triangleright_{A},\triangleleft_{A})$ be a \app and $(A,\circ_{A})$ be
the sub-adjacent perm algebra. Then  $(\mathcal{L}_{\triangleright_{A}},\mathcal{R}_{\triangleright_{A}},
\mathcal{L}_{\triangleleft_{A}},\mathcal{R}_{\triangleleft_{A}},A)$
and
$(\mathcal{L}^{*}_{\circ_{A}}+\mathcal{R}^{*}_{\circ_{A}},\mathcal{R}^{*}_{\triangleright_{A}},
-\mathcal{L}^{*}_{\triangleleft_{A}}-\mathcal{R}^{*}_{\triangleright_{A}},
-\mathcal{R}^{*}_{\circ_{A}},A^{*})$
are equivalent as representations of $(A,\triangleright_{A},\triangleleft_{A})$
if and only if
there is a nondegenerate bilinear form $\mathcal{B}$ on $A$ satisfying \eqref{eq:cor3}, \eqref{eq:cor4} and the following equations:
\begin{eqnarray}
\mathcal{B}(x\bullet_{A}y,z)&=&-\mathcal{B}(x,y\triangleleft_{A}z)
-\mathcal{B}(x,z\triangleright_{A}y),\label{eq:dppbf3}\\
\mathcal{B}(x\triangleright_{A}y,z)&=&
\mathcal{B}(x,z\triangleright_{A}y),\;\forall x,y,z\in A.\label{eq:cor3.37}
\end{eqnarray}
\end{pro}

\begin{proof}
Let $\phi:A\rightarrow A^{*}$ be a linear map and $\mathcal{B}$ be a bilinear form on $A$ given by $\mathcal{B}^{\natural}=\phi$.
Then $\phi$ is bijective if and only if $\mathcal{B}$ is nondegenerate.
Moreover, the following equations hold
\begin{eqnarray*}
&&\phi\big(\mathcal{L}_{\triangleright_{A}}(x)y\big)=(\mathcal{L}^{*}_{\circ_{A}}+\mathcal{R}^{*}_{\circ_{A}})(x)\phi(y),\;
\phi\big(\mathcal{R}_{\triangleright_{A}}(y)x\big)=\mathcal{R}^{*}_{\triangleright_{A}}(y)\phi(x),\\
&&\phi\big(\mathcal{L}_{\triangleleft_{A}}(y)x\big)=
(-\mathcal{L}^{*}_{\triangleleft_{A}}-\mathcal{R}^{*}_{\triangleright_{A}})(y)\phi(x),\;
\phi\big(\mathcal{R}_{\triangleleft_{A}}(y)x\big)=-\mathcal{R}^{*}_{\circ_{A}} (y)\phi(x),\;\forall x,y\in A
\end{eqnarray*}
if and only if  \eqref{eq:cor3}, \eqref{eq:cor4}, \eqref{eq:dppbf3} and \eqref{eq:cor3.37} are satisfied.
Hence the conclusion follows.
\end{proof}

\begin{defi}
A (symmetric) bilinear form $\mathcal{B}$ on a perm algebra $(A,\circ_{A})$ is called
{\bf quasi-left-invariant} if \eqref{eq:left inv1} holds.
\end{defi}

\begin{defi}
A (symmetric) bilinear form $\mathcal{B}$ on a \app  $(A,\triangleright_{A},\triangleleft_{A})$ is called
{\bf invariant} if \eqref{eq:cor3} and \eqref{eq:cor4} hold. A {\bf quadratic \app}
$(A,\triangleright_{A},\triangleleft_{A},\mathcal{B})$ is a \app $(A,\triangleright_{A},\triangleleft_{A})$ together with a nondegenerate symmetric invariant
bilinear form $\mathcal{B}$.
\end{defi}

\begin{pro}\label{pro:330}
Let $\mathcal{B}$ be a nondegenerate symmetric quasi-left-invariant bilinear form on a perm algebra $(A,\circ_{A})$. Define
multiplications $\triangleleft_{A},\triangleright_{A}:A\otimes A\rightarrow A$
respectively by \eqref{eq:cor4} and the following equation
\begin{equation}\label{eq:succ}
x\triangleright_{A}y=x\circ_{A}y-x\triangleleft_{A}y,\;\forall x,y\in A.
\end{equation}
Then $(A,\triangleright_{A},\triangleleft_{A},\mathcal{B})$ is a quadratic
\app. Conversely, let $(A,\triangleright_{A},\triangleleft_{A},\mathcal{B})$ be
a quadratic \app. Then $\mathcal B$ is quasi-left-invariant on the
sub-adjacent perm algebra $(A,\circ_A)$. Hence there is a
one-to-one correspondence between perm algebras with nondegenerate
symmetric quasi-left-invariant bilinear forms and quadratic \apps.
\end{pro}

\begin{proof}
The first half part follows directly from
Theorem \ref{thm:420}.
The second half part is obtained directly.
\end{proof}

\begin{defi}
    \begin{enumerate}
        \item Let $(A,\circ_{A})$ and
        $(A^{*},\circ_{A^{*}})$ be perm algebras. If there is a perm algebra
        structure $(A\oplus
        A^{*},\circ_{d})$ on $A\oplus A^{*}$
        containing $(A,\circ_{A})$ and $(
        A^{*},\circ_{A^{*}})$ as perm subalgebras, and the natural nondegenerate symmetric bilinear form $\mathcal{B}_{d}$
        is quasi-left-invariant on $(A\oplus
        A^{*},\circ_{d})$, then we say
        $\big(  (  A\oplus
        A^{*},\circ_{d},\mathcal{B}_{d}),(A,\circ_{A}),(A^{*},\circ_{A^{*}})\big)
        $ is a {\bf Manin triple of perm algebras associated to the nondegenerate
            symmetric quasi-left-invariant bilinear form}.
        \item Let $(A,\triangleright_{A},\triangleleft_{A})$ and
        $(A^{*},\triangleright_{A^{*}},\triangleleft_{A^{*}})$ be \apps. If there is a
        quadratic \app structure $(A\oplus
        A^{*},\triangleright_{d},\triangleleft_{d},\mathcal{B}_{d})$ on $A\oplus A^{*}$
        which contains $(A,\triangleright_{A},\triangleleft_{A})$ and $(
        A^{*},\triangleright_{A^{*}},\triangleleft_{A^{*}})$ as \appsubs, then we say
        $\big(  (  A\oplus
        A^{*},\triangleright_{d},\triangleleft_{d},\mathcal{B}_{d}),
        (A,\triangleright_{A},\triangleleft_{A}),$
        $(A^{*},\triangleright_{A^{*}},\triangleleft_{A^{*}})\big)
        $ is a \textbf{Manin triple of \apps}.
    \end{enumerate}
\end{defi}

\begin{pro}\label{pro:equ}
There is a one-by-one correspondence between
Manin triples of perm algebras associated to nondegenerate
symmetric quasi-left-invariant bilinear forms and Manin triples of \apps.
\end{pro}
\begin{proof}
    Let $\big(  (  A\oplus
    A^{*},\circ_{d},\mathcal{B}_{d}),(A,\circ_{A}),(A^{*},\circ_{A^{*}})\big)
    $ be a Manin triple of perm algebras associated to the nondegenerate
    symmetric quasi-left-invariant bilinear form. Then by Proposition \ref{pro:330}, there is a quadratic \app  $(A\oplus A^{*},\triangleright_{d},\triangleleft_{d},\mathcal{B})$ with the multiplications $\triangleright_{d},\triangleleft_{d}$ defined by
    {\small
    \begin{eqnarray*}
    &&\mathcal{B}_{d}\big( (x+a^{*})\triangleleft_{d}(y+b^{*}) ,z+c^{*}\big)=-\mathcal{B}_{d}\big( x+a^{*}, (z+c^{*})\circ_{d}(y+b^{*}) \big),\\
    &&(x+a^{*})\triangleright_{d}(y+b^{*})=(x+a^{*})\circ_{d}(y+b^{*})-(x+a^{*})\triangleleft_{d}(y+b^{*}),\;\forall x,y,z\in A, a^{*},b^{*},c^{*}\in A^{*}.
    \end{eqnarray*}}In particular, we have
\begin{eqnarray*}
    \mathcal{B}_{d}(x\triangleleft_{d}y,z)=-\mathcal{B}_{d}(x,z\circ_{d}y)=0,\;\;\forall
    x,y,z\in A.
\end{eqnarray*}
Hence we have $x\triangleleft_{d}y\in A$, and $x\triangleright_{d}y=x\circ_{d}y-x\triangleleft_{d}y=x\circ_{A}y-x\triangleleft_{d}y\in A$.
Thus $(A,\triangleright_{A},\triangleleft_{A}):=(A,\triangleright_{d}|_{A},\triangleleft_{d}|_{A})$ is a subalgebra of $(A\oplus A^{*},\triangleright_{d},\triangleleft_{d})$.
Similarly, $(A^{*},\triangleright_{A^{*}},\triangleleft_{A^{*}})$ is also a subalgebra of $(A\oplus A^{*},\triangleright_{d},\triangleleft_{d})$.
Hence $\big(  (  A\oplus
A^{*},\triangleright_{d},\triangleleft_{d},\mathcal{B}_{d}),(A,\triangleright_{A},\triangleleft_{A}),(A^{*},\triangleright_{A^{*}},\triangleleft_{A^{*}})\big)
$ is a  Manin triple of \apps.

Conversely, suppose that $\big(  (  A\oplus
A^{*},\triangleright_{d},\triangleleft_{d},\mathcal{B}_{d}),(A,\triangleright_{A},\triangleleft_{A}),(A^{*},\triangleright_{A^{*}},\triangleleft_{A^{*}})\big)
$ is a  Manin triple of \apps. Then $\big(  (  A\oplus
A^{*},\circ_{d},\mathcal{B}_{d}),(A,\circ_{A}),(A^{*},\circ_{A^{*}})\big)
$ is straightforwardly a Manin triple of perm algebras associated to the nondegenerate
symmetric quasi-left-invariant bilinear form, where $(A\oplus A^{*},\circ_{d}),(A,\circ_{A})$ and $(A^{*},\circ_{A^{*}})$ are the sub-adjacent perm algebras of $(A\oplus A^{*},\triangleright_{d},\triangleleft_{d})$,
$(A,\triangleright_{A},\triangleleft_{A})$ and $(A^{*},\triangleright_{A^{*}},\triangleleft_{A^{*}})$ respectively.
\end{proof}

\begin{thm}\label{thm:Manin triple}
Let $(A,\triangleright_{A},\triangleleft_{A})$ and
$(A^{*},\triangleright_{A^{*}},\triangleleft_{A^{*}})$ be \apps and their sub-adjacent perm algebras be $(A,\circ_{A})$ and
$(A^{*},\circ_{A^{*}})$ respectively. Then the
following conditions are equivalent:
\begin{enumerate}
\item\label{E4} There is a Manin triple $\big(  (  A\oplus
A^{*},\circ_{d},\mathcal{B}_{d}),(A,\circ_{A}),(A^{*},\circ_{A^{*}})\big)
$ of perm algebras associated to the nondegenerate symmetric
quasi-left-invariant bilinear form such that the compatible \app
$(A\oplus A^{*},\triangleright_{d},\triangleleft_{d})$ induced
from $\mathcal{B}_{d}$ contains
$(A,\triangleright_{A},\triangleleft_{A})$ and
$(A^{*},\triangleright_{A^{*}},\triangleleft_{A^{*}})$ as
subalgebras.
\item\label{E1} There is a Manin triple
$\big((A\oplus
A^{*},\triangleright_{d},\triangleleft_{d},\mathcal{B}_{d}),
(A,\triangleright_{A},\triangleleft_{A}),
(A^{*},\triangleright_{A^{*}},
\triangleleft_{A^{*}})\big)$ of \apps.
\item\label{E2} There is
a perm algebra $(A\oplus A^{*},\circ_{d})$ with $ \circ_{d} $
defined by
\begin{equation}\label{eq:A ds}
    (x+a^{*})\circ_{d}(y+b^{*})=x\circ_{A}y+\mathcal{L}^{*}_{\bullet_{A^{*}}}(a^{*})y
    -\mathcal{R}^{*}_{\triangleleft_{A^{*}}}(b^{*})x+a^{*}\circ_{A^{*}}b^{*}+
    \mathcal{L}^{*}_{\bullet_{A}}(x)b^{*}-\mathcal{R}^{*}_{\triangleleft_{A}}(y)a^{*},
\end{equation}
for all $x,y\in A, a^{*},b^{*}\in A^{*}$. \item\label{E3} There is
a \app  $(A\oplus A^{*},\triangleright_{d},\triangleleft_{d})$
with $\triangleright_{d},\triangleleft_{d}$ defined by
\begin{eqnarray}
    (x+a^{*})\triangleright_{d}(y+b^{*})&=&\;x\triangleright_{A}y+(\mathcal{L}^{*}_{\circ_{A^{*}}}+\mathcal{R}^{*}_{\circ_{A^{*}}})(a^{*})y
    +\mathcal{R}^{*}_{\triangleright_{A^{*}}}(b^{*})x\nonumber\\
    &&\;+a^{*}\triangleright_{A^{*}}b^{*}+(\mathcal{L}^{*}_{\circ_{A}}+\mathcal{R}^{*}_{\circ_{A}})(x)b^{*}+\mathcal{R}^{*}_{\triangleright_{A}}(y)a^{*},\label{eq:A ds1}\\
    (x+a^{*})\triangleleft_{d}(y+b^{*})&=&\;x\triangleleft_{A}y-
    (\mathcal{L}^{*}_{\triangleleft_{A^{*}}}
    +\mathcal{R}^{*}_{\triangleright_{A^{*}}})(a^{*})y
    -\mathcal{R}^{*}_{\circ_{A^{*}}}(b^{*})x\nonumber\\
    &&\;+a^{*}\triangleleft_{A^{*}}b^{*}-(\mathcal{L}^{*}_{\triangleleft_{A}}
    +\mathcal{R}^{*}_{\triangleright_{A}})(x)b^{*}
    -\mathcal{R}^{*}_{\circ_{A }}(y)a^{*},\label{eq:A ds2}
\end{eqnarray}
for all $x,y\in A, a^{*},b^{*}\in A^{*}$.
\end{enumerate}
\end{thm}

\begin{proof}
(\ref{E4})$\Longleftrightarrow$(\ref{E1}) It follows from Proposition \ref{pro:equ}.

(\ref{E1})$\Longrightarrow$(\ref{E3}) \delete{ Suppose that there
is a Manin triple of \sapps $\big(( A\oplus
A^{*}, \triangleright_{d}, \triangleleft_{d},\mathcal{B}_{d}),(A, \triangleright_{A}$,
$ \triangleleft_{A}),(A^{*}, \triangleright_{A^{*}}, \triangleleft_{A^{*}})\big) $. Then for
all $x,y\in A, a^{*},b^{*}\in A^{*}$, we have} Let $x,y\in A,
a^{*},b^{*}\in A^{*}$. By the assumption, we have
\begin{eqnarray*}
\mathcal{B}_{d}(x\triangleright_{d} b^{*},y)&\overset{\eqref{eq:cor3}}{=}&\mathcal{B}
_{d}(b^{*},x\circ_{A} y+y\circ_{A} x)=
\mathcal{B}_{d}\big((\mathcal{L}
^{*}_{\circ_{A}}+\mathcal{R}
^{*}_{\circ_{A}})(x)b^{*},y\big),\\
\mathcal{B}_{d}(x\triangleright_{d} b^{*},a^{*})&\overset{\eqref{eq:cor3.37}}{=}&
\mathcal{B}_{d}(x,a^{*}\triangleright_{A^{*}}b^{*})=\langle x,
a^{*}\triangleright_{A^{*}}b^{*}\rangle=\mathcal{B}_{d}\big(\mathcal{R}
^{*}_{\triangleright_{A^{*}}}(b^{*})x,a^{*}\big).
\end{eqnarray*}
Thus we have
\[
x\triangleright_{d} b^{*}=(\mathcal{L}
^{*}_{\circ_{A}}+\mathcal{R}
^{*}_{\circ_{A}})(x)b^{*}+\mathcal{R}
^{*}_{\triangleright_{A^{*}}}(b^{*})x,
\]
and similarly
\[
a^{*}\triangleright_{d} y=(\mathcal{L}
^{*}_{\circ_{A^{*}}}+\mathcal{R}
^{*}_{\circ_{A^{*}}})(a^{*})y+\mathcal{R}
^{*}_{\triangleright_{A}}(y)a^{*}.
\]
Hence \eqref{eq:A ds1} holds. Moreover, we have
{\small
\begin{align*}
\mathcal{B}_{d}(x\triangleleft_{d} b^{*},y)&\overset{\eqref{eq:dppbf3}}{=}-\mathcal{B}
_{d}(b^{*},x\triangleleft_{A}y)-\mathcal{B}
_{d}(b^{*},y\triangleright_{A}x)
=-\mathcal{B}_{d}\big((\mathcal{L}^{*}_{\triangleleft_{A}}+\mathcal{R}^{*}_{\triangleright_{A}})(x)b^{*},y\big), \\
\mathcal{B}_{d}(x\triangleleft_{d} b^{*},a^{*})&\overset{\eqref{eq:cor4}}{=}
-\mathcal{B}_{d}(x,a^{*}\circ_{A^{*}}b^{*})=-\langle \mathcal{R}^{*}_{
    \circ_{A^{*}}} (b^{*})x, a^{*}\rangle=-\mathcal{B}_{d}\big(\mathcal{R}^{*}_{
    \circ_{A^{*}}}(b^{*})x, a^{*} \big).
\end{align*}}Thus we have
\[
x\triangleleft_{d} b^{*}=-(\mathcal{L}^{*}_{\triangleleft_{A}}+\mathcal{R}^{*}_{\triangleright_{A}})(x)b^{*}-\mathcal{R}^{*}_{
\circ_{A^{*}}} (b^{*})x.
\]
Hence \eqref{eq:A ds2} holds.

(\ref{E3})$\Longrightarrow$(\ref{E2}) It is straightforward.

(\ref{E2})$\Longrightarrow$(\ref{E1}) Suppose that there is a perm
algebra $(A\oplus A^{*},\circ_{d})$ with $\circ_{d}$ defined by \eqref{eq:A ds}. Then
it is straightforward to check that $\mathcal{B}_{d}$ is
quasi-left-invariant on $(A\oplus A^{*},\circ_{d})$. By Proposition
\ref{pro:330}, there is a quadratic \app $(A\oplus
A^{*},\triangleright_{d},\triangleleft_{d},\mathcal{B}_{d})$ with $\triangleright_{d},\triangleleft_{d}$ defined by
 \eqref{eq:cor3} and \eqref{eq:cor4}. Moreover, let $x,y,z\in
A$. Then we have
\begin{equation*}
\mathcal{B}_{d}(x\triangleleft_{d}y,z)=-\mathcal{B}_{d}(x,z\circ_{d}y)=-\mathcal{B}_{d}(x,z\circ_{A}y)=0,
\end{equation*}
which indicates $x\triangleleft_{d}y\in A$. Similarly we have
$$x\triangleright_{d}y\in A,\;a^{*}\triangleright_{d}b^{*}\in A^{*},\;a^{*}\triangleleft_{d}b^{*}\in A^{*},\;\forall x,y\in A, a^{*},b^{*}\in A^{*}.$$
Hence $(A,\triangleright_{A}=\triangleright_{d}|_{A})$ and
$(A^{*},\triangleright_{A^{*}}=\triangleright_{d}|_{A^{*}})$ are subalgebras of
$(A\oplus A^{*},\triangleright_{d},\triangleleft_{d})$. Therefore $\big((A\oplus
A^{*},\triangleright_{d},\triangleleft_{d},\mathcal{B}_{d}),(A,\triangleright_{A},\triangleleft_{A}),
(A^{*},\triangleright_{A^{*}},\triangleleft_{A^{*}})\big) $ is a Manin triple of
\apps.
\end{proof}

Recall that a {\bf perm coalgebra} \cite{LZB,Hou} is a vector
space $A$ with a co-multiplication $\eta:A\rightarrow A\otimes A$
such that the following equations hold:
\begin{eqnarray}
(  \eta\otimes\mathrm{id})  \eta(  x)  &=&(
\mathrm{id}\otimes\eta)  \eta(  x),\label{eq:co2}\\
(\mathrm{id}\otimes\eta)  \eta(  x) &=&(\tau\otimes\mathrm{id})(
\eta\otimes\mathrm{id})  \eta(  x),\;\forall x\in A.\label{eq:co3}
\end{eqnarray}

Now we introduce the notion of a apre-perm coalgebra.

\begin{defi}
Another Definition:
Let $A$ be a vector space with co-multiplications $\vartheta,\theta:A\rightarrow A\otimes A$, and $\eta=\vartheta+\theta$.
If $(A,\eta)$ is a perm coalgebra and the following equations hold:
\begin{eqnarray}
(\eta\otimes\mathrm{id})\varepsilon(x)&=&
(\mathrm{id}\otimes\varepsilon)\varepsilon(x),\label{eq:co1}\\
(\mathrm{id}\otimes\varepsilon)\varepsilon(x)&=&
(\tau\otimes\mathrm{id})(\mathrm{id}\otimes\varepsilon)\varepsilon(x),
\label{eq:co4}\\
(\mathrm{id}\otimes\eta)\theta(x)&=&-(\mathrm{id}\otimes\tau)
(\theta\otimes\mathrm{id})\theta(x),\label{eq:co5}\\
-(\mathrm{id}\otimes\tau)
(\theta\otimes\mathrm{id})\theta(x)&=&(\tau\otimes\mathrm{id})
(\mathrm{id}\otimes\theta)\varepsilon(x),\label{eq:co9}\\
(\tau\otimes\mathrm{id})
(\mathrm{id}\otimes\theta)\varepsilon(x)&=&
(\tau\otimes\mathrm{id})(\varepsilon\otimes\mathrm{id})\theta(x)
,\;\forall x\in A,\label{eq:co10}
\end{eqnarray}
where $\varepsilon:A\rightarrow A\otimes A$ is also a co-multiplication defined by
\begin{equation}\label{eq:co11}
\varepsilon(x)=\vartheta(x)+\tau\theta(x),\;\forall x\in A.
\end{equation}
Then we say $(A,\vartheta,\theta)$ is a {\bf apre-perm coalgebra}.
\end{defi}

\begin{pro}\label{lem:co}
Another Pro:
Let $A$ be a vector space and $\vartheta,\theta:A\rightarrow A\otimes A$ be co-multiplications.
Let $\triangleright_{A^{*}},\triangleleft_{A^{*}}:A^{*}\otimes A^{*}\rightarrow A^{*}$ be the linear duals of $\vartheta$ and $\theta$ respectively, that is, the following equations hold:
\begin{equation}
\langle a^{*}\triangleright_{A^{*}}b^{*},x\rangle=\langle a^{*}\otimes b^{*},\vartheta(x)\rangle,\;\langle a^{*}\triangleleft_{A^{*}}b^{*},x\rangle=\langle a^{*}\otimes b^{*},\theta(x)\rangle,\;\forall x\in A, a^{*},b^{*}\in A^{*}.
\end{equation}
Then $(A^{*},\triangleright_{A^{*}},\triangleleft_{A^{*}})$ is a \app if and only
if $(A,\vartheta,\theta)$ is a apre-perm coalgebra.
\end{pro}
\begin{proof}
Let $\eta=\vartheta+\theta$ and the linear dual of $\eta$ be $\circ_{A^{*}}$. Suppose that \eqref{eq:co11} holds and the linear dual of $\varepsilon$ is $\bullet_{A^{*}}$.
From a straightforward computation, we have
\begin{enumerate}
\item $(A,\eta)$ is a perm coalgebra if and only if
$(A^{*},\circ_{A^{*}})$ is a perm algebra.
\item \eqref{eq:co1} and \eqref{eq:co4} hold if and only if \eqref{eq:gppa2,2} holds on $A^{*}$.
\item \eqref{eq:co5}, \eqref{eq:co9} and \eqref{eq:co10} hold if and only if \eqref{eq:gppa3,2} holds on $A^{*}$.
\end{enumerate}
Hence the conclusion follows.
\end{proof}

\begin{defi}
Let $A$ be a vector space with co-multiplications $\vartheta,\theta:A\rightarrow A\otimes A$, and $\eta=\vartheta+\theta$.
If $(A,\eta)$ is a perm coalgebra and the following equations hold:
\begin{eqnarray}
(\mathrm{id}\otimes\theta)\vartheta(x)&=&-2(\tau\otimes\mathrm{id})
(\mathrm{id}\otimes\tau)
(\theta\otimes\mathrm{id})\theta(x),\label{eq:co1}\\
(\mathrm{id}\otimes\eta)\theta(x)&=&-(\mathrm{id}\otimes\tau)
(\theta\otimes\mathrm{id})\theta(x),\label{eq:co4}\\
-(\mathrm{id}\otimes\tau)(\theta\otimes\mathrm{id})\theta(x)
&=&(\tau\otimes\mathrm{id})
(\varepsilon\otimes\mathrm{id})\theta(x),\label{eq:co5}\\
(\eta\otimes\mathrm{id})\vartheta(x)&=&(\tau\otimes\mathrm{id})
(\mathrm{id}\otimes\varepsilon)\vartheta(x),\label{eq:co9}\\
(\tau\otimes\mathrm{id})(\mathrm{id}\otimes\varepsilon)\vartheta(x)
&=&(\mathrm{id}\otimes\vartheta)\varepsilon(x)
,\;\forall x\in A,\label{eq:co10}
\end{eqnarray}
where $\varepsilon:A\rightarrow A\otimes A$ is also a co-multiplication defined by
\begin{equation}\label{eq:co11}
\varepsilon(x)=\vartheta(x)+\tau\theta(x),\;\forall x\in A.
\end{equation}
Then we say $(A,\vartheta,\theta)$ is a {\bf apre-perm coalgebra}.
\end{defi}

\begin{pro}\label{lem:co}
Let $A$ be a vector space and $\vartheta,\theta:A\rightarrow A\otimes A$ be co-multiplications.
Let $\triangleright_{A^{*}},\triangleleft_{A^{*}}:A^{*}\otimes A^{*}\rightarrow A^{*}$ be the linear duals of $\vartheta$ and $\theta$ respectively, that is, the following equations hold:
\begin{equation}
\langle a^{*}\triangleright_{A^{*}}b^{*},x\rangle=\langle a^{*}\otimes b^{*},\vartheta(x)\rangle,\;\langle a^{*}\triangleleft_{A^{*}}b^{*},x\rangle=\langle a^{*}\otimes b^{*},\theta(x)\rangle,\;\forall x\in A, a^{*},b^{*}\in A^{*}.
\end{equation}
Then $(A^{*},\triangleright_{A^{*}},\triangleleft_{A^{*}})$ is a \app if and only
if $(A,\vartheta,\theta)$ is a apre-perm coalgebra.
\end{pro}
\begin{proof}
Let $\eta=\vartheta+\theta$ and the linear dual of $\eta$ be $\circ_{A^{*}}$. Suppose that \eqref{eq:co11} holds and the linear dual of $\varepsilon$ is $\bullet_{A^{*}}$.
From a straightforward computation, we have
\begin{enumerate}
\item \eqref{eq:co1} holds if and only if \eqref{eq:dpp1.1} holds on $A^{*}$.
\item $(A,\eta)$ is a perm coalgebra if and only if
$(A^{*},\circ_{A^{*}})$ is a perm algebra, that is \eqref{eq:dpp1.2} holds on $A^{*}$.
\item \eqref{eq:co4} and \eqref{eq:co5} hold if and only if \eqref{eq:dpp1.3} holds on $A^{*}$.
\item \eqref{eq:co9} and \eqref{eq:co10} hold if and only if
    \eqref{eq:dpp1.4} holds on $A^{*}$.
\end{enumerate}
Hence the conclusion follows from Proposition-Definition \ref{pdef:aa}.
\end{proof}

Now we give the notion of a \appb.

\begin{defi}
Let $(A,\triangleright_{A},\triangleleft_{A})$ be a \app. Suppose that there is a
apre-perm coalgebra $(A,\vartheta,\theta)$ and the following equations hold:
\begin{eqnarray}
\eta(x\bullet_{A}y)&=&\big(\mathcal{L}_{\bullet_{A}}(x)  \otimes\mathrm{id}\big)\eta(y)-\tau
\big(\mathcal{R}_{\bullet_{A}}(y)\otimes\mathrm{id}\big)\theta(x),  \label{eq:bialg1}\\
\eta(x\bullet_{A}y)&=& \big(\mathrm{id}\otimes\mathcal{R}_{\bullet_{A}}(y)\big)\varepsilon(x)
-\big(\mathcal{R}_{\triangleleft_{A}}(x)\otimes\mathrm{id}\big)
\eta(y),\label{eq:bialg2}\\
\eta(x\bullet_{A}y)&=&\big(\mathrm{id}\otimes\mathcal{L}
_{\bullet_{A}}(x)\big)\eta(y)+
\big(\mathcal{L}_{\triangleleft_{A}}(y)\otimes\mathrm{id}\big)
\theta(x),\label{eq:bialg3}\\
\eta(x\triangleleft_{A}y)&=&\big(  \mathrm{id}\otimes\mathcal{R}_{\triangleleft_{A}}(y)\big)
\eta(x)+\tau  \big(\mathrm{id}\otimes\mathcal{L}_{\triangleleft_{A}
}(x)\big)\varepsilon(y),\label{eq:bialg4}\\
\eta(x\triangleleft_{A}y)&=&\tau\eta(  x\triangleleft_{A}y),\label{eq:bialg5}\\
\varepsilon(x\circ_{A}y)&=&
\big(\mathrm{id}\otimes\mathcal{R}_{\circ_{A}}(y)\big)\varepsilon
(x)-\big(\mathcal{R}_{\triangleleft_{A}}(x)
\otimes\mathrm{id}\big)\varepsilon(y),\label{eq:bialg6}\\
\varepsilon(x\circ_{A}y)&=&\big(\mathcal{L}_{\bullet_{A}}(x)  \otimes\mathrm{id}\big)\varepsilon(y)-\tau
\big(\mathcal{R}_{\circ_{A}}(y)\otimes\mathrm{id}\big)\theta
(x),\label{eq:bialg7}\\
\varepsilon(x\circ_{A}y)&=&
\big(\mathrm{id}\otimes\mathcal{L}_{\circ_{A}}(x)\big)\varepsilon
(y)+\tau\big(\mathrm{id}
\otimes\mathcal{R}_{\triangleleft_{A}}(y)\big)\theta(x),\label{eq:bialg8}\\
\theta(x\circ_{A}y)&=&\big(\mathcal{L}_{\circ_{A}}(x)\otimes\mathrm{id}\big)
\theta(y)+  \big(\mathrm{id}\otimes\mathcal{L}_{\bullet_{A}}(y)\big)\theta(x),\label{eq:bialg9}\\
\theta(x\circ_{A}y)&=&\theta(y\circ_{A}x),\label{eq:bialg10}
\end{eqnarray}
for all $x,y\in A$. Then we say $(A,\triangleright_{A},\triangleleft_{A},\vartheta,\theta)$ is a \textbf{\appb}.
\end{defi}

\begin{lem}\cite{LZB,Hou}\label{lem:mp}
Let $(A,\circ_{A})$ and $(B,\circ_{B})$ be two perm algebras, and
$l_{A},r_{A}:A\rightarrow \mathrm{End}_{\mathbb K}(B)$ and
$l_{B},r_{B}:B\rightarrow\mathrm{End}_{\mathbb K}(A)$ be linear
maps. Then there is a perm algebra structure $(A\oplus B,\circ_{d})$ on $A\oplus B$ with $\circ_{d}$ defined
by
\begin{equation}
(x+a)\circ_{d}(y+b)=x\circ_{A}y+l_{B}(a)y+r_{B}(b)x+a\circ_{B}b+l_{A}(x)b+r_{A}(y)a,\;\forall x,y\in A, a,b\in B
\end{equation}
if and only if $(l_{A},r_{A},B)$ and $(l_{B},r_{B},A)$ are representations of $(A,\circ_{A})$ and $(B,\circ_{B})$ respectively, and the following equations hold:
\begin{eqnarray}
l_{_{A}}(  x)  (  a\circ_{B}b)  &=&l_{_{A}}(
x)  a\circ_{B}b+l_{_{A}}(  r_{_{B}}(  a)  x)  b,\label{eq:mp1}\\
l_{_{A}}(  x)  (  a\circ_{B}b)&=&r_{_{A}}(  x)  a\circ_{B}b+l_{_{A}}(
l_{_{B}}(  a)  x)  b,\label{eq:mp2}\\
l_{_{A}}(  x)  (  a\circ_{B}b)&=&a\circ_{B}l_{_{A}}(  x)  b+r_{_{A}}(
r_{_{B}}(  b)  x)  a,\label{eq:mp3}\\
r_{_{A}}(  x)  (  a\circ_{B}b)&=&a\circ_{B}r_{_{A}%
}(  x)  b+r_{_{A}}(  l_{_{B}}(  b)  x)  a,\label{eq:mp4}\\
r_{_{A}}(  x)  (  a\circ_{B}b)  &=&r_{_{A}}(
x)  (  b\circ_{B}a),\label{eq:mp5}\\
l_{_{B}}(  a)  (  x\circ_{A}y)  &=&l_{_{B}}(
a)  x\circ_{A}y+l_{_{B}}(  r_{_{A}}(  x)  a)  y,
\label{eq:mp6}\\
l_{_{B}}(  a)  (  x\circ_{A}y)&=&r_{_{B}}(  a)  x\circ_{A}y+l_{_{B}}(
l_{_{A}}(  x)  a)  y,\label{eq:mp7}\\
l_{_{B}}(  a)  (  x\circ_{A}y)&=&x\circ_{A}l_{_{B}}(  a)  y+r_{_{B}}(
r_{_{A}}(  y)  a)  x,\label{eq:mp8}\\
r_{_{B}}(  a)  (  x\circ_{A}y)  &=&x\circ_{A}r_{_{B}%
}(  a)  y+r_{_{B}}(  l_{_{A}}(  y)  a)  x,\label{eq:mp9}\\
r_{_{B}}(  a)  (  x\circ_{A}y)  &=&r_{_{B}}(
a)  (  y\circ_{A}x).\label{eq:mp10}
\end{eqnarray}
\end{lem}

\begin{thm}\label{thm:2-2}
Let $(A,\triangleright_{A},\triangleleft_{A})$ and
$(A^{*},\triangleright_{A^{*}},\triangleleft_{A^{*}})$ be two \apps, and linear
maps $\vartheta,\theta:A\rightarrow A\otimes A$ be the linear
duals of $\triangleright_{A^{*}}$ and $\triangleleft_{A^{*}}$ respectively. Then
$(A,\triangleright_{A},\triangleleft_{A},\vartheta,\theta)$ is a \appb if and only if there is a perm algebra $(A\oplus A^{*},\circ_{d})$ with $\circ_{d}$
defined by \eqref{eq:A ds}.
\end{thm}

\begin{proof}
Since $(A,\triangleright_{A},\triangleleft_{A})$ and $(A^{*},\triangleright_{A^{*}},\triangleleft_{A^{*}})$
are \apps, $(\mathcal{L}^{*}_{\bullet_{A}},-\mathcal{R}
^{*}_{\triangleleft_{A}},A^{*})$ and $(\mathcal{L}^{*}_{\bullet_{A^{*}}},-\mathcal{R}^{*}_{\triangleleft_{A^{*}}},A)$ are representations of the sub-adjacent perm algebras $(A,\circ_{A})$ and $
(A^{*},\circ_{A^{*}})$ respectively, and \eqref{eq:co1}-\eqref{eq:co10} hold
by Proposition \ref{lem:co}. For all $x,y\in A, a^{*},b^{*}\in A^{*}$, we have
\begin{eqnarray*}
&&\langle \mathcal{L}_{\bullet_{A}}^{*}( x) (
a^{*}\circ_{A^{*}}b^{*}),y\rangle=\langle a^{*}\circ_{A^{*}}b^{*},
x\bullet_{A}y\rangle =\langle a^{*}\otimes b^{*},\eta( x\bullet_{A}y) \rangle,
\\
&&\langle \mathcal{L}^{*}_{\bullet_{A}}(x)a^{*}\circ_{A^{*}}b^{*},y\rangle=
\langle\mathcal{L}^{*}_{\bullet_{A}}(x)a^{*}\otimes
b^{*},\eta(y)\rangle=\langle a^{*}\otimes b^{*},\big( \mathcal{L}
_{\bullet_{A}}(x)\otimes\mathrm{id} \big)\eta(y)\rangle, \\
&&\langle\mathcal{L}^{*}_{\bullet_{A}}\big( -\mathcal{R}^{*}_{
   \triangleleft_{A^{*}}}(a^{*})x\big)b^{*},y\rangle=\langle b^{*},-\mathcal{R}
^{*}_{\triangleleft_{A^{*}}}(a^{*})x\bullet_{A}y\rangle=\langle \mathcal{R}
^{*}_{\bullet_{A}}(y)b^{*},-\mathcal{R}^{*}_{\triangleleft_{A^{*}}}(a^{*})x\rangle \\
&&=-\langle a^{*}\triangleleft_{A^{*}}\mathcal{R}^{*}_{\circ_{A}}(y)b^{*},x\rangle=
-\langle \mathcal{R}^{*}_{\bullet_{A}}(y)b^{*}\otimes a^{*},\theta(x)\rangle=
-\langle a^{*}\otimes b^{*},\tau\big(\mathcal{R}
_{\bullet_{A}}(y)\otimes\mathrm{id}\big)\theta(x)\rangle.
\end{eqnarray*}
Thus \eqref{eq:bialg1} holds if and only if \eqref{eq:mp1} holds for $l_{A}=
\mathcal{L}^{*}_{\bullet_{A}},r_{A}=-\mathcal{R}^{*}_{\triangleleft_{A}}, l_{B}=
\mathcal{L}^{*}_{\bullet_{A^{*}}},r_{B}=-\mathcal{R}^{*}_{\triangleleft_{A^{*}}}. $
Similarly, we have
\begin{eqnarray*}
&&\eqref{eq:bialg2}\Leftrightarrow\eqref{eq:mp2},\;
\eqref{eq:bialg3}\Leftrightarrow\eqref{eq:mp3},\;
\eqref{eq:bialg4}\Leftrightarrow\eqref{eq:mp4},\;
\eqref{eq:bialg5}\Leftrightarrow\eqref{eq:mp5},\; \eqref{eq:bialg6}
\Leftrightarrow\eqref{eq:mp6},\\
&&\eqref{eq:bialg7}\Leftrightarrow
\eqref{eq:mp7},\;
\eqref{eq:bialg8}\Leftrightarrow\eqref{eq:mp8},\;
\eqref{eq:bialg9}\Leftrightarrow\eqref{eq:mp9},\;
\eqref{eq:bialg10}\Leftrightarrow\eqref{eq:mp10},
\end{eqnarray*}
where $l_{A}=\mathcal{L}^{*}_{\bullet_{A}},r_{A}=-\mathcal{R}^{*}_{\triangleleft_{A}},
l_{B}=\mathcal{L}^{*}_{\bullet_{A^{*}}},r_{B}=-\mathcal{R}^{*}_{\triangleleft_{A^{*}}}
$. Hence the conclusion follows from Lemma \ref{lem:mp}.
\end{proof}

Combining Theorems \ref{thm:Manin triple} and \ref{thm:2-2}, we have the following result.

\begin{cor}\label{cor:5.8}
Let $(A,\triangleright_{A},\triangleleft_{A})$ and
$(A^{*},\triangleright_{A^{*}},\triangleleft_{A^{*}})$ be two \apps.
Then the following conditions are equivalent:
\begin{enumerate}
    \item There is a Manin triple $\big(  (  A\oplus
    A^{*},\circ_{d},\mathcal{B}_{d}),(A,\circ_{A}),(A^{*},\circ_{A^{*}})\big)
    $ of perm algebras associated to the nondegenerate
    symmetric quasi-left-invariant bilinear form such that the compatible \app $(A\oplus A^{*},\triangleright_{d},\triangleleft_{d})$ induced from $\mathcal{B}_{d}$ contains $(A,\triangleright_{A},\triangleleft_{A})$ and
    $(A^{*},\triangleright_{A^{*}},\triangleleft_{A^{*}})$ as subalgebras.
    \item There is a Manin triple $\big(  (  A\oplus
    A^{*},\triangleright_{d},\triangleleft_{d},\mathcal{B}_{d}),
    (A,\triangleright_{A},\triangleleft_{A}),
    (A^{*},\triangleright_{A^{*}},\triangleleft_{A^{*}})\big)
    $ of \apps.
    \item $(A,\triangleright_{A},\triangleleft_{A},\vartheta,\theta)$ is a \appb, where $\vartheta,\theta:A\rightarrow A\otimes A$ are the linear
    duals of $\triangleright_{A^{*}}$ and $\triangleleft_{A^{*}}$ respectively.
\item
There is a \app $(A\oplus A^{*},\triangleright_{d},\triangleleft_{d})$ with $\triangleright_{d},\triangleleft_{d}$
defined by \eqref{eq:A ds1} and \eqref{eq:A ds2} respectively.
\end{enumerate}
\end{cor}}

\section{Special apre-perm bialgebras}\label{sec:5}\
We introduce the notion of the special apre-perm Yang-Baxter equation (SAPP-YBE), and show that a solution of the SAPP-YBE in a \sapp whose symmetric part is invariant gives rise to a \sappb that we call quasi-triangular. Furthermore, we introduce the notion of $\mathcal{O}$-operators of \sapps in order to characterize the SAPP-YBE in terms of operator forms. 
Moreover,  quasi-triangular \sappbs can be obtained from quasi-triangular averaging commutative and cocommutative infinitesimal bialgebras.
We also study triangular   and factorizable \sappbs as subclasses of quasi-triangular \sappbs.

\subsection{Quasi-triangular special apre-perm bialgebras, the special apre-perm Yang-Baxter equation and $\mathcal{O}$-operators}\label{sec4.1}\

\begin{defi}
A {\bf \sapp} is a triple $(A,\triangleright_{A},\triangleleft_{A})$, such that $A$ is a vector space, $\triangleright_{A},\triangleleft_{A}:A\otimes A\rightarrow A$ are multiplications on $A$ and the following conditions hold:
\begin{enumerate}
	\item the multiplication $\triangleleft_{A}$ is commutative.
	\item $(A,\circ_{A})$ is a perm algebra, where the multiplication $\circ_{A} :A\otimes A\rightarrow A$ is given by
	\begin{eqnarray}\label{1987}
	x\circ_{A}y=x\triangleright_{A}y+x\triangleleft_{A}y,\;\forall x,y\in A.
	\end{eqnarray}  
	\item the following equation holds:
	 \begin{equation}\mlabel{eq:SDPP}
		(x\circ_{A}y)\triangleleft_{A}z=x\circ_{A}(y\triangleleft_{A}z)=-x\triangleleft_{A}(y\triangleleft_{A}z),\;\forall x,y,z\in A.
	\end{equation}
\end{enumerate}
\end{defi}

Let $(A,\triangleright_{A},\triangleleft_{A})$ be a \sapp. Then $(A,\circ_{A})$ given by \eqref{1987} is called the {\bf sub-adjacent perm algebra} of $(A,\triangleright_{A},\triangleleft_{A})$, and $(A,\triangleright_{A},\triangleleft_{A})$ is called a {\bf compatible \sapp} of $(A,\circ_{A})$. Moreover, by \cite{Bai2024},  
$(\mathcal{L}^{*}_{\circ_{A}},-\mathcal{L}^{*}_{\triangleleft_{A}},A^{*})$ and  $(\mathcal{L}_{\circ_{A}},\mathcal{L}_{\circ_{A}}+\mathcal{L}_{\triangleleft_{A}},A)$ are representations of $(A,\circ_{A})$, thus giving a new splitting of perm algebras besides pre-perm algebras \cite{LZB}.

\begin{defi}\cite{Bai2024}
A {\bf special apre-perm coalgebra} is a triple $(A,\vartheta,\theta)$, such that $A$ is a vector space and $\vartheta,\theta:A\rightarrow A\otimes A$ are co-multiplications satisfying the following equations:
\begin{eqnarray}
(  \eta\otimes\mathrm{id})  \eta(  x)  &=&(
	\mathrm{id}\otimes\eta)  \eta(  x),\label{eq:co2}\\
	(\mathrm{id}\otimes\eta)  \eta(  x) &=&(\tau\otimes\mathrm{id})(
	\eta\otimes\mathrm{id})  \eta(  x),\label{eq:co3}\\
\theta(x)&=&\tau\theta(x),\label{eq:co1} \\
	(  \eta\otimes\mathrm{id})  \theta(  x)  &=&(
	\mathrm{id}\otimes\theta)  \eta(  x),\label{eq:co4}\\
	(\mathrm{id}\otimes\theta) (\eta+\theta)(x) &=&0,\;\;\forall x\in A,\label{eq:co5}
\end{eqnarray}
where $\eta=\theta+\vartheta$.
\end{defi}

Let $A$ be a vector space, $\vartheta,\theta:A\rightarrow A\otimes A$ be co-multiplications and $\triangleright_{A^{*}},\triangleleft_{A^{*}}:A^{*}\otimes A^{*}\rightarrow A^{*}$ be the linear duals of $\vartheta$ and $\theta$ respectively.
Then $(A,\vartheta,\theta)$ is a special apre-perm coalgebra if and only if $(A^{*},\triangleright_{A^{*}},\triangleleft_{A^{*}} )$ is a special apre-perm  algebra.

Now let us recall the definition of \sappbs.

\begin{defi}\cite{Bai2024}
	Let $(A,\triangleright_{A},\triangleleft_{A})$ be a \sapp and $(A,\vartheta,\theta)$ be a special apre-perm coalgebra.
	Suppose that the following equations hold:
	\begin{eqnarray}
		\eta(  x\circ_{A}y)  &=&\big(  \mathcal{L}_{\circ_{A}}(
		x)  \otimes\mathrm{id}\big)  \eta(  y)-\big(
		\mathrm{id}\otimes\mathcal{R}_{\circ_{A}}(  y)  \big)
		\theta(  x),  \label{eq:bialg1}\\
		\eta(  x\circ_{A}y)  &=&
		\big(  \mathrm{id}\otimes\mathcal{R}
		_{\circ_{A}}(  y)  \big)  \eta(  x)-\big(
		\mathcal{L}_{\triangleleft_{A}}(  x)  \otimes\mathrm{id}\big)
		\eta(  y) ,\label{eq:bialg2}\\
		\eta(  x\circ_{A}y)  &=&\big(  \mathrm{id}\otimes\mathcal{L}%
		_{\circ_{A}}(  x)  \big)  \eta(  y)  +\big(
		\mathcal{L}_{\triangleleft_{A}}(  y)  \otimes\mathrm{id}\big)
		\theta(  x),\label{eq:bialg3}\\
		\eta(  x\triangleleft_{A}y)&=&\big(  \mathrm{id}\otimes\mathcal{L}_{\triangleleft_{A}}(  x)  \big)
		\eta(  y)  +\tau\big(  \mathrm{id}\otimes\mathcal{L}_{\triangleleft_{A}
		}(  y)  \big)  \eta(  x), \label{eq:bialg4}\\
		\eta(  x\triangleleft_{A}y)  &=&\tau\eta(  x\triangleleft_{A}y),\label{eq:bialg5}\\
		\theta(  x\circ_{A}y)&=&\big(
		\mathrm{id}\otimes\mathcal{L}_{\circ_{A}}(  x)  \big)
		\theta(  y)  +\big(  \mathcal{L}_{\circ_{A}}(  y)
		\otimes\mathrm{id}\big)  \theta(  x),\label{eq:bialg6}\\
		\theta(  x\circ_{A}y)  &=&\theta(  y\circ_{A}x),\label{eq:bialg7}
	\end{eqnarray}
	for all $x,y\in A$. Such a structure is called a \textbf{\sappb} and is denoted by $(A,\triangleright_{A},\triangleleft_{A},\vartheta,\theta)$.
\end{defi}

\begin{defi}
Let $(A,\triangleright_{A},\triangleleft_{A})$ be a \sapp and $(A,\circ_{A})$ be the sub-adjacent perm algebra.
Let $r=\sum\limits_{i}u_{i}\otimes v_{i}\in A\otimes A$ and set
\begin{equation}\label{eq:Ps}
    SA(r)=\sum_{i,j} u_{i}\circ_{A} u_{j}\otimes v_{i}\otimes v_{j}+u_{i}\otimes v_{i}\triangleleft_{A} u_{j}\otimes v_{j}+u_{i}\otimes u_{j}\otimes v_{j}\circ_{A} v_{i}.
\end{equation}
We say $r$ is a solution of the \textbf{special apre-perm Yang-Baxter equation} (or \textbf{SAPP-YBE} in short) if $SA(r)=0$.
\end{defi}

\delete{
Recall \cite{Bai2024} that an element $r\in A\otimes A$ is called {\bf invariant} on a SDPP algebra $(A, \triangleright_{A}, \triangleleft_{A})$, such that $(A,\circ_{A})$ is the sub-adjacent perm algebra of $(A, \triangleright_{A}, \triangleleft_{A})$, $f,g:A\rightarrow \mathrm{End}(A\otimes A)$ are linear maps given by
\begin{eqnarray}
&&f(x)=\mathrm{id}\otimes\mathcal{R}_{\circ_{A}}(x)-\mathcal{L}_{ \triangleleft_{A}}(x)\otimes\mathrm{id},\\
&&g(x)=\mathrm{id}\otimes\mathcal{L}_{\circ_{A}}(x)-\mathcal{L}_{\circ_{A}}(x)\otimes\mathrm{id},\;\forall x\in A,
\end{eqnarray}
and $f(x)r=g(x)r=0$ for all $x\in A$.

Now we investigate the tensor
form of the bilinear form $\mathcal{B}$ in a quadratic \sapp
$(A, \triangleright_{A}, \triangleleft_{A},\mathcal{B})$. Let $\mathcal{B}$ be a
nondegenerate bilinear form on a vector space $A$ which
corresponds to a linear map $\mathcal{B}^{\natural}:A\rightarrow
A^{*}$ by
\begin{equation}\label{eq:B}
\mathcal{B}(x,y)=\langle\mathcal{B}^{\natural}(x),y\rangle,\;\forall x,y\in A.
\end{equation}Define a 2-tensor $\phi_{\mathcal{B}}\in A\otimes A$ to be
the tensor form of ${\mathcal{B}^{\natural}}^{-1}$, that is,
\begin{equation}\label{eq:2-tensor}
\langle \phi_{\mathcal{B}},a^{*}\otimes b^{*}\rangle=\langle {\mathcal{B}
^{\natural}}^{-1}(a^{*}),b^{*}\rangle,\;\forall a^{*},b^{*}\in A^{*}.
\end{equation}}

\begin{defi}
Let $(A,\triangleright_{A},\triangleleft_{A})$ be a \sapp and $(A,\circ_{A})$ be the sub-adjacent perm algebra. Set linear maps
$f,g:A\rightarrow \mathrm{End}_{\mathbb K}(A\otimes A)$ by
\begin{eqnarray}
&&f(x)=\mathrm{id}\otimes\mathcal{R}_{\circ_{A}}(x)+\mathcal{L}_{\triangleleft_{A}}(x)\otimes\mathrm{id},\\
&&g(x)=\mathcal{L}_{\circ_{A}}(x)\otimes\mathrm{id}-\mathrm{id}\otimes\mathcal{L}_{\circ_{A}}(x),\;\forall x\in A.
\end{eqnarray}
An element  $r\in A\otimes A$ is called {\bf invariant} on $(A,\triangleright_{A},\triangleleft_{A})$ if $f(x)r=g(x)r=0$ for all $x\in A$.
\end{defi}

\begin{pro}\label{pro:co}
Let $(A,\triangleright_{A},\triangleleft_{A})$ be a \sapp and $r\in A\otimes A$.
Let linear maps $\eta_{r},\vartheta_{r},\theta_{r}:A\rightarrow A\otimes A$ be given by
\begin{equation}\label{eq:comul}
    \eta_{r}(x)=f(x)r,\;\theta_{r}(x)=g(x)r,\;
    \vartheta_{r}(x)=(\eta_{r}-\theta_{r})(x)=(f-g)(x)r,\;\forall x\in A.
\end{equation}
\begin{enumerate}
	\item\label{pro:co2} \eqref{eq:co2} holds if and only if the following equation holds:
{\small
		\begin{equation}\label{eq:pro:co2} \big(\mathrm{id}\otimes\mathrm{id}\otimes\mathcal{R}_{\circ_{A}}(x)\big)\Big(\sum_{j}f(u_{j})\big( r+\tau(r) \big)\otimes v_{j}-(\tau\otimes\mathrm{id})SA(r)\Big)-\big( \mathcal{L}_{\triangleleft_{A}}(x)\otimes\mathrm{id}\otimes\mathrm{id} \big)SA(r)=0.
		\end{equation}}
\item\label{pro:co3} \eqref{eq:co3} holds if and only if the following equation holds:
		\begin{align}
			&\big( \mathrm{id}\otimes\mathrm{id}\otimes\mathcal{R}_{\circ_{A}}(x)+\mathcal{L}_{\triangleleft_{A}}(x)\otimes\mathrm{id}\otimes\mathrm{id}  \big)\bigg(SA(r)-\sum_{j}\Big( \tau f(u_{j})\big(r+\tau(r)\big)\Big)\otimes v_{j} \bigg)\notag\\
			&+\sum_{j}\big(  \mathrm{id}\otimes\mathcal{L}_{\triangleleft_{A}}(u_{j})\otimes\mathrm{id} \big)\Big( f(x)\big( r+\tau(r)\big)\otimes v_{j} \Big)=0.\label{eq:pro:co3}
		\end{align}
	\item\label{pro:co1} \eqref{eq:co1} holds if and only if the following equation holds:
		\begin{equation}\label{eq:pro:co1}
			g(x)\big( r+\tau(r)\big)=0.
		\end{equation}
\item\label{pro:co4} \eqref{eq:co4} holds if and only if the following equation holds:
		\begin{equation}\label{eq:pro:co4}
			\big(  -\mathcal{L}_{\triangleleft_{A}}(x)\otimes\mathrm{id}\otimes\mathrm{id}- \mathrm{id}\otimes\mathrm{id}\otimes\mathcal{L}_{\circ_{A}}(x) \big)\Big((\tau\otimes\mathrm{id})SA(r)-\sum_{j}f(u_{j})\big( r+\tau(r) \big)\otimes v_{j}\Big)=0.
		\end{equation}
\item\label{pro:co5} \eqref{eq:co5} holds if and only if the following equation holds:
		\begin{equation}\label{eq:pro:co5} \big((\mathcal{L}_{\circ_{A}}+\mathcal{L}_{\triangleleft_{A}}) (x)\otimes\mathrm{id}\otimes\mathrm{id}\big)\Big((\tau\otimes\mathrm{id})SA(r)-\sum_{j}f(u_{j})\big( r+\tau(r) \big)\otimes v_{j}\Big)=0.
		\end{equation}
\end{enumerate}
\end{pro}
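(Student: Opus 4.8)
The plan is to prove each of the five equivalences by a direct computation in $A\otimes A\otimes A$, built on the explicit forms of the comultiplications. Writing $r=\sum_i u_i\otimes v_i$ and unwinding the definitions of $f$, $g$ and \eqref{eq:comul}, I first record
\begin{align*}
\eta_r(x)&=\sum_i u_i\otimes(v_i\circ_A x)+(x\triangleleft_A u_i)\otimes v_i,\\
\theta_r(x)&=\sum_i (x\circ_A u_i)\otimes v_i-u_i\otimes(x\circ_A v_i),
\end{align*}
and $\vartheta_r(x)=\eta_r(x)-\theta_r(x)$. For each item I would substitute these into the two sides of the relevant coalgebra axiom, expand into a sum of rank-one tensors indexed by the pairs $(i,j)$, and show that the difference of the two sides equals the stated expression.

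I would begin with \eqref{eq:co1}, which is the shortest and sets the pattern. A one-line computation gives $\theta_r(x)-\tau\theta_r(x)=g(x)\big(r+\tau(r)\big)$, since both sides are the same four rank-one terms $\sum_i (x\circ_A u_i)\otimes v_i-u_i\otimes(x\circ_A v_i)-v_i\otimes(x\circ_A u_i)+(x\circ_A v_i)\otimes u_i$; hence \eqref{eq:co1} holds if and only if \eqref{eq:pro:co1} does. This already explains why the symmetric combination $r+\tau(r)$ is the natural object that measures the failure of cosymmetry, and the same mechanism produces the factors $f(u_j)\big(r+\tau(r)\big)$ and $\tau f(u_j)\big(r+\tau(r)\big)$ appearing in the remaining items.

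For \eqref{eq:co2}--\eqref{eq:co5} the computation is of the same nature but now involves a double application of $\eta_r$ or $\theta_r$, so each side is a nested sum over $i,j$. After expanding, say, $(\eta_r\otimes\mathrm{id})\eta_r(x)$ and $(\mathrm{id}\otimes\eta_r)\eta_r(x)$, I would reorganize the resulting terms using the SAPP axioms: the commutativity of $\triangleleft_A$, the perm identities \eqref{eq:perm} for $\circ_A$, and the compatibility \eqref{eq:SDPP}. These relations let me (i) collapse the products $u_i\circ_A u_j$, $v_i\triangleleft_A u_j$, $v_j\circ_A v_i$ so that the associator-type tensor $SA(r)$ of \eqref{eq:Ps}, together with its permutation $(\tau\otimes\mathrm{id})SA(r)$, is exposed in a single slot, and (ii) recombine the remaining cross terms into the symmetrized pieces $\sum_j f(u_j)\big(r+\tau(r)\big)\otimes v_j$. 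Matching slot-by-slot then yields exactly \eqref{eq:pro:co2}, \eqref{eq:pro:co3}, \eqref{eq:pro:co4} and \eqref{eq:pro:co5}.

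The main obstacle is purely organizational: each of the last four identities generates on the order of a dozen rank-one tensors, and the delicate point is to track in which tensor factor each SAPP relation must be applied so that the terms group precisely into $SA(r)$, $(\tau\otimes\mathrm{id})SA(r)$ and the symmetrized sums, with no residue. The two features that make the collapse work are the commutativity of $\triangleleft_A$ (which is what allows $r$ to be replaced by $r+\tau(r)$ inside the $f(u_j)$-terms) and the three-term compatibility \eqref{eq:SDPP} (which converts mixed $\circ_A$/$\triangleleft_A$ products into the single patterns recorded in $SA(r)$). I would carry out \eqref{eq:co2} in full detail and then indicate that \eqref{eq:co3}, \eqref{eq:co4} and \eqref{eq:co5} follow by identical bookkeeping.
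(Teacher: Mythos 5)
Your proposal is correct and follows essentially the same route as the paper's own proof: a direct expansion in $A\otimes A\otimes A$ using the explicit forms of $\eta_{r}$ and $\theta_{r}$, with the perm identity, the commutativity of $\triangleleft_{A}$ and \eqref{eq:SDPP} used to regroup the rank-one terms into $SA(r)$, $(\tau\otimes\mathrm{id})SA(r)$ and $\sum_{j}f(u_{j})\big(r+\tau(r)\big)\otimes v_{j}$. The paper likewise carries out only one item in full (item (\ref{pro:co2}), splitting $(\eta_{r}\otimes\mathrm{id})\eta_{r}(x)-(\mathrm{id}\otimes\eta_{r})\eta_{r}(x)$ into three groups handled exactly by the identities you name) and declares the remaining items analogous, which matches your plan.
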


\begin{proof}
We only prove Item (\ref{pro:co2}), and other items are obtained similarly.
For all $x\in A$, we have
\begin{eqnarray*}
&&(  \eta_{r}\otimes\mathrm{id})  \eta_{r}(  x)-
(\mathrm{id}\otimes\eta_{r})  \eta_{r}(  x), \\
&=&\sum_{i,j}u_{j}\otimes v_{j}\circ_{A} u_{i}\otimes v_{i}\circ_{A} x+u_{i}\triangleleft_{A}
u_{j}\otimes v_{j}\otimes v_{i}\circ_{A} x+u_{j}\otimes v_{j}\circ_{A} (x\triangleleft_{A}
u_{i})\otimes v_{i}\\
&&\ \ +(x\triangleleft_{A} u_{i})\triangleleft_{A} u_{j}\otimes v_{j}\otimes v_{i}
-u_{i}\otimes u_{j}\otimes v_{j}\circ_{A} (v_{i}\circ_{A} x)-u_{i}\otimes
(v_{i}\circ_{A} x)\triangleleft_{A} u_{j}\otimes v_{j}\\
&&\ \ -x\triangleleft_{A} u_{i}\otimes u_{j}\otimes
v_{j}\circ_{A} v_{i}-x\triangleleft_{A} u_{i}\otimes v_{i}\triangleleft_{A} u_{j}\otimes v_{j}\\
&=&A(1)+A(2)+A(3),
\end{eqnarray*}
where \eqref{1987} holds and
\begin{align*}
A(1)=&\sum_{i,j}u_{j}\otimes v_{j}\circ_{A} u_{i}\otimes v_{i}\circ_{A}
x+u_{i}\triangleleft_{A} u_{j}\otimes v_{j}\otimes v_{i}\circ_{A} x-u_{i}\otimes
u_{j}\otimes v_{j}\circ_{A} (v_{i}\circ_{A} x)\\
=&\sum_{i,j}u_{j}\otimes v_{j}\circ_{A} u_{i}\otimes
v_{i}\circ_{A} x+u_{i}\triangleleft_{A} u_{j}\otimes v_{j}\otimes v_{i}\circ_{A} x-u_{i}\otimes
u_{j}\otimes (v_{i}\circ_{A} v_{j})\circ_{A} x \\
=&\big(\mathrm{id}\otimes \mathrm{id}\otimes \mathcal{R}_{\circ_{A}}(x)
\big)\Big(\sum_{i,j}u_{j}\otimes v_{j}\circ_{A} u_{i}\otimes v_{i}+u_{i}\triangleleft_{A}
u_{j}\otimes v_{j}\otimes v_{i}-u_{i}\otimes u_{j}\otimes v_{i}\circ_{A} v_{j}
\Big) \\
=&\big(\mathrm{id}\otimes \mathrm{id}\otimes \mathcal{R}_{\circ_{A}}(x)
\big)\Big(\sum_{j}f(u_{j})\big(r+\tau (r)
\big)\otimes v_{j}-(\tau \otimes \mathrm{id})SA(r)\Big), \\
A(2) =&\sum_{i,j}u_{j}\otimes v_{j}\circ_{A}(x\triangleleft_{A} u_{i})\otimes
v_{i}-u_{i}\otimes (v_{i}\circ_{A} x)\triangleleft_{A} u_{j}\otimes v_{j} \\
\overset{\eqref{eq:SDPP}}{=}&0, \\
A(3) =&\sum_{i,j}(x\triangleleft_{A} u_{i})\triangleleft_{A} u_{j}\otimes v_{j}\otimes v_{i}-x\triangleleft_{A}
u_{i}\otimes u_{j}\otimes v_{j}\circ_{A} v_{i}-x\triangleleft_{A} u_{i}\otimes v_{i}\triangleleft_{A}
u_{j}\otimes v_{j} \\
\overset{\eqref{eq:SDPP}}{=}&\sum_{i,j}-x\triangleleft_{A} (u_{j}\circ_{A} u_{i})\otimes
v_{j}\otimes v_{i}-x\triangleleft_{A} u_{i}\otimes u_{j}\otimes v_{j}\circ_{A} v_{i}-x\triangleleft_{A}
u_{i}\otimes v_{i}\triangleleft_{A} u_{j}\otimes v_{j} \\
=&\big(-\mathcal{L}_{\triangleleft_{A}}(x)\otimes \mathrm{id}\otimes \mathrm{id}
\big)SA(r).
\end{align*}
Hence the conclusion follows.
\end{proof}

\begin{pro}\label{pro:mp}
	Let $(A,\triangleright_{A},\triangleleft_{A})$ be a \sapp and $r\in A\otimes A$.
	Let linear maps $\vartheta_{r},\theta_{r}:A\rightarrow A\otimes A$ be given by \eqref{eq:comul}.
	\begin{enumerate}
		\item\label{pro:mp1}  \eqref{eq:bialg1} holds automatically.
		\item\label{pro:mp2} \eqref{eq:bialg2} holds automatically.
		\item\label{pro:mp3} \eqref{eq:bialg3} holds automatically.
		\item\label{pro:mp4} \eqref{eq:bialg4} holds if and only if the following equation holds:
		\begin{equation}\label{eq:pro:mp4} \big(\mathcal{L}_{\triangleleft_{A}}(y)\otimes\mathrm{id}\big)\tau\Big( f(x)\big(r+\tau(r)\big)\Big)=0,\;\forall x,y\in A.
		\end{equation}
		\item\label{pro:mp5} \eqref{eq:bialg5} holds if and only if the following equation holds:
		\begin{equation}\label{eq:pro:mp5}
			f(x\triangleleft_{A} y)\big( r+\tau(r)\big)=0.
		\end{equation}
		\item\label{pro:mp6} \eqref{eq:bialg6} holds automatically.
		\item\label{pro:mp7} \eqref{eq:bialg7} holds automatically.
	\end{enumerate}
\end{pro}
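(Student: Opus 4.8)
The plan is to substitute the explicit co-multiplications $\eta_r(x)=f(x)r$ and $\theta_r(x)=g(x)r$ (with $r=\sum_i u_i\otimes v_i$, so that $\vartheta_r=\eta_r-\theta_r$ as in \eqref{eq:comul}) into each of the seven bialgebra equations \eqref{eq:bialg1}--\eqref{eq:bialg7}, expand both sides as sums of rank-one tensors in the $u_i,v_i$ and the inputs $x,y$, and then reduce everything to a common normal form using the three defining features of a \sapp: the perm identity \eqref{eq:perm} for $\circ_A$, the commutativity of $\triangleleft_A$, and the relation \eqref{eq:SDPP}. From \eqref{eq:SDPP} I will repeatedly use the consequences $z\circ_A(x\triangleleft_A y)=(z\circ_A x)\triangleleft_A y=-z\triangleleft_A(x\triangleleft_A y)$, together with the symmetry $(z\circ_A x)\triangleleft_A y=(z\circ_A y)\triangleleft_A x$ obtained by further invoking the commutativity of $\triangleleft_A$.

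For items (\ref{pro:mp1}), (\ref{pro:mp2}), (\ref{pro:mp3}), (\ref{pro:mp6}) and (\ref{pro:mp7}) I expect each equation to hold with no constraint on $r$. For instance, in \eqref{eq:bialg1} the two terms involving $v_i\circ_A y$ produced by $(\mathcal{L}_{\circ_A}(x)\otimes\mathrm{id})\eta_r(y)$ cancel against the corresponding terms of $(\mathrm{id}\otimes\mathcal{R}_{\circ_A}(y))\theta_r(x)$, and the surviving terms match $\eta_r(x\circ_A y)$ after using $v_i\circ_A(x\circ_A y)=(x\circ_A v_i)\circ_A y$ (perm identity) and $(x\circ_A y)\triangleleft_A u_i=x\circ_A(y\triangleleft_A u_i)$ (the left $=$ middle part of \eqref{eq:SDPP}); likewise \eqref{eq:bialg6} and \eqref{eq:bialg7} reduce to the perm identities $y\circ_A(x\circ_A u_i)=(x\circ_A y)\circ_A u_i$ and $(x\circ_A y)\circ_A u_i=(y\circ_A x)\circ_A u_i$. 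Items (\ref{pro:mp2}) and (\ref{pro:mp3}) are entirely analogous rearrangements.

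The only equations producing a condition are \eqref{eq:bialg4} and \eqref{eq:bialg5}, and in both the symmetric part $r+\tau(r)$ emerges. For (\ref{pro:mp5}), specializing to $z=x\triangleleft_A y$ the identity $a\circ_A(x\triangleleft_A y)=-a\triangleleft_A(x\triangleleft_A y)=-(x\triangleleft_A y)\triangleleft_A a$ gives termwise $v_i\circ_A z=-(z\triangleleft_A v_i)$ and $z\triangleleft_A u_i=-(u_i\circ_A z)$, whence $\tau\eta_r(z)=-f(z)\tau(r)$ and therefore $\eta_r(x\triangleleft_A y)-\tau\eta_r(x\triangleleft_A y)=f(x\triangleleft_A y)\big(r+\tau(r)\big)$, which vanishes exactly when \eqref{eq:pro:mp5} holds. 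For (\ref{pro:mp4}), using $x\triangleleft_A(v_i\circ_A y)=v_i\circ_A(x\triangleleft_A y)$ cancels the second summand of $(\mathrm{id}\otimes\mathcal{L}_{\triangleleft_A}(x))\eta_r(y)$ against the surviving term $u_i\otimes\big(v_i\circ_A(x\triangleleft_A y)\big)$ of $\eta_r(x\triangleleft_A y)$; the remaining difference of the two sides then simplifies, via $(x\triangleleft_A y)\triangleleft_A u_i=-\big(y\triangleleft_A(u_i\circ_A x)\big)$, to exactly $-(\mathcal{L}_{\triangleleft_A}(y)\otimes\mathrm{id})\tau\big(f(x)(r+\tau(r))\big)$, so \eqref{eq:bialg4} is equivalent to \eqref{eq:pro:mp4}.

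I expect the main obstacle to be purely organizational rather than conceptual: each side of \eqref{eq:bialg1}--\eqref{eq:bialg7} expands into several rank-one summands, and the difficulty lies in consistently rewriting the various nested products $z\triangleleft_A(x\triangleleft_A y)$, $z\circ_A(x\triangleleft_A y)$ and $(x\triangleleft_A y)\triangleleft_A z$ into a single chosen normal form so that the cancellations and the emergence of $r+\tau(r)$ become visible. As in the proof of Proposition \ref{pro:co}, I will display one representative computation in full (say for \eqref{eq:bialg4} or \eqref{eq:bialg5}) and remark that the remaining items follow by the same bookkeeping.
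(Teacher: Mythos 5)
Your proposal is correct and follows essentially the same route as the paper: the paper's proof also verifies Proposition \ref{pro:mp} by expanding $\eta_r$, $\theta_r$ into rank-one tensors and reducing with the perm identity, the commutativity of $\triangleleft_{A}$ and \eqref{eq:SDPP}, displaying only the computation for item (\ref{pro:mp4}) (which matches your identities $x\triangleleft_{A}(v_{i}\circ_{A}y)=v_{i}\circ_{A}(x\triangleleft_{A}y)$ and $(x\triangleleft_{A}y)\triangleleft_{A}u_{i}=-y\triangleleft_{A}(u_{i}\circ_{A}x)$, up to your opposite sign convention for the difference of the two sides) and noting that the other items are obtained similarly. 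Your extra sketches for items (\ref{pro:mp1}) and (\ref{pro:mp5}) are also correct, so no gap.
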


\begin{proof}
	We only prove Item (\ref{pro:mp4}), and other items are obtained similarly.
	For all $x,y\in A$, we have
		\begin{eqnarray*}
			&&\big(\mathrm{id}\otimes \mathcal{L}_{\triangleleft
				_{A}}(x)\big)\eta _{r}(y)+\tau \big(\mathrm{id}\otimes \mathcal{L}_{\triangleleft
				_{A}}(y)\big)\eta _{r}(x)-\eta _{r}(x\triangleleft_{A}y)\\
			&=&\sum_{i,j}-u_{i}\otimes v_{i}\circ _{A}(x\triangleleft_{A}y)-(x\triangleleft_{A}y)\triangleleft_{A}
			u_{i}\otimes v_{i}+u_{i}\otimes x\triangleleft_{A}(v_{i}\circ _{A}y) \\
			&&+y\triangleleft_{A}u_{i}\otimes x\triangleleft_{A}v_{i}+y\triangleleft_{A}(v_{i}\circ
			_{A}x)\otimes u_{i}+y\triangleleft_{A}v_{i}\otimes x\triangleleft_{A}u_{i} \\
			&\overset{\eqref{eq:SDPP}}{=}&\sum_{i}y\triangleleft _{A}(u_{i}\circ
			_{A}x)\otimes v_{i}+y\triangleleft_{A}(v_{i}\circ _{A}x)\otimes u_{i}+y\triangleleft
			_{A}u_{i}\otimes x\triangleleft_{A}v_{i}+y\triangleleft_{A}v_{i}\otimes x\triangleleft_{A}u_{i} \\
			&=&\big(\mathcal{L}_{\triangleleft_{A}}(y)\otimes \mathrm{id}\big)\tau \Big(f(x)
			\big(r+\tau (r)\big)\Big).
		\end{eqnarray*}
	Hence the conclusion follows.
\end{proof}

Combining Propositions \ref{pro:co} and \ref{pro:mp} together, we have the
following result.

\begin{thm}\label{thm:bialg}
	Let $(A,\triangleright_{A},\triangleleft_{A})$ be a \sapp and $r\in A\otimes A$.
	Let $\vartheta_{r},\theta_{r}:A\rightarrow A\otimes A$ be co-multiplications given by \eqref{eq:comul}.
	Then $(A,\triangleright_{A},\triangleleft_{A},\vartheta_{r},\theta_{r})$ is a \sappb if and only if \eqref{eq:pro:co1}-\eqref{eq:pro:mp5} hold.
	In particular, if $r$ is a solution of the SAPP-YBE and the symmetric part of $r$ is invariant on $(A,\triangleright_{A},\triangleleft_{A})$, that is,
	\eqref{eq:pro:co1} and
	the following equation  hold:
	\begin{equation}\label{eq:invariance}
		f(x)\big( r+\tau(r)\big )= 0,\;\forall x\in A,
	\end{equation}
	then $(A,\triangleright_{A},\triangleleft_{A},\vartheta_{r},\theta_{r})$ is a \sappb.
\end{thm}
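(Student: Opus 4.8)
The plan is to read off the definition of a \sappb and dispatch its two groups of axioms to Propositions \ref{pro:co} and \ref{pro:mp}. Recall that, with $(A,\triangleright_{A},\triangleleft_{A})$ already a \sapp, the datum $(A,\triangleright_{A},\triangleleft_{A},\vartheta_{r},\theta_{r})$ is a \sappb precisely when (i) $(A,\vartheta_{r},\theta_{r})$ is a special apre-perm coalgebra, i.e.\ \eqref{eq:co2}, \eqref{eq:co3}, \eqref{eq:co1}, \eqref{eq:co4} and \eqref{eq:co5} hold, and (ii) the compatibility conditions \eqref{eq:bialg1}--\eqref{eq:bialg7} hold. Since $\vartheta_{r}$ and $\theta_{r}$ are built from $r$ through \eqref{eq:comul}, both groups of axioms become tensor identities in $A^{\otimes 3}$.

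First I would invoke Proposition \ref{pro:co}, which converts the five coalgebra axioms into the conditions \eqref{eq:pro:co2}, \eqref{eq:pro:co3}, \eqref{eq:pro:co1}, \eqref{eq:pro:co4} and \eqref{eq:pro:co5} respectively. Then I would invoke Proposition \ref{pro:mp}: it shows that \eqref{eq:bialg1}, \eqref{eq:bialg2}, \eqref{eq:bialg3}, \eqref{eq:bialg6} and \eqref{eq:bialg7} hold automatically for every $r$, while \eqref{eq:bialg4} and \eqref{eq:bialg5} are equivalent to \eqref{eq:pro:mp4} and \eqref{eq:pro:mp5}. Intersecting the two lists yields the first assertion: $(A,\triangleright_{A},\triangleleft_{A},\vartheta_{r},\theta_{r})$ is a \sappb if and only if \eqref{eq:pro:co1}, \eqref{eq:pro:co2}, \eqref{eq:pro:co3}, \eqref{eq:pro:co4}, \eqref{eq:pro:co5}, \eqref{eq:pro:mp4} and \eqref{eq:pro:mp5} all hold.

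For the sufficient condition I would assume $SA(r)=0$ together with the invariance of the symmetric part, which by the definitions of $f$ and $g$ means $g(x)\big(r+\tau(r)\big)=0$ and $f(x)\big(r+\tau(r)\big)=0$ for all $x\in A$; the former is exactly \eqref{eq:pro:co1}. Substituting into the remaining six conditions, one observes that each is assembled entirely from the blocks $SA(r)$ and $f(u_{j})\big(r+\tau(r)\big)$ (and, for \eqref{eq:pro:mp4}, the single block $\tau f(x)\big(r+\tau(r)\big)$), all of which vanish under the hypotheses. The only point needing a word of care is that $f(x)\big(r+\tau(r)\big)=0$ is asserted for all $x$, so it simultaneously annihilates the summands $\sum_{j}f(u_{j})\big(r+\tau(r)\big)\otimes v_{j}$ and delivers \eqref{eq:pro:mp5} upon taking $x$ to be $x\triangleleft_{A}y$. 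This settles both parts.

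The genuinely hard computations are already packaged into Propositions \ref{pro:co} and \ref{pro:mp}, so within the theorem itself there is no serious obstacle; the difficulty is organizational, namely matching each coalgebra and compatibility axiom to its tensor avatar and tracking which of \eqref{eq:bialg1}--\eqref{eq:bialg7} are vacuous. Were those two propositions not available, the main work would be to expand $(\eta_{r}\otimes\mathrm{id})\eta_{r}$ and its permutations and repeatedly apply the defining identity \eqref{eq:SDPP} of a \sapp to collapse the mixed terms into $SA(r)$ and $f(\cdot)\big(r+\tau(r)\big)$, exactly as carried out in the proof of Proposition \ref{pro:co}.
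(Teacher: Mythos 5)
Your proposal is correct and follows exactly the paper's route: the paper proves this theorem simply by combining Propositions \ref{pro:co} and \ref{pro:mp}, matching each coalgebra axiom \eqref{eq:co2}--\eqref{eq:co5} and compatibility condition \eqref{eq:bialg1}--\eqref{eq:bialg7} to its tensor reformulation, and then observing that under $SA(r)=0$, \eqref{eq:pro:co1} and \eqref{eq:invariance} every remaining condition vanishes (with the same specialization $x\mapsto x\triangleleft_{A}y$ for \eqref{eq:pro:mp5}). No gaps; your version merely writes out the bookkeeping the paper leaves implicit.
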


\begin{defi}
	Suppose that $(A,\triangleright_{A},\triangleleft_{A})$ is a \sapp. If there
	exists a solution of the SAPP-YBE $r\in A\otimes A$ whose symmetric part is
	invariant on $(A,\triangleright_{A},\triangleleft_{A})$, then the resulting \sappb $(A,\triangleright_{A},
	\triangleleft_{A},\vartheta_{r},\theta_{r})$ by Theorem \ref{thm:bialg} is called
	\textbf{quasi-triangular}.
\end{defi}

\delete{
\begin{pro}\label{pro:4,2}
		Let $(A,\triangleright_{A},\triangleleft_{A})$ be a \sapp and $r=\sum\limits_{i}u_{i}\otimes v_{i}\in A\otimes A$.
		Then we have
		\begin{equation*}
			SD\big( -\tau(r) \big)=\tau_{13}\big(SA(r)\big),
		\end{equation*}
		where $\tau_{13}(x\otimes y\otimes z)=z\otimes y\otimes x$, for all $x,y,z\in A$.
		Consequently, $r$ is a solution of the SAPP-YBE in $(A,\triangleright_{A},\triangleleft_{A})$ if and only if $-\tau(r)$ is a solution of the SAPP-YBE in $(A,\triangleright_{A},\triangleleft_{A})$. Furthermore, if $(A,\triangleright_{A},\triangleleft_{A},\vartheta_{r},\theta_{r})$ is a quasi-triangular \sappb, then $(A,\triangleright_{A},\triangleleft_{A},\eta_{-\tau(r)},\theta_{-\tau(r)})$ is also a quasi-triangular \sappb.
\end{pro}
	
\begin{proof}
It is straightforward.
\end{proof}}

Next we show that quasi-triangular averaging commutative and cocommutative infinitesimal bialgebras render quasi-triangular \sappbs.

\begin{lem}\cite{Bai2024}\label{2474}
Let $(A,\cdot_{A},\Delta,P,Q)$ be an averaging commutative and cocommutative infinitesimal bialgebra.
Let  $\triangleright_{A},\triangleleft_{A}:A\otimes A\rightarrow A$ be multiplications given by
\begin{equation}\label{eq:com asso and SDPP}
	x\triangleright_{A}y=P(x)\cdot_{A}y+Q(x\cdot_{A}y),\; x\triangleleft_{A}y=-Q(x\cdot_{A}y),\;\forall x,y\in A,
\end{equation}
and $\vartheta,\theta:A\rightarrow A\otimes A$ be co-multiplications given by
\begin{equation}\label{eq:co re}
	\vartheta (x)=(Q\otimes\mathrm{id})\Delta (x)+\Delta (Px),\; \theta (x)=-\Delta (Px),\;\forall x\in A.
\end{equation}
Then $(A,\triangleright_{A},\triangleleft_{A},\vartheta ,\theta )$ is a \sappb.
\end{lem}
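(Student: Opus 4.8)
The plan is to verify separately the three ingredients in the definition of a \sappb for $(A,\triangleright_{A},\triangleleft_{A},\vartheta,\theta)$, exploiting a duality that makes the coalgebra part essentially free. First observe that summing the two products in \eqref{eq:com asso and SDPP} gives $x\circ_{A}y=P(x)\cdot_{A}y$, so the sub-adjacent multiplication is exactly the perm algebra of Proposition \ref{ex:comm aver}; in particular $(A,\circ_{A})$ is a perm algebra. Commutativity of $\triangleleft_{A}$ is immediate from commutativity of $\cdot_{A}$. For \eqref{eq:SDPP} one computes $(x\circ_{A}y)\triangleleft_{A}z=-Q\big((P(x)\cdot_{A}y)\cdot_{A}z\big)$, then $x\circ_{A}(y\triangleleft_{A}z)=-P(x)\cdot_{A}Q(y\cdot_{A}z)$, and $-x\triangleleft_{A}(y\triangleleft_{A}z)=-Q\big(x\cdot_{A}Q(y\cdot_{A}z)\big)$, and the admissible identity \eqref{eq:ao pair} together with associativity collapses all three to $-Q\big(P(x)\cdot_{A}(y\cdot_{A}z)\big)$. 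Hence $(A,\triangleright_{A},\triangleleft_{A})$ is a \sapp.

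For the coalgebra $(A,\vartheta,\theta)$ I would avoid a direct check of the axioms \eqref{eq:co2}--\eqref{eq:co5} and instead use duality. By Lemma \ref{lem:2.11}, $(A^{*},\cdot_{A^{*}},Q^{*},P^{*})$ is an admissible averaging commutative algebra, where $\cdot_{A^{*}}$ is the dual of $\Delta$. Applying the algebra-side recipe \eqref{eq:com asso and SDPP} to $(A^{*},\cdot_{A^{*}},Q^{*},P^{*})$, with $Q^{*}$ and $P^{*}$ in the roles of $P$ and $Q$, produces the products $a^{*}\triangleright_{A^{*}}b^{*}=Q^{*}(a^{*})\cdot_{A^{*}}b^{*}+P^{*}(a^{*}\cdot_{A^{*}}b^{*})$ and $a^{*}\triangleleft_{A^{*}}b^{*}=-P^{*}(a^{*}\cdot_{A^{*}}b^{*})$. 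A short pairing computation shows that the linear duals of these two multiplications are precisely the $\vartheta$ and $\theta$ of \eqref{eq:co re}. Since the first paragraph (applied now over $A^{*}$) guarantees $(A^{*},\triangleright_{A^{*}},\triangleleft_{A^{*}})$ is a \sapp, the stated equivalence between special apre-perm coalgebras on $A$ and \sapps on $A^{*}$ yields that $(A,\vartheta,\theta)$ is a special apre-perm coalgebra, with no further work.

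The remaining, and main, task is the compatibility system \eqref{eq:bialg1}--\eqref{eq:bialg7}. I would first rewrite every structure map in terms of $\cdot_{A},\Delta,P,Q$: namely $\mathcal{L}_{\circ_{A}}(x)=\mathcal{L}_{\cdot_{A}}(Px)$, $\mathcal{R}_{\circ_{A}}(y)=\mathcal{L}_{\cdot_{A}}(y)P$, $\mathcal{L}_{\triangleleft_{A}}(x)=-Q\mathcal{L}_{\cdot_{A}}(x)$, $\eta=(Q\otimes\mathrm{id})\Delta$ and $\theta=-\Delta P$. Each identity then reduces using the infinitesimal compatibility \eqref{eq:bib}, cocommutativity $\Delta=\tau\Delta$, commutativity of $\cdot_{A}$, the averaging law \eqref{eq:Ao} (e.g. $P\big(P(x)\cdot_{A}y\big)=P(x)\cdot_{A}P(y)$) and the coalgebra conditions \eqref{eq:aoco1}, \eqref{eq:aoco2}. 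For instance \eqref{eq:bialg7} is trivial by commutativity, and \eqref{eq:bialg6} follows from \eqref{eq:bib} applied to $\Delta\big(P(x)\cdot_{A}P(y)\big)$ after rewriting $\theta(x\circ_{A}y)=-\Delta\big(P(x)\cdot_{A}P(y)\big)$; meanwhile \eqref{eq:aoco1} forces $\eta(x\triangleleft_{A}y)=-(Q\otimes Q)\Delta(x\cdot_{A}y)$, which is $\tau$-invariant by cocommutativity, giving \eqref{eq:bialg5} at once and reducing \eqref{eq:bialg4} (via \eqref{eq:bib}) to exactly this symmetry.

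The genuine obstacle is the trio \eqref{eq:bialg1}--\eqref{eq:bialg3}, which simultaneously involve $\eta$ and $\theta$ and the asymmetric placements of $P$ and $Q$; here one must invoke the second admissible-coalgebra identity \eqref{eq:aoco2} and the pairing identity \eqref{eq:ao pair}, and carefully track the transpositions $\tau$ produced by cocommutativity so that the $P$'s and $Q$'s land in the correct tensor legs. An alternative that bypasses this bookkeeping is the Manin-triple route: by Lemma \ref{lem:2.11} the bialgebra is equivalent to a double construction of averaging Frobenius commutative algebra on $A\oplus A^{*}$; the induced quadratic \sapp on $A\oplus A^{*}$ makes $A$ and $A^{*}$ isotropic \sapp subalgebras, and the resulting Manin triple of \sapps is equivalent to a \sappb whose structure maps, upon unwinding, are precisely \eqref{eq:com asso and SDPP} and \eqref{eq:co re}.
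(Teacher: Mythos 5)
Your overall route is sound, but note first that this paper contains no proof of Lemma \ref{2474} at all: it is quoted from \cite{Bai2024}, where it is obtained by the Manin-triple/double-construction argument that you relegate to your closing ``alternative'' sketch. So your primary, direct verification is a genuinely different (and more self-contained) route than the one behind the paper's citation. Its first two parts are complete and correct: the \sapp axioms reduce, as you say, to associativity plus \eqref{eq:ao pair}, and your duality trick for the coalgebra part is clean --- \eqref{eq:co re} is exactly the linear dual of the recipe \eqref{eq:com asso and SDPP} applied to the admissible averaging commutative algebra $(A^{*},\cdot_{A^{*}},Q^{*},P^{*})$ furnished by Lemma \ref{lem:2.11}, so the special apre-perm coalgebra axioms follow from your algebra-side argument together with the duality statement in Section \ref{sec4.1} (legitimate since everything here is finite-dimensional). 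Your reductions of \eqref{eq:bialg4}--\eqref{eq:bialg7} are also correct as sketched.

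The shortfall is that \eqref{eq:bialg1}--\eqref{eq:bialg3}, which you yourself single out as ``the genuine obstacle'', are never verified; naming \eqref{eq:aoco2} and \eqref{eq:ao pair} as the needed tools is not a proof. The gap is fillable, and more cheaply than your wording suggests: \eqref{eq:ao pair} is equivalent to the operator identities $\mathcal{L}_{\cdot_{A}}(Px)\,Q=Q\,\mathcal{L}_{\cdot_{A}}(Px)=Q\,\mathcal{L}_{\cdot_{A}}(x)\,Q$, while \eqref{eq:aoco2} gives $(Q\otimes\mathrm{id})\Delta(Px)=(\mathrm{id}\otimes P)\Delta(Px)$ and $(Q\otimes P)\Delta(x)=(Q\otimes\mathrm{id})\Delta(Px)$. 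Writing $\eta=(Q\otimes\mathrm{id})\Delta$, $\theta=-\Delta P$, $\mathcal{L}_{\circ_{A}}(x)=\mathcal{L}_{\cdot_{A}}(Px)$, $\mathcal{R}_{\circ_{A}}(y)=\mathcal{L}_{\cdot_{A}}(y)P$, $\mathcal{L}_{\triangleleft_{A}}(x)=-Q\mathcal{L}_{\cdot_{A}}(x)$, and expanding $\eta(x\circ_{A}y)=(Q\otimes\mathrm{id})\Delta\big(P(x)\cdot_{A}y\big)$ by \eqref{eq:bib} (for \eqref{eq:bialg1} and \eqref{eq:bialg2}) or by its $\tau$-conjugate, available from cocommutativity (for \eqref{eq:bialg3}), each of the three identities splits into two terms that match term-by-term under the displayed relations; \eqref{eq:bialg3} in fact needs nothing beyond \eqref{eq:bib} and cocommutativity. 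With those three computations written out, your direct proof is complete. As for the fallback, be aware that the claim that the structure maps of the induced \sappb ``unwind'' to \eqref{eq:com asso and SDPP} and \eqref{eq:co re} is precisely the substance of the proof in \cite{Bai2024}, so invoking it is closer to citing the result than to proving it.
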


\begin{pro}\label{pro:quasi bialgebras}
	Let $(A,\cdot_{A},\Delta_{r} ,P,Q)$ be a quasi-triangular averaging commutative and cocommutative infinitesimal bialgebra and $(A,\triangleright_{A},\triangleleft_{A},\vartheta ,\theta )$ be the \sappb given in Lemma \ref{2474}.
	Then the following conditions hold:
	\begin{enumerate}
		\item\label{2492} $r+\tau(r)$ is invariant on $(A,\triangleright_{A},\triangleleft_{A})$.
		\item\label{2493}  $r$ satisfies the SAPP-YBE in $(A,\triangleright_{A},\triangleleft_{A})$.
		\item\label{2494} the following equations hold:
		\begin{eqnarray}
			\vartheta(x)=(f-g)(x)r=\vartheta_{r}(x),\;
			\theta(x)=g(x)r=\theta_{r}(x),\;\forall x\in A,
		\end{eqnarray}
	\end{enumerate}
such that $(A,\triangleright_{A},\triangleleft_{A},\vartheta=\vartheta_{r} ,\theta=\theta_{r} )$ is a quasi-triangular \sappb.
\delete{
	If there exists an $r\in A\otimes A$ such that  $(A,\cdot_{A},\Delta=\Delta_{r},P,Q)$ is quasi-triangular, then $(A,\triangleright_{A},\triangleleft_{A},\vartheta ,\theta )$
	Let  $\triangleright_{A},\triangleleft_{A}:A\otimes A\rightarrow A$ be given by
\begin{equation}\label{eq:com asso and SDPP}
x\triangleright_{A}y=P(x)\cdot_{A}y+Q(x\cdot_{A}y),\; x\triangleleft_{A}y=-Q(x\cdot_{A}y),\;\forall x,y\in A,
\end{equation}
and $\vartheta_{r},\theta_{r}:A\rightarrow A\otimes A$ be given by
\begin{equation}\label{eq:co re}
\vartheta_{r}(x)=(Q\otimes\mathrm{id})\Delta_{r}(x)+\Delta_{r}(Px),\; \theta_{r}(x)=-\Delta_{r}(Px),\;\forall x\in A.
\end{equation}
Then $(A,\triangleright_{A},\triangleleft_{A},\vartheta_{r},\theta_{r})$ is a quasi-triangular \sappb.}
\end{pro}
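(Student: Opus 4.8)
The plan is to verify the three items by translating everything into the operator language of Proposition~\ref{pro:co}, and then to read off quasi-triangularity from the ``in particular'' clause of Theorem~\ref{thm:bialg}. The first bookkeeping step is to record the operator forms of the induced \sapp from \eqref{eq:com asso and SDPP}: since $x\circ_{A}y=x\triangleright_{A}y+x\triangleleft_{A}y=P(x)\cdot_{A}y$, commutativity of $\cdot_{A}$ gives $\mathcal{L}_{\circ_{A}}(x)=\mathcal{L}_{\cdot_{A}}(Px)$, $\mathcal{R}_{\circ_{A}}(x)=\mathcal{L}_{\cdot_{A}}(x)P$ and $\mathcal{L}_{\triangleleft_{A}}(x)=-Q\mathcal{L}_{\cdot_{A}}(x)$. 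Hence $f(x)=\mathrm{id}\otimes\mathcal{L}_{\cdot_{A}}(x)P-Q\mathcal{L}_{\cdot_{A}}(x)\otimes\mathrm{id}$ and $g(x)=\mathcal{L}_{\cdot_{A}}(Px)\otimes\mathrm{id}-\mathrm{id}\otimes\mathcal{L}_{\cdot_{A}}(Px)$, which is the form in which I will use $f,g$ throughout.

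I would first dispatch Item~(\ref{2494}). Using \eqref{eq:comul1}, $\Delta_{r}(Px)=\big(\mathrm{id}\otimes\mathcal{L}_{\cdot_{A}}(Px)-\mathcal{L}_{\cdot_{A}}(Px)\otimes\mathrm{id}\big)r$, so $\theta(x)=-\Delta_{r}(Px)=g(x)r=\theta_{r}(x)$ is immediate. For $\vartheta$, I expand $\vartheta(x)=(Q\otimes\mathrm{id})\Delta_{r}(x)+\Delta_{r}(Px)$ and $\vartheta_{r}(x)=(f-g)(x)r$ by the formulas above; after cancelling the common summands the difference collapses to $(Q\otimes\mathcal{L}_{\cdot_{A}}(x))r-(\mathrm{id}\otimes\mathcal{L}_{\cdot_{A}}(x)P)r$, and this vanishes because \eqref{eq:AAYBE2} gives $(Q\otimes\mathrm{id})r=(\mathrm{id}\otimes P)r$. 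Thus $\vartheta=\vartheta_{r}$ and $\theta=\theta_{r}$, so the \sappb produced by Lemma~\ref{2474} is exactly $(A,\triangleright_{A},\triangleleft_{A},\vartheta_{r},\theta_{r})$.

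For Item~(\ref{2492}) I set $s=r+\tau(r)$ and first observe that applying $\tau$ to \eqref{eq:AAYBE1} and \eqref{eq:AAYBE2} shows $\tau(r)$ again solves them, so $s$ satisfies both $(P\otimes\mathrm{id})s=(\mathrm{id}\otimes Q)s$ and $(Q\otimes\mathrm{id})s=(\mathrm{id}\otimes P)s$, while $s$ is invariant on $(A,\cdot_{A})$ by hypothesis. Taking $z=Px$ in the invariance relation $(\mathrm{id}\otimes\mathcal{L}_{\cdot_{A}}(z)-\mathcal{L}_{\cdot_{A}}(z)\otimes\mathrm{id})s=0$ gives $g(x)s=0$. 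For $f(x)s$ I rewrite $(\mathrm{id}\otimes\mathcal{L}_{\cdot_{A}}(x)P)s=(\mathrm{id}\otimes\mathcal{L}_{\cdot_{A}}(x))(Q\otimes\mathrm{id})s=(Q\otimes\mathcal{L}_{\cdot_{A}}(x))s$ using $(\mathrm{id}\otimes P)s=(Q\otimes\mathrm{id})s$, and then invariance turns this into $(Q\mathcal{L}_{\cdot_{A}}(x)\otimes\mathrm{id})s$, which cancels the second summand of $f(x)s$. Hence $f(x)s=g(x)s=0$, i.e.\ $s$ is invariant on $(A,\triangleright_{A},\triangleleft_{A})$, supplying both \eqref{eq:pro:co1} and \eqref{eq:invariance}.

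Item~(\ref{2493}) is the crux. Writing $r=\sum_{i}u_{i}\otimes v_{i}$ and substituting the products into \eqref{eq:Ps}, the three summands of $SA(r)$ are $\sum (P(u_{i})\cdot_{A}u_{j})\otimes v_{i}\otimes v_{j}$, $-\sum u_{i}\otimes Q(u_{j}\cdot_{A}v_{i})\otimes v_{j}$ and $\sum u_{i}\otimes u_{j}\otimes (v_{i}\cdot_{A}P(v_{j}))$. The key manoeuvre is to push the operators onto the middle leg: applying \eqref{eq:AAYBE1} in the form $\sum_{i}P(u_{i})\otimes v_{i}=\sum_{i}u_{i}\otimes Q(v_{i})$ to the first summand turns it into $\sum (u_{i}\cdot_{A}u_{j})\otimes Q(v_{i})\otimes v_{j}$, and applying \eqref{eq:AAYBE2} in the form $\sum_{j}u_{j}\otimes P(v_{j})=\sum_{j}Q(u_{j})\otimes v_{j}$ to the third turns it into $\sum u_{i}\otimes Q(u_{j})\otimes (v_{i}\cdot_{A}v_{j})$. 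Comparing with the definition of ${\bf A}(r)$ then yields exactly $SA(r)=(\mathrm{id}\otimes Q\otimes\mathrm{id}){\bf A}(r)$, which is $0$ since ${\bf A}(r)=0$. With Items~(\ref{2492}) and~(\ref{2493}) in hand, the ``in particular'' clause of Theorem~\ref{thm:bialg} shows $(A,\triangleright_{A},\triangleleft_{A},\vartheta_{r},\theta_{r})$ is a quasi-triangular \sappb, and by Item~(\ref{2494}) this is the structure of Lemma~\ref{2474}. The main obstacle is precisely the $SA(r)=0$ computation: one has to recognise that all the $P$'s and $Q$'s inserted by $\circ_{A}$ and $\triangleleft_{A}$ can be funnelled by the two AAYBE relations onto a single tensor slot so that $SA(r)$ collapses onto ${\bf A}(r)$; the analogous moves for $f(x)s$ are a smaller instance of the same idea.
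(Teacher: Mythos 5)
Your proposal is correct and follows essentially the same route as the paper's proof: both use the AAYBE relations \eqref{eq:AAYBE1}--\eqref{eq:AAYBE2} to transfer $P$'s into $Q$'s across tensor slots so that $f(x)\big(r+\tau(r)\big)$ and $g(x)\big(r+\tau(r)\big)$ reduce to the invariance of $r+\tau(r)$ on $(A,\cdot_{A})$, and both collapse $SA(r)$ to the key identity $SA(r)=(\mathrm{id}\otimes Q\otimes\mathrm{id}){\bf A}(r)=0$ before invoking Theorem \ref{thm:bialg}. The only differences are presentational (you phrase the computations in operator form and verify Item (\ref{2494}) first, while the paper works element-wise on $r=\sum_{i}u_{i}\otimes v_{i}$), so there is nothing substantive to add.
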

\begin{proof}
By the assumption, $r+\tau(r)$ is invariant on $(A,\cdot_{A})$ and $r$ satisfies the AAYBE.
Let $r=\sum\limits_{i}u_{i}\otimes v_{i}\in A\otimes A$ and $x\in A$. Then we have
	{\small
		\begin{eqnarray*}	f(x)\big(r+\tau(r)\big)&=&\big(\mathrm{id}\otimes\mathcal{R}_{\circ_{A}}(x)+\mathcal{L}_{\triangleleft_{A}}(x)\otimes\mathrm{id}\big)\big(r+\tau(r)\big)\\
			&=&\sum_{i} u_{i}\otimes v_{i}\circ_{A}x+x\triangleleft_{A}u_{i}\otimes v_{i}+v_{i}\otimes u_{i}\circ_{A}x+x\triangleleft_{A}v_{i}\otimes u_{i}\\
			&=&\sum_{i}u_{i}\otimes P(v_{i})\cdot_{A}x-Q(x\cdot_{A}u_{i})\otimes v_{i}+v_{i}\otimes P(u_{i})\cdot_{A}x-Q(x\cdot_{A}v_{i})\otimes u_{i}\\
			&\overset{\eqref{eq:AAYBE1},\eqref{eq:AAYBE2}}{=}&\sum_{i} Q(u_{i})\otimes v_{i}\cdot_{A}x-Q(x\cdot_{A}u_{i})\otimes v_{i}+Q(v_{i})\otimes u_{i}\cdot_{A}x-Q(x\cdot_{A}v_{i})\otimes u_{i}\\
			&=&(Q\otimes\mathrm{id})\big(\mathrm{id}\otimes\mathcal{L}_{\cdot_{A}}(x)-
			\mathcal{L}_{\cdot_{A}}(x)\otimes\mathrm{id}\big)\big(r+\tau(r)\big)\\
			&=&0,\\
			g(x)\big(r+\tau(r)\big)&=&\big(\mathcal{L}_{\circ_{A}}(x)\otimes\mathrm{id}-\mathrm{id}\otimes\mathcal{L}_{\circ_{A}}(x)\big)\big(r+\tau(r)\big)\\
			&=&\sum_{i} x\circ_{A}u_{i}\otimes v_{i}-u_{i}\otimes x\circ_{A}v_{i}+x\circ v_{i}\otimes u_{i}-v_{i}\otimes x\circ_{A}u_{i}\\
			&=&\sum_{i}P(x)\cdot_{A}u_{i}\otimes v_{i}-u_{i}\otimes P(x)\cdot_{A}v_{i}+P(x)\cdot_{A}v_{i}\otimes u_{i}-v_{i}\otimes P(x)\cdot_{A}u_{i}\\
			&=&-\big(\mathrm{id}\otimes\mathcal{L}_{\cdot_{A}}(Px)-\mathcal{L}_{\cdot_{A}}(Px)\otimes\mathrm{id}\big)\big(r+\tau(r)\big)\\
			&=&0.
	\end{eqnarray*}}Moreover, we have
	\begin{eqnarray*}
		SA(r)&=&\sum_{i,j} u_{i}\circ_{A}u_{j}\otimes v_{i}\otimes v_{j}+u_{i}\otimes v_{i}\triangleleft_{A}u_{j}\otimes v_{j}+u_{i}\otimes u_{j}\otimes v_{j}\circ_{A}v_{i}\\
		&=&\sum_{i,j} P(u_{i})\cdot_{A}u_{j}\otimes v_{i}\otimes v_{j}-u_{i}\otimes Q(v_{i}\cdot_{A}u_{j})\otimes v_{j}+u_{i}\otimes u_{j}\otimes P(v_{j})\cdot_{A}v_{i}\\
		&\overset{\eqref{eq:AAYBE1},\eqref{eq:AAYBE2}}{=}&\sum_{i,j} u_{i}\cdot_{A}u_{j}\otimes Q(v_{i})\otimes v_{j}-u_{i}\otimes Q(v_{i}\cdot_{A}u_{j})\otimes v_{j}+u_{i}\otimes Q(u_{j})\otimes v_{j}\cdot_{A}v_{i}\\
		&=&(\mathrm{id}\otimes Q\otimes\mathrm{id}){\bf A}(r)\\
		&=&0.
	\end{eqnarray*}
	Furthermore, we have
	\begin{eqnarray*}
		\theta(x)&=&-\Delta_{r}(Px)\overset{\eqref{eq:comul1}}{=}-\big( \mathrm{id}\otimes\mathcal{L}_{\cdot_{A}}(Px)-\mathcal{L}_{\cdot_{A}}(Px)\otimes\mathrm{id} \big)r\\
		&=&\sum_{i} P(x)\cdot_{A}u_{i}\otimes v_{i}-u_{i}\otimes P(x)\cdot_{A}v_{i}
		=\sum_{i} x\circ_{A} u_{i}\otimes v_{i}-u_{i}\otimes x\circ_{A}v_{i}
		\\
		&=&\big( \mathcal{L}_{\circ_{A}}(x)\otimes\mathrm{id}-\mathrm{id}\otimes\mathcal{L}_{\circ_{A}}(x) \big)r=g(x)r=\theta_{r}(x),
	\end{eqnarray*}
	and similarly $\vartheta(x)=(f-g)(x)r=\vartheta_{r}(x)$.
	In conclusion, conditions \eqref{2492}-\eqref{2494} hold, and thus $(A,\triangleright_{A},\triangleleft_{A},\vartheta_{r},\theta_{r})$ is a quasi-triangular \sappb.
\end{proof}

\begin{lem}\label{lem:TR}
	Let $(A,\triangleright_{A},\triangleleft_{A})$ be a \sapp and
$(A,\circ_{A})$ be the sub-adjacent perm algebra of $(A,\triangleright_{A},\triangleleft_{A})$. Suppose that
$r=\sum\limits_{i} u_{i}\otimes v_{i}\in A\otimes A$.
Let $\eta_{r},\vartheta_{r},\theta_{r}:A\rightarrow A\otimes A$ be linear maps given by \eqref{eq:comul}, and $\circ_{r},\triangleright_{r},\triangleleft_{r}:A^{*}\otimes A^{*}\rightarrow A^{*}$ be the linear duals of $\eta_{r}$, $\vartheta_{r}$ and $\theta_{r}$ respectively.
Then we have
	\begin{eqnarray}
		&& a^{*}\circ_{r}b^{*}=\mathcal{L}^{*}_{\circ_{A}}\big(r^{\sharp}(a^{*})\big)b^{*}+\mathcal{L}^{*}_{\triangleleft_{A}}\big({\tau(r)}^{\sharp}(b^{*})\big)a^{*},\label{eq:dual space mul1}\\
		&& a^{*}\triangleright_{r}b^{*}=(\mathcal{L}^{*}_{\circ_{A}}+\mathcal{R}^{*}_{\circ_{A}})\big(r^{\sharp}(a^{*})\big)b^{*}-\mathcal{R}^{*}_{\triangleright_{A}}\big({\tau(r)}^{\sharp}(b^{*})\big)a^{*},\label{eq:dual space mul3}\\
		&& a^{*}\triangleleft_{r}b^{*}=\mathcal{R}^{*}_{\circ_{A}}\big({\tau(r)}^{\sharp}(b^{*})\big)a^{*}-\mathcal{R}^{*}_{\circ_{A}}\big(r^{\sharp}(a^{*})\big)b^{*},\;\forall a^{*},b^{*}\in A^{*}.\label{eq:dual space mul2}
	\end{eqnarray}
	Moreover, we have
	{\small
		\begin{align}
			&\langle r^{\sharp}(a^{*})\circ_{A}r^{\sharp}(b^{*})-r^{\sharp}(a^{*}\circ_{r}b^{*}),c^{*}\rangle\notag\\
&=\langle a^{*}\otimes b^{*}\otimes c^{*},(\tau\otimes\mathrm{id})SA(r)- \sum_{j}f(u_{j})\big(r+\tau(r)\big)\otimes v_{j}  \rangle,\label{eq:t1}\\
			&\langle r^{\sharp}(a^{*})\triangleright_{A}r^{\sharp}(b^{*})-r^{\sharp}(a^{*}\triangleright_{r}b^{*}),c^{*}\rangle\notag\\
&=\langle a^{*}\otimes b^{*}\otimes c^{*},(\tau\otimes\mathrm{id}+\mathrm{id}\otimes\tau)SA(r)\notag\\
&\ \ 
-\sum_{j}u_{j}\otimes\tau\Big(f(v_{j})\big(r+\tau(r)\big)\Big)+f(u_{j})\big(r+\tau(r)\big)\otimes v_{j}\rangle,\label{eq:t1.5}\\
			&\langle r^{\sharp}(a^{*})\triangleleft_{A} r^{\sharp}(b^{*})-r^{\sharp}(a^{*}\triangleleft_{r}b^{*}),c^{*}\rangle\notag\\
			&=\langle a^{*}\otimes b^{*}\otimes c^{*},\sum_{j}u_{j}\otimes\tau\Big(f(v_{j})\big(r+\tau(r)\big)\Big)-(\mathrm{id}\otimes\tau)SA(r)\rangle,\label{eq:t2}\\
			&\langle \big(-\tau(r)\big)^{\sharp}(a^{*})\circ_{A}\big(-\tau(r)\big)^{\sharp}(b^{*})-\big(-\tau(r)\big)^{\sharp}(a^{*}\circ_{r}b^{*}),c^{*}\rangle\notag\\
			&=\langle a^{*}\otimes b^{*}\otimes c^{*},\xi SA(r) \rangle, \label{eq:t3}\\
			&\langle \big(-\tau(r)\big)^{\sharp}(a^{*})\triangleright_{A}\big(-\tau(r)\big)^{\sharp}(b^{*})-\big(-\tau(r)\big)^{\sharp}(a^{*}\triangleright_{r}b^{*}),c^{*}\rangle\notag\\
			&=\langle a^{*}\otimes b^{*}\otimes c^{*},(\xi+\mathrm{id}\otimes\tau)SA(r)-\xi\sum_{j} f(u_{j})\big( r+\tau(r) \big)\otimes v_{j}  \rangle,\label{eq:t4.5}\\
			&\langle \big(-\tau(r)\big)^{\sharp}(a^{*})\triangleleft_{A}\big(-\tau(r)\big)^{\sharp}(b^{*})-\big(-\tau(r)\big)^{\sharp}(a^{*}\triangleleft_{r}b^{*}),c^{*}\rangle\notag\\
			&=\langle a^{*}\otimes b^{*}\otimes c^{*},\xi\sum_{j} f(u_{j})\big( r+\tau(r) \big)\otimes v_{j}-(\mathrm{id}\otimes\tau)SA(r)  \rangle,\;\forall a^{*},b^{*},c^{*}\in A^{*}.\label{eq:t4}
	\end{align}}
\end{lem}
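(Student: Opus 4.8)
The plan is to treat the whole statement as a bookkeeping identity in the dual pairing: first pin down the three dual products, then derive the six ``defect'' formulas, exploiting a reduction that cuts the independent work down to four computations. To establish \eqref{eq:dual space mul1}, \eqref{eq:dual space mul3} and \eqref{eq:dual space mul2}, I would start from the definition of the linear dual together with \eqref{eq:comul}: for all $x\in A$ and $a^*,b^*\in A^*$ one has $\langle a^*\circ_{r}b^*,x\rangle=\langle a^*\otimes b^*,\eta_{r}(x)\rangle=\langle a^*\otimes b^*,f(x)r\rangle$, and analogously $\langle a^*\triangleleft_{r}b^*,x\rangle=\langle a^*\otimes b^*,g(x)r\rangle$ and $\langle a^*\triangleright_{r}b^*,x\rangle=\langle a^*\otimes b^*,(f-g)(x)r\rangle$. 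Expanding $f(x)r=\sum_{i}u_{i}\otimes(v_{i}\circ_{A}x)+(x\triangleleft_{A}u_{i})\otimes v_{i}$ and rewriting the scalar coefficients through $r^{\sharp}(a^*)=\sum_{i}\langle a^*,u_{i}\rangle v_{i}$ and ${\tau(r)}^{\sharp}(b^*)=\sum_{i}\langle b^*,v_{i}\rangle u_{i}$ turns the two summands into $\mathcal{L}^{*}_{\circ_{A}}(r^{\sharp}(a^*))b^*$ and $\mathcal{L}^{*}_{\triangleleft_{A}}({\tau(r)}^{\sharp}(b^*))a^*$; here the commutativity of $\triangleleft_{A}$ is what lets me replace $x\triangleleft_{A}{\tau(r)}^{\sharp}(b^*)$ by $\mathcal{L}_{\triangleleft_{A}}({\tau(r)}^{\sharp}(b^*))x$. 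The formula \eqref{eq:dual space mul2} comes out the same way from $g$, and \eqref{eq:dual space mul3} then follows by subtraction once I use $\mathcal{R}_{\circ_{A}}=\mathcal{R}_{\triangleright_{A}}+\mathcal{R}_{\triangleleft_{A}}$ and $\mathcal{R}_{\triangleleft_{A}}=\mathcal{L}_{\triangleleft_{A}}$ (commutativity again) to recognize $\mathcal{L}^{*}_{\triangleleft_{A}}-\mathcal{R}^{*}_{\circ_{A}}=-\mathcal{R}^{*}_{\triangleright_{A}}$.

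For the ``Moreover'' part I would first record the structural reduction coming from $\triangleright_{A}=\circ_{A}-\triangleleft_{A}$ and $\triangleright_{r}=\circ_{r}-\triangleleft_{r}$ (the latter since $\vartheta_{r}=\eta_{r}-\theta_{r}$): the left-hand side of \eqref{eq:t1.5} is exactly the difference of those of \eqref{eq:t1} and \eqref{eq:t2}, and likewise \eqref{eq:t4.5} is \eqref{eq:t3} minus \eqref{eq:t4}; a glance at the right-hand sides confirms that the $SA(r)$- and $f(\cdot)(r+\tau(r))$-tensors combine accordingly. Hence it suffices to prove the four $\circ$- and $\triangleleft$-identities \eqref{eq:t1}, \eqref{eq:t2}, \eqref{eq:t3}, \eqref{eq:t4}. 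Each is handled by pairing both sides against an arbitrary $c^*\in A^*$, substituting $r^{\sharp}$ (resp. $\big(-\tau(r)\big)^{\sharp}$) and the dual products just obtained, and reading everything off as a triple contraction with $a^*\otimes b^*\otimes c^*$. For \eqref{eq:t1}, say, the contribution $\langle r^{\sharp}(a^*)\circ_{A}r^{\sharp}(b^*),c^*\rangle$ reproduces the third summand of $\langle(\tau\otimes\mathrm{id})SA(r),a^*\otimes b^*\otimes c^*\rangle$ after swapping the summation indices, while $\langle r^{\sharp}(a^*\circ_{r}b^*),c^*\rangle$ accounts for two summands of $\langle\sum_{j}f(u_{j})(r+\tau(r))\otimes v_{j},a^*\otimes b^*\otimes c^*\rangle$, the remaining matches being forced by index relabelling and, once more, the commutativity of $\triangleleft_{A}$. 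The patterns for \eqref{eq:t2}, \eqref{eq:t3}, \eqref{eq:t4} are identical, with the feature that in \eqref{eq:t2} the leftover summands cancel in pairs purely by relabelling.

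The only real obstacle is combinatorial: after applying the permutations $\tau$, $\xi$, $\tau\otimes\mathrm{id}$ and $\mathrm{id}\otimes\tau$ one must keep precise track of which tensor slot each $u_{i}$ or $v_{i}$ lands in and then pair up the many summands of $SA(r)$ and of the invariance tensors with the expansions of $r^{\sharp}(a^*\circ_{r}b^*)$, $r^{\sharp}(a^*\triangleleft_{r}b^*)$, and their $\big(-\tau(r)\big)^{\sharp}$-analogues. It is worth emphasizing that no axiom beyond the commutativity of $\triangleleft_{A}$ and the decomposition $\circ_{A}=\triangleright_{A}+\triangleleft_{A}$ enters the argument; in particular the perm-type relation \eqref{eq:SDPP} is not used, so the entire lemma is a formal identity in $A\otimes A\otimes A$, and the work reduces to disciplined index management and matching terms in pairs.
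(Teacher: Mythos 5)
Your proposal is correct and follows essentially the same route as the paper's proof: obtain the dual products by pairing against $x\in A$ and expanding $f(x)r$ and $g(x)r$ (the paper likewise gets \eqref{eq:dual space mul3} by subtracting \eqref{eq:dual space mul2} from \eqref{eq:dual space mul1}), then verify the defect identities by contracting both sides against $a^{*}\otimes b^{*}\otimes c^{*}$ and matching summands after index relabelling, with commutativity of $\triangleleft_{A}$ as the only algebraic input. Your one refinement --- deducing \eqref{eq:t1.5} and \eqref{eq:t4.5} by subtracting \eqref{eq:t2} from \eqref{eq:t1} and \eqref{eq:t4} from \eqref{eq:t3}, after checking that the right-hand sides combine accordingly --- is a harmless shortcut; the paper instead proves \eqref{eq:t1} in full and treats the remaining identities as ``similar'' direct computations.
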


\begin{proof}
	Let $x\in A,a^{* },b^{* },c^{* }\in A^{* }$. We have
	\begin{eqnarray*}
		\langle a^{* }\circ _{r}b^{* },x\rangle  &=&\langle \eta_{r} (x),a^{*}\otimes b^{* }\rangle  \\
		&=&\langle \big(\mathrm{id}\otimes \mathcal{R}_{\circ _{A}}(x)+\mathcal{L}_{\triangleleft_{A}}(x)\otimes \mathrm{id}\big)r,a^{* }\otimes b^{* }\rangle
		\\
		&=&\langle r,a^{* }\otimes \mathcal{R}_{\circ _{A}}^{* }(x)b^{* }+
		\mathcal{L}_{\triangleleft_{A}}^{* }(x)a^{* }\otimes b^{* }\rangle  \\
		&=&\langle r^{\sharp}(a^{* }),\mathcal{R}_{\circ _{A}}^{* }(x)b^{*
		}\rangle +\langle {\tau(r)}^{\sharp}(b^{* }),\mathcal{L}_{\triangleleft_{A}}^{*
		}(x)a^{*}\rangle  \\
		&=&\langle r^{\sharp}(a^{* })\circ _{A}x,b^{* }\rangle +\langle x\triangleleft_{A}{\tau(r)}^{\sharp}(b^{* }),a^{* }\rangle  \\
		&=&\langle x,\mathcal{L}_{\circ _{A}}^{* }\big(r^{\sharp}(a^{* })\big)
		b^{* }+\mathcal{L}_{\triangleleft_{A}}^{* }\big({\tau(r)}^{\sharp}(b^{* })\big)
		a^{* }\rangle .
	\end{eqnarray*}
	Hence \eqref{eq:dual space mul1} holds. Similarly we get
	\eqref{eq:dual space mul2}, and \eqref{eq:dual space mul3} holds by substracting \eqref{eq:dual space mul2} from \eqref{eq:dual space mul1}. Moreover, we have
		\begin{eqnarray*}
			&&\langle r^{\sharp}(a^{* })\circ _{A}r^{\sharp}(b^{* }),c^{* }\rangle
			=\langle r^{\sharp}(a^{* }),\mathcal{R}_{\circ _{A}}^{* }\big(r^{\sharp}(b^{*
			})\big)c^{* }\rangle  \\
			&=&\langle r,a^{* }\otimes \mathcal{R}_{\circ _{A}}^{* }\big(
			r^{\sharp}(b^{* })\big)c^{* }\rangle =\sum_{i}\langle u_{i},a^{*
			}\rangle \langle v_{i}\circ _{A}r^{\sharp}(b^{* }),c^{* }\rangle  \\
			&=&\sum_{i}\langle u_{i},a^{* }\rangle \langle r^{\sharp}(b^{* }),\mathcal{L
			}_{\circ _{A}}^{* }(v_{i})c^{* }\rangle =\sum_{i}\langle u_{i},a^{*
			}\rangle \langle r,b^{* }\otimes \mathcal{L}_{\circ _{A}}^{*
			}(v_{i})c^{* }\rangle  \\
			&=&\sum_{i,j}\langle u_{i},a^{* }\rangle \langle u_{j}\otimes v_{i}\circ
			_{A}v_{j},b^{* }\otimes c^{* }\rangle =\sum_{i,j}\langle a^{*
			}\otimes b^{* }\otimes c^{* },u_{i}\otimes u_{j}\otimes v_{i}\circ
			_{A}v_{j}\rangle , \\
			&&\langle r^{\sharp}(a^{* }\circ _{r}b^{* }),c^{* }\rangle =\langle
			r,a^{* }\circ _{r}b^{* }\otimes c^{* }\rangle  \\
			&=&\sum_{i}\langle u_{i},a^{* }\circ _{r}b^{* }\rangle \langle
			v_{i},c^{* }\rangle =\sum_{i}\langle \eta_{r} (u_{i}),a^{* }\otimes
			b^{* }\rangle \langle v_{i},c^{* }\rangle  \\
			&=&\sum_{i,j}\langle u_{j}\otimes v_{j}\circ _{A}u_{i}+u_{i}\triangleleft_{A}u_{j}\otimes v_{j},a^{* }\otimes b^{* }\rangle \langle
			v_{i},c^{* }\rangle  \\
			&=&\sum_{i,j}\langle a^{* }\otimes b^{* }\otimes c^{*
			},u_{j}\otimes v_{j}\circ _{A}u_{i}\otimes v_{i}+u_{i}\triangleleft_{A}u_{j}\otimes
			v_{j}\otimes v_{i}\rangle .
		\end{eqnarray*}
Hence
	\begin{eqnarray*}
		&&\langle r^{\sharp}(a^{* })\circ _{A}r^{\sharp}(b^{* })-r^{\sharp}(a^{* }\circ
		_{r}b^{* }),c^{* }\rangle  \\
		&=&\sum_{i,j}\langle a^{* }\otimes b^{* }\otimes c^{*
		},u_{i}\otimes u_{j}\otimes v_{i}\circ _{A}v_{j}-u_{j}\otimes v_{j}\circ
		_{A}u_{i}\otimes v_{i}-u_{i}\triangleleft_{A}u_{j}\otimes v_{j}\otimes v_{i}\rangle
		\\
		&=&\langle a^{* }\otimes b^{* }\otimes c^{* },(\tau \otimes \mathrm{
			id})SA(r)-\sum_{j}f(u_{j})\big(r+\tau (r)\big)\otimes v_{j}\rangle,
	\end{eqnarray*}
	that is, \eqref{eq:t1} holds. Similarly we get \eqref{eq:t1.5}-\eqref{eq:t4}.
\end{proof}

\begin{thm}\label{thm:4.6}
	Let $(A,\triangleright_{A},\triangleleft_{A},\vartheta_{r},\theta_{r})$ be a quasi-triangular \sappb, and
	$\triangleright_{r},\triangleleft_{r}:A^{*}\otimes A^{*}\rightarrow A^{*}$ be the linear duals of $\vartheta_{r}$ and $\theta_{r}$ respectively.
	Then $r^{\sharp}:A^{*}\rightarrow A$ is a \sapp homomorphism, that is,
	\begin{eqnarray} &&r^{\sharp}(a^{*})\triangleright_{A}r^{\sharp}(b^{*})=r^{\sharp}(a^{*}\triangleright_{r} b^{*}),\;\label{eq:homo sdpp1}\\
&& r^{\sharp}(a^{*})\triangleleft_{A}r^{\sharp}(b^{*})=r^{\sharp}(a^{*}\triangleleft_{r} b^{*}),\;\forall a^{*},b^{*}\in A^{*}.\label{eq:homo sdpp2}
	\end{eqnarray}
Moreover, $\big(-\tau(r)\big)^{\sharp}:A^{*}\rightarrow A$ is also a \sapp homomorphism.
\end{thm}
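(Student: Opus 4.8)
The plan is to read the result off directly from Lemma~\ref{lem:TR}, feeding in only the two conditions that define quasi-triangularity. By definition of a quasi-triangular \sappb, the $2$-tensor $r$ is a solution of the SAPP-YBE, so that $SA(r)=0$, and its symmetric part $r+\tau(r)$ is invariant on $(A,\triangleright_{A},\triangleleft_{A})$, so that $f(x)\big(r+\tau(r)\big)=0$ for all $x\in A$ by \eqref{eq:invariance}. Writing $r=\sum_{i}u_{i}\otimes v_{i}$, this means in particular that $f(u_{j})\big(r+\tau(r)\big)=0$ and $f(v_{j})\big(r+\tau(r)\big)=0$ for every index $j$. I would first note that $(A^{*},\triangleright_{r},\triangleleft_{r})$ is genuinely a \sapp, since $(A,\vartheta_{r},\theta_{r})$ is the special apre-perm coalgebra underlying the given \sappb, so the statement that $r^{\sharp}$ is a \sapp homomorphism is meaningful.

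First I would establish \eqref{eq:homo sdpp1}. By \eqref{eq:t1.5}, for all $a^{*},b^{*},c^{*}\in A^{*}$ the pairing $\langle r^{\sharp}(a^{*})\triangleright_{A}r^{\sharp}(b^{*})-r^{\sharp}(a^{*}\triangleright_{r}b^{*}),c^{*}\rangle$ equals an expression built entirely from $SA(r)$ and from the tensors $f(u_{j})\big(r+\tau(r)\big)$ and $f(v_{j})\big(r+\tau(r)\big)$; under the quasi-triangular hypothesis all of these vanish, so the right-hand side is zero for every $c^{*}$, whence \eqref{eq:homo sdpp1}. Applying the same reasoning to \eqref{eq:t2} yields \eqref{eq:homo sdpp2}, as its right-hand side is again assembled solely from $SA(r)$ and $f(v_{j})\big(r+\tau(r)\big)$. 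For the second assertion I would treat $\big(-\tau(r)\big)^{\sharp}$ identically, now invoking \eqref{eq:t4.5} and \eqref{eq:t4}: each of these right-hand sides is a combination of $SA(r)$ (possibly precomposed with the cyclic permutation $\xi$) and of $f(u_{j})\big(r+\tau(r)\big)$, so both vanish and give the two homomorphism identities for $\big(-\tau(r)\big)^{\sharp}$. The $\circ$-homomorphism property is not needed as input, but follows a posteriori from the two identities together with $\circ_{A}=\triangleright_{A}+\triangleleft_{A}$ and $\circ_{r}=\triangleright_{r}+\triangleleft_{r}$.

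Since all the computational content is already packaged inside Lemma~\ref{lem:TR}, there is no substantive obstacle here. The only point requiring attention is the bookkeeping observation that the SAPP-YBE condition $SA(r)=0$ together with the invariance condition $f(x)\big(r+\tau(r)\big)=0$ are exactly the two inputs that annihilate every term appearing on the right-hand sides of \eqref{eq:t1.5}, \eqref{eq:t2}, \eqref{eq:t4.5} and \eqref{eq:t4}. In carrying this out I would make sure that the invariance hypothesis supplies $f$ (rather than $g$) applied to $r+\tau(r)$, which is precisely the form entering those four identities, so that no residual $g(x)\big(r+\tau(r)\big)$ term is left over that the hypotheses do not control.
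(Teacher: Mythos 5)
Your proposal is correct and follows exactly the paper's own route: the published proof likewise reads the two homomorphism identities off Lemma~\ref{lem:TR}, using that quasi-triangularity ($SA(r)=0$ together with $f(x)\big(r+\tau(r)\big)=0$) annihilates every term on the right-hand sides of \eqref{eq:t1.5}, \eqref{eq:t2}, \eqref{eq:t4.5} and \eqref{eq:t4}. Your additional bookkeeping remarks (well-definedness of $(A^{*},\triangleright_{r},\triangleleft_{r})$ and the fact that only the $f$-form of invariance is needed) are accurate but not essentially different from the paper's argument.
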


\begin{proof}
By  Lemma \ref{lem:TR}, \eqref{eq:homo sdpp1} and \eqref{eq:homo sdpp2} hold. Thus  $r^{\sharp}$ is a \sapp homomorphism. Similarly $\big(-\tau(r)\big)^{\sharp}$ is also a \sapp homomorphism.
\end{proof}

Now we study the representation theory of \sapps.

\begin{defi}
Let $(A,\triangleright_{A},\triangleleft_{A})$ be a \sapp and $(A,\circ_{A})$ be
the sub-adjacent perm algebra. Let $V$ be a vector space and
$l_{\triangleright_{A}},r_{\triangleright_{A}},l_{\triangleleft_{A}}
:A\rightarrow \mathrm{End}_{\mathbb K}(V)$ be linear maps. Set
\begin{equation}\label{eq:sum linear}
l_{\circ_{A}}=l_{\triangleright_{A}}+l_{\triangleleft_{A}},\;\;
r_{\circ_{A}}=r_{\triangleright_{A}}+l_{\triangleleft_{A}}.
\end{equation}
If $(l_{\circ_{A}},r_{\circ_{A}},V)$ is a representation of $(A,\circ_{A})$, that is, 
\begin{eqnarray}
&&l_{\circ_{A}}(x\circ_{A}y)v
=l_{\circ_{A}}(x)l_{\circ_{A}}(y)v
=l_{\circ_{A}}(y)l_{\circ_{A}}(x)v,\label{eq:rep1}\\
&&r_{\circ_{A}}(x\circ_{A}y)v
=r_{\circ_{A}}(y)r_{\circ_{A}}(x)v
=r_{\circ_{A}}(y)l_{\circ_{A}}(x)v
=l_{\circ_{A}}(x)r_{\circ_{A}}(y)v,\;\forall x,y\in A,v\in V,\ \ \ \ \label{eq:rep2}
\end{eqnarray}
and the following equations hold:
\begin{eqnarray}
&&l_{\triangleleft_{A}}(x\circ_{A}y)v=l_{\circ_{A}}(x)
l_{\triangleleft_{A}}(y)v
=-l_{\triangleleft_{A}}(x)l_{\triangleleft_{A}}(y)v
=l_{\triangleleft_{A}}(y)l_{\circ_{A}}(x)v,\label{eq:sdpp rep1}\\
&&l_{\triangleleft_{A}}(x\triangleleft_{A}y)v
=-l_{\triangleleft_{A}}(y)r_{\circ_{A}}(x)v
=-r_{\circ_{A}}(x\triangleleft_{A}y)v,\;\forall x,y\in A, v\in V,
\end{eqnarray}
then we say $(l_{\triangleright_{A}},r_{\triangleright_{A}},
l_{\triangleleft_{A}},V)$ is a {\bf representation} of $(A,\triangleright_{A},\triangleleft_{A})$.
\end{defi}

\delete{
Two representations $(l _{\triangleright_{A}},r _{\triangleright_{A}},l _{\triangleleft_{A}},V)$ and $(l' _{\triangleright_{A}},r' _{\triangleright_{A}},l' _{\triangleleft_{A}},V')$ of $(A,\triangleright_{A},\triangleleft_{A})$ are called
\textbf{equivalent} if there exists a linear isomorphism $\phi:V \rightarrow V'$ such that the following equations hold:
\begin{eqnarray*}
\phi l_{\triangleright_{A}}(x)=l'_{\triangleright_{A}}(x)\phi,\;\phi r_{\triangleright_{A}}(x)=r'_{\triangleright_{A}}(x)\phi,\;\phi l_{\triangleleft_{A}} (x)=l'_{\triangleleft_{A}}(x)\phi, \;\forall x\in A.
\end{eqnarray*}
\textcolor{blue}{GL: Are equivalent reps used in this paper? If not, please delete this notion.}}

\begin{pro}
Let $(A,\triangleright_{A},\triangleleft_{A})$ be a \sapp, $V$ be a vector space and
$l_{\triangleright_{A}},r_{\triangleright_{A}},l_{\triangleleft_{A}}
:A\rightarrow \mathrm{End}_{\mathbb K}(V)$ be linear maps.
Then $(l_{\triangleright_{A}},r_{\triangleright_{A}},l_{\triangleleft_{A}},
V)$ is a representation of the \sapp $(A,\triangleright_{A},\triangleleft_{A})$ if and only
if there is a \sapp structure on $A\oplus V$ given by
\begin{eqnarray}
	&&(x+u)\triangleright_{d} (y+v)=x\triangleright_{A}y+l_{\triangleright_{A}}(x)v+r_{\triangleright_{A}}(y)u,\label{eq:sd SDPP1}\\
	&&(x+u)\triangleleft_{d} (y+v)=x\triangleleft_{A}y+l_{\triangleleft_{A}}(x)v+l_{\triangleleft_{A}}(y)u,\;\forall x,y\in A,u,v\in V.
	\label{eq:sd SDPP2}
\end{eqnarray}
In this case, we denote the \sapp structure on $A\oplus V$ by $A\ltimes
_{l_{\triangleright_{A}},r_{\triangleright_{A}},l_{\triangleleft_{A}}}V$ and call it the {\bf semi-direct product \sapp of $(A,\triangleright_{A},\triangleleft_{A})$ with respect to $(l_{\triangleright_{A}},r_{\triangleright_{A}},l_{\triangleleft_{A}},
	V)$}.
\end{pro}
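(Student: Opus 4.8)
The plan is to expand each of the three defining conditions of a \sapp for the triple $(A\oplus V,\triangleright_{d},\triangleleft_{d})$ on generic elements $x+u$, $y+v$, $z+w$ with $x,y,z\in A$ and $u,v,w\in V$, and then to separate the resulting identities into their $A$-valued and $V$-valued components. The $A$-valued components reproduce precisely the corresponding conditions for $(A,\triangleright_{A},\triangleleft_{A})$, which hold by hypothesis, so the whole equivalence reduces to an analysis of the $V$-valued components. Each such component is linear in exactly one of $u,v,w$, and collecting the coefficient of each slot-variable will produce exactly the defining equations of a representation of $(A,\triangleright_{A},\triangleleft_{A})$.

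First I would record that, by \eqref{eq:sum linear}, the induced multiplication $\circ_{d}=\triangleright_{d}+\triangleleft_{d}$ satisfies $(x+u)\circ_{d}(y+v)=x\circ_{A}y+l_{\circ_{A}}(x)v+r_{\circ_{A}}(y)u$, where $l_{\circ_{A}}=l_{\triangleright_{A}}+l_{\triangleleft_{A}}$ and $r_{\circ_{A}}=r_{\triangleright_{A}}+l_{\triangleleft_{A}}$. Commutativity of $\triangleleft_{d}$ is automatic: interchanging $x+u$ and $y+v$ in \eqref{eq:sd SDPP2} leaves $x\triangleleft_{A}y+l_{\triangleleft_{A}}(x)v+l_{\triangleleft_{A}}(y)u$ unchanged because $\triangleleft_{A}$ is commutative, so no condition on the maps is imposed here. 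Next, the perm-algebra axiom for $(A\oplus V,\circ_{d})$ unwinds, upon collecting the coefficients of $u,v,w$, into exactly \eqref{eq:rep1} and \eqref{eq:rep2}, that is, into the statement that $(l_{\circ_{A}},r_{\circ_{A}},V)$ is a representation of the sub-adjacent perm algebra $(A,\circ_{A})$; this is the standard correspondence between representations of a perm algebra and its semi-direct products.

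The remaining content is the third axiom \eqref{eq:SDPP}. Here I would expand $(X\circ_{d}Y)\triangleleft_{d}Z$, $X\circ_{d}(Y\triangleleft_{d}Z)$ and $-X\triangleleft_{d}(Y\triangleleft_{d}Z)$ for $X=x+u$, $Y=y+v$, $Z=z+w$, and equate the three. Matching the coefficient of $w$ (the $V$-part carried by the third argument) gives the chain $l_{\triangleleft_{A}}(x\circ_{A}y)=l_{\circ_{A}}(x)l_{\triangleleft_{A}}(y)=-l_{\triangleleft_{A}}(x)l_{\triangleleft_{A}}(y)$, while matching the coefficient of $v$ supplies the missing member $=l_{\triangleleft_{A}}(y)l_{\circ_{A}}(x)$, so that the two together are exactly \eqref{eq:sdpp rep1}. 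Matching the coefficient of $u$ (the $V$-part in the first argument) yields $l_{\triangleleft_{A}}(x\triangleleft_{A}y)=-l_{\triangleleft_{A}}(y)r_{\circ_{A}}(x)=-r_{\circ_{A}}(x\triangleleft_{A}y)$, which is the final defining equation of a representation. Each implication is reversible: reading the coefficient identities back as the representation axioms reconstitutes \eqref{eq:SDPP} on $A\oplus V$ slot by slot, while the $A$-component is the axiom for $A$ itself.

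The computations are entirely routine; the only genuine care needed is bookkeeping, namely keeping track of which of the three arguments carries the $V$-component in each monomial and of the signs coming from the $-x\triangleleft_{A}(y\triangleleft_{A}z)$ term. Thus the main obstacle is organizational rather than conceptual. I would present the verification by displaying the three expanded expressions once and then reading off the slot-by-slot coefficient identities, citing the perm-algebra semi-direct product correspondence for the $\circ_{d}$ part and reducing the rest to \eqref{eq:sdpp rep1} together with the final representation equation.
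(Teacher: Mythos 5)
Your proof is correct. The slot-by-slot bookkeeping works exactly as you describe: the $w$-coefficient of the axiom $(X\circ_{d}Y)\triangleleft_{d}Z=X\circ_{d}(Y\triangleleft_{d}Z)=-X\triangleleft_{d}(Y\triangleleft_{d}Z)$ gives the first three members of \eqref{eq:sdpp rep1}, the $v$-coefficient gives the fourth member $l_{\triangleleft_{A}}(y)l_{\circ_{A}}(x)$, and the $u$-coefficient gives the remaining identity $l_{\triangleleft_{A}}(x\triangleleft_{A}y)v=-l_{\triangleleft_{A}}(y)r_{\circ_{A}}(x)v=-r_{\circ_{A}}(x\triangleleft_{A}y)v$, while the perm axiom for $\circ_{d}$ decomposes into \eqref{eq:rep1}--\eqref{eq:rep2} and commutativity of $\triangleleft_{d}$ is automatic.

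Your route differs from the paper's in organization rather than substance. The paper proves this proposition by a one-line reduction: it is the special case of Proposition \ref{2517} (the semi-direct product characterization of $(A,\triangleright_{A},\triangleleft_{A})$-representation algebras) in which the multiplications $\triangleright_{V},\triangleleft_{V}$ on $V$ are taken to be zero — under that specialization the conditions \eqref{bim alg1}--\eqref{bim alg4} and the requirement that $(V,\triangleright_{V},\triangleleft_{V})$ be a \sapp become vacuous, leaving exactly the representation axioms. Since Proposition \ref{2517} is itself justified only by "a straightforward computation," your proposal essentially carries out that computation explicitly in the special case. What the paper's approach buys is economy and a single general statement covering both this proposition and the representation-algebra setting; what your approach buys is a self-contained, checkable argument that does not rely on a forward reference (the paper cites Proposition \ref{2517}, which appears later in the text), and it makes visible precisely which representation axiom arises from which tensor slot. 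Both are valid; yours is the more transparent verification.
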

\begin{proof}
	 It follows from Proposition \ref{2517} by taking the zero multiplications on $V$.
\end{proof}

\begin{ex}
Let $(A,\triangleright_{A},\triangleleft_{A})$ be a \sapp. Then
$(\mathcal{L}_{\triangleright_{A}},\mathcal{R}_{\triangleright_{A}},
\mathcal{L}_{\triangleleft_{A}},A)$
is a representation of $(A,\triangleright_{A},\triangleleft_{A} )$, which is called
the \textbf{adjoint representation}.
\end{ex}

\begin{pro}
Let $(A,\triangleright_{A},\triangleleft_{A})$ be a \sapp and $(A,\circ_{A})$ be the sub-adjacent perm algebra. If
$(l_{\triangleright_{A}},r_{\triangleright_{A}},l_{\triangleleft_{A}},V)$ is a representation of $(A,\triangleright_{A},\triangleleft_{A})$, then
$$(l^{*}_{\circ_{A}}+r^{*}_{\circ_{A}},r^{*}_{\triangleright_{A}},
-l^{*}_{\triangleleft_{A}}-r^{*}_{\triangleright_{A}},V^{*})
=(l^{*}_{\circ_{A}}+r^{*}_{\circ_{A}},r^{*}_{\triangleright_{A}},
-r^{*}_{\circ_{A}},V^{*})$$
is also a representation of $(A,\triangleright_{A},\triangleleft_{A})$.
In particular, $(\mathcal{L}^{*}_{\circ_{A}}+\mathcal{R}^{*}_{\circ_{A}},
\mathcal{R}^{*}_{\triangleright_{A}},-\mathcal{R}^{*}_{\circ_{A}},
A^{*})$ is a representation of $(A,\triangleright_{A},\triangleleft_{A})$, which is called the {\bf coadjoint representation} of $(A,\triangleright_{A},\triangleleft_{A})$.
\end{pro}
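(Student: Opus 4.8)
The plan is to check the defining axioms of a representation of a \sapp directly for the dual datum, transporting every required identity back to the given representation $(l_{\triangleright_{A}},r_{\triangleright_{A}},l_{\triangleleft_{A}},V)$ through the adjunction $\langle \phi^{*}(u^{*}),v\rangle=\langle u^{*},\phi(v)\rangle$, bearing in mind that passing to transposes reverses the order of composition. Writing $\tilde l_{\triangleright_{A}}=l^{*}_{\circ_{A}}+r^{*}_{\circ_{A}}$, $\tilde r_{\triangleright_{A}}=r^{*}_{\triangleright_{A}}$ and $\tilde l_{\triangleleft_{A}}=-r^{*}_{\circ_{A}}$, the asserted equality of the two quadruples is immediate from $r_{\circ_{A}}=r_{\triangleright_{A}}+l_{\triangleleft_{A}}$. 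Forming the associated operators as in \eqref{eq:sum linear}, one finds $\tilde l_{\circ_{A}}=\tilde l_{\triangleright_{A}}+\tilde l_{\triangleleft_{A}}=l^{*}_{\circ_{A}}$ and $\tilde r_{\circ_{A}}=\tilde r_{\triangleright_{A}}+\tilde l_{\triangleleft_{A}}=-l^{*}_{\triangleleft_{A}}$, so the sub-adjacent perm-algebra datum to be tested is $(l^{*}_{\circ_{A}},-l^{*}_{\triangleleft_{A}},V^{*})$.

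First I would show that $(l^{*}_{\circ_{A}},-l^{*}_{\triangleleft_{A}},V^{*})$ is a representation of the perm algebra $(A,\circ_{A})$, i.e. that \eqref{eq:rep1} and \eqref{eq:rep2} hold for it. Pairing \eqref{eq:rep1} for $\tilde l_{\circ_{A}}=l^{*}_{\circ_{A}}$ against $v\in V$ turns the two required equalities into $l_{\circ_{A}}(x\circ_{A}y)v=l_{\circ_{A}}(y)l_{\circ_{A}}(x)v$ and $l_{\circ_{A}}(x\circ_{A}y)v=l_{\circ_{A}}(x)l_{\circ_{A}}(y)v$, which are precisely the original \eqref{eq:rep1} read with the two factors reversed. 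For \eqref{eq:rep2} applied to $\tilde r_{\circ_{A}}=-l^{*}_{\triangleleft_{A}}$, the three required identities dualize to $-l_{\triangleleft_{A}}(x\circ_{A}y)v=l_{\triangleleft_{A}}(x)l_{\triangleleft_{A}}(y)v$, $l_{\triangleleft_{A}}(x)l_{\triangleleft_{A}}(y)v=-l_{\circ_{A}}(x)l_{\triangleleft_{A}}(y)v$ and $l_{\circ_{A}}(x)l_{\triangleleft_{A}}(y)v=l_{\triangleleft_{A}}(y)l_{\circ_{A}}(x)v$, each of which is contained in the original \sapp axiom \eqref{eq:sdpp rep1}.

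Next I would verify the two \sapp-specific axioms for the dual datum. Axiom \eqref{eq:sdpp rep1} applied to $\tilde l_{\triangleleft_{A}}=-r^{*}_{\circ_{A}}$ and $\tilde l_{\circ_{A}}=l^{*}_{\circ_{A}}$ collapses, after pairing against $v$, to the chain asserting the equality of $r_{\circ_{A}}(x\circ_{A}y)v$, $r_{\circ_{A}}(y)l_{\circ_{A}}(x)v$, $r_{\circ_{A}}(y)r_{\circ_{A}}(x)v$ and $l_{\circ_{A}}(x)r_{\circ_{A}}(y)v$, which is exactly the original perm-representation axiom \eqref{eq:rep2}. For the final axiom in the definition, the two equalities become $-r^{*}_{\circ_{A}}(x\triangleleft_{A}y)=l^{*}_{\triangleleft_{A}}(x\triangleleft_{A}y)$ and $r^{*}_{\circ_{A}}(x\triangleleft_{A}y)=r^{*}_{\circ_{A}}(y)l^{*}_{\triangleleft_{A}}(x)$; the first is the transpose of $-r_{\circ_{A}}(x\triangleleft_{A}y)v=l_{\triangleleft_{A}}(x\triangleleft_{A}y)v$ from the original axiom, while the second reads, after pairing, $r_{\circ_{A}}(x\triangleleft_{A}y)v=l_{\triangleleft_{A}}(x)r_{\circ_{A}}(y)v$. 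This is the one point where commutativity of $\triangleleft_{A}$ is essential: since $x\triangleleft_{A}y=y\triangleleft_{A}x$, the original relation $r_{\circ_{A}}(a\triangleleft_{A}b)v=l_{\triangleleft_{A}}(b)r_{\circ_{A}}(a)v$ with $a=y$ and $b=x$ yields it at once.

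Having checked every axiom, the dual quadruple is a representation. The in-particular statement then follows by applying the result to the adjoint representation $(\mathcal{L}_{\triangleright_{A}},\mathcal{R}_{\triangleright_{A}},\mathcal{L}_{\triangleleft_{A}},A)$, where commutativity of $\triangleleft_{A}$ gives $\mathcal{R}_{\triangleleft_{A}}=\mathcal{L}_{\triangleleft_{A}}$ and hence $r_{\circ_{A}}=\mathcal{R}_{\circ_{A}}$, so that the coadjoint datum is $(\mathcal{L}^{*}_{\circ_{A}}+\mathcal{R}^{*}_{\circ_{A}},\mathcal{R}^{*}_{\triangleright_{A}},-\mathcal{R}^{*}_{\circ_{A}},A^{*})$ as stated. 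The computation is almost entirely mechanical; the only delicate points are keeping the reversal of composition order consistent across all the transposes, and spotting the single place where commutativity of $\triangleleft_{A}$ must be invoked to match the surviving relation, everything else being a routine dualization of the original axioms.
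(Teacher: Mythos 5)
Your proposal is correct and follows essentially the same route as the paper: identify the sub-adjacent perm datum of the dual quadruple as $(l^{*}_{\circ_{A}},-l^{*}_{\triangleleft_{A}},V^{*})$, then verify each axiom by transposing and matching it against the corresponding original axiom (\eqref{eq:rep1}, \eqref{eq:rep2}, \eqref{eq:sdpp rep1} and the $\triangleleft$-axiom swap roles in pairs under dualization). Your treatment is in fact slightly more careful than the paper's, which dispatches the last two chains with ``similarly'' and never mentions the invocation of commutativity of $\triangleleft_{A}$ that you correctly isolate as the one non-mechanical step.
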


\begin{proof}
It is clear that $l^{*}_{\circ_{A}}$ satisfies \eqref{eq:rep1}.
For all $x,y\in A,u^{*}\in V^{*},v\in V$, we have
{\small
\begin{eqnarray*}
&&\langle -l^{*}_{\triangleleft_{A}}(x\circ _{A}y)u^{*},v\rangle =\langle u^{*
},-l _{\triangleleft_{A}}(x\circ _{A}y)v\rangle ,\;\langle l_{\triangleleft_{A}}^{* }(y)l_{\triangleleft_{A}}^{* }(x)u^{*
},v\rangle =\langle u^{* },l_{\triangleleft_{A}}(x)l_{\triangleleft_{A}}(y)v\rangle , \\
&&\langle -l_{\triangleleft_{A}}^{* }(y)l_{\circ_{A}}^{* }(x)u^{* },v\rangle =\langle u^{*
},-l _{\circ_{A}}(x)l_{\triangleleft_{A}}(y)v\rangle ,\;\langle -l_{\circ_{A}}^{* }(x)l_{\triangleleft_{A}}^{* }(y)u^{* },v\rangle
=\langle u^{* },-l_{\triangleleft_{A}}(y)l_{\circ_{A}}(x)v\rangle.
\end{eqnarray*}}Hence by \eqref{eq:sdpp rep1}, we have
\begin{equation*}
-l_{\triangleleft_{A}}^{* }(x\circ _{A}y)u^{* }=l_{\triangleleft_{A}}^{* }(y)l_{\triangleleft_{A}}^{* }(x)u^{* }=-l_{\triangleleft_{A}}^{*
}(y)l_{\circ_{A}}^{* }(x)u^{* }=-l_{\circ_{A}}^{* }(x)l_{\triangleleft_{A}}^{* }(y)u^{*}.
\end{equation*}
Thus 
$$ (l^{*}_{\circ_{A}},-l^{*}_{\triangleleft_{A}},V^{*})
=(l^{*}_{\circ_{A}}+r^{*}_{\circ_{A}}
-l^{*}_{\triangleleft_{A}}-r^{*}_{\triangleright_{A}},
r^{*}_{\triangleright_{A}}-l^{*}_{\triangleleft_{A}}
-r^{*}_{\triangleright_{A}},V^{*})$$
is a representation of $(A,\circ_{A})$.
Similarly we have
\begin{eqnarray*}
&&-r_{\circ_{A}}^{* }(x\circ _{A}y)u^{* }=-l_{\circ_{A}}^{* }(x)r_{\circ_{A}}^{* }(y)u^{* }=-r_{\circ_{A}}^{*
}(x)r_{\circ_{A}}^{* }(y)u^{* }=-r_{\circ_{A}}^{* }(y)l_{\circ_{A}}^{* }(x)u^{* }, \\
&&-r_{\circ_{A}}^{* }(x\triangleleft_{A}y)u^{* }=-r_{\circ_{A}}^{* }(y)l_{\triangleleft_{A}}^{* }(x)u^{* }=l_{\triangleleft_{A}}^{*
}(x\triangleleft_{A}y)u^{* }.
\end{eqnarray*}
Thus $(l^{*}_{\circ_{A}}+r^{*}_{\circ_{A}},r^{*}_{\triangleright_{A}},
-r^{*}_{\circ_{A}},V^{*})$ is a representation of $(A,\triangleright_{A},\triangleleft_{A})$.
\end{proof}

Now we introduce the notion of special apre-perm representation algebras.

\begin{defi}
Let $(A,\triangleright_{A},\triangleleft_{A})$ and $(V,\triangleright_{V},\triangleleft_{V})$ be \sapps and $l_{\triangleright_{A}},r_{\triangleright_{A}},$
$l_{\triangleleft_{A}}:
A\rightarrow \mathrm{End}_{\mathbb K}(V)$ be linear maps.
Let $l_{\circ_{A}},r_{\circ_{A}}:A\rightarrow \mathrm{End}_{\mathbb K}(V)$ be linear maps given by \eqref{eq:sum linear}.
If $(l_{\triangleright_{A}},r_{\triangleright_{A}},l_{\triangleleft_{A}},V)$ is a representation of $(A,\triangleright_{A},\triangleleft_{A})$ and the following equations hold:
	\begin{eqnarray}		&&l_{\circ_{A}}(x)(u\circ_{V}v)=l_{\circ_{A}}(x)u\circ_{V}v=r_{\circ_{A}}(x)u\circ_{V}v=u\circ_{V}l_{\circ_{A}}(x)v,\label{bim alg1}\\
		&&l_{\triangleleft_{A}}(x)(u\circ_{V}v)=u\circ_{V}l_{\triangleleft_{A}}(x)v=-u\triangleleft_{V}l_{\triangleleft_{A}}(x)v=r_{\circ_{A}}(x)u\triangleleft_{V}v,\label{bim alg2}\\
		&&r_{\circ_{A}}(x)(u\circ_{V}v)=r_{\circ_{A}}(x)(v\circ_{V}u)=u\circ_{V}r_{\circ_{A}}(x)v,\label{bim alg3}\\
		&&l_{\circ_{A}}(x)(u\triangleleft_{V}v)=l_{\circ_{A}}(x)u\triangleleft_{V}v=-l_{\triangleleft_{A}}(x)(u\triangleleft_{V}v),\;\forall x\in A, u,v\in V,\label{bim alg4}
	\end{eqnarray}
 then we say $(l_{\triangleright_{A}},r_{\triangleright_{A}},l_{\triangleleft_{A}},V,\triangleright_{V},\triangleleft_{V})$ is an \textbf{$(A,\triangleright_{A},\triangleleft_{A})$-representation algebra}.
\end{defi}

\begin{pro}\label{2517}
	Let $(A,\triangleright_{A},\triangleleft_{A})$ be a \sapp.
	Suppose that $V$ is a vector space with multiplications $\triangleright_{V},\triangleleft_{V}:V\otimes V\rightarrow V$ and
	$l_{\triangleright_{A}},r_{\triangleright_{A}},l_{\triangleleft_{A}}:A\rightarrow \mathrm{End}_{\mathbb K}(V) $
	are linear maps.
	Then $(l_{\triangleright_{A}},r_{\triangleright_{A}},l_{\triangleleft_{A}},V,\triangleright_{V},\triangleleft_{V})$ is an $(A,\triangleright_{A},\triangleleft_{A})$-representation algebra if and only if there is a \sapp structure on $A\oplus V$ given by
	\begin{eqnarray}
		&&(x+u)\triangleright_{d} (y+v)=x\triangleright_{A}y+l_{\triangleright_{A}}(x)v+r_{\triangleright_{A}}(y)u+u\triangleright_{V}v,\\
		&&(x+u)\triangleleft_{d} (y+v)=x\triangleleft_{A}y+l_{\triangleleft_{A}}(x)v+l_{\triangleleft_{A}}(y)u+u\triangleleft_{V}v, \;\forall x,y\in A, u,v\in V.\ \ \ \ \
	\end{eqnarray}
\end{pro}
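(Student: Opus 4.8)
The plan is to prove the equivalence by a direct verification that decomposes every defining identity into homogeneous components relative to the splitting $A\oplus V$. First I would record that the sub-adjacent perm multiplication $\circ_{d}=\triangleright_{d}+\triangleleft_{d}$ on $A\oplus V$ has the semi-direct form
\begin{equation*}
(x+u)\circ_{d}(y+v)=x\circ_{A}y+l_{\circ_{A}}(x)v+r_{\circ_{A}}(y)u+u\circ_{V}v,\;\forall x,y\in A,\;u,v\in V,
\end{equation*}
where $l_{\circ_{A}},r_{\circ_{A}}$ are given by \eqref{eq:sum linear} and $\circ_{V}$ is defined by $u\circ_{V}v=u\triangleright_{V}v+u\triangleleft_{V}v$. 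Thus $\circ_{d}$ is exactly the semi-direct product perm multiplication attached to the triple $(l_{\circ_{A}},r_{\circ_{A}},V)$ and the multiplication $\circ_{V}$, which lets the perm-algebra part of the \sapp axioms be analyzed through the candidate representation $(l_{\circ_{A}},r_{\circ_{A}},V)$ of $(A,\circ_{A})$.

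Since $\triangleright_{d}$, $\triangleleft_{d}$ and $\circ_{d}$ are multilinear and preserve the decomposition $A\oplus V$, each of the three \sapp axioms on $A\oplus V$ --- commutativity of $\triangleleft_{d}$, the perm identity for $\circ_{d}$, and the compatibility \eqref{eq:SDPP} for the pair $(\circ_{d},\triangleleft_{d})$ --- holds if and only if it holds on every homogeneous component, that is, after each argument is taken to lie wholly in $A$ or wholly in $V$. I would classify the components of an $n$-argument axiom by the number $k$ of arguments chosen in $V$. The components with $k=0$ are precisely the \sapp axioms of $(A,\triangleright_{A},\triangleleft_{A})$, which hold by hypothesis, and the components with $k=n$ are the \sapp axioms of $(V,\triangleright_{V},\triangleleft_{V})$. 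In particular, for commutativity of $\triangleleft_{d}$ these are the only two components and the single mixed term $l_{\triangleleft_{A}}(x)v+l_{\triangleleft_{A}}(y)u$ is symmetric automatically, so $\triangleleft_{d}$ is commutative if and only if both $\triangleleft_{A}$ and $\triangleleft_{V}$ are.

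It then remains to match the mixed components. The components with $k=1$ of the perm identity for $\circ_{d}$ reproduce \eqref{eq:rep1} and \eqref{eq:rep2}, while the $k=1$ components of \eqref{eq:SDPP} reproduce \eqref{eq:sdpp rep1} together with the remaining representation equation; jointly these say precisely that $(l_{\triangleright_{A}},r_{\triangleright_{A}},l_{\triangleleft_{A}},V)$ is a representation of $(A,\triangleright_{A},\triangleleft_{A})$. The components with $k=2$ of the perm identity reproduce \eqref{bim alg1} and \eqref{bim alg3}, and the $k=2$ components of \eqref{eq:SDPP} reproduce \eqref{bim alg2} and \eqref{bim alg4}. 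Reading these equivalences from left to right gives the forward implication, and reading them in reverse gives the converse, since a \sapp axiom on $A\oplus V$ is equivalent to the conjunction of all of its homogeneous components.

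The main obstacle I anticipate is the bookkeeping for \eqref{eq:SDPP}: it is a chain of two equalities in three arguments, so each expression carries $2^{3}$ homogeneous components, and I must carefully track the signs arising from the term $-x\triangleleft_{A}(y\triangleleft_{A}z)$ and from the substitutions $l_{\circ_{A}}=l_{\triangleright_{A}}+l_{\triangleleft_{A}}$, $r_{\circ_{A}}=r_{\triangleright_{A}}+l_{\triangleleft_{A}}$ as arguments move between $A$ and $V$, so as to land exactly on \eqref{eq:sdpp rep1}, \eqref{bim alg2} and \eqref{bim alg4} rather than on sign-twisted variants. Once the components are correctly sorted, each individual identification is a one-line computation, and the perm-identity and commutativity parts are entirely routine.
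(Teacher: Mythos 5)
Your proposal is correct and is essentially the paper's own argument: the paper disposes of this proposition as "a straightforward computation," and your homogeneous-component bookkeeping (with the $k=1$ components of the perm identity and of \eqref{eq:SDPP} giving the representation axioms \eqref{eq:rep1}, \eqref{eq:rep2}, \eqref{eq:sdpp rep1}, and the $k=2$ components giving \eqref{bim alg1}--\eqref{bim alg4}) is exactly that computation, carried out correctly. Nothing is missing; your identifications of which components land on which axioms all check out.
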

\begin{proof}
	It follows from a straightforward computation.
	\delete{
Set a multiplication $\circ_{d}$ on $A\oplus V$ by
\begin{equation*}
 (x+u)\circ_{d} (y+v)=(x+u)\triangleright_{d} (y+v)+(x+u)\triangleleft_{d} (y+v).
\end{equation*}
Then it follows from a straightforward computation that   $(l_{\triangleright_{A}},r_{\triangleright_{A}},l_{\triangleleft_{A}},V,\triangleright_{V},\triangleleft_{V})$ is an  $(A,\triangleright_{A},$
$\triangleleft_{A})$-representation algebra if and only if the following conditions hold:
\begin{enumerate}
	\item $(A\oplus V,\circ_{d})$ is a perm algebra.
	\item $\triangleleft_{d}$ is commutative.
	\item \eqref{eq:SDPP} holds for the triple $(A\oplus V,\circ_{d},\triangleleft_{d})$.
\end{enumerate}
Hence the conclusion follows.}
\end{proof}

\begin{lem}\label{lem:6.3}
	Let $(A,\triangleright_{A},\triangleleft_{A})$ be a \sapp and $s\in A\otimes A$.
	Then  $s$ is invariant if and only if the following equations hold:
	\begin{eqnarray}
		&&s^{\sharp}(a^{*})\circ_{A}x+s^{\sharp}\big(\mathcal{L}^{*}_{\triangleleft_{A}}(x)a^{*}\big)=0,\label{eq:lem1}\\
		&&s^{\sharp}\big(\mathcal{L}^{*}_{\circ_{A}}(x)a^{*}\big)-x\circ_{A}s^{\sharp}(a^{*})=0,\;\forall x\in A, a^{*}\in A^{*}.\label{eq:lem2}
	\end{eqnarray}
	If in addition $s$  is symmetric, then $s$ is invariant if and only if the following equations hold:
	\begin{eqnarray}
		&&\mathcal{L}^{*}_{\circ_{A}}\big(s^{\sharp}(a^{*})\big)b^{*}+\mathcal{L}^{*}_{\triangleleft_{A}}\big(s^{\sharp}(b^{*})\big)a^{*}=0,\label{eq:lem3}\\
		&&\mathcal{R}^{*}_{\circ_{A}}\big(s^{\sharp}(b^{*})\big)a^{*}-\mathcal{R}^{*}_{\circ_{A}}\big(s^{\sharp}(a^{*})\big)b^{*}=0,\;\forall a^{*}, b^{*}\in A^{*}.\label{eq:lem4}
	\end{eqnarray}
\end{lem}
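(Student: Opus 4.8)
The plan is to prove all four equivalences by dualizing the defining conditions $f(x)s=0$ and $g(x)s=0$, converting each tensor identity in $A\otimes A$ into an identity in $A$ that is read off against an arbitrary $b^{*}\in A^{*}$. The whole argument is a bookkeeping exercise with the adjoint maps $\mathcal{L}^{*}$, $\mathcal{R}^{*}$ and the identification $s^{\sharp}$, so there is no conceptual obstacle; the real work is simply to organize the pairings cleanly.

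For the first equivalence, I would pair $f(x)s$ against $a^{*}\otimes b^{*}$. Writing $f(x)=\mathrm{id}\otimes\mathcal{R}_{\circ_{A}}(x)+\mathcal{L}_{\triangleleft_{A}}(x)\otimes\mathrm{id}$ and moving each operator to the dual side, the first summand becomes $\langle s^{\sharp}(a^{*})\circ_{A}x,b^{*}\rangle$ (using $\mathcal{R}^{*}_{\circ_{A}}(x)b^{*}$ together with the definition of $s^{\sharp}$), while the second becomes $\langle s^{\sharp}\big(\mathcal{L}^{*}_{\triangleleft_{A}}(x)a^{*}\big),b^{*}\rangle$. Hence $\langle f(x)s,a^{*}\otimes b^{*}\rangle=\langle s^{\sharp}(a^{*})\circ_{A}x+s^{\sharp}\big(\mathcal{L}^{*}_{\triangleleft_{A}}(x)a^{*}\big),b^{*}\rangle$, and since $b^{*}$ is arbitrary, $f(x)s=0$ for all $x$ is equivalent to \eqref{eq:lem1}. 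The same manipulation applied to $g(x)=\mathcal{L}_{\circ_{A}}(x)\otimes\mathrm{id}-\mathrm{id}\otimes\mathcal{L}_{\circ_{A}}(x)$ yields $\langle g(x)s,a^{*}\otimes b^{*}\rangle=\langle s^{\sharp}\big(\mathcal{L}^{*}_{\circ_{A}}(x)a^{*}\big)-x\circ_{A}s^{\sharp}(a^{*}),b^{*}\rangle$, so $g(x)s=0$ for all $x$ is equivalent to \eqref{eq:lem2}. This settles the first claim.

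For the symmetric refinement, I would use that symmetry of $s$ is precisely $\langle s^{\sharp}(a^{*}),b^{*}\rangle=\langle s^{\sharp}(b^{*}),a^{*}\rangle$. Pairing \eqref{eq:lem1} against $b^{*}$ and rewriting the second term via symmetry gives $\langle s^{\sharp}(b^{*}),\mathcal{L}^{*}_{\triangleleft_{A}}(x)a^{*}\rangle=\langle x\triangleleft_{A}s^{\sharp}(b^{*}),a^{*}\rangle$; invoking commutativity of $\triangleleft_{A}$ to replace $x\triangleleft_{A}s^{\sharp}(b^{*})$ by $s^{\sharp}(b^{*})\triangleleft_{A}x$, one sees that \eqref{eq:lem1} paired with $b^{*}$ is exactly \eqref{eq:lem3} paired with $x$. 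Likewise, pairing \eqref{eq:lem2} against $b^{*}$ and using symmetry on the first term turns it into \eqref{eq:lem4} paired with $x$. Thus for symmetric $s$ the pair \eqref{eq:lem1}–\eqref{eq:lem2} is equivalent to \eqref{eq:lem3}–\eqref{eq:lem4}, and combined with the first part this yields the second claim.

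The only place that demands care — hence the main obstacle, modest as it is — is tracking which argument of each $\mathcal{L}^{*}$ or $\mathcal{R}^{*}$ carries the module element versus the algebra element, and remembering that $\triangleleft_{A}$ is commutative so that the two halves genuinely match up. A misplaced sign or a swapped left/right slot would break the claimed identifications, so I would carry out these pairings with explicit basis-free computations rather than by quoting symmetry informally.
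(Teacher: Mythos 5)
Your proposal is correct and follows essentially the same route as the paper: both parts are proved by the same dualization/pairing computation, converting $f(x)s=0$ and $g(x)s=0$ into \eqref{eq:lem1}--\eqref{eq:lem2}, and then using symmetry of $s$ (together with commutativity of $\triangleleft_{A}$) to identify these with \eqref{eq:lem3}--\eqref{eq:lem4}. The only cosmetic difference is that the paper pairs \eqref{eq:lem3}--\eqref{eq:lem4} directly against $x$ to recover $f(x)s$ and $g(x)s$, whereas you compose the first equivalence with a second one between \eqref{eq:lem1}--\eqref{eq:lem2} and \eqref{eq:lem3}--\eqref{eq:lem4}; the underlying computations are identical.
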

\begin{proof}
	For all $x\in A, a^{*},b^{*}\in A^{*}$, we have
		\begin{eqnarray*}
			\langle s^{\sharp}(a^{*})\circ_{A}x+s^{\sharp}\big(\mathcal{L}^{*}_{\triangleleft_{A}}(x)a^{*}\big),b^{*}\rangle&=&
			\langle s^{\sharp}(a^{*})\circ_{A}x, b^{*}\rangle+\langle s^{\sharp}\big(\mathcal{L}^{*}_{\triangleleft_{A}}(x)a^{*}\big),b^{*}\rangle\\
			&=&\langle s^{\sharp}(a^{*}),\mathcal{R}^{*}_{\circ_{A}}(x)b^{*}\rangle+\langle s,\mathcal{L}^{*}_{\triangleleft_{A}}(x)a^{*}\otimes b^{*}\rangle\\
			&=&\langle \big(\mathrm{id}\otimes\mathcal{R}_{\circ_{A}}(x)+\mathcal{L}_{\triangleleft_{A}}(x)\otimes\mathrm{id}\big)s, a^{*}\otimes b^{*}\rangle,\\
			\langle s^{\sharp}\big(\mathcal{L}^{*}_{\circ_{A}}(x)a^{*}\big)-x\circ_{A}s^{\sharp}(a^{*}),b^{*}\rangle
			&=&\langle s^{\sharp}\big(\mathcal{L}^{*}_{\circ_{A}}(x)a^{*}\big),b^{*}\rangle
-\langle s^{\sharp}(a^{*}),\mathcal{L}^{*}_{\circ_{A}}(x)b^{*}\rangle\\
			&=&\langle s,\mathcal{L}^{*}_{\circ_{A}}(x)a^{*}\otimes b^{*}\rangle-\langle s,a^{*}\otimes \mathcal{L}^{*}_{\circ_{A}}(x)b^{*}\rangle\\
			&=&\langle \big(\mathcal{L}_{\circ_{A}}(x)\otimes\mathrm{id}-\mathrm{id}\otimes\mathcal{L}_{\circ_{A}}(x)\big)s, a^{*}\otimes b^{*}\rangle.
		\end{eqnarray*}
	Hence $s$ is invariant if and only if \eqref{eq:lem1} and \eqref{eq:lem2} hold. Moreover, if $s$  is symmetric, then we have
		\begin{eqnarray*}
			\langle \mathcal{L}^{*}_{\circ_{A}}\big(s^{\sharp}(a^{*})\big)b^{*}+\mathcal{L}^{*}_{\triangleleft_{A}}\big(s^{\sharp}(b^{*})\big)a^{*},x\rangle
			&=&\langle b^{*},s^{\sharp}(a^{*})\circ_{A}x\rangle+\langle a^{*},s^{\sharp}(b^{*})\triangleleft_{A}x\rangle\\
			&=&\langle s^{\sharp}(a^{*}),\mathcal{R}^{*}_{\circ_{A}}(x)b^{*}\rangle+\langle s^{\sharp}(b^{*}),\mathcal{L}^{*}_{\triangleleft_{A}}(x)a^{*}\rangle\\
			&=&\langle s, a^{*}\otimes\mathcal{R}^{*}_{\circ_{A}}(x)b^{*}+b^{*}\otimes \mathcal{L}^{*}_{\triangleleft_{A}}(x)a^{*} \rangle\\
			&=&\langle \big(\mathrm{id}\otimes\mathcal{R}_{\circ_{A}}(x)+\mathcal{L}_{\triangleleft_{A}}(x)\otimes\mathrm{id}\big)s, a^{*}\otimes b^{*}\rangle,\\
			\langle \mathcal{R}^{*}_{\circ_{A}}\big(s^{\sharp}(b^{*})\big)a^{*}-\mathcal{R}^{*}_{\circ_{A}}\big(s^{\sharp}(a^{*})\big)b^{*},x\rangle
			&=&\langle x\circ_{A}s^{\sharp}(b^{*}),a^{*}\rangle-\langle x\circ_{A}s^{\sharp}(a^{*}),b^{*}\rangle\\
			&=&\langle s^{\sharp}(b^{*}),\mathcal{L}^{*}_{\circ_{A}}(x)a^{*}\rangle-\langle s^{\sharp}(a^{*}),\mathcal{L}^{*}_{\circ_{A}}(x)b^{*}\rangle\\
			&=&\langle s,b^{*}\otimes \mathcal{L}^{*}_{\circ_{A}}(x)a^{*}-a^{*}\otimes\mathcal{L}^{*}_{\circ_{A}}(x)b^{*}\rangle\\
			&=&\langle \big(\mathcal{L}_{\circ_{A}}(x)\otimes\mathrm{id}-\mathrm{id}\otimes\mathcal{L}_{\circ_{A}}(x)\big)s, a^{*}\otimes b^{*}\rangle.
		\end{eqnarray*}
Hence $s$ is invariant if and only if \eqref{eq:lem3} and \eqref{eq:lem4} hold.
\end{proof}

\begin{pro}\label{pro ss inv SDPP}
	Let $(A,\triangleright_{A},\triangleleft_{A})$ be a \sapp and $s\in A\otimes A$ be symmetric and invariant. Set multiplications $ \triangleright_{s}, \triangleleft_{s}:A^{*}\otimes A^{*}\rightarrow A^{*}$ by
	\begin{equation}	a^{*} \triangleright_{s}b^{*}=(\mathcal{L}^{*}_{\circ_{A}}+\mathcal{R}^{*}_{\circ_{A}})\big(s^{\sharp}(a^{*})\big)b^{*},\;
 a^{*} \triangleleft_{s}b^{*}=-\mathcal{R}^{*}_{\circ_{A}}\big(s^{\sharp}(a^{*})\big)b^{*},\;\forall a^{*},b^{*}\in A^{*}.
	\end{equation}
	\delete{
	\begin{eqnarray}
		&&a^{*}\bullet_{s}b^{*}=\mathcal{L}^{*}_{\circ_{A}}\big(s^{\sharp}(a^{*})\big)b^{*},\label{eq1:perm on A^{*}}\\
		&&a^{*}\lhd_{s}b^{*}=\mathcal{R}^{*}_{\circ_{A}}\big(s^{\sharp}(a^{*})\big)b^{*},\;\;\forall a^{*},b^{*}\in A^{*}.\label{eq2:perm on A^{*}}
	\end{eqnarray}}
	Then $(\mathcal{L}^{*}_{\circ_{A}}+\mathcal{R}^{*}_{\circ_{A}},\mathcal{R}^{*}_{\triangleright_{A}},-\mathcal{R}^{*}_{\circ_{A}}, A^{*}, \triangleright_{s}, \triangleleft_{s})$ is an $(A,\triangleright_{A},\triangleleft_{A})$-representation algebra.
\end{pro}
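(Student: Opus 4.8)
The plan is to check the two defining requirements of an $(A,\triangleright_{A},\triangleleft_{A})$-representation algebra separately. The first requirement, that $(\mathcal{L}^{*}_{\circ_{A}}+\mathcal{R}^{*}_{\circ_{A}},\mathcal{R}^{*}_{\triangleright_{A}},-\mathcal{R}^{*}_{\circ_{A}},A^{*})$ be a representation of $(A,\triangleright_{A},\triangleleft_{A})$, is exactly the coadjoint representation established just above, so nothing new is needed there. It therefore remains to show that $(A^{*},\triangleright_{s},\triangleleft_{s})$ is a \sapp and that the compatibility equations \eqref{bim alg1}--\eqref{bim alg4} hold; equivalently, by Proposition \ref{2517}, that the semi-direct product multiplications on $A\oplus A^{*}$ determined by the coadjoint representation together with $\triangleright_{s},\triangleleft_{s}$ define a \sapp structure. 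I would organize the whole verification around the single observation that $s^{\sharp}:A^{*}\rightarrow A$ is multiplicative for the relevant operations.

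Writing $\circ_{s}=\triangleright_{s}+\triangleleft_{s}$, a direct expansion gives $a^{*}\circ_{s}b^{*}=\mathcal{L}^{*}_{\circ_{A}}\big(s^{\sharp}(a^{*})\big)b^{*}$. Using the invariance identities \eqref{eq:lem1} and \eqref{eq:lem2} from Lemma \ref{lem:6.3}, I would first prove the homomorphism relations
\begin{equation*}
 s^{\sharp}(a^{*}\circ_{s}b^{*})=s^{\sharp}(a^{*})\circ_{A}s^{\sharp}(b^{*}),\qquad
 s^{\sharp}(a^{*}\triangleleft_{s}b^{*})=s^{\sharp}(a^{*})\triangleleft_{A}s^{\sharp}(b^{*}),
\end{equation*}
by pairing both sides with an arbitrary $c^{*}\in A^{*}$ and moving $s^{\sharp}$ across the pairing via the symmetry of $s$. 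These identities are the key simplifier: they turn every product in $A^{*}$ that is fed back into $s^{\sharp}$ into the corresponding product in $A$. With them in hand, the \sapp axioms on $A^{*}$ reduce to facts already known on $A$: commutativity of $\triangleleft_{s}$ is precisely \eqref{eq:lem4}; the perm identities for $\circ_{s}$ follow from $\mathcal{L}^{*}_{\circ_{A}}$ being a representation of the perm algebra $(A,\circ_{A})$ (so $\mathcal{L}^{*}_{\circ_{A}}(x\circ_{A}y)=\mathcal{L}^{*}_{\circ_{A}}(x)\mathcal{L}^{*}_{\circ_{A}}(y)=\mathcal{L}^{*}_{\circ_{A}}(y)\mathcal{L}^{*}_{\circ_{A}}(x)$) together with the first homomorphism relation; and the identity \eqref{eq:SDPP} for $(A^{*},\circ_{s},\triangleleft_{s})$ follows from \eqref{eq:SDPP} for $(A,\triangleright_{A},\triangleleft_{A})$ after the same substitution.

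Finally I would verify \eqref{bim alg1}--\eqref{bim alg4}, which couple the coadjoint actions $l_{\circ_{A}}=\mathcal{L}^{*}_{\circ_{A}}$, $r_{\circ_{A}}=-\mathcal{R}^{*}_{\triangleleft_{A}}$, $l_{\triangleleft_{A}}=-\mathcal{R}^{*}_{\circ_{A}}$ with $\circ_{s}$ and $\triangleleft_{s}$. Each equation is checked by pairing with an element of $A$, transposing the starred operators onto $A$, and again invoking the homomorphism relations to rewrite the $A^{*}$-products; what remains in every case is an instance of the perm-algebra axioms for $(A,\circ_{A})$, the defining relation \eqref{eq:SDPP}, and the invariance relations \eqref{eq:lem1}--\eqref{eq:lem4}.

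The main obstacle is not any single hard step but the bookkeeping: there are many bilinear identities to check, each requiring the correct one of the four invariance relations of Lemma \ref{lem:6.3} and a careful transposition of $\mathcal{L}^{*}_{\circ_{A}}$ versus $\mathcal{R}^{*}_{\circ_{A}}$. The homomorphism relations for $s^{\sharp}$ are what make the computation uniform, collapsing each required identity on $A^{*}$ to one that already holds on $A$; establishing those relations correctly from \eqref{eq:lem1} and \eqref{eq:lem2} is the crux, after which the rest is routine.
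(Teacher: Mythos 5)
Your proposal is correct and is essentially the paper's own argument: commutativity of $\triangleleft_{s}$ is read off from \eqref{eq:lem4}, the \sapp axioms on $A^{*}$ are reduced to known identities on $A$ via \eqref{eq:lem2} — your first homomorphism relation is literally the paper's step $(a^{*}\bullet_{s}b^{*})\bullet_{s}c^{*}\overset{\eqref{eq:lem2}}{=}\mathcal{L}^{*}_{\circ_{A}}\big(s^{\sharp}(a^{*})\circ_{A}s^{\sharp}(b^{*})\big)c^{*}$ — and the compatibility equations \eqref{bim alg1}--\eqref{bim alg4} are verified by pairing with elements of $A$ and using \eqref{eq:SDPP} together with the invariance relations, where the paper's auxiliary identity \eqref{eq:lem5} is just your second homomorphism relation in mixed form, equivalent to \eqref{eq:lem1} by symmetry of $s$. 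The only imprecision is attributional: after your substitution, the identity \eqref{eq:SDPP} for $(A^{*},\circ_{s},\triangleleft_{s})$ reduces not to \eqref{eq:SDPP} on $A$ but to the dualized perm-algebra axioms for $(A,\circ_{A})$ (equivalently, \eqref{eq:rep1} and \eqref{eq:sdpp rep1} for the coadjoint representation, which is what the paper cites), and since your pairing-and-transposing method produces exactly those identities, this slip does not affect correctness.
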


\begin{proof}
	By \eqref{eq:lem4}, $ \triangleleft_{s}$ is commutative.
	Set a multiplication $\bullet_{s}$ on $A^{*}$ by
	\begin{equation*}
		a^{*}\bullet_{s}b^{*}= a^{*} \triangleright_{s}b^{*}+a^{*} \triangleleft_{s}b^{*} =\mathcal{L}^{*}_{\circ_{A}}\big(s^{\sharp}(a^{*})\big)b^{*},\;\forall a^{*},b^{*}\in A^{*}.
	\end{equation*}
	For all $a^{*},b^{*},c^{*}\in A^{*}$, we have
	\begin{eqnarray*}
		&&a^{*}\bullet_{s}( b^{*}\bullet_{s}c^{*})=a^{*}\bullet_{s}\mathcal{L}^{*}_{\circ_{A}}\big(s^{\sharp}(b^{*})\big)c^{*}=\mathcal{L}^{*}_{\circ_{A}}\big(s^{\sharp}(a^{*})\big)\mathcal{L}^{*}_{\circ_{A}}\big(s^{\sharp}(b^{*})\big)c^{*},\\
		&&b^{*}\bullet_{s}( a^{*}\bullet_{s}c^{*})=b^{*}\bullet_{s}\mathcal{L}^{*}_{\circ_{A}}\big(s^{\sharp}(a^{*})\big)c^{*}=\mathcal{L}^{*}_{\circ_{A}}\big(s^{\sharp}(b^{*})\big)\mathcal{L}^{*}_{\circ_{A}}\big(s^{\sharp}(a^{*})\big)c^{*},\\
		&&(a^{*}\bullet_{s}b^{*})\bullet_{s}c^{*}=\mathcal{L}^{*}_{\circ_{A}}\bigg(s^{\sharp}\Big(\mathcal{L}^{*}_{\circ_{A}}\big(s^{\sharp}(a^{*})\big)b^{*}\Big)\bigg)c^{*}\overset{\eqref{eq:lem2}}{=}\mathcal{L}^{*}_{\circ_{A}}\big(s^{\sharp}(a^{*})\circ_{A}s^{\sharp}(b^{*})\big)c^{*},\\
		&&(a^{*}\bullet_{s}b^{*}) \triangleleft_{s}c^{*}=-\mathcal{R}^{*}_{\circ_{A}}\bigg(s^{\sharp}\Big(\mathcal{L}^{*}_{\circ_{A}}\big(s^{\sharp}(a^{*})\big)b^{*}\Big)\bigg)c^{*}\overset{\eqref{eq:lem2}}{=}-\mathcal{R}^{*}_{\circ_{A}}\big(s^{\sharp}(a^{*})\circ_{A}s^{\sharp}(b^{*})\big)c^{*},\\
		&&a^{*}\bullet_{s}( b^{*} \triangleleft_{s}c^{*})=-a^{*}\bullet_{s}\mathcal{R}^{*}_{\circ_{A}}\big(s^{\sharp}(b^{*})\big)c^{*}=-\mathcal{L}^{*}_{\circ_{A}}\big(s^{\sharp}(a^{*})\big)\mathcal{R}^{*}_{\circ_{A}}\big(s^{\sharp}(b^{*})\big)c^{*},\\
		&&a^{*} \triangleleft_{s}( b^{*} \triangleleft_{s}c^{*})=-a^{*} \triangleleft_{s}\mathcal{R}^{*}_{\circ_{A}}\big(s^{\sharp}(b^{*})\big)c^{*}=\mathcal{R}^{*}_{\circ_{A}}\big(s^{\sharp}(a^{*})\big)\mathcal{R}^{*}_{\circ_{A}}\big(s^{\sharp}(b^{*})\big)c^{*}.
	\end{eqnarray*}
Hence by \eqref{eq:rep1} and \eqref{eq:sdpp rep1}, we have
\begin{eqnarray*}
 &&a^{*}\bullet_{s}( b^{*}\bullet_{s}c^{*})=b^{*}\bullet_{s}( a^{*}\bullet_{s}c^{*})=(a^{*}\bullet_{s}b^{*})\bullet_{s}c^{*},\\
 &&(a^{*}\bullet_{s}b^{*}) \triangleleft_{s}c^{*}=a^{*}\bullet_{s}( b^{*} \triangleleft_{s}c^{*})=-a^{*} \triangleleft_{s}( b^{*} \triangleleft_{s}c^{*}).
\end{eqnarray*}
Thus $(A^{*},\bullet_{s})$ is a perm algebra and
 $(A^{*}, \triangleright_{s}, \triangleleft_{s})$ is a compatible \sapp of $(A^{*},\bullet_{s})$. Since $s$ is symmetric and invariant, we have
\begin{eqnarray*}
&&\langle x\triangleleft_{A}s^{\sharp}(a^{*}),b^{*}\rangle=\langle s^{\sharp}(a^{*}),\mathcal{L}^{*}_{\triangleleft_{A}}(x)b^{*}\rangle=\langle s,a^{*}\otimes\mathcal{L}^{*}_{\triangleleft_{A}}(x)b^{*}\rangle\\	&&=\langle\big(\mathrm{id}\otimes\mathcal{L}_{\triangleleft_{A}}(x)\big)s, a^{*}\otimes b^{*}\rangle =-\langle \big(\mathcal{R}_{\circ_{A}}(x)\otimes\mathrm{id}\big)s,a^{*}\otimes b^{*}\rangle
=-\langle s^{\sharp}\big(\mathcal{R}^{*}_{\circ_{A}}(x)a^{*}\big),b^{*}\rangle,
\end{eqnarray*}
that is,
	\begin{equation}\label{eq:lem5} x\triangleleft_{A}s^{\sharp}(a^{*})=-s^{\sharp}\big(\mathcal{R}^{*}_{\circ_{A}}(x)a^{*}\big),\;\forall x\in A, a^{*}\in A^{*}.
	\end{equation}
	Moreover, for all $x,y\in A, a^{*},b^{*}\in A^{*}$, we have
	\begin{eqnarray*} \langle\mathcal{L}^{*}_{\triangleleft_{A}}(x)(a^{*}\bullet_{s}b^{*}),y\rangle&=&\langle \mathcal{L}^{*}_{\circ_{A}}\big(s^{\sharp}(a^{*})\big)b^{*},x\triangleleft_{A}y\rangle=\langle b^{*},s^{\sharp}(a^{*})\circ_{A}(x\triangleleft_{A}y)\rangle,\\
		\langle \mathcal{L}^{*}_{\triangleleft_{A}}(x)(b^{*}\bullet_{s}a^{*}),y\rangle&=&\langle\mathcal{L}^{*}_{\circ_{A}}\big(s^{\sharp}(b^{*})\big)a^{*},x\triangleleft_{A}y\rangle=\langle s^{\sharp}(b^{*}),\mathcal{R}^{*}_{\circ_{A}}(x\triangleleft_{A}y)a^{*}\rangle\\
		&=&\langle s^{\sharp}\big(\mathcal{R}^{*}_{\circ_{A}}(x\triangleleft_{A}y)a^{*}\big),b^{*}\rangle\overset{\eqref{eq:lem5}}{=}-\langle b^{*},(x\triangleleft_{A}y)\triangleleft_{A}s^{\sharp}(a^{*})\rangle,\\
		\langle a^{*}\bullet_{s}\mathcal{L}^{*}_{\triangleleft_{A}}(x)(b^{*}),y\rangle&=&\langle \mathcal{L}^{*}_{\circ_{A}}\big(s^{\sharp}(a^{*})\big)\mathcal{L}^{*}_{\triangleleft_{A}}(x)(b^{*}),y\rangle
		=\langle b^{*},x\triangleleft_{A}(s^{\sharp}(a^{*})\circ_{A}y)\rangle.
	\end{eqnarray*}
	Hence by \eqref{eq:SDPP}, \eqref{bim alg3} holds for $(\mathcal{L}^{*}_{\circ_{A}}+\mathcal{R}^{*}_{\circ_{A}},\mathcal{R}^{*}_{\triangleright_{A}},-\mathcal{R}^{*}_{\circ_{A}}, A^{*}, \triangleright_{s}, \triangleleft_{s})$. Similarly \eqref{bim alg1}, \eqref{bim alg2} and \eqref{bim alg4} hold for $(\mathcal{L}^{*}_{\circ_{A}}+\mathcal{R}^{*}_{\circ_{A}},\mathcal{R}^{*}_{\triangleright_{A}},-\mathcal{R}^{*}_{\circ_{A}}, A^{*}, \triangleright_{s}, \triangleleft_{s})$,
	and thus $(\mathcal{L}^{*}_{\circ_{A}}+\mathcal{R}^{*}_{\circ_{A}},\mathcal{R}^{*}_{\triangleright_{A}},-\mathcal{R}^{*}_{\circ_{A}}, A^{*}, \triangleright_{s}, \triangleleft_{s})$ is an $(A,\triangleright_{A},\triangleleft_{A})$-representation algebra.
\end{proof}

Next we introduce the notion of $\calo$-operators with weights of \sapps.

\begin{defi}
	Let $(A,\triangleright_{A},\triangleleft_{A})$ be a \sapp and $(l_{\triangleright_{A}},r_{\triangleright_{A}},l_{\triangleleft_{A}},V,\triangleright_{V},\triangleleft_{V})$ be an $(A,\triangleright_{A},\triangleleft_{A})$-representation algebra. A linear map $T:V\rightarrow A$ is called an {\bf $\calo$-operator of weight $\lambda\in\mathbb{K}$  of $(A,\triangleright_{A},\triangleleft_{A})$ associated to $(l_{\triangleright_{A}},r_{\triangleright_{A}},
l_{\triangleleft_{A}},V,\triangleright_{V},\triangleleft_{V})$} if $T$ satisfies
	\begin{eqnarray} &&Tu\triangleright_{A}Tv=T\big(l_{\triangleright_{A}}(Tu)v+r_{\triangleright_{A}}(Tv)u+\lambda u\triangleright_{V}v\big),\\ &&Tu\triangleleft_{A}Tv=T\big(l_{\triangleleft_{A}}(Tu)v+l_{\triangleleft_{A}}(Tv)u+\lambda u\triangleleft_{V}v\big),\;\;\forall u,v\in V.
	\end{eqnarray}
	In particular, if $V$ is equipped with zero multiplications, that is
	\begin{equation*}
		u\triangleright_{V} v=u\triangleleft_{V} v=0,
	\end{equation*}
	then we simply say $T:V\rightarrow A$ is an {\bf $\calo$-operator of $(A,\triangleright_{A},\triangleleft_{A})$ associated to $(l_{\triangleright_{A}},r_{\triangleright_{A}},l_{\triangleleft_{A}},$
$V)$}.
\end{defi}

\begin{ex}
	Let $(A,\triangleright_{A},\triangleleft_{A})$ be a \sapp. Suppose that  $R:A\rightarrow A$ is a \textbf{Rota-Baxter operator of weight $\lambda$}, that is,
	\begin{eqnarray}
		&&R(x)\triangleright_{A}R(y)=R\big(x\triangleright_{A}R(y)+R(x)\triangleright_{A}y+\lambda x\triangleright_{A}y\big),\\
		&&R(x)\triangleleft_{A}R(y)=R\big(x\triangleleft_{A}R(y)+R(x)\triangleleft_{A}y+\lambda x\triangleleft_{A}y\big),\;\;\forall x, y\in A.
	\end{eqnarray}
	Then $R$ is an $\calo$-operator of $(A,\triangleright_{A},\triangleleft_{A})$ of weight $\lambda$ associated to $(\mathcal{L}_{\triangleright_{A}},\mathcal{R}_{\triangleright_{A}},\mathcal{L}_{\triangleleft_{A}},A,\triangleright_{A},
\triangleleft_{A})$. In particular, if $\lambda=0$, then $R$ is simply an $\calo$-operator of $(A,\triangleright_{A},\triangleleft_{A})$ associated to $(\mathcal{L}_{\triangleright_{A}},\mathcal{R}_{\triangleright_{A}},$
$\mathcal{L}_{\triangleleft_{A}},A)$.
\end{ex}

Solutions of the SAPP-YBE whose symmetric parts are invariant can be interpreted in terms of $\mathcal{O}$-operators with weights as follows.

\begin{thm}\label{pro:rrb}
	Let $(A,\triangleright_{A},\triangleleft_{A})$ be a \sapp and $r\in A\otimes A$ such that $r+\tau(r)$ is invariant. Then the following conditions are equivalent:
	\begin{enumerate}
		\item\label{rrb0}
		$r$ is a solution of the SAPP-YBE in $(A,\triangleright_{A},\triangleleft_{A})$ such that $(A,\triangleright_{A},\triangleleft_{A},\vartheta_{r},\theta_{r})$ with $\vartheta_{r}$ and $\theta_{r}$ defined by \eqref{eq:comul} is a quasi-triangular \sappb.
		\item\label{rrb1} $r^{\sharp}$ is an $\calo$-operator of weight $-1$ of $(A,\triangleright_{A},\triangleleft_{A})$ associated to the
$(A,\triangleright_{A},\triangleleft_{A})$-represen-
tation algebra $(\mathcal{L}^{*}_{\circ_{A}}+\mathcal{R}^{*}_{\circ_{A}},
\mathcal{R}^{*}_{\triangleright_{A}},-\mathcal{R}^{*}_{\circ_{A}}, A^{*}, \triangleright_{r+\tau(r)}, \triangleleft_{r+\tau(r)})$, where the multiplications $ \triangleright_{r+\tau(r)}$ and $ \triangleleft_{r+\tau(r)}$ are given by
		\begin{eqnarray}
			&&a^{*} \triangleright_{r+\tau(r)} b^{*}=(\mathcal{L}^{*}_{\circ_{A}}+\mathcal{R}^{*}_{\circ_{A}})\Big(\big(r+\tau(r)\big)^{\sharp} (a^{*})\Big)b^{*},\label{eq:thm:6,1}\\
			&&a^{*} \triangleleft_{r+\tau(r)} b^{*}=-\mathcal{R}^{*}_{\circ_{A}}\Big(\big(r+\tau(r)\big)^{\sharp} (a^{*})\Big)b^{*},\;\;\forall a^{*},b^{*}\in A^{*}.\label{eq:thm:6,2}
		\end{eqnarray}
		That is, the following equations hold:
\begin{eqnarray} &&r^{\sharp}(a^{*})\triangleright_{A}r^{\sharp}(b^{*})
=r^{\sharp}\Big((\mathcal{L}^{*}_{\circ_{A}}
+\mathcal{R}^{*}_{\circ_{A}})\big(r^{\sharp}(a^{*})\big)b^{*}+ \mathcal{R}^{*}_{\triangleright_{A}}\big(r^{\sharp}(b^{*})\big)a^{*}
-a^{*} \triangleright_{r+\tau(r)}b^{*}\Big),\ \ \ \ \label{o-op-1}\\ &&r^{\sharp}(a^{*})\triangleleft_{A}r^{\sharp}(b^{*})
=r^{\sharp}\Big(-\mathcal{R}^{*}_{\circ_{A}}\big(r^{\sharp}(a^{*})\big)b^{*}
-\mathcal{R}^{*}_{\circ_{A}}\big(r^{\sharp}(b^{*})\big)a^{*}
-a^{*} \triangleleft_{r+\tau(r)}b^{*}\Big).\label{o-op-2}
\end{eqnarray}
	\end{enumerate}
\end{thm}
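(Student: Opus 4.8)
The plan is to reduce both conditions to the single equation $SA(r)=0$, exploiting the standing hypothesis that $s:=r+\tau(r)$ is invariant. For (\ref{rrb0}) this reduction is immediate: $SA(r)=0$ is by definition the SAPP-YBE, and together with the invariance of $s$ it is exactly the hypothesis of Theorem \ref{thm:bialg} and of the definition of quasi-triangularity, so (\ref{rrb0}) holds if and only if $SA(r)=0$. Before analyzing (\ref{rrb1}) I would first check that this condition is even well-posed: since $s$ is symmetric and invariant, Proposition \ref{pro ss inv SDPP} applied to $s$ guarantees that $(\mathcal{L}^{*}_{\circ_{A}}+\mathcal{R}^{*}_{\circ_{A}}, \mathcal{R}^{*}_{\triangleright_{A}}, -\mathcal{R}^{*}_{\circ_{A}}, A^{*}, \triangleright_{r+\tau(r)}, \triangleleft_{r+\tau(r)})$ really is an $(A,\triangleright_{A},\triangleleft_{A})$-representation algebra with structure products \eqref{eq:thm:6,1} and \eqref{eq:thm:6,2}, so that writing out the weight $-1$ $\calo$-operator axioms for $T=r^{\sharp}$ genuinely produces \eqref{o-op-1} and \eqref{o-op-2}.

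The key bridging step is to identify the arguments of $r^{\sharp}$ on the right-hand sides of \eqref{o-op-1} and \eqref{o-op-2} with the dual products $a^{*}\triangleright_{r}b^{*}$ and $a^{*}\triangleleft_{r}b^{*}$ computed in Lemma \ref{lem:TR} through \eqref{eq:dual space mul3} and \eqref{eq:dual space mul2}. Using $\mathcal{R}^{*}_{\triangleright_{A}}=\mathcal{R}^{*}_{\circ_{A}}-\mathcal{L}^{*}_{\triangleleft_{A}}$ (from the commutativity of $\triangleleft_{A}$, which gives $\mathcal{R}_{\triangleleft_{A}}=\mathcal{L}_{\triangleleft_{A}}$) together with $r^{\sharp}+{\tau(r)}^{\sharp}=s^{\sharp}$, I expect the difference between the $\calo$-operator expression in \eqref{o-op-1} and $a^{*}\triangleright_{r}b^{*}$ to collapse to $\mathcal{R}^{*}_{\triangleright_{A}}\big(s^{\sharp}(b^{*})\big)a^{*}-(\mathcal{L}^{*}_{\circ_{A}}+\mathcal{R}^{*}_{\circ_{A}})\big(s^{\sharp}(a^{*})\big)b^{*}$, which vanishes by the symmetric-invariance identities \eqref{eq:lem3} and \eqref{eq:lem4} of Lemma \ref{lem:6.3}; the analogous computation for $\triangleleft$ reduces to $-\mathcal{R}^{*}_{\circ_{A}}\big(s^{\sharp}(b^{*})\big)a^{*}+\mathcal{R}^{*}_{\circ_{A}}\big(s^{\sharp}(a^{*})\big)b^{*}$ and vanishes by \eqref{eq:lem4} alone. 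Consequently \eqref{o-op-1} and \eqref{o-op-2} are equivalent to the \sapp-homomorphism conditions \eqref{eq:homo sdpp1} and \eqref{eq:homo sdpp2} for $r^{\sharp}$.

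Finally I would feed these homomorphism conditions into the tensor identities \eqref{eq:t1.5} and \eqref{eq:t2} of Lemma \ref{lem:TR}. Because $s$ is invariant we have $f(x)\big(r+\tau(r)\big)=0$ for all $x$, so every term of the shape $\sum_{j}f(u_{j})\big(r+\tau(r)\big)\otimes v_{j}$ and $\sum_{j}u_{j}\otimes\tau\big(f(v_{j})(r+\tau(r))\big)$ drops out of the right-hand sides. Then \eqref{eq:homo sdpp2} becomes $(\mathrm{id}\otimes\tau)SA(r)=0$, equivalent to $SA(r)=0$ since $\mathrm{id}\otimes\tau$ is an involution, while \eqref{eq:homo sdpp1} becomes $(\tau\otimes\mathrm{id}+\mathrm{id}\otimes\tau)SA(r)=0$, a consequence of $SA(r)=0$; conversely $SA(r)=0$ forces both. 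Hence (\ref{rrb1}) $\Longleftrightarrow SA(r)=0 \Longleftrightarrow$ (\ref{rrb0}), which closes the equivalence. I expect the main obstacle to lie in the bookkeeping of the second paragraph — keeping straight which of \eqref{eq:lem3}, \eqref{eq:lem4} cancels which summand and handling the splitting $s^{\sharp}=r^{\sharp}+{\tau(r)}^{\sharp}$ correctly — rather than in any conceptually deep point, since the heavy tensor computations are already packaged into Lemmas \ref{lem:TR} and \ref{lem:6.3}.
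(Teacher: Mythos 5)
Your proposal is correct and follows essentially the same route as the paper's proof: well-posedness of condition (\ref{rrb1}) via Proposition \ref{pro ss inv SDPP}, identification of the right-hand sides of \eqref{o-op-1}--\eqref{o-op-2} with $r^{\sharp}(a^{*}\triangleright_{r}b^{*})$ and $r^{\sharp}(a^{*}\triangleleft_{r}b^{*})$ using \eqref{eq:lem3}--\eqref{eq:lem4}, and then reduction to $SA(r)=0$ through \eqref{eq:t1.5}--\eqref{eq:t2} under the invariance of $r+\tau(r)$. Your cancellation computations (including the use of $\mathcal{R}^{*}_{\triangleright_{A}}=\mathcal{R}^{*}_{\circ_{A}}-\mathcal{L}^{*}_{\triangleleft_{A}}$ coming from the commutativity of $\triangleleft_{A}$) check out, and your treatment of the asymmetry --- \eqref{o-op-2} being equivalent to $SA(r)=0$ while \eqref{o-op-1} need only be implied by it --- is, if anything, slightly more careful than the paper's ``similarly''.
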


\begin{proof}
	By Proposition \ref{pro ss inv SDPP}, $(\mathcal{L}^{*}_{\circ_{A}}+\mathcal{R}^{*}_{\circ_{A}},
\mathcal{R}^{*}_{\triangleright_{A}},-\mathcal{R}^{*}_{\circ_{A}}, A^{*}, \triangleright_{r+\tau(r)}, \triangleleft_{r+\tau(r)})$ is an $(A,\triangleright_{A},\triangleleft_{A})$-representation algebra. On the other hand, we have
	\begin{eqnarray*}
		&&-r^{\sharp}\Big(\mathcal{R}^{*}_{\circ_{A}}\big(r^{\sharp}(a^{*})\big)b^{*}\Big)-r^{\sharp}\Big( \mathcal{R}^{*}_{\circ_{A}}\big(r^{\sharp}(b^{*})\big)a^{*}\Big)-r^{\sharp}(a^{*} \triangleleft_{r+\tau(r)}b^{*})\\
		&\overset{\eqref{eq:thm:6,2}}{=}&-r^{\sharp}\Big(\mathcal{R}^{*}_{\circ_{A}}\big(r^{\sharp}(a^{*})\big)b^{*}\Big)-r^{\sharp}\Big( \mathcal{R}^{*}_{\circ_{A}}\big(r^{\sharp}(b^{*})\big)a^{*}\Big)+r^{\sharp}\bigg(\mathcal{R}^{*}_{\circ_{A}}\Big(\big(r+\tau(r)\big)^{\sharp}(a^{*})\Big)b^{*}\bigg)\\
		&\overset{\eqref{eq:lem4}}{=}&-r^{\sharp}\Big(\mathcal{R}^{*}_{\circ_{A}}\big(r^{\sharp}(a^{*})\big)b^{*}\Big)-r^{\sharp}\Big( \mathcal{R}^{*}_{\circ_{A}}\big(r^{\sharp}(b^{*})\big)a^{*}\Big)+r^{\sharp}\bigg(\mathcal{R}^{*}_{\circ_{A}}\Big(\big(r+\tau(r)\big)^{\sharp}(b^{*})\Big)a^{*}\bigg)\\
		&=&
		r^{\sharp}\Big(\mathcal{R}^{*}_{\circ_{A}}\big( \tau(r)^{\sharp}(b^{*}) \big)a^{*}-\mathcal{R}^{*}_{\circ_{A}}\big(r^{\sharp}(a^{*})\big)b^{*} \Big)\\
		&\overset{\eqref{eq:dual space mul2}}{=}&r^{\sharp}(a^{*}\triangleleft_{r}b^{*}).	
	\end{eqnarray*}
	Since $r+\tau(r)$ is invariant, by \eqref{eq:t2}, \eqref{o-op-2} holds if and only if $SA(r)=0$. Similarly, 
\eqref{o-op-1} holds if and only if $SA(r)=0$. Hence the conclusion follows from Lemma~\ref{lem:TR}.
\end{proof}

Recall \cite{Bai2024} that a {\bf quadratic \sapp} is a quadruple $(A,\triangleright_{A},\triangleleft_{A},\mathcal{B})$, where $(A,\triangleright_{A},\triangleleft_{A})$ is a \sapp and $\mathcal{B}$ is a nondegenerate symmetric bilinear form on $A$ satisfying the following equation:
\begin{equation}\label{eq:cor4}
\mathcal{B}(x\triangleleft_{A}y,z)=-\mathcal{B}(x,z\circ_{A} y),\;\forall x,y,z\in A.
\end{equation}
Now we investigate the tensor
form of the bilinear form $\mathcal{B}$ in a quadratic \sapp
$(A,\triangleright_{A},\triangleleft_{A},\mathcal{B})$. 
\delete{Let $\mathcal{B}$ be a
nondegenerate bilinear form on a vector space $A$ which
corresponds to a linear map $\mathcal{B}^{\natural}:A\rightarrow
A^{*}$ by \eqref{eq:B}. Define a 2-tensor $\phi_{\mathcal{B}}\in A\otimes A$ to be
the tensor form of ${\mathcal{B}^{\natural}}^{-1}$, that is,
\begin{equation}\label{eq:2-tensor}
\langle \phi_{\mathcal{B}},a^{*}\otimes b^{*}\rangle=\langle {\mathcal{B}
^{\natural}}^{-1}(a^{*}),b^{*}\rangle,\;\forall a^{*},b^{*}\in A^{*}.
\end{equation}}

\begin{pro}\label{pro:2.24}
Let $(A,\triangleright_{A},\triangleleft_{A})$ be a \sapp and $\mathcal{B}$ be a
nondegenerate bilinear form on $A$. Then
$(A,\triangleright_{A},\triangleleft_{A},\mathcal{B})$ is a quadratic \sapp if and
only if $\phi_{\mathcal{B}}$ is symmetric and satisfies
$f(x)\phi_{\mathcal{B}}=0$ for all $x\in A$. Moreover, in this
case we have $g(x)\phi_{\mathcal{B}}=0$ for all $x\in A$, and thus
$\phi_{\mathcal{B}}$ is invariant on $(A,\triangleright_{A},\triangleleft_{A})$.
\end{pro}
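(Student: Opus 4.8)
The plan is to push everything to the operator side through $\phi_{\mathcal{B}}^{\sharp}={\mathcal{B}^{\natural}}^{-1}$ and to reuse the pairing identities established in the proof of Lemma~\ref{lem:6.3}. For a symmetric $2$-tensor $s$ that proof shows $\langle f(x)s,a^{*}\otimes b^{*}\rangle=\langle\mathcal{L}^{*}_{\circ_{A}}(s^{\sharp}(a^{*}))b^{*}+\mathcal{L}^{*}_{\triangleleft_{A}}(s^{\sharp}(b^{*}))a^{*},x\rangle$ and the analogous formula expressing $\langle g(x)s,a^{*}\otimes b^{*}\rangle$ through the left-hand side of \eqref{eq:lem4}. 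Hence the whole task reduces to recognizing, for $s=\phi_{\mathcal{B}}$, these pointwise expressions as the defining condition \eqref{eq:cor4} of a quadratic \sapp.

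First I would settle the symmetry clause. Substituting $a^{*}=\mathcal{B}^{\natural}(x)$ and $b^{*}=\mathcal{B}^{\natural}(y)$ into \eqref{eq:2-tensor} gives $\langle\phi_{\mathcal{B}},a^{*}\otimes b^{*}\rangle=\mathcal{B}(y,x)$ and $\langle\phi_{\mathcal{B}},b^{*}\otimes a^{*}\rangle=\mathcal{B}(x,y)$; since $\mathcal{B}^{\natural}$ is bijective, $\phi_{\mathcal{B}}$ is symmetric if and only if $\mathcal{B}$ is symmetric. This justifies assuming symmetry from now on and invoking the symmetric case of Lemma~\ref{lem:6.3}.

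Assuming symmetry, I would translate the condition $f(x)\phi_{\mathcal{B}}=0$. Writing $u=\phi_{\mathcal{B}}^{\sharp}(a^{*})$, $v=\phi_{\mathcal{B}}^{\sharp}(b^{*})$, so that $a^{*}=\mathcal{B}^{\natural}(u)$ and $b^{*}=\mathcal{B}^{\natural}(v)$, the identity above rewrites $\langle f(x)\phi_{\mathcal{B}},a^{*}\otimes b^{*}\rangle$ as $\mathcal{B}(v,u\circ_{A}x)+\mathcal{B}(u,v\triangleleft_{A}x)$. By symmetry of $\mathcal{B}$ the second summand equals $\mathcal{B}(v\triangleleft_{A}x,u)$, and \eqref{eq:cor4} says precisely that this is $-\mathcal{B}(v,u\circ_{A}x)$, so the two terms cancel; running the equivalence in reverse (with $u,v,x$ ranging over all of $A$ since $\mathcal{B}^{\natural}$ is onto) recovers \eqref{eq:cor4} from $f(x)\phi_{\mathcal{B}}=0$. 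Together with the symmetry step this establishes the stated ``if and only if''.

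Finally, for the ``moreover'' assertion I would assume $(A,\triangleright_{A},\triangleleft_{A},\mathcal{B})$ is a quadratic \sapp and compute $\langle g(x)\phi_{\mathcal{B}},a^{*}\otimes b^{*}\rangle=\mathcal{B}(u,x\circ_{A}v)-\mathcal{B}(v,x\circ_{A}u)$ from the $g$-formula behind Lemma~\ref{lem:6.3}. Applying \eqref{eq:cor4} to each term converts this into $\mathcal{B}(v\triangleleft_{A}u-u\triangleleft_{A}v,x)$, which vanishes because $\triangleleft_{A}$ is commutative; thus $g(x)\phi_{\mathcal{B}}=0$ for all $x$, and $\phi_{\mathcal{B}}$ is invariant. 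I expect no genuine obstacle: the argument is pure bookkeeping in the pairing $\langle-,-\rangle$, and the only delicate points are keeping the substitution $a^{*}=\mathcal{B}^{\natural}(u)$, $b^{*}=\mathcal{B}^{\natural}(v)$ consistent and using symmetry of $\mathcal{B}$ in the $f$-cancellation while using commutativity of $\triangleleft_{A}$ in the $g$-cancellation.
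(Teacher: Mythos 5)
Your proposal is correct. For the main equivalence it follows essentially the same route as the paper: both arguments pull $a^{*},b^{*}$ back along $\mathcal{B}^{\natural}$, compute the pairing of $f(x)\phi_{\mathcal{B}}$ against $a^{*}\otimes b^{*}$, and recognize the result as \eqref{eq:cor4}; the only cosmetic difference is that you recycle the pairing identities already written out in the proof of Lemma~\ref{lem:6.3} (which indeed apply, since you first settle that $\phi_{\mathcal{B}}$ is symmetric iff $\mathcal{B}$ is), whereas the paper recomputes them from scratch. The genuine divergence is in the ``moreover'' clause: the paper deduces $g(x)\phi_{\mathcal{B}}=0$ by citing from \cite{Bai2024} the identity $\mathcal{B}(x\circ_{A}y,z)=\mathcal{B}(y,x\circ_{A}z)$ valid on any quadratic \sapp, while you prove it directly -- expressing $\langle g(x)\phi_{\mathcal{B}},a^{*}\otimes b^{*}\rangle$ as $\mathcal{B}(u,x\circ_{A}v)-\mathcal{B}(v,x\circ_{A}u)$, applying \eqref{eq:cor4} to each term to get $\mathcal{B}(v\triangleleft_{A}u-u\triangleleft_{A}v,x)$, and killing it by commutativity of $\triangleleft_{A}$. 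Your version is therefore self-contained (it does not lean on the external reference, and as a by-product it reproves the cited left-invariance identity), at the modest cost of a slightly longer computation; both are valid.
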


\begin{proof}
It is clear that $\mathcal{B}$ is symmetric if and only if $\phi _{\mathcal{B}}$ is symmetric.
Let $x,y,z\in A$ and $a^{*}=\mathcal{B}^{\natural}(y),b^{*}=\mathcal{B}^{\natural}(z)$.
Under the assumptions, we have
\begin{eqnarray*}
&&\langle \big(\mathrm{id}\otimes \mathcal{R}_{\circ _{A}}(x)\big)\phi _{
    \mathcal{B}},a^{* }\otimes b^{* }\rangle =\langle \phi _{\mathcal{B}
},a^{* }\otimes \mathcal{R}_{\circ _{A}}^{* }(x)b^{* }\rangle
=\langle {\mathcal{B}^{\natural}}^{-1}(a^{* }),\mathcal{R}_{\circ
    _{A}}^{* }(x)b^{* }\rangle  \\
&&= \langle {\mathcal{B}^{\natural}}^{-1}(a^{* })\circ _{A}x,b^{*
}\rangle =\langle y\circ _{A}x,\mathcal{B}^{\natural}(z)\rangle =\mathcal{B}
(y\circ _{A}x,z), \\
&&-\langle \big(\mathcal{L}_{\triangleleft_{A}}(x)\otimes \mathrm{id}\big)\phi _{\mathcal{B}},a^{* }\otimes b^{* }\rangle =-\langle \phi _{\mathcal{B}},
\mathcal{L}_{\triangleleft_{A}}^{* }(x)a^{* }\otimes b^{* }\rangle
=-\langle \mathcal{L}_{\triangleleft_{A}}^{* }(x)a^{* },{\mathcal{B}^{\natural}}
^{-1}(b^{* })\rangle  \\
&&=-\langle a^{* },x\triangleleft_{A}{\mathcal{B}^{\natural}}^{-1}(b^{*
})\rangle =-\langle \mathcal{B}^{\natural}(y),x\triangleleft_{A}z\rangle =-\mathcal{B}(y,x\triangleleft_{A}z)=-\mathcal{B}(y,z\triangleleft_{A}x).
\end{eqnarray*}
Hence \eqref{eq:cor4} holds if and only if $f(x)\phi _{\mathcal{B}}=0$
for all $x\in A$. In this case, by \cite{Bai2024}, we obtain
\begin{equation}
\mathcal{B}(x\circ_{A}y,z)=\mathcal{B}(y,x\circ_{A}z),\;\forall x,y,z\in A,
\end{equation}
which indicates that $g(x)\phi _{\mathcal{B}}=0$ for all $x\in A$.
\end{proof}

Next we apply Theorem~\ref{pro:rrb} to the case of quadratic \sapps.

\begin{pro}\label{pro1-6}
	Let $(A,\triangleright_{A},\triangleleft_{A},\mathcal{B})$ be a quadratic \sapp.
	Suppose that there exists  $r\in A\otimes A$ such that $r+\tau(r)$ is invariant.
	Define a linear map $R:A\rightarrow A$ by \eqref{Br,Brt}.
	Then  $r$ is a solution of the SAPP-YBE in $(A,\triangleright_{A},\triangleleft_{A})$ if and only if the following equations hold:
		\begin{eqnarray}
			&&R(x)\triangleright_{A}R(y)=R \Big(R(x)\triangleright_{A}y+ x\triangleright_{A}R (y)- x\triangleright_{A}\big(r+\tau(r)\big)^{\sharp} \mathcal{B}^{\natural}(y)\Big),\label{eq:pro3,1}\\
			&&R(x)\triangleleft_{A}R(y)=R \Big(R(x)\triangleleft_{A}y+ x\triangleleft_{A}R (y)- x\triangleleft_{A}\big(r+\tau(r)\big)^{\sharp} \mathcal{B}^{\natural}(y)\Big),\;\forall x,y\in A.\ \ \ \ \label{eq:pro3,2}
		\end{eqnarray}
\end{pro}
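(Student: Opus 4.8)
The plan is to deduce the proposition from Theorem~\ref{pro:rrb} by transporting the operator identities \eqref{o-op-1} and \eqref{o-op-2} across the isomorphism $\mathcal{B}^{\natural}$, in exact analogy with the way Proposition~\ref{pro:3.14} derives its Rota--Baxter type identities from Theorem~\ref{thm:rt oop} via the single intertwiner \eqref{eq:b}. Since $(A,\triangleright_{A},\triangleleft_{A},\mathcal{B})$ is a quadratic \sapp, Proposition~\ref{pro:2.24} guarantees that $\phi_{\mathcal{B}}$ is symmetric and invariant, so the hypothesis that $r+\tau(r)$ be invariant fits the setting of Theorem~\ref{pro:rrb}. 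First I would fix $x,y,z\in A$ and set $a^{*}=\mathcal{B}^{\natural}(x)$, $b^{*}=\mathcal{B}^{\natural}(y)$, so that by \eqref{Br,Brt} we have $R(x)=r^{\sharp}(a^{*})$, $R(y)=r^{\sharp}(b^{*})$ and $R(u)=r^{\sharp}\mathcal{B}^{\natural}(u)$ for every $u$. Under this dictionary the left-hand sides of \eqref{eq:pro3,1} and \eqref{o-op-1} (resp. of \eqref{eq:pro3,2} and \eqref{o-op-2}) coincide verbatim, so the whole problem reduces to matching the three summands inside $R(\cdots)$ in \eqref{eq:pro3,1}--\eqref{eq:pro3,2} with the three summands inside $r^{\sharp}(\cdots)$ in \eqref{o-op-1}--\eqref{o-op-2}.

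The key step is to record the translation identities produced by the quadratic structure. From \eqref{eq:cor4} one reads off $\mathcal{B}(x\triangleleft_{A}y,z)=-\mathcal{B}(x,z\circ_{A}y)$, and the proof of Proposition~\ref{pro:2.24} already yields $\mathcal{B}(x\circ_{A}y,z)=\mathcal{B}(y,x\circ_{A}z)$. Combining these two (together with the symmetry of $\mathcal{B}$ and the commutativity of $\triangleleft_{A}$) I would establish the two auxiliary relations $\mathcal{B}(x\triangleright_{A}y,z)=\mathcal{B}(x,z\triangleright_{A}y)$ and $\mathcal{B}(w\triangleright_{A}y,z)=\mathcal{B}(y,w\circ_{A}z)+\mathcal{B}(y,z\circ_{A}w)$. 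Dualizing, these say precisely that $\mathcal{B}^{\natural}$ intertwines the left/right multiplications by $w=r^{\sharp}(a^{*})$ with the coadjoint operators appearing on the right of \eqref{o-op-1}--\eqref{o-op-2}; concretely $\mathcal{B}^{\natural}(R(x)\triangleright_{A}y)=(\mathcal{L}^{*}_{\circ_{A}}+\mathcal{R}^{*}_{\circ_{A}})\big(r^{\sharp}(a^{*})\big)b^{*}$ and $\mathcal{B}^{\natural}(x\triangleright_{A}R(y))=\mathcal{R}^{*}_{\triangleright_{A}}\big(r^{\sharp}(b^{*})\big)a^{*}$, while on the $\triangleleft$ side $\mathcal{B}^{\natural}(R(x)\triangleleft_{A}y)=-\mathcal{R}^{*}_{\circ_{A}}\big(r^{\sharp}(a^{*})\big)b^{*}$ and $\mathcal{B}^{\natural}(x\triangleleft_{A}R(y))=-\mathcal{R}^{*}_{\circ_{A}}\big(r^{\sharp}(b^{*})\big)a^{*}$.

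It remains to handle the correction terms $-\,x\triangleright_{A}\big(r+\tau(r)\big)^{\sharp}\mathcal{B}^{\natural}(y)$ and $-\,x\triangleleft_{A}\big(r+\tau(r)\big)^{\sharp}\mathcal{B}^{\natural}(y)$, which must be identified with $-a^{*}\triangleright_{r+\tau(r)}b^{*}$ and $-a^{*}\triangleleft_{r+\tau(r)}b^{*}$ given by \eqref{eq:thm:6,1}--\eqref{eq:thm:6,2}. Writing $s=r+\tau(r)$, I would feed the symmetric invariant tensor $s$ into Lemma~\ref{lem:6.3}: equations \eqref{eq:lem3} and \eqref{eq:lem4} let me swap $a^{*}$ and $b^{*}$ inside $\mathcal{L}^{*}_{\circ_{A}}$, $\mathcal{R}^{*}_{\circ_{A}}$ and $\mathcal{L}^{*}_{\triangleleft_{A}}$ applied to $s^{\sharp}$, and the commutativity of $\triangleleft_{A}$ gives the operator identity $\mathcal{R}^{*}_{\circ_{A}}-\mathcal{L}^{*}_{\triangleleft_{A}}=\mathcal{R}^{*}_{\triangleright_{A}}$. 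These two facts are exactly what is needed to turn $\mathcal{B}^{\natural}\big(x\triangleright_{A}s^{\sharp}(b^{*})\big)=\mathcal{R}^{*}_{\triangleright_{A}}\big(s^{\sharp}(b^{*})\big)a^{*}$ into the expression $(\mathcal{L}^{*}_{\circ_{A}}+\mathcal{R}^{*}_{\circ_{A}})\big(s^{\sharp}(a^{*})\big)b^{*}=a^{*}\triangleright_{s}b^{*}$, and likewise for the $\triangleleft$ term, where only \eqref{eq:lem4} is required. Once all three summands match on each side, \eqref{eq:pro3,1} becomes equivalent to \eqref{o-op-1} and \eqref{eq:pro3,2} to \eqref{o-op-2}, whence Theorem~\ref{pro:rrb} finishes the proof.

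I expect the $\triangleright$-correction term to be the main obstacle: unlike the commutative situation of Proposition~\ref{pro:3.14}, where the single intertwining identity \eqref{eq:b} suffices, here the presence of two products forces one to combine \emph{both} symmetric-invariance identities of Lemma~\ref{lem:6.3} with the commutativity of $\triangleleft_{A}$ in order to reconcile a right-multiplication $\mathcal{R}^{*}_{\triangleright_{A}}\big(s^{\sharp}(b^{*})\big)a^{*}$ with a sum of $\circ_{A}$-coadjoint operators evaluated at $s^{\sharp}(a^{*})$. The $\triangleleft$-side, by contrast, relies only on \eqref{eq:cor4}, the derived $\circ_{A}$-identity and \eqref{eq:lem4}, and should be routine.
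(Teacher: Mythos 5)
Your proposal is correct and follows essentially the same route as the paper's proof: reduce to Theorem~\ref{pro:rrb} by transporting the summands of \eqref{o-op-1}--\eqref{o-op-2} across $\mathcal{B}^{\natural}$, using Proposition~\ref{pro:2.24} for the invariance of $\phi_{\mathcal{B}}$ and Lemma~\ref{lem:6.3} (namely \eqref{eq:lem3}, \eqref{eq:lem4} together with $\mathcal{R}^{*}_{\triangleright_{A}}=\mathcal{R}^{*}_{\circ_{A}}-\mathcal{L}^{*}_{\triangleleft_{A}}$) to convert the correction term $\mathcal{R}^{*}_{\triangleright_{A}}\big(s^{\sharp}(b^{*})\big)a^{*}$ into $a^{*}\triangleright_{r+\tau(r)}b^{*}$. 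The only cosmetic difference is that you derive the intertwining identities from bilinear-form relations built out of \eqref{eq:cor4}, whereas the paper obtains the equivalent identities \eqref{eq:lem7} and \eqref{eq:lem6} by computing directly with the invariant tensor $\phi_{\mathcal{B}}$.
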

\begin{proof}
	Let $x,y\in A$ and  $a^{*}=\mathcal{B}^{\natural}(x), b^{*}=\mathcal{B}^{\natural}(y)$. By Proposition \ref{pro:2.24}, $\phi_{\mathcal{B}}$ is symmetric and invariant. Then we have
\delete{
	\begin{small}
		\begin{eqnarray*}
			&&\langle x\triangleleft_{A}\phi_{\mathcal{B}}^{\sharp}(a^{*}),b^{*}\rangle=\langle \phi_{\mathcal{B}}^{\sharp}(a^{*}),\mathcal{L}^{*}_{\triangleleft_{A}}(x)b^{*}\rangle=\langle \phi_{\mathcal{B}},a^{*}\otimes\mathcal{L}^{*}_{\triangleleft_{A}}(x)b^{*}\rangle\\
			&&=\langle\big(\mathrm{id}\otimes\mathcal{L}_{\triangleleft_{A}}(x)\big)\phi_{\mathcal{B}}, a^{*}\otimes b^{*}\rangle =-\langle \big( \mathcal{R}_{\circ_{A}} (x)\otimes\mathrm{id}\big)\phi_{\mathcal{B}},a^{*}\otimes b^{*}\rangle=-\langle \phi_{\mathcal{B}}^{\sharp}\big( \mathcal{R}^{*}_{\circ_{A}} (x)a^{*}\big),b^{*}\rangle,
		\end{eqnarray*}
	\end{small}that is,
		\begin{equation}\label{eq:lem5-}		x\triangleleft_{A}\mathcal{B}^{{\natural}^{-1}}(a^{*})=-\mathcal{B}^{{\natural}^{-1}}\big( \mathcal{R}^{*}_{\circ_{A}} (x)a^{*}\big).
		\end{equation}}
		\begin{eqnarray*}
&&\langle y\triangleright_{A}\mathcal{B}^{{\natural}^{-1}}(a^{*}),b^{*}\rangle=\langle \mathcal{B}^{{\natural}^{-1}}(a^{*}),\mathcal{L}^{*}_{\triangleright_{A}}(y)b^{*}\rangle=\langle \phi_{\mathcal{B}},a^{*}\otimes\mathcal{L}^{*}_{\triangleright_{A}}(y)b^{*}\rangle\\
			&&=\langle\big(\mathrm{id}\otimes\mathcal{L}_{\triangleright_{A}}(y)\big)\phi_{\mathcal{B}}, a^{*}\otimes b^{*}\rangle =\langle\big(\mathrm{id}\otimes(\mathcal{L}_{\circ_{A}}-\mathcal{L}_{\triangleleft_{A}})(y)\big)\phi_{\mathcal{B}}, a^{*}\otimes b^{*}\rangle\\
&&=
\langle \big( (\mathcal{L}_{\circ_{A}}+\mathcal{R}_{\circ_{A}}) (y)\otimes\mathrm{id}\big)\phi_{\mathcal{B}},a^{*}\otimes b^{*}\rangle=\langle \mathcal{B}^{{\natural}^{-1}}\big( (\mathcal{L}^{*}_{\circ_{A}}+\mathcal{R}^{*}_{\circ_{A}}) (y)a^{*}\big),b^{*}\rangle,
		\end{eqnarray*}
that is,
\begin{equation}\label{eq:lem7}
y\triangleright_{A}\mathcal{B}^{{\natural}^{-1}}(a^{*})=\mathcal{B}^{{\natural}^{-1}}\big((\mathcal{L}^{*}_{\circ_{A}}+\mathcal{R}^{*}_{\circ_{A}})(y)a^{*}\big).
\end{equation}
Similarly, we have
\begin{equation}\label{eq:lem6} \mathcal{B}^{{\natural}^{-1}}(a^{*})\triangleright_{A}y=\mathcal{B}^{{\natural}^{-1}}\big(\mathcal{R}^{*}_{\triangleright_{A}}(y)a^{*}\big).
\end{equation}
Moreover, we have
\begin{align*}
			R(x)\triangleright_{A}R(y)&=r^{\sharp}(a^{*})\triangleright_{A}r^{\sharp}(b^{*}),\\
			R\big(R(x)\triangleright_{A}y\big)&=r^{\sharp}\mathcal{B}^{\natural}\big(r^{\sharp}(a^{*})\triangleright_{A}\mathcal{B}^{{\natural}^{-1}}(b^{*})\big)
			\overset{\eqref{eq:lem7}}{=}r^{\sharp}\Big((\mathcal{L}^{*}_{\circ_{A}}+\mathcal{R}^{*}_{\circ_{A}})\big(r^{\sharp}(a^{*})\big)b^{*}\Big),\\
			R\big(x\triangleright_{A}R(y)\big)&=r^{\sharp}\mathcal{B}^{\natural}\big(\mathcal{B}^{{\natural}^{-1}}(a^{*})\triangleright_{A}r^{\sharp}(b^{*})\big)
			\overset{\eqref{eq:lem6}}{=}r^{\sharp}\Big(\mathcal{R}^{*}_{\triangleright_{A}}\big(r^{\sharp}(b^{*})\big)a^{*}\Big),\\
			-R\Big(x\triangleright_{A}\big(r+\tau(r)\big)^{\sharp}\mathcal{B}^{\natural}(y)\Big)&=
			-r^{\sharp}\mathcal{B}^{\natural}\Big(\mathcal{B}^{{\natural}^{-1}}(a^{*})\triangleright_{A}\big(r+\tau(r)\big)^{\sharp}(b^{*})\Big)\\&\overset{\eqref{eq:lem6}}{=}
			-r^{\sharp}\bigg(\mathcal{R}^{*}_{\triangleright_{A}}\Big(\big(r+\tau(r)\big)^{\sharp}(b^{*})\Big)a^{*}\bigg)\\
			&\overset{\eqref{eq:lem3},\eqref{eq:lem4}}{=}-r^{\sharp}\bigg((\mathcal{L}^{*}_{\circ_{A}}+\mathcal{R}^{*}_{\circ_{A}})\Big(\big(r+\tau(r)\big)^{\sharp}(a^{*})\Big) b^{*} \bigg)\\
&=-r^{\sharp}(a^{*} \triangleright_{r+\tau(r)}b^{*}).
\end{align*}
Hence \eqref{eq:pro3,1} holds if and only if \eqref{o-op-1} holds. Similarly, \eqref{eq:pro3,2} holds if and only if \eqref{o-op-2} holds. Thus the conclusion follows from Theorem~\ref{pro:rrb}.
\end{proof}

\delete{
\begin{proof}
	Let $x,y\in A$ and  $a^{*}=\mathcal{B}^{\natural}(x), b^{*}=\mathcal{B}^{\natural}(y)$. By Proposition \ref{pro:2.24}, $\phi_{\mathcal{B}}$ is symmetric and invariant. Then we have
\begin{eqnarray*}
		R(x) \triangleleft_{A}R(y)&=&r^{\sharp}(a^{*}) \triangleleft_{A}r^{\sharp}(b^{*}),\\
		R\big(R(x) \triangleleft_{A}y\big)&=&r^{\sharp}\mathcal{B}^{\natural}\big(r^{\sharp}(a^{*}) \triangleleft_{A}\mathcal{B}^{{\natural}^{-1}}(b^{*})\big)\overset{\eqref{eq:lem5}}{=}r^{\sharp}\Big(\mathcal{R}^{*}_{\circ_{A}}\big(r^{\sharp}(a^{*})\big)b^{*}\Big),\\
		R\big(x \triangleleft_{A}R(y)\big)&=&r^{\sharp}\mathcal{B}^{\natural}\big(\mathcal{B}^{{\natural}^{-1}}(a^{*}) \triangleleft_{A}r^{\sharp}(b^{*})\big)\overset{\eqref{eq:lem5}}{=}r^{\sharp}\Big(\mathcal{R}^{*}_{\circ_{A}}\big(r^{\sharp}(b^{*})\big)a^{*}\Big),\\
		-R\Big(x \triangleleft_{A}\big(r+\tau(r)\big)^{\sharp}\mathcal{B}^{\natural}(y)\Big)&=&
		-r^{\sharp}\mathcal{B}^{\natural}\Big(\mathcal{B}^{{\natural}^{-1}}(a^{*}) \triangleleft_{A}\big(r+\tau(r)\big)^{\sharp}(b^{*})\Big)\\&\overset{\eqref{eq:lem1}}{=}&
		-r^{\sharp}\bigg(\mathcal{R}^{*}_{\circ_{A}}\Big(\big(r+\tau(r)\big)^{\sharp}(b^{*})\Big)a^{*}\bigg)\\
		&\overset{\eqref{eq:lem4}}{=}&-r^{\sharp}\bigg(\mathcal{R}^{*}_{\circ_{A}}\Big(\big(r+\tau(r)\big)^{\sharp}(a^{*})\Big) b^{*} \bigg)=-r^{\sharp}(a^{*}\lhd_{r+\tau(r)}b^{*}).
\end{eqnarray*}Hence \eqref{eq:pro3,2} holds if and only if \eqref{o-op-2} holds. Similarly, \eqref{eq:pro3,1} holds if and only if \eqref{o-op-1} holds. Thus the conclusion follows from Theorem~\ref{pro:rrb}.
\end{proof}}

\delete{
	
	In this section, we study triangular and factorizable SDPP bialgebras, which are subclasses of quasi-triangular SDPP bialgebras $(A,\circ_{A}, \triangleleft_{A},\eta_{r}, \theta_{r})$ for the distinct cases that $r+\tau(r)=0$ and $\big(r+\tau(r)\big)^{\sharp}$ is nondegenerate respectively.
	On the one hand, triangular SDPP bialgebras can be realized from $\mathcal{O}$-operators of SDPP algebras where the multiplications on the representation spaces are simplified to be trivial.
	Moreover, such $\mathcal{O}$-operators can be provided by pre-SDPP algebras and hence the latter give rise to triangular SDPP bialgebras.
	On the other hand,  a  factorizable SDPP bialgebra  leads to a factorization of the underlying SDPP algebra, and an arbitrary Manin triple of SDPP algebras admits a factorizable SDPP bialgebra structure.}

\subsection{Triangular \sappbs}\label{sec5.1}\

Let $(A,\triangleright_{A},\triangleleft_{A})$ be a \sapp and $r\in A\otimes A$. If $r$ is a skew-symmetric solution of the SAPP-YBE in $(A,\triangleright_{A},\triangleleft_{A})$, then by Theorem \ref{thm:bialg}, $(A,\triangleright_{A},\triangleleft_{A},\vartheta_{r},\theta_{r})$ is a \sappb, where $\vartheta_{r}$ and $\theta_{r}$ are defined by \eqref{eq:comul}. In this case, we say $(A,\triangleright_{A},\triangleleft_{A},\vartheta_{r},\theta_{r})$ is \textbf{triangular}.


\begin{pro}\label{thm:T1}
	Let $(A,\triangleright_{A},\triangleleft_{A})$ be a \sapp and $r\in A\otimes A$ be skew-symmetric. Then $r$ is a solution of the SAPP-YBE in $(A,\triangleright_{A},\triangleleft_{A})$ if and only if $r^{\sharp}$ is an $\calo$-operator of $(A,\triangleright_{A},\triangleleft_{A})$ associated to $(\mathcal{L}^{*}_{\circ_{A}}+\mathcal{R}^{*}_{\circ_{A}},\mathcal{R}^{*}_{\triangleright_{A}}, -\mathcal{R}^{*}_{\circ_{A}},A^{*})$.
\end{pro}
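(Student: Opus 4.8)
The plan is to derive this proposition as a direct specialization of Theorem~\ref{pro:rrb} to the skew-symmetric case, where the symmetric part $r+\tau(r)$ vanishes identically. First I would observe that skew-symmetry gives $r+\tau(r)=0$, so the standing hypothesis of Theorem~\ref{pro:rrb} that $r+\tau(r)$ be invariant is automatically satisfied, since $f(x)\big(r+\tau(r)\big)=g(x)\big(r+\tau(r)\big)=0$ for all $x\in A$. In particular \eqref{eq:invariance} holds trivially, so by Theorem~\ref{thm:bialg} the bare condition ``$r$ is a solution of the SAPP-YBE'' already forces $(A,\triangleright_{A},\triangleleft_{A},\vartheta_{r},\theta_{r})$ to be a \sappb. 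Hence ``$r$ is a solution of the SAPP-YBE'' is equivalent to condition~(\ref{rrb0}) of Theorem~\ref{pro:rrb} (the quasi-triangularity clause being automatic here).

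Next I would simplify the representation algebra appearing in condition~(\ref{rrb1}) of Theorem~\ref{pro:rrb}. Since $\big(r+\tau(r)\big)^{\sharp}=0$, the two multiplications $\triangleright_{r+\tau(r)}$ and $\triangleleft_{r+\tau(r)}$ defined by \eqref{eq:thm:6,1} and \eqref{eq:thm:6,2} are both the zero multiplication on $A^{*}$. Thus the $(A,\triangleright_{A},\triangleleft_{A})$-representation algebra $(\mathcal{L}^{*}_{\circ_{A}}+\mathcal{R}^{*}_{\circ_{A}},\mathcal{R}^{*}_{\triangleright_{A}},-\mathcal{R}^{*}_{\circ_{A}},A^{*},\triangleright_{r+\tau(r)},\triangleleft_{r+\tau(r)})$ is exactly the representation $(\mathcal{L}^{*}_{\circ_{A}}+\mathcal{R}^{*}_{\circ_{A}},\mathcal{R}^{*}_{\triangleright_{A}},-\mathcal{R}^{*}_{\circ_{A}},A^{*})$ equipped with zero multiplications. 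Consequently, in the defining equations \eqref{o-op-1} and \eqref{o-op-2} of a weight $-1$ $\mathcal{O}$-operator the terms $a^{*}\triangleright_{r+\tau(r)}b^{*}$ and $a^{*}\triangleleft_{r+\tau(r)}b^{*}$ drop out, and what remains are precisely the identities expressing that $r^{\sharp}$ is an $\mathcal{O}$-operator of $(A,\triangleright_{A},\triangleleft_{A})$ associated to $(\mathcal{L}^{*}_{\circ_{A}}+\mathcal{R}^{*}_{\circ_{A}},\mathcal{R}^{*}_{\triangleright_{A}},-\mathcal{R}^{*}_{\circ_{A}},A^{*})$ in the sense of the paper's convention for zero multiplications (the weight becoming irrelevant).

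Putting the two reductions together, condition~(\ref{rrb1}) of Theorem~\ref{pro:rrb} becomes exactly the statement that $r^{\sharp}$ is an $\mathcal{O}$-operator of $(A,\triangleright_{A},\triangleleft_{A})$ associated to $(\mathcal{L}^{*}_{\circ_{A}}+\mathcal{R}^{*}_{\circ_{A}},\mathcal{R}^{*}_{\triangleright_{A}},-\mathcal{R}^{*}_{\circ_{A}},A^{*})$, and the claimed equivalence then follows at once from Theorem~\ref{pro:rrb}. I do not expect a genuine computational obstacle; the only care required is bookkeeping, namely checking that skew-symmetry simultaneously collapses the invariance hypothesis and trivializes the induced multiplications so that the weight $-1$ $\mathcal{O}$-operator equations degenerate to the plain ones. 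As an alternative sanity check, one could argue directly from Lemma~\ref{lem:TR}: setting $r+\tau(r)=0$ in \eqref{eq:t1} and \eqref{eq:t2} annihilates every term except those carrying $SA(r)$, so the $\mathcal{O}$-operator identities for $r^{\sharp}$ hold if and only if $SA(r)=0$, recovering the proposition without invoking Theorem~\ref{pro:rrb}.
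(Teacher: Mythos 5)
Your proposal is correct and follows exactly the paper's own route: the paper proves Proposition \ref{thm:T1} precisely by the one-line specialization of Theorem \ref{pro:rrb} to the case $r+\tau(r)=0$, and your bookkeeping (automatic invariance of the symmetric part, vanishing of $\triangleright_{r+\tau(r)}$ and $\triangleleft_{r+\tau(r)}$, collapse of the weight $-1$ equations \eqref{o-op-1}--\eqref{o-op-2} to the plain $\mathcal{O}$-operator equations) is exactly what that observation tacitly uses. One trivial remark on your alternative check via Lemma \ref{lem:TR}: the identity governing $\triangleright_A$ is \eqref{eq:t1.5} rather than \eqref{eq:t1}, though since $\triangleright_{r}=\circ_{r}-\triangleleft_{r}$ the pair \eqref{eq:t1}, \eqref{eq:t2} serves equally well.
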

\begin{proof}
It follows from Theorem~\ref{pro:rrb} by observing $r+\tau(r)=0$.
\end{proof}

\begin{defi}
	A \textbf{quadratic Rota-Baxter \sapp of weight $\lambda$} is a quintuple  $(A,\triangleright_{A},\triangleleft_{A},R,\mathcal{B})$, such that $(A,\triangleright_{A},\triangleleft_{A},\mathcal{B})$ is a quadratic \sapp, $R$ is a Rota-Baxter operator of weight $\lambda$ on $(A,\triangleright_{A},\triangleleft_{A})$ and  \eqref{strong} holds.
\end{defi}

Next we establish the relationship between symmetric Rota-Baxter Frobenius commutative algebras and quadratic Rota-Baxter \sapps with the same weights.

\begin{pro}\label{pro:6.31}
	Let $(A,\cdot_{A},P,R,\mathcal{B})$ be a symmetric averaging Rota-Baxter Frobenius commutative algebra of weight $\lambda$.
	\delete{
	Suppose that $P$ is an averaging operator of $(A,\cdot_{A})$ which commutes with $R$ and $\hat{P}$ is the adjoint map of $P$ with respect to $\mathcal{B}$ defined by
\begin{equation}
\mathcal{B}\big(P(x),y\big)=\mathcal{B}\big(x,\hat{P}(y)\big),\;\forall x,y\in A.
\end{equation}}Then $(A,\triangleright_{A},\triangleleft_{A},R,\mathcal{B})$ is a quadratic Rota-Baxter \sapp of weight $\lambda$,
where the multiplications $\triangleright_{A},\triangleleft_{A}:A\otimes A\rightarrow A$ are given by
\begin{equation}\label{eq:90}
x\triangleright_{A}y=P(x)\cdot_{A}y+\hat{P}(x\cdot_{A}y),\; x\triangleleft_{A}y=-\hat{P}(x\cdot_{A}y),\;\forall x,y\in A.
\end{equation}
\end{pro}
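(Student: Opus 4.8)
The plan is to verify the three defining ingredients of a quadratic Rota-Baxter \sapp of weight $\lambda$ in turn, leaning on the fact from \cite{Bai2024} that $(A,\cdot_{A},P,\hat{P})$ is an admissible averaging commutative algebra and that such an algebra induces a \sapp through exactly formula \eqref{eq:90} (this is the algebra part of Lemma \ref{2474} with $Q=\hat{P}$). Here $\hat{P}$ plays the role of the auxiliary map $Q$, and a direct computation gives $x\circ_{A}y=x\triangleright_{A}y+x\triangleleft_{A}y=P(x)\cdot_{A}y$, so the sub-adjacent perm algebra is precisely the one produced from the averaging operator $P$ as in Proposition \ref{ex:comm aver}. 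This settles that $(A,\triangleright_{A},\triangleleft_{A})$ is a \sapp.

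Next I would check that $\mathcal{B}$ satisfies the quadratic condition \eqref{eq:cor4}. Using $x\triangleleft_{A}y=-\hat{P}(x\cdot_{A}y)$, the defining property of the adjoint $\mathcal{B}(\hat{P}(u),v)=\mathcal{B}(u,P(v))$, the symmetry and invariance of $\mathcal{B}$ on $(A,\cdot_{A})$, and commutativity of $\cdot_{A}$, one computes
\[
\mathcal{B}(x\triangleleft_{A}y,z)=-\mathcal{B}(x\cdot_{A}y,P(z))=-\mathcal{B}(x,P(z)\cdot_{A}y)=-\mathcal{B}(x,z\circ_{A}y).
\]
Together with nondegeneracy and symmetry of $\mathcal{B}$ this makes $(A,\triangleright_{A},\triangleleft_{A},\mathcal{B})$ a quadratic \sapp.

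The key preliminary for the Rota-Baxter condition is the identity $R\hat{P}=\hat{P}R$, which I would establish exactly as in the proof of Theorem \ref{thm:commute}, but carrying the weight-$\lambda$ terms of \eqref{strong} through. Starting from $\mathcal{B}\big(x,R\hat{P}(y)\big)$ and applying \eqref{strong}, then rewriting $\mathcal{B}\big(R(x),\hat{P}(y)\big)=\mathcal{B}\big(RP(x),y\big)$ via the adjoint together with $PR=RP$, and applying \eqref{strong} a second time to $\mathcal{B}\big(RP(x),y\big)$, the two $\lambda$-terms cancel and one arrives at $\mathcal{B}\big(x,R\hat{P}(y)\big)=\mathcal{B}\big(P(x),R(y)\big)=\mathcal{B}\big(x,\hat{P}R(y)\big)$, whence $R\hat{P}=\hat{P}R$ by nondegeneracy.

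Finally I would verify that $R$ is a Rota-Baxter operator of weight $\lambda$ on $(A,\triangleright_{A},\triangleleft_{A})$. Rather than attacking $\triangleright_{A}$ directly, I would check the Rota-Baxter identity separately for the two simpler products $\circ_{A}$ and $\triangleleft_{A}$ and then use that the identity is linear in the multiplication, together with $\triangleright_{A}=\circ_{A}-\triangleleft_{A}$, to deduce it for $\triangleright_{A}$. For $\circ_{A}$, writing $P\big(R(x)\big)=R\big(P(x)\big)$ and applying the Rota-Baxter identity for $R$ on $(A,\cdot_{A})$ reduces $R(x)\circ_{A}R(y)$ to $R\big(R(x)\circ_{A}y+x\circ_{A}R(y)+\lambda x\circ_{A}y\big)$; for $\triangleleft_{A}$, the prefactor $-\hat{P}$ is pulled past $R$ using $R\hat{P}=\hat{P}R$ and the Rota-Baxter identity on $(A,\cdot_{A})$ is invoked once more. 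Since condition \eqref{strong} holds by hypothesis, all three defining properties are verified, giving the claim. I expect the main obstacle to be establishing $R\hat{P}=\hat{P}R$ for general weight $\lambda$; once that commutation and $PR=RP$ are in hand, the remaining Rota-Baxter verifications reduce to routine linear manipulations.
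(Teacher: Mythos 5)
Your proposal is correct and takes essentially the same approach as the paper: the quadratic \sapp structure is obtained from the admissible averaging commutative algebra $(A,\cdot_{A},P,\hat{P})$ induced by $P$ and its adjoint (cited from \cite{Bai2024}), the commutation $R\hat{P}=\hat{P}R$ is derived from \eqref{strong} together with $PR=RP$, and the Rota-Baxter identity on $(A,\triangleright_{A},\triangleleft_{A})$ is verified by pulling $\hat{P}$ past $R$ and invoking the Rota-Baxter identity on $(A,\cdot_{A})$. If anything you are slightly more careful than the paper, which cites Theorem \ref{thm:commute} (a weight $-1$ statement) for the commutation at general weight $\lambda$, whereas you rerun that computation carrying the $\lambda$-terms; likewise your deduction of the $\triangleright_{A}$-identity from those for $\circ_{A}$ and $\triangleleft_{A}$ by linearity is a clean rendering of the paper's ``similarly''.
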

\begin{proof}
	By \cite{Bai2024}, $(A,\triangleright_{A},\triangleleft_{A},\mathcal{B})$ is a quadratic \sapp. By Theorem \ref{thm:commute}, $\hat{P}$ also commutes with $R$. For all $x,y\in A$, we have
	\begin{eqnarray*}
&&R(x)\triangleleft_{A}R(y)-R\big( R(x)\triangleleft_{A}y+x\triangleleft_{A} R(y)+\lambda x\triangleleft_{A}y \big)\\
		&&=-\hat{P}\big( R(x)\cdot_{A}R(y) \big)+R\hat{P}\big(R(x)\cdot_{A}y+x\cdot_{A}R(y)+\lambda x\cdot_{A}y  \big)\\
		&&=-\hat{P}\Big( R(x)\cdot_{A}R(y)-R\big(R(x)\cdot_{A}y+x\cdot_{A}R(y)+\lambda x\cdot_{A}y\big)  \Big)\\
		&&=0, 
	\end{eqnarray*}
and similarly
\begin{eqnarray*}
R(x)\triangleright_{A}R(y)-R\big( R(x)\triangleright_{A}y+x\triangleright_{A}R(y)+\lambda x\triangleright_{A}y \big)=0.
\end{eqnarray*}
	Thus $(A,\triangleright_{A},\triangleleft_{A},R,\mathcal{B})$ is a quadratic Rota-Baxter \sapp of weight $\lambda $.
\end{proof}

\delete{
	\begin{pro}\label{pro:4.2}
		Let $\beta$ be a Rota-Baxter operator of   weight $\lambda$ on a SDPP algebra $(A, \triangleright_{A}, \triangleleft_{A})$.
		Then $$\big(A\ltimes_{\mathcal{L}^{*}_{\circ_{A}},\mathcal{L}^{*}_{ \triangleleft_{A}},\mathcal{R}^{*}_{\circ_{A}}} A^{*},\mathcal{B}_{d},\beta-(\beta+\lambda\mathrm{id}_{A} )^{*}\big)$$ is a quadratic Rota-Baxter SDPP algebra of weight $\lambda$, where the bilinear form $\mathcal{B}_{d}$ on $A\oplus A^{*}$ is given by \eqref{eq:bfds}.
	\end{pro}
	\begin{proof}
		It follows from a straightforward computation.
	\end{proof}

	\begin{ex}\label{ex:6.13}
		Let $(A, \triangleright_{A}, \triangleleft_{A})$ be a SDPP algebra.
		Then the identity map $\mathrm{id}_{A}$ is a Rota-Baxter operator of $(A, \triangleright_{A}, \triangleleft_{A})$ of weight $-1$.
		By Proposition \ref{pro:4.2},
		\begin{equation*}
			\big(A\ltimes_{\mathcal{L}^{*}_{\circ_{A}},\mathcal{L}^{*}_{ \triangleleft_{A}},\mathcal{R}^{*}_{\circ_{A}}} A^{*},\mathcal{B}_{d},\mathrm{id}_{A}-(\mathrm{id}_{A}-\mathrm{id}_{A})^{*}\big)= (A\ltimes_{\mathcal{L}^{*}_{\circ_{A}},\mathcal{L}^{*}_{ \triangleleft_{A}},\mathcal{R}^{*}_{\circ_{A}}} A^{*},\mathcal{B}_{d},\mathrm{id}_{A})
		\end{equation*}
		is a quadratic Rota-Baxter SDPP algebra of weight $-1$.
\end{ex}}

\begin{lem}\label{lem:bf}
	Let $A$ be a vector space and $\mathcal{B}$ be a nondegenerate symmetric bilinear form.
	Let $r\in A\otimes A$ and $R:A\rightarrow A$ satisfy \eqref{Br,Brt}.
	Then $r$satisfies
	\begin{equation}\label{rw}
		r+\tau(r)=-\lambda\phi_{\mathcal{B}},\; \lambda\in\mathbb{K}
	\end{equation}
	if and only if $R$  satisfies \eqref{strong}.
\end{lem}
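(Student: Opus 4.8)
The plan is to reduce both \eqref{strong} and \eqref{rw} to the vanishing of a single $2$-tensor paired against $A^{*}\otimes A^{*}$, after which the equivalence follows from the nondegeneracy of $\mathcal{B}$. First I would fix $x,y\in A$ and put $a^{*}=\mathcal{B}^{\natural}(x)$ and $b^{*}=\mathcal{B}^{\natural}(y)$, so that by \eqref{Br,Brt} we have $R(x)=r^{\sharp}(a^{*})$ and $R(y)=r^{\sharp}(b^{*})$, and also $(\mathcal{B}^{\natural})^{-1}(a^{*})=x$. Since $\mathcal{B}$ is symmetric, I would rewrite $\mathcal{B}\big(R(x),y\big)=\langle \mathcal{B}^{\natural}(y),R(x)\rangle=\langle b^{*},r^{\sharp}(a^{*})\rangle=\langle r,a^{*}\otimes b^{*}\rangle$, using the defining relation of $r^{\sharp}$.

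Next I would treat the remaining two terms in the same way. The symmetric term becomes $\mathcal{B}\big(x,R(y)\big)=\langle a^{*},r^{\sharp}(b^{*})\rangle=\langle r,b^{*}\otimes a^{*}\rangle=\langle \tau(r),a^{*}\otimes b^{*}\rangle$, where the last equality is just the definition of $\tau$. For the weight term, the definition \eqref{eq:2-tensor} of $\phi_{\mathcal{B}}$ together with $(\mathcal{B}^{\natural})^{-1}(a^{*})=x$ gives $\langle \phi_{\mathcal{B}},a^{*}\otimes b^{*}\rangle=\langle x,b^{*}\rangle=\langle \mathcal{B}^{\natural}(y),x\rangle=\mathcal{B}(x,y)$, so that $\lambda\mathcal{B}(x,y)=\lambda\langle \phi_{\mathcal{B}},a^{*}\otimes b^{*}\rangle$. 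Assembling these three identities, the left-hand side of \eqref{strong} equals $\big\langle\, r+\tau(r)+\lambda\phi_{\mathcal{B}},\,a^{*}\otimes b^{*}\big\rangle$.

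Finally I would invoke nondegeneracy: because $\mathcal{B}^{\natural}$ is a bijection, as $x$ and $y$ range over $A$ the covectors $a^{*}=\mathcal{B}^{\natural}(x)$ and $b^{*}=\mathcal{B}^{\natural}(y)$ range over all of $A^{*}$. Hence \eqref{strong} holds for all $x,y\in A$ if and only if $\langle r+\tau(r)+\lambda\phi_{\mathcal{B}},a^{*}\otimes b^{*}\rangle=0$ for all $a^{*},b^{*}\in A^{*}$, which is exactly $r+\tau(r)=-\lambda\phi_{\mathcal{B}}$, i.e.\ \eqref{rw}. I do not expect a genuine obstacle here; the proof is bookkeeping through the identifications $r^{\sharp}$, $\mathcal{B}^{\natural}$ and $\phi_{\mathcal{B}}$. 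The only point requiring care is keeping track of which slot receives $a^{*}$ and which receives $b^{*}$ so that the crossed term is correctly identified with $\tau(r)$ rather than $r$, and noting that symmetry of $\mathcal{B}$ is what makes the two mixed terms combine into $r+\tau(r)$; the passage from "for all $x,y$" to "for all $a^{*},b^{*}$" rests precisely on the surjectivity of $\mathcal{B}^{\natural}$.
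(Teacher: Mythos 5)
Your proof is correct and is essentially the argument the paper has in mind: the paper's proof simply defers to \cite[Lemma 4.17]{BLST}, and that computation is precisely yours — pairing each of the three terms of \eqref{strong} with $a^{*}\otimes b^{*}$ via $\mathcal{B}^{\natural}$ to identify them with $r$, $\tau(r)$ and $\lambda\phi_{\mathcal{B}}$, then concluding by nondegeneracy. No gaps; your bookkeeping of which slot receives $a^{*}$ versus $b^{*}$ is also handled correctly.
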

\begin{proof}
It is similar to the proof of \cite[Lemma 4.17]{BLST}.
\end{proof}

\delete{
\begin{proof}
	Let $x,y\in A$ and $a^{*}=\mathcal{B}^{\natural}(x), b^{*}=\mathcal{B}^{\natural}(y)$.
	Then
	\begin{eqnarray*}
		\mathcal{B}\big(R (x),y\big)&=& \langle \mathcal{B}^{\natural}(y),r^{\sharp}\mathcal{B}^{\natural}(x)\rangle=\langle b^{*}, r^{\sharp}(a^{*})\rangle=\langle r, a^{*}\otimes b^{*}\rangle,\\
		\mathcal{B}\big(x,R (y)\big)&=& \langle\mathcal{B}^{\natural}(x), r^{\sharp}\mathcal{B}^{\natural}(y)\rangle=\langle a^{*}, r^{\sharp}(b^{*})\rangle=\langle \tau(r), a^{*}\otimes b^{*}\rangle,\\
		\lambda\mathcal{B}(x,y)&=& \lambda \langle \mathcal{B}^{\natural^{-1}}\mathcal{B}^{\natural}(x),\mathcal{B}^{\natural}(y)\rangle=\lambda  \langle  \mathcal{B}^{\natural^{-1}} (a^{*}),b^{*}\rangle=\lambda\langle\phi_{\mathcal{B}}, a^{*}\otimes b^{*}\rangle.
	\end{eqnarray*}
	Hence $r$ satisfies  \eqref{rw} if and only if $R$ satisfies \eqref{strong}.
\end{proof}}

\begin{pro}\label{pro:triangular Leib}
	Let $(A,\triangleright_{A},\triangleleft_{A},R,\mathcal{B})$ be a quadratic Rota-Baxter \sapp of weight $0$.
	Then there is a triangular \sappb $(A,\triangleright_{A},\triangleleft_{A},\vartheta_{r},\theta_{r})$ with $\vartheta_r$ and $\theta_{r}$ defined by \eqref{eq:comul},
where $r\in A\otimes A$ is given through the operator form $r^{\sharp}$ by \eqref{eq:thm:quadratic to fact}.
\end{pro}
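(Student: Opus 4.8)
The plan is to recognize that the weight-$0$ hypothesis collapses everything onto two previously established results, Lemma~\ref{lem:bf} and Proposition~\ref{pro1-6}, so that no new computation is really required. First I would set $\lambda=0$ in \eqref{strong}, giving $\mathcal{B}\big(R(x),y\big)+\mathcal{B}\big(x,R(y)\big)=0$ for all $x,y\in A$. Since $R$ is presented in operator form by \eqref{Br,Brt} (equivalently \eqref{eq:thm:quadratic to fact}), Lemma~\ref{lem:bf} applied with $\lambda=0$ immediately yields $r+\tau(r)=0$; that is, $r$ is skew-symmetric.

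Next I would observe that the vanishing $r+\tau(r)=0$ is trivially invariant on $(A,\triangleright_{A},\triangleleft_{A})$, so the hypotheses of Proposition~\ref{pro1-6} are met. Substituting $r+\tau(r)=0$ into \eqref{eq:pro3,1} and \eqref{eq:pro3,2} deletes the last summand in each, reducing them to
\begin{align*}
R(x)\triangleright_{A}R(y)&=R\big(R(x)\triangleright_{A}y+x\triangleright_{A}R(y)\big),\\
R(x)\triangleleft_{A}R(y)&=R\big(R(x)\triangleleft_{A}y+x\triangleleft_{A}R(y)\big),
\end{align*}
for all $x,y\in A$. These are exactly the weight-$0$ Rota-Baxter identities for $R$ on $(A,\triangleright_{A},\triangleleft_{A})$, which hold by the assumption that $(A,\triangleright_{A},\triangleleft_{A},R,\mathcal{B})$ is a quadratic Rota-Baxter \sapp of weight $0$. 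Hence Proposition~\ref{pro1-6} gives that $r$ is a solution of the SAPP-YBE in $(A,\triangleright_{A},\triangleleft_{A})$.

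Finally, since $r$ is a skew-symmetric solution of the SAPP-YBE, the construction recorded at the opening of this subsection (namely Theorem~\ref{thm:bialg} together with the definition of triangularity) shows that $(A,\triangleright_{A},\triangleleft_{A},\vartheta_{r},\theta_{r})$, with $\vartheta_{r}$ and $\theta_{r}$ defined by \eqref{eq:comul}, is a triangular \sappb. There is no genuine obstacle in this argument: all the structural content has already been absorbed into Lemma~\ref{lem:bf} and Proposition~\ref{pro1-6}, and the only point needing care is the bookkeeping observation that the extra terms $x\triangleright_{A}\big(r+\tau(r)\big)^{\sharp}\mathcal{B}^{\natural}(y)$ and $x\triangleleft_{A}\big(r+\tau(r)\big)^{\sharp}\mathcal{B}^{\natural}(y)$ vanish precisely because $r+\tau(r)=0$, which is what forces the weight of the associated $\mathcal{O}$-operator conditions to drop from $-1$ down to the weight-$0$ Rota-Baxter conditions.
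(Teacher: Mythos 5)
Your proposal is correct and follows exactly the paper's own route: the paper's proof is precisely the one-line invocation of Lemma~\ref{lem:bf} and Proposition~\ref{pro1-6} together with the observation that $r+\tau(r)=0$, which your argument simply unpacks in detail (skew-symmetry from \eqref{strong} with $\lambda=0$, trivial invariance of the zero tensor, reduction of \eqref{eq:pro3,1}--\eqref{eq:pro3,2} to the weight-$0$ Rota-Baxter identities, then the definition of triangularity via Theorem~\ref{thm:bialg}).
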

\begin{proof}
It follows from Proposition \ref{pro1-6} and Lemma \ref{lem:bf} by observing $r+\tau(r)=0$.
\end{proof}

\begin{thm}\label{thm:TY3}
	Let $(A,\triangleright_{A},\triangleleft_{A})$ be a \sapp with a representation $(l_{\triangleright_{A}},$
$r_{\triangleright_{A}},l_{\triangleleft_{A}},V)$. Let $T:V\rightarrow A$ be a linear map which is identified as
	\begin{equation*}
		T_{\sharp}\in V^{*}\otimes A \subset (A\ltimes_{l^{*}_{\circ_{A}}+r^{*}_{\circ_{A}},r^{*}_{\triangleright_{A}}, -r^{*}_{\circ_{A}}}V^{*})\otimes (A\ltimes_{l^{*}_{\circ_{A}}+r^{*}_{\circ_{A}},r^{*}_{\triangleright_{A}}, -r^{*}_{\circ_{A}}}V^{*}) .
	\end{equation*}
	\delete{an element in $(A\ltimes
		_{l^{*},h^{*},r^{*}}V^{*})\otimes (A\ltimes_{l^{*},h^{*},r^{*}}V^{*})$(through Hom$(V,A)\cong V^{*}\otimes A\subset (A\ltimes_{l^{*},h^{*},r^{*}}V^{*})\otimes (A\ltimes_{l^{*},h^{*},r^{*}}V^{*}))$.}
	Then $r=T_{\sharp}-\tau(T_{\sharp})$ is a skew-symmetric solution of the SAPP-YBE in $A\ltimes_{l^{*}_{\circ_{A}}+r^{*}_{\circ_{A}},r^{*}_{\triangleright_{A}}, -r^{*}_{\circ_{A}}}V^{*}$ if and only if $T$ is an $\calo$-operator of $(A,\triangleright_{A},\triangleleft_{A})$ associated to $(l_{\triangleright_{A}},r_{\triangleright_{A}},l_{\triangleleft_{A}},V)$.
\end{thm}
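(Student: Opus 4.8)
The plan is to follow the strategy of Theorem \ref{thm:3.16}, reducing the single tensor equation $SA(r)=0$ to the pair of operator identities defining an $\calo$-operator. First I would record the setup: by the dual representation established above, $(l^{*}_{\circ_{A}}+r^{*}_{\circ_{A}},r^{*}_{\triangleright_{A}},-r^{*}_{\circ_{A}},V^{*})$ is a representation of $(A,\triangleright_{A},\triangleleft_{A})$, so the semi-direct product $\widehat A=A\ltimes_{l^{*}_{\circ_{A}}+r^{*}_{\circ_{A}},r^{*}_{\triangleright_{A}},-r^{*}_{\circ_{A}}}V^{*}$ is a genuine \sapp and the statement makes sense. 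Writing $T_{\sharp}=\sum_{s}\epsilon_{s}^{*}\otimes T\epsilon_{s}$ for a basis $\{\epsilon_{s}\}$ of $V$ with dual basis $\{\epsilon_{s}^{*}\}$, the element $r=T_{\sharp}-\tau(T_{\sharp})=\sum_{s}\epsilon_{s}^{*}\otimes T\epsilon_{s}-T\epsilon_{s}\otimes\epsilon_{s}^{*}$ is manifestly skew-symmetric, so $r+\tau(r)=0$ is trivially invariant and by Theorem \ref{thm:bialg} the only condition to analyze is $SA(r)=0$ in $\widehat A$.

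The core of the argument is the direct evaluation of $SA(r)$ for this $r$, where $SA$ is as in \eqref{eq:Ps} but computed with the multiplications $\circ_{d}$ and $\triangleleft_{d}$ of $\widehat A$. Because $r$ lives in the off-diagonal part $V^{*}\otimes A\,\oplus\,A\otimes V^{*}$, each of the three summands of $SA(r)$ expands into terms whose tensor legs alternate between $A$ and $V^{*}$; the mixed products $x\circ_{d}\epsilon_{s}^{*}$, $\epsilon_{s}^{*}\triangleleft_{d}x$ and the like are governed entirely by the dualized maps $l^{*}_{\circ_{A}}+r^{*}_{\circ_{A}}$, $r^{*}_{\triangleright_{A}}$ and $-r^{*}_{\circ_{A}}$. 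I would pair $SA(r)$ against arbitrary test elements of $\widehat A^{*}=A^{*}\oplus V$ and sort the resulting scalars by which component ($A\otimes A\otimes A$, $A\otimes A\otimes V^{*}$, and so on) they come from. Using the adjunction between a map and its dual together with the defining identities \eqref{eq:rep1}, \eqref{eq:rep2} and \eqref{eq:sdpp rep1} of the representation $(l_{\triangleright_{A}},r_{\triangleright_{A}},l_{\triangleleft_{A}},V)$, the surviving components should collapse precisely to
\[
Tu\triangleright_{A}Tv-T\big(l_{\triangleright_{A}}(Tu)v+r_{\triangleright_{A}}(Tv)u\big)\ \text{and}\ Tu\triangleleft_{A}Tv-T\big(l_{\triangleleft_{A}}(Tu)v+l_{\triangleleft_{A}}(Tv)u\big),
\]
so that $SA(r)=0$ is equivalent to the vanishing of both, i.e.\ to $T$ being an $\calo$-operator of $(A,\triangleright_{A},\triangleleft_{A})$ associated to $(l_{\triangleright_{A}},r_{\triangleright_{A}},l_{\triangleleft_{A}},V)$.

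An alternative, more structural route is to invoke Proposition \ref{thm:T1} for the \sapp $\widehat A$: since $r$ is skew-symmetric, $r$ solves the SAPP-YBE in $\widehat A$ iff $r^{\sharp}:\widehat A^{*}\to\widehat A$ is an $\calo$-operator of $\widehat A$ associated to its coadjoint representation, and one then checks that this condition on $r^{\sharp}$, after restriction to $V\subset\widehat A^{*}$ and projection to $A\subset\widehat A$, reproduces exactly the pair of identities for $T$. Either way, the main obstacle is the bookkeeping of the semi-direct product: unlike the associative case of Theorem \ref{thm:3.16}, which could quote \cite{Bai2010,BGM} for its two halves, here $SA(r)$ carries three summands and two distinct multiplications $\triangleright_{A},\triangleleft_{A}$, so tracking which dualized representation map acts on which leg—and correctly applying the perm-type relations \eqref{eq:rep2} and \eqref{eq:sdpp rep1} to cancel the mixed terms—is where the care lies. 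The computations packaged in Lemma \ref{lem:TR} furnish templates for exactly these $SA(r)$-to-operator reductions and can be reused once specialized to $\widehat A$.
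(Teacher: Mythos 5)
Your primary route is essentially the paper's own proof: the paper also writes $T_{\sharp}=\sum_{i}v_{i}^{*}\otimes T(v_{i})$ in a dual basis, expands $SA(r)$ directly in the semi-direct product (where the mixed products are given by the dualized maps $l^{*}_{\circ_{A}}+r^{*}_{\circ_{A}}$, $r^{*}_{\triangleright_{A}}$, $-r^{*}_{\circ_{A}}$), and uses the dual-basis adjunction identity $\sum_{i}T(v_{i})\otimes l^{*}_{\triangleright_{A}}\big(T(v_{k})\big)v_{i}^{*}=\sum_{i}T\Big(l_{\triangleright_{A}}\big(T(v_{k})\big)v_{i}\Big)\otimes v_{i}^{*}$ (and its analogues) to collapse $SA(r)$ to exactly the two $\mathcal{O}$-operator identities. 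The only cosmetic differences are that the paper's collapse is stated for the pair $(\circ_{A},\triangleleft_{A})$ rather than $(\triangleright_{A},\triangleleft_{A})$ (trivially equivalent), and the representation axioms \eqref{eq:rep1}--\eqref{eq:sdpp rep1} enter only to make the semi-direct product a well-defined \sapp, not in the collapse itself.
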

\begin{proof}
Let $\{v_{1}$, $\cdots$, $v_{n}\}$ be a basis of $V$ and  $\{v^{*}_{1},\cdots,v^{*}_{n}\}$ be the dual basis. We have
	\begin{equation*}
		T_{\sharp}=\sum\limits_{i=1}^{n}v^{*}_{i}\otimes T(v_{i}),\;r=\sum\limits_{i=1}^{n}v^{*}_{i}\otimes T(v_{i}) -T(v_{i})\otimes v^{*}_{i}.
	\end{equation*}
Notice that
		\begin{eqnarray*}
			\sum_{i,k} T(v_{i})\otimes l^{*}_{\triangleright_{A}}\big(T(v_{k})\big)v^{*}_{i}=\sum_{i,k} T\Big(l_{\triangleright_{A}}\big(T(v_{k})\big)v_{i}\Big)\otimes v^{*}_{i},
	\end{eqnarray*}
and similarly for linear maps $r_{\triangleright_{A}}$ and $l_{\triangleleft_{A}}$. Hence we obtain
	\delete{
		\begin{eqnarray}
			T(v_{i})\otimes l^{*}\big(T(v_{k})\big)v^{*}_{i}
			&=&T\Big(l\big(T(v_{k})\big)v_{i}\Big)\otimes v^{*}_{i},\label{Eq:condition1}\\
			l^{*}\big(T(v_{k})\big)v^{*}_{i}\otimes T(v_{i})&=&v^{*}_{i}\otimes T\Big(l\big(T(v_{k})\big)v_{i}\Big),\label{Eq:condition4}
	\end{eqnarray}}
{\small
		\begin{align*}
			SA(r)=&\sum_{i,j}T(v_{i})\circ_{A} T(v_{j})\otimes v^{*}_{i}\otimes v^{*}_{j}-v^{*}_{i}\circ T(v_{j})\otimes T(v_{i})\otimes v^{*}_{j}-T(v_{i})\circ v^{*}_{j}\otimes v^{*}_{i}\otimes T(v_{j})\\
			&+v^{*}_{i}\otimes T(v_{i})\triangleleft v^{*}_{j}\otimes T(v_{j})-v^{*}_{i}\otimes T(v_{i})\triangleleft_{A} T(v_{j})\otimes v^{*}_{j}+T(v_{i})\otimes v^{*}_{i}\triangleleft T(v_{j})\otimes v^{*}_{j}\\
			&+v^{*}_{j}\otimes v^{*}_{i}\otimes T(v_{i})\circ_{A} T(v_{j})-T(v_{j})\otimes v^{*}_{i}\otimes T(v_{i})\circ v^{*}_{j}-v^{*}_{j}\otimes T(v_{i})\otimes v^{*}_{i}\circ T(v_{j})\\
			=&\sum_{i,j}T(v_{i})\circ_{A} T(v_{j})\otimes v^{*}_{i}\otimes v^{*}_{j}+l_{\triangleleft_{A}}^{*}\big(T(v_{j})\big)v^{*}_{i}\otimes T(v_{i})\otimes v^{*}_{j}
		-l^{*}_{\circ_{A}}\big(T(v_{i})\big)v^{*}_{j}\otimes v^{*}_{i}\otimes T(v_{j})\\
			&-v^{*}_{i}\otimes r^{*}_{\circ_{A}}\big(T(v_{i})\big)v^{*}_{j}\otimes T(v_{j})
			-v^{*}_{i}\otimes T(v_{i})\triangleleft_{A} T(v_{j})\otimes v^{*}_{j}-T(v_{i})\otimes r^{*}_{\circ_{A}}\big(T(v_{j})\big)v^{*}_{i}\otimes v^{*}_{j}\\
			&+v^{*}_{j}\otimes v^{*}_{i}\otimes T(v_{i})\circ_{A} T(v_{j})-T(v_{j})\otimes v^{*}_{i}\otimes l^{*}_{\circ_{A}}\big(T(v_{i})\big)v^{*}_{j}+v^{*}_{j}\otimes T(v_{i})\otimes l^{*}_{\triangleleft_{A}}\big(T(v_{j})\big)v^{*}_{i}\\
			=&\sum_{i,j}\bigg(T(v_{i})\circ_{A} T(v_{j})-T\Big( l_{\circ_{A}} \big(T(v_{i})\big)v_{j}+r_{\circ_{A}}\big(T(v_{j})\big)v_{i}\Big)\bigg)\otimes v^{*}_{i}\otimes v^{*}_{j}\\
			&+v^{*}_{j}\otimes v^{*}_{i}\otimes \bigg(T(v_{i})\circ_{A} T(v_{j})-T\Big(l_{\circ_{A}}\big(T(v_{i})\big)v_{j}+r_{\circ_{A}}\big(T(v_{j})\big)v_{i}\Big)\bigg)\\
			&-v^{*}_{i}\otimes \bigg(T(v_{i})\triangleleft_{A} T(v_{j})-T\Big(l_{\triangleleft_{A}}\big(T(v_{i})\big)v_{j}+l_{\triangleleft_{A}}\big(T(v_{j})\big)v_{i}\Big)\bigg)\otimes v^{*}_{j}.
		\end{align*}}Therefore $SA(r)=0$ if and only if the following equations hold:
	\begin{eqnarray*}
		&&T(v_{i}) \triangleright_{A} T(v_{j})=T\Big( l_{\triangleright_{A}}\big(T(v_{i})v_{j}\big)+r_{\triangleright_{A}}\big(T(v_{j})\big)v_{i}  \Big),\\
		&&T(v_{i}) \triangleleft_{A} T(v_{j})=T\Big( l_{\triangleleft_{A}}\big(T(v_{i})v_{j}\big)+l_{\triangleleft_{A}}\big(T(v_{j})\big)v_{i}  \Big),\;\forall i,j\in\{1,\cdots, n\},
	\end{eqnarray*}
	that is,
	$T$ is an $\calo$-operator of $(A,\triangleright_{A},\triangleleft_{A})$ associated to $(l_{\triangleright_{A}},r_{\triangleright_{A}},l_{\triangleleft_{A}},V)$.
\end{proof}

\delete{
	\begin{pro}
		Let $(A,\cdot_{A})$ be a commutative  algebra with a representation $(\mu,V)$ and $T:V\rightarrow A$ be a linear map.
		\begin{enumerate}
			\item\label{r1}
			There is a perm symmetric algebra $(A,\circ_{A},\star_{A})$ given by $x\circ_{A}y=x\cdot_{A} y,\; x\star_{A}y=0$. Moreover, $(\mu^{*},0,\mu^{*},V^{*})$ is a representation of $(A,\circ_{A},\star_{A})$. Consequently there is a perm symmetric algebra $A\ltimes _{\mu^{*},0,\mu^{*}}V^{*}$ on the direct sum $A\oplus V^{*}$ of vector spaces, whose multiplications are given by
			\begin{eqnarray}
				&&(x+u^{*})\circ(y+v^{*})=x\cdot_{A} y+\mu^{*}(x)v^{*},\\
				&&(x+u^{*})\star(y+v^{*})=\mu^{*}(x)v^{*}+\mu^{*}(y)u^{*},\;\forall x,y\in A, u^{*},v^{*}\in V^{*}.
			\end{eqnarray}
			\item\label{r2}
			$r=T_{\sharp}-\tau(T_{\sharp})=\sum\limits_{i}v^{*}_{i}\otimes T(v_{i})-T(v_{i})\otimes v^{*}_{i}$ is a skew-symmetric solution of PSYBE in $A\ltimes _{\mu^{*},0,\mu^{*}}V^{*}$ if and only if $T$ is an $\calo$-operator of $(A,\cdot_{A})$ associated to $(\mu,V)$.
		\end{enumerate}
	\end{pro}
	
	\begin{proof}
		item \ref{r1} follows from a straightforward computation. We only prove item \ref{r2}. By \eqref{Eq:condition1} and \eqref{Eq:condition4} for $l=\mu$, we have
		\begin{small}
			\begin{eqnarray*}
				SA(r)&=&\sum_{i,j}-T(v_{i})\circ v^{*}_{j}\otimes v^{*}_{i}\otimes T(v_{j})+T(v_{i})\circ T(v_{j})\otimes v^{*}_{i}\otimes v^{*}_{j}-v^{*}_{i}\otimes T(v_{i})\star v^{*}_{j}\otimes T(v_{j})\\
				&&-T(v_{i})\otimes v^{*}_{i}\star T(v_{j})\otimes v^{*}_{j}+v^{*}_{j}\otimes v^{*}_{i}\otimes T(v_{i})\circ T(v_{j})-T(v_{j})\otimes v^{*}_{i}\otimes T(v_{i})\circ v^{*}_{j}\\
				&=&\sum_{i,j}-\mu^{*}\big(T(v_{i})\big)v^{*}_{j}\otimes v^{*}_{i}\otimes T(v_{j})+T(v_{i})\circ T(v_{j})\otimes v^{*}_{i}\otimes v^{*}_{j}-v^{*}_{i}\otimes \mu^{*}\big(T(v_{i})\big)v^{*}_{j}\otimes T(v_{j})\\
				&&-T(v_{i})\otimes \mu^{*}\big(T(v_{j})\big)v^{*}_{i}\otimes v^{*}_{j}+v^{*}_{j}\otimes v^{*}_{i}\otimes T(v_{i})\circ T(v_{j})-T(v_{j})\otimes v^{*}_{i}\otimes \mu^{*}\big(T(v_{i})\big)v^{*}_{j}\\
				&=&\sum_{i,j}\bigg(T(v_{i})\circ_{A} T(v_{j})-T\Big(\mu\big(T(v_{i})\big)v_{j}+\mu\big(T(v_{j})\big)v_{i}\Big)\bigg)\otimes v^{*}_{i}\otimes v^{*}_{j}\\
				&&+v^{*}_{j}\otimes v^{*}_{i}\otimes \bigg(T(v_{i})\circ_{A} T(v_{j})-T\Big(\mu\big(T(v_{i})\big)v_{j}+\mu\big(T(v_{j})\big)v_{i}\Big)\bigg).
			\end{eqnarray*}
		\end{small}Hence the conclusion follows.
\end{proof}}

\delete{
	\begin{defi}\cite{BLZ}
		A \textbf{pre-perm algebra} is a triple$(A, \triangleright_{A}, \triangleleft_{A})$, where $A$ is a vector space, and $ \triangleright_{A}$ and $ \triangleleft_{A}$ are multiplications such that
		\begin{small}
			\begin{eqnarray}
				&&x \triangleleft_{A}(y \triangleright_{A} z+y \triangleleft_{A} z)=(x \triangleleft_{A} y) \triangleleft_{A} z=(y \triangleright_{A} x) \triangleleft_{A} z=y \triangleright_{A}(x \triangleleft_{A} z),\\
				&&(x \triangleright_{A} y+x \triangleleft_{A} y) \triangleright_{A} z=x \triangleright_{A} (y \triangleright_{A} z)=y \triangleright_{A}(x \triangleright_{A} z),\;\forall x,y,z\in A.
			\end{eqnarray}
		\end{small}
\end{defi}}

Recall \cite{LZB} that a {\bf pre-perm algebra} $(A,\succ_{A},\prec_{A})$ is a vector space $A$ together with multiplications $ \succ_{A},\prec_{A}:A\otimes A\rightarrow A $ such that the following equations hold:
\begin{eqnarray*}
&&x\succ_{A}(y\succ_{A} z)=y\succ_{A} (x\succ_{A} z)=(x\circ_{A}y)\succ_{A} z,\\
&&x\prec_{A}(y\circ_{A}z)=(x\prec_{A} y)\prec_{A} z=(y \succ_{A} x)\prec_{A} z=y \succ_{A} (x \prec_{A} z),\;\forall x,y,z\in A,
\end{eqnarray*}
where $x\circ_{A}y=x\succ_{A}y+x\prec_{A}y$. Next we intoduce the notion of pre-\sapps to construct skew-symmetric solutions of the SAPP-YBE.

\begin{defi}
	Let $A$ be a vector space with multiplications $ \smallfrown_{A},\smallsmile_{A},\diamond_{A}:A\otimes A\rightarrow A$.
	Let multiplications $\triangleright_{A},\triangleleft_{A},\succ_{A},\prec_{A},\circ_{A}:A\otimes A\rightarrow A$  be given by
	\begin{eqnarray}
&&x\triangleright_{A}y=x\smallfrown_{A}y+x\smallsmile_{A}y,\;\;
x\triangleleft_{A}y=x\diamond_{A}y+y\diamond_{A}x,\label{eq:pre-sdpp}\\
&&x\succ_{A}y=x\smallfrown_{A}y+x\diamond_{A} y,\;\;\;
x\prec_{A} y=x\smallsmile_{A}y+y\diamond_{A} x,\\
&&x\circ_{A}y=x\triangleright_{A}y+x\triangleleft_{A}y=x\succ_{A}y+x\prec_{A} y,\;
\forall x,y\in A.
	\end{eqnarray}
If $(A,\succ_{A},\prec_{A})$ is a pre-perm algebra, and the following equations hold:
\begin{eqnarray}
	&&(x\circ_{A}y)\diamond_{A} z
	=x\succ_{A}(y\diamond_{A}z)=-x\diamond_{A}(y\diamond_{A}z)=y\diamond_{A}(x\succ_{A}z),\\
	&&(x\triangleleft_{A}y)\diamond_{A}z=-y\diamond_{A}(z\prec_{A} x)=-z\prec_{A}(x\triangleleft_{A}y),\;\forall x,y,z\in A,
\end{eqnarray}
then we say $(A,\smallfrown_{A},\smallsmile_{A},\diamond_{A})$ is a {\bf pre-\sapp}.
\end{defi}

\delete{
	For a pre-perm symmetric algebra $(A, \triangleright_{A}, \triangleleft_{A},\diamond_{A})$, define three linear maps $\mathcal{L}_{ \triangleright_{A}}, \mathcal{R}_{ \triangleleft_{A}}, \mathcal{L}_{\diamond_{A}}:A\rightarrow$ End$(A)$ respectively by
	\begin{equation*}
		\mathcal{L}_{ \triangleright_{A}}(x)y=x \triangleright_{A} y,\; \mathcal{R}_{ \triangleleft_{A}}(x)y=y \triangleleft_{A} x,\; \mathcal{L}_{\diamond_{A}}(x)y=x\diamond_{A} y,\;\forall x,y\in A.
	\end{equation*}
	By a direct computation, we have the following conclusion.}

\begin{pro}\label{pro:TY2}
	Let $(A,\smallfrown_{A},\smallsmile_{A},\diamond_{A})$ be a pre-\sapp. Then  $(A,\triangleright_{A},\triangleleft_{A})$ given by \eqref{eq:pre-sdpp} is a \sapp, which is called the \textbf{sub-adjacent \sapp} of $(A,\smallfrown_{A},$
	$\smallsmile_{A},\diamond_{A})$. Moreover, $(\mathcal{L}_{\smallfrown_{A}}, \mathcal{R}_{\smallsmile_{A}}, \mathcal{L}_{\diamond_{A}},A)$ is a representation of $(A,\triangleright_{A},\triangleleft_{A})$ and the identity map $\mathrm{id}$ is an $\calo$-operator of $(A,\triangleright_{A},\triangleleft_{A})$ associated to $(\mathcal{L}_{\smallfrown_{A}}, \mathcal{R}_{\smallsmile_{A}}, \mathcal{L}_{\diamond_{A}},A)$.
\end{pro}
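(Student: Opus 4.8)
The plan is to verify the three assertions---that $(A,\triangleright_{A},\triangleleft_{A})$ is a \sapp, that $(\mathcal{L}_{\smallfrown_{A}},\mathcal{R}_{\smallsmile_{A}},\mathcal{L}_{\diamond_{A}},A)$ is a representation, and that $\mathrm{id}$ is an $\calo$-operator---directly from the two defining relations of a pre-\sapp, after first recording the operator identities that make these claims nearly parallel. Setting $l_{\triangleright_{A}}=\mathcal{L}_{\smallfrown_{A}}$, $r_{\triangleright_{A}}=\mathcal{R}_{\smallsmile_{A}}$ and $l_{\triangleleft_{A}}=\mathcal{L}_{\diamond_{A}}$, I would first check that the operators induced by \eqref{eq:sum linear} satisfy $l_{\circ_{A}}=\mathcal{L}_{\smallfrown_{A}}+\mathcal{L}_{\diamond_{A}}=\mathcal{L}_{\succ_{A}}$ and $r_{\circ_{A}}(y)x=x\smallsmile_{A}y+y\diamond_{A}x=x\prec_{A}y$, so that $r_{\circ_{A}}=\mathcal{R}_{\prec_{A}}$. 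This identification is what links everything back to the sub-adjacent perm algebra $(A,\circ_{A})$ of the pre-perm algebra $(A,\succ_{A},\prec_{A})$; note also that the two candidate definitions of $\circ_{A}$ (via $\triangleright_{A},\triangleleft_{A}$ and via $\succ_{A},\prec_{A}$) agree, both equalling $x\smallfrown_{A}y+x\smallsmile_{A}y+x\diamond_{A}y+y\diamond_{A}x$.

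For the \sapp structure, commutativity of $\triangleleft_{A}$ is immediate since $x\triangleleft_{A}y=x\diamond_{A}y+y\diamond_{A}x$ is manifestly symmetric, and $(A,\circ_{A})$ being a perm algebra is inherited from $(A,\succ_{A},\prec_{A})$ being pre-perm \cite{LZB}. The remaining condition \eqref{eq:SDPP} is where the real work lies. I would expand $x\circ_{A}(y\triangleleft_{A}z)=x\succ_{A}(y\triangleleft_{A}z)+x\prec_{A}(y\triangleleft_{A}z)$ and repeatedly apply the first defining relation of the pre-\sapp in the forms $x\succ_{A}(a\diamond_{A}b)=(x\circ_{A}a)\diamond_{A}b$ and $b\diamond_{A}(a\succ_{A}c)=(a\circ_{A}b)\diamond_{A}c$, together with the second relation $(a\triangleleft_{A}b)\diamond_{A}c=-c\prec_{A}(a\triangleleft_{A}b)=-b\diamond_{A}(c\prec_{A}a)$. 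The key intermediate identity is $z\diamond_{A}(x\circ_{A}y)=(x\circ_{A}z)\diamond_{A}y-(y\triangleleft_{A}z)\diamond_{A}x$, obtained by splitting $x\circ_{A}y=x\succ_{A}y+x\prec_{A}y$ and applying the two relations to each summand; comparing it with the expansions of $(x\circ_{A}y)\triangleleft_{A}z$ and $-x\triangleleft_{A}(y\triangleleft_{A}z)$ shows that all three expressions in \eqref{eq:SDPP} collapse to the common value $(x\circ_{A}y)\diamond_{A}z+(x\circ_{A}z)\diamond_{A}y-(y\triangleleft_{A}z)\diamond_{A}x$.

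For the representation claim, the identifications $l_{\circ_{A}}=\mathcal{L}_{\succ_{A}}$ and $r_{\circ_{A}}=\mathcal{R}_{\prec_{A}}$ reduce \eqref{eq:rep1} and \eqref{eq:rep2} to the statement that $(\mathcal{L}_{\succ_{A}},\mathcal{R}_{\prec_{A}},A)$ is a representation of the perm algebra $(A,\circ_{A})$, which again holds because $(A,\succ_{A},\prec_{A})$ is pre-perm \cite{LZB}. The two extra defining equations of a \sapp representation, namely \eqref{eq:sdpp rep1} and the identity following it, translate under these identifications exactly into the two defining relations of the pre-\sapp: the first becomes $(x\circ_{A}y)\diamond_{A}v=x\succ_{A}(y\diamond_{A}v)=-x\diamond_{A}(y\diamond_{A}v)=y\diamond_{A}(x\succ_{A}v)$, and the second becomes $(x\triangleleft_{A}y)\diamond_{A}v=-y\diamond_{A}(v\prec_{A}x)=-v\prec_{A}(x\triangleleft_{A}y)$. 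Thus no new computation is needed for this part.

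Finally, for the $\calo$-operator claim I would substitute $T=\mathrm{id}$, with the zero multiplications on the representation space $A$ so that the weight term drops out: the two defining equations reduce to $u\triangleright_{A}v=\mathcal{L}_{\smallfrown_{A}}(u)v+\mathcal{R}_{\smallsmile_{A}}(v)u=u\smallfrown_{A}v+u\smallsmile_{A}v$ and $u\triangleleft_{A}v=\mathcal{L}_{\diamond_{A}}(u)v+\mathcal{L}_{\diamond_{A}}(v)u=u\diamond_{A}v+v\diamond_{A}u$, which are precisely the defining relations \eqref{eq:pre-sdpp}; hence this part is tautological. The main obstacle is therefore concentrated entirely in \eqref{eq:SDPP}, where the only real difficulty is choosing the correct relabelings of the two pre-\sapp relations so that the three terms reduce to a single common expression.
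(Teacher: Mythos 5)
Your proposal is correct, and it coincides with the paper's (implicit) argument: the paper's own proof is just ``a straightforward computation,'' and your verification is exactly that computation carried out. In particular, your identifications $l_{\circ_{A}}=\mathcal{L}_{\succ_{A}}$, $r_{\circ_{A}}=\mathcal{R}_{\prec_{A}}$ correctly reduce the representation axioms to the pre-perm and pre-\sapp relations, your key identity $z\diamond_{A}(x\circ_{A}y)=(x\circ_{A}z)\diamond_{A}y-(y\triangleleft_{A}z)\diamond_{A}x$ does make all three expressions in \eqref{eq:SDPP} equal the common value $(x\circ_{A}y)\diamond_{A}z+(x\circ_{A}z)\diamond_{A}y-(y\triangleleft_{A}z)\diamond_{A}x$, and the $\calo$-operator claim is indeed tautological once the representation is in place.
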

\begin{proof}
It follows from a straightforward computation.
\end{proof}

\begin{pro}\label{pro:3.34}
	Let $(A,\star_{A},P,Q)$ be an admissible averaging Zinbiel algebra.
	Then there is a pre-\sapp $(A,\smallfrown_{A},\smallsmile_{A},\diamond_{A})$ given by
		\begin{equation*}
			x\smallfrown_{A} y=P(x)\star_{A} y+Q(x\star_{A} y),\;
			x\smallsmile_{A}  y=y\star_{A} P(x)+Q(y\star_{A}x),\;
			x\diamond_{A} y=-Q(x\star_{A} y), 
		\end{equation*}
for all $x,y\in A$.
\end{pro}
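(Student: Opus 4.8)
The plan is to replace the three defining products by simpler expressions and then check the two families of axioms in the definition of a pre-\sapp directly. Write $x\cdot_{A}y=x\star_{A}y+y\star_{A}x$ for the sub-adjacent commutative multiplication of $(A,\star_{A})$. A short computation collapses the auxiliary products of \eqref{eq:pre-sdpp}: the $Q$-terms in $\smallfrown_{A}$ and $\diamond_{A}$ cancel in $\succ_{A}$, and likewise for $\prec_{A}$, so that
\[
x\succ_{A}y=P(x)\star_{A}y,\quad x\prec_{A}y=y\star_{A}P(x),\quad x\circ_{A}y=P(x)\star_{A}y+y\star_{A}P(x)=P(x)\cdot_{A}y,
\]
while $x\triangleright_{A}y=P(x)\cdot_{A}y+Q(x\cdot_{A}y)$ and $x\triangleleft_{A}y=-Q(x\cdot_{A}y)$. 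Thus $\succ_{A},\prec_{A},\circ_{A}$ depend only on $P$ and $\star_{A}$, and every occurrence of $Q$ is confined to $x\diamond_{A}y=-Q(x\star_{A}y)$. I will not invoke that $(A,\triangleright_{A},\triangleleft_{A})$ is a \sapp, since that is a consequence of being a pre-\sapp (Proposition \ref{pro:TY2}) rather than a hypothesis to be checked.

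The two computational tools are the identity $(u\cdot_{A}v)\star_{A}w=u\star_{A}(v\star_{A}w)$, valid in any Zinbiel algebra (an immediate rewriting of \eqref{eq:Zinb}), together with the averaging relations $P(P(x)\star_{A}y)=P(x)\star_{A}P(y)=P(x\star_{A}P(y))$ and the two $Q$-identities of an admissible averaging Zinbiel algebra. A useful preliminary observation is that $P(x\circ_{A}y)=P(x)\star_{A}P(y)+P(y)\star_{A}P(x)$ is \emph{symmetric} in $x$ and $y$. For the pre-perm axioms, I would expand each required equality: the first chain reduces, via the Zinbiel consequence, to $x\succ_{A}(y\succ_{A}z)=P(x)\star_{A}(P(y)\star_{A}z)=P(x\circ_{A}y)\star_{A}z=(x\circ_{A}y)\succ_{A}z$, and the symmetry above yields the equality with $y\succ_{A}(x\succ_{A}z)$. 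The second chain is handled the same way, using the averaging relations to rewrite $P(y\star_{A}P(x))$ and $P(P(y)\star_{A}x)$ as $P(y)\star_{A}P(x)$ and the Zinbiel consequence to match the four terms $x\prec_{A}(y\circ_{A}z)$, $(x\prec_{A}y)\prec_{A}z$, $(y\succ_{A}x)\prec_{A}z$ and $y\succ_{A}(x\prec_{A}z)$.

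For the $\diamond_{A}$-relations I would substitute $x\diamond_{A}y=-Q(x\star_{A}y)$ and push every $Q$ across using the admissibility identities. In the first family, the relations $P(x)\star_{A}Q(w)=Q(P(x)\star_{A}w)$ and $Q(x\star_{A}Q(w))=Q(P(x)\star_{A}w)$, combined with $(P(x)\cdot_{A}y)\star_{A}z=P(x)\star_{A}(y\star_{A}z)$, reduce all three quantities $(x\circ_{A}y)\diamond_{A}z$, $x\succ_{A}(y\diamond_{A}z)$, $-x\diamond_{A}(y\diamond_{A}z)$, $y\diamond_{A}(x\succ_{A}z)$ to $-Q\big(P(x)\star_{A}(y\star_{A}z)\big)$. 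In the second family, the relations $Q(Q(w)\star_{A}z)=Q(w)\star_{A}P(z)=Q(w\star_{A}P(z))$, again with the Zinbiel consequence, reduce $(x\triangleleft_{A}y)\diamond_{A}z$, $-y\diamond_{A}(z\prec_{A}x)$ and $-z\prec_{A}(x\triangleleft_{A}y)$ all to $Q(x\cdot_{A}y)\star_{A}P(z)$. This yields both displayed chains and completes the verification that $(A,\smallfrown_{A},\smallsmile_{A},\diamond_{A})$ is a pre-\sapp.

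The computations are routine; the only real work is bookkeeping — keeping straight which of the three $Q$-identities (and in which direction) to apply at each step, and remembering to convert $\star_{A}$-products into $\cdot_{A}$-products via the Zinbiel consequence before pushing $Q$ across. The single non-mechanical point is the symmetry $P(x\circ_{A}y)=P(y\circ_{A}x)$, which is precisely what makes the commutativity-type equalities in the pre-perm and $\diamond_{A}$ axioms hold even though $\circ_{A}$ itself is not commutative.
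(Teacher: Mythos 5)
Your proposal is correct and is precisely the direct verification that the paper leaves to the reader (its proof of Proposition \ref{pro:3.34} is just ``It is straightforward''): reduce $\succ_{A},\prec_{A},\circ_{A}$ to $P(x)\star_{A}y$, $y\star_{A}P(x)$, $P(x)\cdot_{A}y$, then check the pre-perm axioms and the two $\diamond_{A}$-chains using the Zinbiel identity $(u\cdot_{A}v)\star_{A}w=u\star_{A}(v\star_{A}w)$ and the admissibility relations, which indeed collapse the first family to $-Q\big(P(x)\star_{A}(y\star_{A}z)\big)$ and the second to $Q(x\cdot_{A}y)\star_{A}P(z)$. Your observation that $Q$ only enters through $\diamond_{A}$, and that the needed commutativity comes from the symmetry of $P(x)\cdot_{A}P(y)$, is exactly the right bookkeeping.
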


\begin{proof}
	It is straightforward.
\end{proof}

\begin{pro}\label{pro:3.35}
	Let $(A,\smallfrown_{A},\smallsmile_{A},\diamond_{A})$ be a pre-\sapp and $(A,\triangleright_{A},\triangleleft_{A})$ be the sub-adjacent \sapp of $(A,\smallfrown_{A},\smallsmile_{A},\diamond_{A})$. Let $\{e_{1}$, $\cdots$, $e_{n}\}$ be a basis of $A$ and  $\{e^{*}_{1},\cdots,e^{*}_{n}\}$ be the dual basis. Then
	$r$ given by \eqref{1109}
	is a skew-symmetric solution of the SAPP-YBE in the \sapp $$A\ltimes A^{*}:=A\ltimes_{\mathcal{L}^{*}_{\smallfrown_{A}}
+\mathcal{R}^{*}_{\smallsmile_{A}}+2\mathcal{L}^{*}_{\diamond_{A}},
\mathcal{R}^{*}_{\smallsmile_{A}},-\mathcal{L}^{*}_{\diamond_{A}}
-\mathcal{R}^{*}_{\smallsmile_{A}}}A^{*}.$$
Therefore there is a triangular \sappb $(A\ltimes A^{*},\vartheta_{r},\theta_{r})$, where the linear maps $\vartheta_{r}$ and $\theta_{r}$ are defined by \eqref{eq:comul} with the above $r$.
\end{pro}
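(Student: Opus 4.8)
The plan is to obtain this proposition as a direct application of Theorem~\ref{thm:TY3} to the $\calo$-operator supplied by Proposition~\ref{pro:TY2}, exactly paralleling the way Proposition~\ref{pro:2.28} was deduced from Lemma~\ref{lem:3.18} and Theorem~\ref{thm:3.16} in the averaging commutative setting. First I would invoke Proposition~\ref{pro:TY2}: the sub-adjacent \sapp $(A,\triangleright_{A},\triangleleft_{A})$ given by \eqref{eq:pre-sdpp} carries the representation $(\mathcal{L}_{\smallfrown_{A}},\mathcal{R}_{\smallsmile_{A}},\mathcal{L}_{\diamond_{A}},A)$, and the identity map $\mathrm{id}:A\rightarrow A$ is an $\calo$-operator of $(A,\triangleright_{A},\triangleleft_{A})$ associated to it.

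Next I would set $V=A$, $T=\mathrm{id}$, and identify the representation maps as $l_{\triangleright_{A}}=\mathcal{L}_{\smallfrown_{A}}$, $r_{\triangleright_{A}}=\mathcal{R}_{\smallsmile_{A}}$, $l_{\triangleleft_{A}}=\mathcal{L}_{\diamond_{A}}$. Forming the combined maps via \eqref{eq:sum linear} gives $l_{\circ_{A}}=\mathcal{L}_{\smallfrown_{A}}+\mathcal{L}_{\diamond_{A}}$ and $r_{\circ_{A}}=\mathcal{R}_{\smallsmile_{A}}+\mathcal{L}_{\diamond_{A}}$, and dualizing yields $l^{*}_{\circ_{A}}+r^{*}_{\circ_{A}}=\mathcal{L}^{*}_{\smallfrown_{A}}+\mathcal{R}^{*}_{\smallsmile_{A}}+2\mathcal{L}^{*}_{\diamond_{A}}$ together with $-r^{*}_{\circ_{A}}=-\mathcal{L}^{*}_{\diamond_{A}}-\mathcal{R}^{*}_{\smallsmile_{A}}$; hence the ambient semi-direct product $A\ltimes_{l^{*}_{\circ_{A}}+r^{*}_{\circ_{A}},r^{*}_{\triangleright_{A}},-r^{*}_{\circ_{A}}}A^{*}$ appearing in Theorem~\ref{thm:TY3} coincides with the \sapp $A\ltimes A^{*}$ of the statement. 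Since $T=\mathrm{id}$, we have $T_{\sharp}=\sum_{i}e^{*}_{i}\otimes e_{i}$, so $r=T_{\sharp}-\tau(T_{\sharp})$ is precisely the element \eqref{1109}, and Theorem~\ref{thm:TY3} then gives that this $r$ is a skew-symmetric solution of the SAPP-YBE in $A\ltimes A^{*}$.

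Finally, because $r$ is skew-symmetric, $r+\tau(r)=0$ is trivially invariant on $A\ltimes A^{*}$, so Theorem~\ref{thm:bialg} applies and produces the triangular \sappb $(A\ltimes A^{*},\vartheta_{r},\theta_{r})$ with $\vartheta_{r},\theta_{r}$ given by \eqref{eq:comul}. The only genuinely computational point is the coefficient bookkeeping in the previous step, and in particular the factor $2$ multiplying $\mathcal{L}^{*}_{\diamond_{A}}$: this arises because $l_{\triangleleft_{A}}=\mathcal{L}_{\diamond_{A}}$ enters \emph{both} $l_{\circ_{A}}$ and $r_{\circ_{A}}$ through \eqref{eq:sum linear}. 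I would verify this matching carefully, since it is the sole place where the abstract semi-direct product in Theorem~\ref{thm:TY3} must be reconciled with the explicitly written representation defining $A\ltimes A^{*}$; once it is confirmed, the result follows with no further work.
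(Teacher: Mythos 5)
Your proposal is correct and follows essentially the same route as the paper: Proposition~\ref{pro:TY2} supplies the $\mathcal{O}$-operator $\mathrm{id}$, Theorem~\ref{thm:TY3} converts it into the skew-symmetric solution $r=T_{\sharp}-\tau(T_{\sharp})$ of the SAPP-YBE in $A\ltimes A^{*}$, and triangularity follows since $r+\tau(r)=0$. Your explicit verification that $l^{*}_{\circ_{A}}+r^{*}_{\circ_{A}}=\mathcal{L}^{*}_{\smallfrown_{A}}+\mathcal{R}^{*}_{\smallsmile_{A}}+2\mathcal{L}^{*}_{\diamond_{A}}$ and $-r^{*}_{\circ_{A}}=-\mathcal{L}^{*}_{\diamond_{A}}-\mathcal{R}^{*}_{\smallsmile_{A}}$ (which the paper leaves implicit) is accurate, including the factor $2$ arising from $l_{\triangleleft_{A}}=\mathcal{L}_{\diamond_{A}}$ entering both maps in \eqref{eq:sum linear}.
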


\begin{proof}
	By Proposition \ref{pro:TY2}, the identity map $\mathrm{id}$ is an $\calo$-operator of $(A,\triangleright_{A},\triangleleft_{A})$ associated to $(\mathcal{L}_{\smallfrown_{A}}, \mathcal{R}_{\smallsmile_{A}}, \mathcal{L}_{\diamond_{A}}, A)$. Hence by Theorem \ref{thm:TY3},
	$r$ given by \eqref{1121}
	is a skew-symmetric solution of the SAPP-YBE in  $A\ltimes A^{*}$.
Hence the conclusion follows.
\end{proof}

Combing Propositions \ref{pro:2.28}, \ref{pro:quasi bialgebras}, \ref{pro:3.34} and \ref{pro:3.35}, we have the following commutative diagram which has already been shown in the Introduction.
 {\small
    	\begin{equation*}
    		\begin{split}
    			\xymatrix{
    				\txt{admissible averaging Zinbiel\\ algebras}
    				\ar@{=>}[r]^-{{\rm Prop.}~\ref{pro:2.28}
    				}
    				\ar@{=>}[d]^-{{\rm Prop.}~\ref{pro:3.34}
    				}
    				& \txt{triangular averaging commutative  and\\ cocommutative  
    					infinitesimal bialgebras
    				}
    				\ar@{=>}[d]^-{{\rm Prop.}~\ref{pro:quasi bialgebras}
    				}
    				\\  
    				\txt{pre-special apre-perm algebras}
    				\ar@{=>}[r]^-{{\rm Prop.}~\ref{pro:3.35} 
    				}
    				& \txt{triangular special apre-perm bialgebras}
    			}
    		\end{split}
    	\end{equation*}
    }
\subsection{Factorizable \sappbs}\label{sec5.2}\

Let $(A,\triangleright_{A},\triangleleft_{A},\vartheta_{r},\theta_{r})$ be a quasi-triangular \sappb. \delete{
	Recall that the operator $\big(r+\tau(r)\big)^{\sharp}$ is defined by
	\begin{equation}\label{factorizable operator}
		\big(r+\tau(r)\big)^{\sharp}=r^{\sharp}+{\tau(r)}^{\sharp}:A^{*}\rightarrow A.
\end{equation}}If $r+\tau(r)=0$, then $r$ is skew-symmetric and  $(A,\triangleright_{A},\triangleleft_{A},\vartheta_{r},\theta_{r})$ is triangular.
Factorizable \sappbs are however concerned with the opposite case that $\big(r+\tau(r)\big)^{\sharp}$ is bijective.

\begin{defi}
	A quasi-triangular \sappb $(A,\triangleright_{A},\triangleleft_{A},\vartheta_{r},\theta_{r})$ is called \textbf{factorizable} if the linear map $\big(r+\tau(r)\big)^{\sharp}:A^{*}\rightarrow A$  is a bijection.
\end{defi}

\begin{rmk}\label{rmk:7.8}
	In the definition of a factorizable \sappb $(A,\triangleright_{A},\triangleleft_{A},\vartheta_{r},$
$\theta_{r})$,
the condition \eqref{eq:pro:co1} is superfluous, since by Proposition \ref{pro:2.24}, it can be indicated from the nondegeneracy of $\big(r+\tau(r)\big)^{\sharp}$ and \eqref{eq:invariance}.
\end{rmk}


\delete{
	As a subclass of quasi-triangular \sappbs, factorizable \sappbs also appear in pairs.
	
	\begin{cor}\label{cor:fact pair}
		If $(A,\circ_{A},\star_{A},\theta_{r},\delta_{r})$ is a factorizable perm symmetric bialgebra, then $(A,\circ_{A}$,
		$\star_{A},\theta_{-\tau(r)},\delta_{-\tau(r)})$ is also a factorizable perm symmetric bialgebra.
	\end{cor}
	\begin{proof}
		If $\big(r+\tau(r)\big)^{\sharp}$ is a linear isomorphism, then $\Big(-\tau(r)+\tau\big(-\tau(r)\big)\Big)^{\sharp}=-\big(r+\tau(r)\big)^{\sharp}$ is also a linear isomorphism. Hence the conclusion follows from Proposition \ref{pro:4,2}.
\end{proof}}


\begin{defi}\cite{Bai2024}
Let $(A,\triangleright_{A},\triangleleft_{A})$ and
        $(A^{*},\triangleright_{A^{*}},\triangleleft_{A^{*}})$ be \sapps. If there is a
        quadratic \sapp structure $(A\oplus
        A^{*},\triangleright_{d},\triangleleft_{d},\mathcal{B}_{d})$ on $A\oplus A^{*}$
        such that $(A\oplus
        A^{*},\triangleright_{d},\triangleleft_{d} )$ contains $(A,\triangleright_{A},\triangleleft_{A})$ and $(
        A^{*},\triangleright_{A^{*}},\triangleleft_{A^{*}})$ as \sappsubs, then we say
        $\big(  (  A\oplus
        A^{*},\triangleright_{d},\triangleleft_{d},\mathcal{B}_{d}),(A,\triangleright_{A},\triangleleft_{A}),(A^{*},\triangleright_{A^{*}},\triangleleft_{A^{*}})\big)
        $ is a \textbf{Manin triple of \sapps}.
\end{defi}
\delete{
\begin{thm}\label{thm:Manin triple}
	Let $(A, \triangleright_{A}, \triangleleft_{A})$ and $(A^{*}, \triangleright_{A^{*}}, \triangleleft_{A^{*}})$ be SDPP algebras.
Then there is a  Manin triple of SDPP algebras $\big((A\oplus A^{*}, \triangleright_{d}, \triangleleft_{d},\mathcal{B}_{d}),(A, \triangleright_{A}, \triangleleft_{A}),(A^{*}, \triangleright_{A^{*}}, \triangleleft_{A^{*}})\big) $ if and only if there is a perm algebra structure $(A\oplus A^{*},\circ_{d})$ on $A\oplus A^{*}$ given by
\begin{equation}\label{eq:A ds}	(x+a^{*})\circ_{d}(y+b^{*})=x\circ_{A}y+\mathcal{L}^{*}_{\circ_{A^{*}}}(a^{*})y+\mathcal{L}^{*}_{ \triangleleft_{A^{*}}}(b^{*})x+a^{*}\circ_{A^{*}}b^{*}+
	\mathcal{L}^{*}_{\circ_{A}}(x)b^{*}+\mathcal{L}^{*}_{ \triangleleft_{A}}(y)a^{*},
\end{equation}
for all $x,y\in A, a^{*},b^{*}\in A^{*}$.
Moreover, in this case the SDPP algebra structure $(A\oplus A^{*}, \triangleright_{d},$
$ \triangleleft_{d})$ on $A\oplus A^{*}$ is given by
\begin{eqnarray}
(x+a^{*}) \triangleright_{d}(y+b^{*})&=&\;x \triangleright_{A}y+(\mathcal{L}^{*}_{ \triangleright_{A^{*}}}-\mathcal{R}^{*}_{ \triangleright_{A^{*}}})(a^{*})y
-\mathcal{R}^{*}_{ \triangleright_{A^{*}}}(b^{*})x\nonumber\\
&&\;+a^{*} \triangleright_{A^{*}}b^{*}+(\mathcal{L}^{*}_{ \triangleright_{A}}-\mathcal{R}^{*}_{ \triangleright_{A}})(x)b^{*}-\mathcal{R}^{*}_{ \triangleright_{A}}(y)a^{*},\label{eq:A ds1}\\
(x+a^{*}) \triangleleft_{d}(y+b^{*})&=&\;x \triangleleft_{A}y+\mathcal{R}^{*}_{\circ_{A^{*}}}(a^{*})y
+\mathcal{R}^{*}_{\circ_{A^{*}}}(b^{*})x\nonumber\\
&&\;+a^{*} \triangleleft_{A^{*}}b^{*}+\mathcal{R}^{*}_{\circ_{A }}(x)b^{*}
+\mathcal{R}^{*}_{\circ_{A }}(y)a^{*}.\label{eq:A ds2}
\end{eqnarray}
\end{thm}}

\begin{lem}\cite{Bai2024}\label{3039}
	Let
	$(A,\triangleright_{A},\triangleleft_{A})$ and
	$(A^{*},\triangleright_{A^{*}},\triangleleft_{A^{*}})$ be \sapps.
Then  there is a Manin triple of SDPP algebras $\big(  (  A\oplus
A^{*},\triangleright_{d},\triangleleft_{d},\mathcal{B}_{d}),(A,\triangleright_{A},\triangleleft_{A}),(A^{*},\triangleright_{A^{*}},\triangleleft_{A^{*}})\big)$  if and only if $(A,\triangleright_{A},\triangleleft_{A},\vartheta ,\theta )$ is a \sappb, where $\vartheta$ and $\theta$ are the linear duals of $\triangleright_{A^{*}}$ and $\triangleleft_{A^{*}}$ respectively. Moreover, in this case we have
\begin{eqnarray}
	(x+a^{*})\triangleright_{d}(y+b^{*})&=&\;x\triangleright_{A}y+(\mathcal{L}^{*}_{\circ_{A^{*}}}+\mathcal{R}^{*}_{\circ_{A^{*}}})(a^{*})y
	+\mathcal{R}^{*}_{\triangleright_{A^{*}}}(b^{*})x\nonumber\\
	&&\;+a^{*}\triangleright_{A^{*}}b^{*}+(\mathcal{L}^{*}_{\circ_{A}}+\mathcal{R}^{*}_{\circ_{A}})(x)b^{*}+\mathcal{R}^{*}_{\triangleright_{A}}(y)a^{*},\label{3048}\\
	(x+a^{*})\triangleleft_{d}(y+b^{*})&=&\;x\triangleleft_{A}y-\mathcal{R}^{*}_{\circ_{A^{*}}}(a^{*})y
	-\mathcal{R}^{*}_{\circ_{A^{*}}}(b^{*})x\nonumber\\
	&&\;+a^{*}\triangleleft_{A^{*}}b^{*}-\mathcal{R}^{*}_{\circ_{A }}(x)b^{*}
	-\mathcal{R}^{*}_{\circ_{A }}(y)a^{*},\label{3052}
\end{eqnarray}
for all $x,y\in A, a^{*},b^{*}\in A^{*}$.
\end{lem}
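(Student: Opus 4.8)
The plan is to prove both implications through the explicit double construction on $D=A\oplus A^{*}$, reducing the equivalence to a matched-pair characterization of when this sum carries a \sapp structure. Throughout, $\mathcal{B}_{d}$ is the canonical pairing \eqref{eq:bfds}, for which $A$ and $A^{*}$ are isotropic subspaces, and one sets $\vartheta,\theta$ to be the linear duals of $\triangleright_{A^{*}},\triangleleft_{A^{*}}$. By the stated duality between special apre-perm coalgebras and \sapps, $(A,\vartheta,\theta)$ is a special apre-perm coalgebra precisely because $(A^{*},\triangleright_{A^{*}},\triangleleft_{A^{*}})$ is a \sapp, so the coalgebra axioms \eqref{eq:co2}--\eqref{eq:co5} hold automatically on both sides and never need to be checked by hand; only the mixed bialgebra compatibilities carry content.

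For the forward direction, suppose a Manin triple is given, so that $(D,\triangleright_{d},\triangleleft_{d},\mathcal{B}_{d})$ is a quadratic \sapp containing $A$ and $A^{*}$ as subalgebras. First I would recover the cross products. Fixing $x\in A$ and $b^{*}\in A^{*}$, I pair $x\triangleright_{d}b^{*}$ and $x\triangleleft_{d}b^{*}$ (and the symmetric products $a^{*}\triangleright_{d}y$, $a^{*}\triangleleft_{d}y$) against arbitrary test elements $y\in A$ and $a^{*}\in A^{*}$, using the invariance identity \eqref{eq:cor4} of the quadratic \sapp together with its consequence $\mathcal{B}_{d}(u\circ_{d}v,w)=\mathcal{B}_{d}(v,u\circ_{d}w)$ obtained in Proposition~\ref{pro:2.24}. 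Since $A$ and $A^{*}$ are isotropic subalgebras, each pairing collapses to an expression in the \sapp operations of $A$ or of $A^{*}$, which I recognize as the coadjoint representation $(\mathcal{L}^{*}_{\circ_{A}}+\mathcal{R}^{*}_{\circ_{A}},\mathcal{R}^{*}_{\triangleright_{A}},-\mathcal{R}^{*}_{\circ_{A}},A^{*})$ and its mirror of $A^{*}$ acting on $A$. Nondegeneracy of $\mathcal{B}_{d}$ then pins down the cross products and yields exactly \eqref{3048} and \eqref{3052}.

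For the converse and for the substance of the equivalence, the key is a matched-pair characterization: with the coadjoint-type actions of $A$ on $A^{*}$ and of $A^{*}$ on $A$ fixed as in \eqref{3048}--\eqref{3052}, the sum $(D,\triangleright_{d},\triangleleft_{d})$ is a \sapp if and only if these two actions form a matched pair, i.e. they are representations satisfying a family of mixed compatibility conditions. Expanding the \sapp axioms of $(D,\triangleright_{d},\triangleleft_{d})$ and separating the $A$- and $A^{*}$-components, the purely $A$-valued and purely $A^{*}$-valued parts reproduce the \sapp axioms of $A$ and of $A^{*}$ (which hold by hypothesis), while the mixed parts are the matched-pair conditions. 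Dualizing every occurrence of $\triangleright_{A^{*}},\triangleleft_{A^{*}}$ into $\vartheta,\theta$ via $\langle a^{*}\triangleright_{A^{*}}b^{*},x\rangle=\langle a^{*}\otimes b^{*},\vartheta(x)\rangle$ and likewise for $\theta$, I would verify that these mixed conditions translate term by term into the bialgebra compatibility equations \eqref{eq:bialg1}--\eqref{eq:bialg7}, in the same style as the correspondence between bialgebra relations and matched-pair relations used for ordinary apre-perm bialgebras. The main obstacle is precisely this matched-pair bookkeeping: there are many axioms, and each must be paired off against a single dual bialgebra relation under the coadjoint actions, so the difficulty is organizational rather than conceptual. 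Assembling the two directions, together with the automatic coalgebra axioms, gives the stated equivalence and simultaneously establishes the explicit formulas \eqref{3048}--\eqref{3052}.
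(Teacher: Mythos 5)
This lemma is not proved in the present paper at all: it is quoted verbatim from \cite{Bai2024}, so there is no in-paper proof to compare against. Judged on its own merits, your strategy is the standard one for Manin-triple/bialgebra equivalences and is exactly the route this series of papers takes (the same two-step scheme appears in the authors' treatment of the non-special apre-perm case): in the forward direction, isotropy of $A$ and $A^{*}$, the invariance identity \eqref{eq:cor4}, its consequence $\mathcal{B}(x\circ_{A}y,z)=\mathcal{B}(y,x\circ_{A}z)$ from Proposition \ref{pro:2.24}, commutativity of $\triangleleft_{d}$, and nondegeneracy of $\mathcal{B}_{d}$ pin down the cross products as \eqref{3048}--\eqref{3052}; in the converse, the mixed matched-pair conditions dualize to \eqref{eq:bialg1}--\eqref{eq:bialg7}. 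Your remark that the coalgebra axioms \eqref{eq:co2}--\eqref{eq:co5} are automatic, since $(A^{*},\triangleright_{A^{*}},\triangleleft_{A^{*}})$ is assumed to be a \sapp, is also correct.

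Two points should be flagged. First, in the converse direction your matched-pair argument only yields a \sapp structure on $D=A\oplus A^{*}$ containing $A$ and $A^{*}$ as subalgebras; a Manin triple requires in addition that $(D,\triangleright_{d},\triangleleft_{d},\mathcal{B}_{d})$ be a \emph{quadratic} \sapp, so you must still verify that $\mathcal{B}_{d}$ of \eqref{eq:bfds} satisfies \eqref{eq:cor4} for the products \eqref{3048}--\eqref{3052}. This is a routine direct computation, but without it the equivalence is incomplete as stated. Second, the two-sided matched-pair theory of \sapps that your argument invokes is nowhere developed in this paper (Proposition \ref{2517} only covers one-sided representation algebras), so you would either have to derive those compatibility conditions yourself by expanding the \sapp axioms on $D$ --- which is what you propose, and is legitimate --- or descend to the sub-adjacent perm algebras and use the known matched-pair characterization there. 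Neither issue is conceptual; both are bookkeeping, and with them filled in the proposal is a correct proof of the lemma.
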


\delete{
Let $(A,\triangleright_{A},\triangleleft_{A})$ be a \sapp and $r\in A\otimes A$. Define a linear map $r^{\sharp}\oplus \big(-\tau(r)\big)^{\sharp}:A^{*}\rightarrow A\oplus A$ by
\begin{equation*}
	r^{\sharp}\oplus \big(-\tau(r)\big)^{\sharp}(a^{*})=\Big(r^{\sharp}(a^{*}),\big(-\tau(r)\big)^{\sharp}(a^{*})\Big),\;\forall a^{*}\in A^{*}.
\end{equation*}
Then}
We have the following proposition which justifies the terminology of factorizable \sappbs.

\begin{pro}\label{3038}
	Let $(A,\triangleright_{A},\triangleleft_{A},\vartheta_{r},\theta_{r})$ be a factorizable \sappb, and the corresponding Manin triple be denoted by $\big((D=A\oplus A^{*},\triangleright_{d},\triangleleft_{d},\mathcal{B}_{d}),
(A,\triangleright_{A},\triangleleft_{A}),$
$(A^{*},\triangleright_{r},\triangleleft_{r})\big)$. 
Define a linear map $\psi:D=A\oplus A^{*}\rightarrow A\oplus A$  by
\eqref{eq:factorizable}.
\delete{
\begin{equation*}
	\psi(x)=(x,x), \;\psi(a^{*})=\Big(r^{\sharp}(a^{*}),\big(-\tau(r)\big)^{\sharp}(a^{*})\Big),\;\forall x\in A, a^{*}\in A^{*}.
\end{equation*}}Then $\psi$ gives the \sapp isomorphism between  $(D,\triangleright_{d},\triangleleft_{d})$ and the direct sum $A\oplus A$ of \sapps. In particular, $\psi|_{A^{*}}$ gives  the \sapp isomorphism between $(A^{*},\triangleright_{r},\triangleleft_{r})$   and $\mathrm{Im}(r^{\sharp}\oplus \big(-\tau(r)\big)^{\sharp})$ as a special apre-perm subalgebra of $A\oplus A$. Moreover, for any $x\in A$, there is a unique decomposition
	$x=x_{1}-x_{2},$
	where $(x_{1},x_{2})\in\mathrm{Im}(r^{\sharp}\oplus \big(-\tau(r)\big)^{\sharp}).$
\end{pro}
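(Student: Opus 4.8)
The plan is to follow the same route as the proof of the corresponding statement for factorizable averaging commutative and cocommutative infinitesimal bialgebras, replacing the commutative algebra homomorphism input by the \sapp homomorphism property recorded in Theorem \ref{thm:4.6}. First I would observe that, by \eqref{eq:factorizable}, the restriction $\psi|_{A^{*}}$ is precisely $r^{\sharp}\oplus\big(-\tau(r)\big)^{\sharp}$. By Theorem \ref{thm:4.6} both $r^{\sharp}$ and $\big(-\tau(r)\big)^{\sharp}$ are \sapp homomorphisms from $(A^{*},\triangleright_{r},\triangleleft_{r})$ to $(A,\triangleright_{A},\triangleleft_{A})$, so $\psi|_{A^{*}}$ is a \sapp homomorphism into the direct sum $A\oplus A$; its image is thus a special apre-perm subalgebra. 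Moreover $\psi|_{A^{*}}$ is injective, since the difference of the two components of $\psi(a^{*})$ equals $\big(r+\tau(r)\big)^{\sharp}(a^{*})$, which vanishes only for $a^{*}=0$ by the bijectivity hypothesis. This already yields the second assertion, identifying $(A^{*},\triangleright_{r},\triangleleft_{r})$ with $\mathrm{Im}\big(r^{\sharp}\oplus\big(-\tau(r)\big)^{\sharp}\big)$.

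Next I would upgrade this to the full map $\psi$ on $D=A\oplus A^{*}$. Since $\psi|_{A}$ is the diagonal embedding $x\mapsto(x,x)$, which is visibly a \sapp homomorphism onto the diagonal, and $\psi|_{A^{*}}$ is a homomorphism by the previous step, bilinearity reduces the homomorphism property of $\psi$ to the mixed products $x\triangleright_{d}b^{*}$, $a^{*}\triangleright_{d}y$, $x\triangleleft_{d}b^{*}$ and $a^{*}\triangleleft_{d}y$ for $x,y\in A$ and $a^{*},b^{*}\in A^{*}$. Reading these off from the explicit formulas \eqref{3048} and \eqref{3052} of Lemma \ref{3039}, the task becomes to prove the \sapp analogues of \eqref{eq:tau3} and \eqref{eq:tau4}, the first of which reads
\begin{equation*}
	\mathcal{R}^{*}_{\triangleright_{r}}(b^{*})x+r^{\sharp}\big((\mathcal{L}^{*}_{\circ_{A}}+\mathcal{R}^{*}_{\circ_{A}})(x)b^{*}\big)=x\triangleright_{A}r^{\sharp}(b^{*}),
\end{equation*}
together with the companion equation in which $r^{\sharp}$ is replaced by $\big(-\tau(r)\big)^{\sharp}$, and the corresponding pair for $\triangleleft_{d}$. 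I would establish each such identity by pairing both sides with an arbitrary $c^{*}\in A^{*}$, expanding via the dual multiplications $\triangleright_{r},\triangleleft_{r}$ and the co-multiplications $\vartheta_{r},\theta_{r}$ computed in Lemma \ref{lem:TR}, and then collapsing the result using the invariance of $r+\tau(r)$ in the forms \eqref{eq:invariance} and \eqref{eq:pro:co1} (equivalently \eqref{eq:lem3} and \eqref{eq:lem4}) together with $SA(r)=0$.

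I expect this last step, the verification of the four mixed cross-term identities, to be the main obstacle, exactly as the verification of \eqref{eq:tau3} and \eqref{eq:tau4} was in the commutative case; once they are in hand, $\psi\big((x+a^{*})\triangleright_{d}(y+b^{*})\big)=\psi(x+a^{*})\triangleright\psi(y+b^{*})$ and the analogous $\triangleleft$ identity follow by assembling the four pieces with bilinearity. With $\psi$ a \sapp homomorphism, bijectivity is immediate: $\psi$ is injective because the two components of $\psi(x+a^{*})$ differ by $\big(r+\tau(r)\big)^{\sharp}(a^{*})$, and the dimension count $\dim(A\oplus A^{*})=\dim(A\oplus A)$ then forces surjectivity. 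Hence $\psi$ is the asserted isomorphism of \sapps.

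Finally, I would deduce the unique decomposition as in \cite{SW}. Because $\psi$ is a linear isomorphism with $\psi(A)$ the diagonal and $\psi(A^{*})=\mathrm{Im}\big(r^{\sharp}\oplus\big(-\tau(r)\big)^{\sharp}\big)$, the ambient space splits as a vector space direct sum $A\oplus A=\psi(A)\oplus\mathrm{Im}\big(r^{\sharp}\oplus\big(-\tau(r)\big)^{\sharp}\big)$. Given $x\in A$, writing $(x,0)=(d,d)+(x_{1},x_{2})$ with $(x_{1},x_{2})\in\mathrm{Im}\big(r^{\sharp}\oplus\big(-\tau(r)\big)^{\sharp}\big)$ forces $d=-x_{2}$ and hence $x=x_{1}-x_{2}$, with uniqueness inherited from the directness of the decomposition. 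This completes the proof.
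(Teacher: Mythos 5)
Your proposal is correct and follows essentially the same route as the paper's proof: Theorem \ref{thm:4.6} disposes of the $A^{*}$-$A^{*}$ products, the mixed products reduce to exactly the identities \eqref{eq:106} and \eqref{eq:107}, which the paper likewise proves by pairing against an arbitrary element of $A^{*}$ and invoking the invariance of $r+\tau(r)$ (in fact $SA(r)=0$ is not needed for these two identities), and bijectivity together with the factorization follow as you describe. The only cosmetic differences are that the paper asserts bijectivity of $\psi$ directly rather than via a dimension count, and it exhibits the decomposition $x=x_{1}-x_{2}$ explicitly by setting $x_{1}=r^{\sharp}{\big(r+\tau(r)\big)^{\sharp}}^{-1}(x)$ and $x_{2}=\big(-\tau(r)\big)^{\sharp}{\big(r+\tau(r)\big)^{\sharp}}^{-1}(x)$ instead of deducing it abstractly from the splitting $A\oplus A=\psi(A)\oplus\psi(A^{*})$.
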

\begin{proof}
	Since $\big(r+\tau(r)\big)^{\sharp}$ is a bijection, we have $\mathrm{Ker}(\psi|_{A^{*}})=0$.
	By Theorem \ref{thm:4.6}, we have
	\begin{eqnarray*}
		\psi(a^{*})\triangleright\psi(b^{*})&=&\Big( r^{\sharp}(a^{*})\triangleright_{A}r^{\sharp}(b^{*}),\big(-\tau(r)\big)^{\sharp}(a^{*})\triangleright_{A}\big(-\tau(r)\big)^{\sharp}(b^{*}) \Big)\\
		&=&\Big( r^{\sharp}(a^{*} \triangleright_{r} b^{*}),\big(-\tau(r)\big)^{\sharp}(a^{*} \triangleright_{r} b^{*}) \Big)\\
		&=&\psi(a^{*}\triangleright_{r}b^{*}),\;\forall a^{*},b^{*}\in A^{*}.
	\end{eqnarray*}
Similarly, we also have
\begin{eqnarray*}
\psi(a^{*})\triangleleft\psi(b^{*})=\psi(a^{*}\triangleleft_{r}b^{*}),\;\forall a^{*},b^{*}\in A^{*}.
\end{eqnarray*}
Then $\psi|_{A^{*}}$ is a homomorphism of \sapps. Therefore, $\mathrm{Im}(r^{\sharp}\oplus \big(-\tau(r)\big)^{\sharp})$ is isomorphic to $(A^{*},\triangleright_{r},\triangleleft_{r})$ as special apre-perm subalgebras. For all $x\in A, a^{*},b^{*}\in A^{*}$, we have
{\small
	\begin{align*}
		&\langle r^{\sharp}\big((\mathcal{L}^{*}_{\circ_{A}}+\mathcal{R}^{*}_{\circ_{A}})(x)a^{*}\big)
+\mathcal{R}^{*}_{\triangleright_{r}}(a^{*})x,b^{*}\rangle\\
		&=\langle r, (\mathcal{L}^{*}_{\circ_{A}}+\mathcal{R}^{*}_{\circ_{A}})(x)a^{*}\otimes b^{*}\rangle+\langle x, b^{*}\triangleright_{r}a^{*}\rangle\\
		&=\langle r, (\mathcal{L}^{*}_{\circ_{A}}+\mathcal{R}^{*}_{\circ_{A}})(x)a^{*}\otimes b^{*}\rangle+\langle \vartheta_{r}(x), b^{*}\otimes a^{*}\rangle\\
		&=\langle \big((\mathcal{L}_{\circ_{A}}+\mathcal{R}_{\circ_{A}})(x)\otimes\mathrm{id}\big)r, a^{*}\otimes b^{*}\rangle-\langle \big(-\mathcal{L}_{\triangleright_{A}}(x)\otimes\mathrm{id}+\mathrm{id}\otimes(\mathcal{L}_{\circ_{A}}+\mathcal{R}_{\circ_{A}})(x)\big)\tau (r),b^{*}\otimes a^{*}\rangle\\
		&=\langle \big((\mathcal{L}_{\circ_{A}}+\mathcal{R}_{\circ_{A}})(x)\otimes\mathrm{id}+\mathrm{id}\otimes\mathcal{L}_{\triangleright_{A}}(x)	-(\mathcal{L}_{\circ_{A}}+\mathcal{R}_{\circ_{A}})(x)\otimes\mathrm{id}\big)r, a^{*}\otimes b^{*}\rangle\\
		&=\langle \big(\mathrm{id}\otimes\mathcal{L}_{\triangleright_{A}}(x)\big)r,a^{*}\otimes b^{*}\rangle\\
		&=\langle x\triangleright_{A}r^{\sharp}(a^{*}), b^{*}\rangle,
	\end{align*}}that is,
	\begin{equation}\label{eq:106}
		r^{\sharp}\big((\mathcal{L}^{*}_{\circ_{A}}+\mathcal{R}^{*}_{\circ_{A}})(x)a^{*}\big)
+\mathcal{R}^{*}_{\triangleright_{r}}(a^{*})x= x\triangleright_{A}r^{\sharp}(a^{*}).
	\end{equation}
	Similarly we have
	\begin{equation}\label{eq:107}	\big(-\tau(r)\big)^{\sharp}\big((\mathcal{L}^{*}_{\circ_{A}}+\mathcal{R}^{*}_{\circ_{A}})(x)a^{*}\big)
+\mathcal{R}^{*}_{\triangleright_{r}}(a^{*})x=x\triangleright_{A}\big(-\tau(r)\big)^{\sharp}(a^{*}).
	\end{equation}
	Thus we have
{\small
		\begin{align*}
			\psi(x\triangleright_{D} a^{*})&=\psi\big((\mathcal{L}^{*}_{\circ_{A}}+\mathcal{R}^{*}_{\circ_{A}})(x)a^{*}+\mathcal{R}^{*}_{\triangleright_{r}}(a^{*})x\big)\\	&=\Big(r^{\sharp}\big((\mathcal{L}^{*}_{\circ_{A}}+\mathcal{R}^{*}_{\circ_{A}})(x)a^{*}\big)+\mathcal{R}^{*}_{\triangleright_{r}}(a^{*})x,
\big(-\tau(r)\big)^{\sharp}\big((\mathcal{L}^{*}_{\circ_{A}}+\mathcal{R}^{*}_{\circ_{A}})(x)a^{*}\big)+\mathcal{R}^{*}_{\triangleright_{r}}(a^{*})x\Big)\\ &\overset{\eqref{eq:106},\eqref{eq:107}}{=}\big(x\triangleright_{A}r^{\sharp}(a^{*}), x\triangleright_{A}\big(-\tau(r)\big)^{\sharp}(a^{*})\big)\\
			&=\psi(x)\triangleright \psi(a^{*}),
		\end{align*}}and similarly
	\begin{equation*} \psi(a^{*}\triangleright_{D}x)=\psi(a^{*})\triangleright\psi(x), \;\psi(x\triangleleft_{D}a^{*})=\psi(x)\triangleleft\psi(a^{*}).
	\end{equation*}
	In conclusion, $\psi:D\rightarrow A\oplus A$ is a homomorphism of \sapps and is clearly bijective, and hence $\psi$ is a \sapp isomorphism. Moreover, 
	any element $x\in A$ can be expressed as
	\begin{small}
		\begin{equation*} x=\big(r+\tau(r)\big)^{\sharp}{\big(r+\tau(r)\big)^{\sharp}}^{-1}(x)=r^{\sharp}{\big(r+\tau(r)\big)^{\sharp}}^{-1}(x)-\big(-\tau(r)\big)^{\sharp}{\big(r+\tau(r)\big)^{\sharp}}^{-1}(x)=x_{1}-x_{2},
		\end{equation*}
	\end{small}where $x_{1}=r^{\sharp}{\big(r+\tau(r)\big)^{\sharp}}^{-1}(x)\in\mathrm{Im}  r^{\sharp} ,\; x_{2}=\big(-\tau(r)\big)^{\sharp}{\big(r+\tau(r)\big)^{\sharp}}^{-1}(x)\in\mathrm{Im} \big(-\tau(r)\big)^{\sharp} $.
	Since $\big(r+\tau(r)\big)^{\sharp}$ is a bijection, the decomposition is unique.
	Hence the proof is finished.
\end{proof}

\delete{
	\begin{proof}
		Define a linear map $\theta:D=A\oplus A^{*}\rightarrow A\oplus A$  by
		\begin{equation*}
			\theta(x)=(x,x), \;\theta(a^{*})=\Big(r^{\sharp}(a^{*}),\big(-\tau(r)\big)^{\sharp}(a^{*})\Big),\;\forall x\in A, a^{*}\in A^{*}.
		\end{equation*}
		Since $\big(r+\tau(r)\big)^{\sharp}$ is a linear isomorphism, $\theta$ is bijective.
		It is clear that $\theta|_{A}$ is an embedding of perm symmetric algebras, and by Theorem \ref{thm:4.6}, $\theta|_{A^{*}}$ is a homomorphism of perm symmetric algebras. Therefore, $\mathrm{Im}\Big(r^{\sharp}\oplus \big(-\tau(r)\big)^{\sharp}\Big)$ is isomorphic to $(A^{*},\circ_{r},\star_{r})$ as perm symmetric algebras. For all $x\in A, a^{*}\in A^{*}$, we have
		\begin{eqnarray*}
			\theta(x\circ_{D} a^{*})&=&\theta\big(\mathcal{L}^{*}_{\star_{A^{*}}}(a^{*})x+\mathcal{L}^{*}_{\circ_{A}}(x)a^{*}\big)\\	&=&\Big(r^{\sharp}\big(\mathcal{L}^{*}_{\circ_{A}}(x)a^{*}\big)+\mathcal{L}^{*}_{\star_{A^{*}}}(a^{*})x,\big(-\tau(r)\big)^{\sharp}\big(\mathcal{L}^{*}_{\circ_{A}}(x)a^{*}\big)+\mathcal{L}^{*}_{\star_{A^{*}}}(a^{*})x\Big),\\
			\theta(x\star_{D} a^{*})&=&\theta\big(\mathcal{R}^{*}_{\circ_{A^{*}}}(a^{*})x+\mathcal{R}^{*}_{\circ_{A}}(x)a^{*}\big)\\
			&=&\Big(r^{\sharp}\big(\mathcal{R}^{*}_{\circ_{A}}(x)a^{*}\big)+\mathcal{R}^{*}_{\circ_{A^{*}}}(a^{*})x,\big(-\tau(r)\big)^{\sharp}\big(\mathcal{R}^{*}_{\circ_{A}}(x)a^{*}\big)+\mathcal{R}^{*}_{\circ_{A^{*}}}(a^{*})x\Big).
		\end{eqnarray*}
		Thus for all $b^{*}\in A^{*}$, we have
		\begin{eqnarray*}
			\langle r^{\sharp}\big(\mathcal{L}^{*}_{\circ_{A}}(x)a^{*}\big),b^{*}\rangle&=&\langle r,\mathcal{L}^{*}_{\circ_{A}}(x)a^{*}\otimes b^{*}\rangle=\langle \big(\mathcal{L}_{\circ_{A}}(x)\otimes\mathrm{id}\big)r, a^{*}\otimes b^{*}\rangle,\\
			\langle\mathcal{L}^{*}_{\star_{A^{*}}}(a^{*})x,b^{*}\rangle&=&\langle x,a^{*}\star _{A^{*}}b^{*}\rangle=\langle\delta_{r}(x),a^{*}\otimes b^{*}\rangle\\
			&=&\langle \big(\mathrm{id}\otimes\mathcal{L}_{\circ_{A}}(x)-\mathcal{L}_{\circ_{A}}(x)\otimes\mathrm{id}\big)r, a^{*}\otimes b^{*}\rangle.
		\end{eqnarray*}
		Hence we have
		\begin{eqnarray*}
			\langle r^{\sharp}\big(\mathcal{L}^{*}_{\circ_{A}}(x)a^{*}\big)+\mathcal{L}^{*}_{\star_{A^{*}}}(a^{*})x, b^{*}\rangle&=&\langle\big(\mathrm{id}\otimes\mathcal{L}_{\circ_{A}}(x)\big)r,a^{*}\otimes b^{*})\rangle=\langle r,a^{*}\otimes\mathcal{L}^{*}_{\circ_{A}}(x)b^{*}\rangle\\
			&=&\langle r^{\sharp}(a^{*}), \mathcal{L}^{*}_{\circ_{A}}(x)b^{*}\rangle=\langle x\circ_{A} r^{\sharp}(a^{*}), b^{*}\rangle.
		\end{eqnarray*}
		That is,
		\begin{equation*}
			r^{\sharp}\big(\mathcal{L}^{*}_{\circ_{A}}(x)a^{*}\big)+\mathcal{L}^{*}_{\star_{A^{*}}}(a^{*})x=x\circ_{A}r^{\sharp}(a^{*}).
		\end{equation*}
		Similarly, we also obtain
		\begin{equation*}
			r^{\sharp}\big(\mathcal{R}^{*}_{\circ_{A}}(x)a^{*}\big)+\mathcal{R}^{*}_{\circ_{A^{*}}}(a^{*})x=x\star_{A}r^{\sharp}(a^{*}).
		\end{equation*}
		In the same way, we have
		\begin{small}
			\begin{eqnarray*}	&&\big(-\tau(r)\big)^{\sharp}\big(\mathcal{L}^{*}_{\circ_{A}}(x)a^{*}\big)+\mathcal{L}^{*}_{\star_{A^{*}}}(a^{*})x=x\circ_{A}\big(-\tau(r)\big)^{\sharp}(a^{*}),\\
				&&\big(-\tau(r)\big)^{\sharp}\big(\mathcal{R}^{*}_{\circ_{A}}(x)a^{*}\big)+\mathcal{R}^{*}_{\circ_{A^{*}}}(a^{*})x=x\star_{A}\big(-\tau(r)\big)^{\sharp}(a^{*}).
			\end{eqnarray*}
		\end{small}Thus we have
		\begin{eqnarray*}
			&&\theta(x\circ_{D}a^{*})=\big(x\circ_{A}r^{\sharp}(a^{*}), x\circ_{A}\big(-\tau(r)\big)^{\sharp}(a^{*})\big)=\theta(x)\circ \theta(a^{*}),\\
			&&\theta(x\star_{D}a^{*})=\big(x\star_{A}r^{\sharp}(a^{*}), x\star_{A}\big(-\tau(r)\big)^{\sharp}(a^{*})\big)=\theta(x)\star \theta(a^{*}),
		\end{eqnarray*}
		and similarly
		\begin{equation*}
			\theta(a^{*}\circ_{D}x)=\theta(a^{*})\circ\theta(x).
		\end{equation*}
		In conclusion, $\theta:D\rightarrow A\oplus A$ is a homomorphism of perm symmetric algebras and is bijective, and hence $\theta$ is a perm symmetric algebra isomorphism. Moreover, 
		any element $x\in A$ can be uniquely expressed as
		\begin{small}
			\begin{equation*}
				x=\big(r+\tau(r)\big)^{\sharp}{\big(r+\tau(r)\big)^{\sharp}}^{-1}(x)=r^{\sharp}{\big(r+\tau(r)\big)^{\sharp}}^{-1}(x)-\big(-\tau(r)\big)^{\sharp}{\big(r+\tau(r)\big)^{\sharp}}^{-1}(x)=x_{1}-x_{2},
			\end{equation*}
		\end{small}where $x_{1}=r^{\sharp}{\big(r+\tau(r)\big)^{\sharp}}^{-1}(x)\in\mathrm{Im}(r^{\sharp}),\; x_{2}=\big(-\tau(r)\big)^{\sharp}{\big(r+\tau(r)\big)^{\sharp}}^{-1}(x)\in\mathrm{Im}\Big(\big(-\tau(r)\big)^{\sharp}\Big)$.
		Hence the conclusion follows.
\end{proof}}

In the following, we show that there is a factorizable \sappb structure on an arbitrary Manin triple of \sapps.

\begin{thm}\label{3172}
	Let $\big((D=A\oplus A^{*},\triangleright_{d},\triangleleft_{d},\mathcal{B}_{d}),
(A,\triangleright_{A},\triangleleft_{A}),(A^{*},\triangleright_{A^{*}},\triangleleft_{A^{*}})\big) $ be a Manin triple of \sapps. Let
	$\{e_{1}$, $\cdots$, $e_{n}\}$ be a basis of $A$,  $\{e^{*}_{1},\cdots,e^{*}_{n}\}$ be the dual basis and $r$ be given by \eqref{1306}. Then $(D,\triangleright_{d},\triangleleft_{d},\vartheta_{r},\theta_{r})$ with $\theta_r$ and $\delta_r$ defined by \eqref{eq:comul} is a factorizable \sappb.
\end{thm}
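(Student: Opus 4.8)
The plan is to follow the same route as in the factorizable averaging commutative and cocommutative case on the double: reduce everything to the invariance of the symmetric part $r+\tau(r)$ together with a Rota-Baxter identity on $D$, rather than expanding $SA(r)$ in the basis directly. First I would record that, by the definition of a Manin triple of \sapps, the quadruple $(D,\triangleright_{d},\triangleleft_{d},\mathcal{B}_{d})$ with $\mathcal{B}_{d}$ given by \eqref{eq:bfds} is a quadratic \sapp. Pairing against $D^{*}=A^{*}\oplus A$, a short computation with the dual basis shows $\langle r+\tau(r),\xi\otimes\zeta\rangle=\langle (\mathcal{B}_{d}^{\natural})^{-1}(\xi),\zeta\rangle$ for all $\xi,\zeta\in D^{*}$, i.e. $r+\tau(r)=\phi_{\mathcal{B}_{d}}$ is exactly the $2$-tensor \eqref{eq:2-tensor} of $\mathcal{B}_{d}$. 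By Proposition \ref{pro:2.24}, $\phi_{\mathcal{B}_{d}}$ is symmetric and invariant on $(D,\triangleright_{d},\triangleleft_{d})$, so the symmetric part of $r$ is invariant; moreover $\big(r+\tau(r)\big)^{\sharp}=\phi_{\mathcal{B}_{d}}^{\sharp}=(\mathcal{B}_{d}^{\natural})^{-1}$ is a bijection, which is precisely the factorizability requirement.

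It then remains to show $r$ solves the SAPP-YBE. Here I would invoke Proposition \ref{pro1-6} applied to the quadratic \sapp $(D,\triangleright_{d},\triangleleft_{d},\mathcal{B}_{d})$ with the invariant $r$ just produced, and with $R:D\rightarrow D$ defined through \eqref{Br,Brt} by $R=r^{\sharp}\mathcal{B}_{d}^{\natural}$. Unwinding the pairings gives $\mathcal{B}_{d}^{\natural}(x+a^{*})=a^{*}+x\in A^{*}\oplus A=D^{*}$ and $r^{\sharp}(a^{*}+y)=y$, whence $R(x+a^{*})=x$; that is, $R$ is the projection of $D$ onto $A$ along $A^{*}$. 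Since $A$ and $A^{*}$ are \sappsubs of $(D,\triangleright_{d},\triangleleft_{d})$, this projection is a Rota-Baxter operator of weight $-1$ for both $\triangleright_{d}$ and $\triangleleft_{d}$: splitting $u=u_{1}+u_{2}$, $v=v_{1}+v_{2}$ along $A\oplus A^{*}$, one checks $R(u)\star_{d}R(v)=u_{1}\star_{d}v_{1}=R\big(R(u)\star_{d}v+u\star_{d}R(v)-u\star_{d}v\big)$ for $\star_{d}\in\{\triangleright_{d},\triangleleft_{d}\}$, and $R$ satisfies \eqref{strong} with $\lambda=-1$.

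Now, because $\big(r+\tau(r)\big)^{\sharp}\mathcal{B}_{d}^{\natural}=(\mathcal{B}_{d}^{\natural})^{-1}\mathcal{B}_{d}^{\natural}=\mathrm{id}$, the correction term $x\triangleright_{d}\big(r+\tau(r)\big)^{\sharp}\mathcal{B}_{d}^{\natural}(y)$ in \eqref{eq:pro3,1} is simply $x\triangleright_{d}y$, and likewise in \eqref{eq:pro3,2}. Thus \eqref{eq:pro3,1} and \eqref{eq:pro3,2} collapse to precisely the weight $-1$ Rota-Baxter identities for $R$ with respect to $\triangleright_{d}$ and $\triangleleft_{d}$, which hold by the previous step; hence $r$ solves the SAPP-YBE. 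Combining with the invariance of $r+\tau(r)$, Theorem \ref{thm:bialg} yields that $(D,\triangleright_{d},\triangleleft_{d},\vartheta_{r},\theta_{r})$ with $\vartheta_{r},\theta_{r}$ from \eqref{eq:comul} is a quasi-triangular \sappb, and the bijectivity of $\big(r+\tau(r)\big)^{\sharp}$ makes it factorizable.

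I expect the main obstacle to be bookkeeping rather than conceptual: one must fix the identification $D^{*}=A^{*}\oplus A$ and the induced pairing carefully enough to pin down $r^{\sharp}$, $\mathcal{B}_{d}^{\natural}$ and $\phi_{\mathcal{B}_{d}}$ simultaneously, and then confirm that the projection onto $A$ really is a Rota-Baxter operator of weight $-1$ for each of the two partial products $\triangleright_{d}$ and $\triangleleft_{d}$ separately. Once these identifications are in place, the SAPP-YBE follows formally from Proposition \ref{pro1-6} without ever expanding $SA(r)$.
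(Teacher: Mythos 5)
Your proof is correct, but it takes a genuinely different route from the paper's. The paper argues concretely: it first checks $\big(r+\tau(r)\big)^{\sharp}(x+a^{*})=x+a^{*}$, then expands $f(x)\big(r+\tau(r)\big)$ using the explicit Manin-triple products \eqref{3048}--\eqref{3052} of Lemma \ref{3039}, gets $f(a^{*})\big(r+\tau(r)\big)=0$ by duality, dispenses with the $g$-invariance via Remark \ref{rmk:7.8}, and finally expands $SA(r)$ term by term in the basis to obtain $SA(r)=0$. You instead note that the Manin triple makes $(D,\triangleright_{d},\triangleleft_{d},\mathcal{B}_{d})$ a quadratic \sapp, identify $r+\tau(r)=\phi_{\mathcal{B}_{d}}$ so that symmetry, full invariance (both $f$ and $g$) and bijectivity of $\big(r+\tau(r)\big)^{\sharp}=(\mathcal{B}_{d}^{\natural})^{-1}$ all follow at once from Proposition \ref{pro:2.24}, and then recognize $R=r^{\sharp}\mathcal{B}_{d}^{\natural}$ as the projection of $D$ onto $A$ along $A^{*}$. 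Your key observation that this projection is a Rota-Baxter operator of weight $-1$ for both $\triangleright_{d}$ and $\triangleleft_{d}$ is exactly right and uses only that $A$ and $A^{*}$ are \sappsubs: writing $u=u_{1}+u_{2}$, $v=v_{1}+v_{2}$, one has $R(u)\star_{d}v+u\star_{d}R(v)-u\star_{d}v=u_{1}\star_{d}v_{1}-u_{2}\star_{d}v_{2}$, whose image under $R$ is $u_{1}\star_{d}v_{1}=R(u)\star_{d}R(v)$. Since $\big(r+\tau(r)\big)^{\sharp}\mathcal{B}_{d}^{\natural}=\mathrm{id}_{D}$, conditions \eqref{eq:pro3,1}--\eqref{eq:pro3,2} of Proposition \ref{pro1-6} collapse to these Rota-Baxter identities, so $SA(r)=0$ with no basis expansion, and Theorem \ref{thm:bialg} finishes the argument. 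All results you invoke (Propositions \ref{pro:2.24}, \ref{pro1-6}, Lemma \ref{lem:bf}, Theorem \ref{thm:bialg}) precede Theorem \ref{3172}, so there is no circularity; in effect your argument is the specialization to the double of the correspondence later formalized in Theorem \ref{thm:quadratic to fact}. What your route buys is a coordinate-free proof that additionally exhibits the quadratic Rota-Baxter \sapp of weight $-1$ on $D$; what the paper's computation buys is a self-contained verification that displays the cancellation mechanism in $SA(r)$ explicitly.
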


\begin{proof}
	We first observe that
	\begin{equation*}
		\big( r+\tau(r)\big)^{\sharp}(x+a^{*})=x+a^{*},\;\forall x\in A, a^{*}\in A^{*}.
	\end{equation*}
	Hence $\big(r+\tau(r)\big)^{\sharp}$ is a linear isomorphism.
By  \eqref{3048} and \eqref{3052}, we have
	\begin{eqnarray*}
		f(x)\big(r+\tau(r)\big)&=&\sum_{i}e^{*}_{i}\otimes e_{i}\circ_{A}x+e_{i}\otimes e^{*}_{i}\circ_{d}x+x\triangleleft_{d}e^{*}_{i}\otimes e_{i}+x\triangleleft_{A}e_{i}\otimes e^{*}_{i}\\
		&=&\sum_{i}e^{*}_{i}\otimes e_{i}\circ_{A}x+e_{i}\otimes\mathcal{L}^{*}_{\circ_{A^{*}}}(e^{*}_{i})x
-e_{i}\otimes\mathcal{L}^{*}_{\triangleleft_{A}}(x)e^{*}_{i}
-\mathcal{R}^{*}_{\circ_{A^{*}}}(e^{*}_{i})x\otimes e_{i}\\
		&&-\mathcal{R}^{*}_{\circ_{A}}(x)e^{*}_{i}\otimes e_{i}+x\triangleleft_{A}e_{i}\otimes e^{*}_{i},
	\end{eqnarray*}
	for all $x\in A$.
	Observing that
	\begin{eqnarray*}		&&\sum_{i}e_{i}\otimes\mathcal{L}^{*}_{\circ_{A^{*}}}(e^{*}_{i})x-\mathcal{R}^{*}_{\circ_{A^{*}}}(e^{*}_{i})x\otimes e_{i}=0,\;\sum_{i}x\triangleleft_{A}e_{i}\otimes e^{*}_{i}-e_{i}\otimes\mathcal{L}^{*}_{\triangleleft_{A}}(x)e^{*}_{i}=0,\\
		&&\sum_{i}e^{*}_{i}\otimes e_{i}\circ_{A}x-\mathcal{R}^{*}_{\circ_{A}}(x)e^{*}_{i}\otimes e_{i}=0,
	\end{eqnarray*}
	we finally get $f(x)\big(r+\tau(r)\big)= 0$. By duality, we also have $f(a^{*})\big(r+\tau(r)\big)= 0$, for all $a^{*}\in A^{*}$. By Remark \ref{rmk:7.8}, $r+\tau(r)$ is invariant on $(D,\triangleright_{d},\triangleleft_{d})$. Furthermore,
		\begin{eqnarray*}		SA(r)&=&\sum_{i,j}e^{*}_{i}\circ_{A^{*}}e^{*}_{j}\otimes e_{i}\otimes e_{j}+e^{*}_{i}\otimes e_{i}\triangleleft_{d}e^{*}_{j}\otimes e_{j}+e^{*}_{j}\otimes e^{*}_{i}\otimes e_{i}\circ_{A} e_{j} \\
			&=&\sum_{i,j}e^{*}_{i}\circ_{A^{*}}e^{*}_{j}\otimes e_{i}\otimes e_{j}-e^{*}_{i}\otimes\mathcal{R}^{*}_{\circ_{A}}(e_{i})e^{*}_{j}\otimes e_{j}\\
			&&-e^{*}_{i}\otimes \mathcal{R}^{*}_{\circ_{A^{*}}}(e^{*}_{j})e_{i}\otimes e_{j}+e^{*}_{j}\otimes e^{*}_{i}\otimes e_{i}\circ_{A} e_{j}\\
			&=&0,
		\end{eqnarray*}
that is, $r$ is a solution of the SAPP-YBE in $(D,\triangleright_{d},\triangleleft_{d})$. Hence $(D,\triangleright_{d},\triangleleft_{d},\vartheta_{r},\theta_{r})$ is a factorizable \sappb.
\end{proof}

\begin{ex}\label{ex:6.29}
	Let $(A\ltimes_{\mathcal{L}^{*}_{\cdot_{A}}}A^{*},\Delta_{r},P,\hat{P})$ be the factorizable averaging commutative and cocommutative infinitesimal bialgebra given by Example \ref{ex:3.25}.
	Then by Proposition \ref{pro:quasi bialgebras}, there is a factorizable \sappb $(D=A\oplus A^{*},\triangleright_{d},\triangleleft_{d},\vartheta_{r},\theta_{r})$ with multiplications and co-multiplications given by \eqref{eq:com asso and SDPP} and \eqref{eq:co re} respectively.
	Explicitly, the non-zero multiplications and co-multiplications on $D$ are given by
{\small
		\begin{align}
			&e_{1}\triangleright_{A}e_{1}=e_{1},\; e_{1}\triangleright_{A}e_{2}=e_{2}\triangleright_{A}e_{1}=e_{2},
\;e_{1}\triangleright_{d}e^{*}_{2}=e^{*}_{2}\triangleright_{d}e_{1}
=2e^{*}_{2},\label{eq:ex:fact0}\\
&e_{1}\triangleright_{d}e^{*}_{1}=e_{2}\triangleright_{d}e^{*}_{2}
=e^{*}_{1}\triangleright_{d}e_{1}=e^{*}_{2}\triangleright_{d}e_{2}
=2e^{*}_{1},\;e_{1}\triangleleft_{d}e^{*}_{2}=-e^{*}_{2},\; e_{1}\triangleleft_{d}e^{*}_{1}=e_{2}\triangleleft_{d}e^{*}_{2}=-e^{*}_{1},\label{eq:ex:fact1}\\
&\vartheta_{r}(e^{*}_{1})=e^{*}_{1}\otimes e^{*}_{1},\;  \vartheta_{r}(e^{*}_{2})=e^{*}_{1}\otimes e^{*}_{2}+e^{*}_{2}\otimes e^{*}_{1}.\label{eq:ex:fact2}
	\end{align}}
\end{ex}

\delete{
	\begin{ex}\label{ex:5.14}
		Consider the 4-dimensional perm symmetric algebra $(A,\circ_{A},\star_{A})$ defined with respect to a basis $\{ e_{1},e_{2},e_{3},e_{4}\}$ by the following nonzero products:
		\begin{eqnarray*}
			&&e_{1}\circ e_{1}=e_{1},\; e_{1}\circ e_{2}=e_{2}\circ e_{1}=e_{2},\; e_{1}\circ e_{4}=e_{1}\star e_{4}=e_{4},\\
			&&e_{1}\circ e_{3}=e_{2}\circ e_{4}=e_{1}\star e_{3}=e_{2}\star e_{4}=e_{3}.
		\end{eqnarray*}
		Then it is straightforward to check
		\begin{equation*}
			r=e_{3}\otimes e_{1}+e_{4}\otimes e_{2}
		\end{equation*}
		is a solution of the PSYBE in $(A,\circ_{A},\star_{A})$ and  $r+\tau(r)$ is invariant,
		and thus there is a quasi-triangular perm symmetric bialgebra $(A,\circ_{A},\star_{A},\theta_{r},\delta_{r})$ with $\theta_{r}$ and $\delta_{r}$ defined by \eqref{eq:comul}. Moreover we have
		\begin{equation*}
			r+\tau(r)=e_{1}\otimes e_{3}+e_{2}\otimes e_{4}+e_{3}\otimes e_{1}+e_{4}\otimes e_{2},
		\end{equation*}
		which gives
		\begin{equation*}
			\big(r+\tau(r)\big)^{\sharp}(e^{*}_{1})=e_{3},\;
			\big(r+\tau(r)\big)^{\sharp}(e^{*}_{2})=e_{4},\;
			\big(r+\tau(r)\big)^{\sharp}(e^{*}_{3})=e_{1},\;
			\big(r+\tau(r)\big)^{\sharp}(e^{*}_{4})=e_{2}.
		\end{equation*}
		Hence $\big(r+\tau(r)\big)^{\sharp}$ is nondegenerate and  $(A,\circ_{A},\star_{A},\theta_{r},\delta_{r})$ is factorizable.
\end{ex}}

Next we establish a one-to-one correspondence between quadratic Rota-Baxter \sapps of weight $-1$ and factorizable \sappbs.

\begin{thm}\label{thm:quadratic to fact}
	Let $(A,\triangleright_{A},\triangleleft_{A},R,\mathcal{B})$ be a quadratic Rota-Baxter \sapp of weight $-1$.
	Then there is a factorizable \sappb $(A,\triangleright_{A},\triangleleft_{A},\vartheta_{r},\theta_{r})$ with $r$ given through the operator form $r^{\sharp}$ by \eqref{eq:thm:quadratic to fact}.
	Conversely, let $(A,\triangleright_{A},\triangleleft_{A},\vartheta_{r},\theta_{r})$ be a factorizable \sappb.
	Then there is a quadratic Rota-Baxter \sapp $(A,\triangleright_{A},\triangleleft_{A},R,\mathcal{B})$ of weight $-1$ with $R$ given by  \eqref{eq:fact to quadratic0}
	and $\mathcal{B}$ given by
	\eqref{eq:fact to quadratic}.
\end{thm}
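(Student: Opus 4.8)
The plan is to treat the two directions symmetrically, running the equivalences of Proposition~\ref{pro1-6} and Lemma~\ref{lem:bf} in opposite directions, exactly in the spirit of the proof of Theorem~\ref{thm:commute} for the averaging commutative case. The pivotal observation in both directions is the identification $r+\tau(r)=\phi_{\mathcal{B}}$, equivalently $\big(r+\tau(r)\big)^{\sharp}=\mathcal{B}^{\natural^{-1}}$, which forces $\big(r+\tau(r)\big)^{\sharp}\mathcal{B}^{\natural}(y)=y$ and thereby collapses the weight $-1$ operator identities \eqref{eq:pro3,1} and \eqref{eq:pro3,2} into the defining identities of a Rota-Baxter operator of weight $-1$.

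For the forward direction, I would start from $(A,\triangleright_{A},\triangleleft_{A},R,\mathcal{B})$ and define $r$ through $R(x)=r^{\sharp}\mathcal{B}^{\natural}(x)$, i.e.\ by \eqref{eq:thm:quadratic to fact}. Since $R$ satisfies \eqref{strong} with weight $-1$, Lemma~\ref{lem:bf} yields $r+\tau(r)=\phi_{\mathcal{B}}$; and since $(A,\triangleright_{A},\triangleleft_{A},\mathcal{B})$ is a quadratic \sapp, Proposition~\ref{pro:2.24} guarantees that $\phi_{\mathcal{B}}$, hence $r+\tau(r)$, is symmetric and invariant. Substituting $\big(r+\tau(r)\big)^{\sharp}\mathcal{B}^{\natural}(y)=y$ shows that the Rota-Baxter relations for $R$ are precisely \eqref{eq:pro3,1} and \eqref{eq:pro3,2}, so Proposition~\ref{pro1-6} gives that $r$ solves the SAPP-YBE. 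Then Theorem~\ref{thm:bialg} produces the quasi-triangular \sappb $(A,\triangleright_{A},\triangleleft_{A},\vartheta_{r},\theta_{r})$, and because $\big(r+\tau(r)\big)^{\sharp}=\mathcal{B}^{\natural^{-1}}$ is bijective it is factorizable.

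For the converse, I would start from a factorizable \sappb and define $\mathcal{B}$ by \eqref{eq:fact to quadratic}, so that $\mathcal{B}^{\natural}=\big(r+\tau(r)\big)^{\sharp^{-1}}$ and $\phi_{\mathcal{B}}=r+\tau(r)$. Nondegeneracy of $\mathcal{B}$ follows from the bijectivity of $\big(r+\tau(r)\big)^{\sharp}$, and symmetry of $\mathcal{B}$ from that of the $2$-tensor $r+\tau(r)$; since the latter is also invariant (the \sappb being quasi-triangular), Proposition~\ref{pro:2.24} upgrades $(A,\triangleright_{A},\triangleleft_{A},\mathcal{B})$ to a quadratic \sapp. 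Defining $R$ by \eqref{eq:fact to quadratic0} gives $R(x)=r^{\sharp}\mathcal{B}^{\natural}(x)$, so Lemma~\ref{lem:bf}, applied with $r+\tau(r)=\phi_{\mathcal{B}}$, returns \eqref{strong} with weight $-1$. Finally, since a factorizable \sappb is in particular quasi-triangular, $r$ solves the SAPP-YBE, and Proposition~\ref{pro1-6} furnishes \eqref{eq:pro3,1} and \eqref{eq:pro3,2}; rewriting them via $\big(r+\tau(r)\big)^{\sharp}\mathcal{B}^{\natural}(y)=y$ shows that $R$ is a Rota-Baxter operator of weight $-1$, whence $(A,\triangleright_{A},\triangleleft_{A},R,\mathcal{B})$ is the desired quadratic Rota-Baxter \sapp.

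I expect no serious computational obstacle, since all the analytic content is already packaged in Proposition~\ref{pro1-6}, Lemma~\ref{lem:bf}, and Proposition~\ref{pro:2.24}. The only point requiring care is the bookkeeping around $\big(r+\tau(r)\big)^{\sharp}$ and $\mathcal{B}^{\natural}$, in particular verifying that the weight $-1$ term $-x\triangleright_{A}\big(r+\tau(r)\big)^{\sharp}\mathcal{B}^{\natural}(y)$ (and its $\triangleleft_{A}$ analogue) simplifies to $-x\triangleright_{A}y$ (resp.\ $-x\triangleleft_{A}y$), which is exactly what matches the Rota-Baxter identity of weight $-1$. Because both constructions are governed by the single relation $r+\tau(r)=\phi_{\mathcal{B}}$, the two directions are mutually inverse, giving the claimed one-to-one correspondence.
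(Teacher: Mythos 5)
Your proposal is correct and follows essentially the same route as the paper's proof: both directions hinge on the identification $r+\tau(r)=\phi_{\mathcal{B}}$ via Lemma~\ref{lem:bf}, use Proposition~\ref{pro:2.24} for the invariance of $\phi_{\mathcal{B}}$, and pass through Proposition~\ref{pro1-6} to convert the weight $-1$ Rota-Baxter identities into $SA(r)=0$ and back. The only cosmetic difference is that you cite Theorem~\ref{thm:bialg} explicitly in the forward direction, where the paper leaves this step implicit.
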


\begin{proof}
	Let $(A,\triangleright_{A},\triangleleft_{A},R,\mathcal{B})$ be a quadratic Rota-Baxter \sapp of weight $-1$.
	Then by Lemma \ref{lem:bf}, we have
	\begin{equation}\label{rw2}
		r+\tau(r)=\phi_{\mathcal{B}}.
	\end{equation}
	Since $\phi_{\mathcal{B}}\in A\otimes A$ is invariant and $\mathcal{B}^ \natural:A\rightarrow A^{*}$ is a linear isomorphism, we see that $r+\tau(r)$ is invariant on $(A,\triangleright_{A},\triangleleft_{A})$ and $\big(r+\tau(r)\big)^{\sharp}$ is a bijection. Moreover, since $R$ is a Rota-Baxter operator of weight $-1$, \eqref{eq:pro3,1} and \eqref{eq:pro3,2} hold in Proposition \ref{pro1-6} such that $SA(r)=0$.  Hence $(A,\triangleright_{A},\triangleleft_{A},\vartheta_{r},\theta_{r})$ is a factorizable \sappb with $\theta_{r}$ and $\delta_{r}$ defined by \eqref{eq:comul}.
	
	Conversely, let $(A,\triangleright_{A},\triangleleft_{A},\vartheta_{r},\theta_{r})$ be a factorizable \sappb.
	We first observe that \eqref{eq:fact to quadratic} equivalently gives \eqref{rw2} and
	\begin{equation}\label{eq:113}
	\big(r+\tau(r)\big)^{\sharp}=\mathcal{B}^{{\natural}^{-1}}.
	\end{equation}
	Since $r+\tau(r)$ is invariant and $\big(r+\tau(r)\big)^{\sharp}$ is a linear isomorphism, it follows from Proposition  \ref{pro:2.24} that $\mathcal{B}$ given by \eqref{eq:fact to quadratic} contributes a quadratic \sapp $(A,\triangleright_{A},\triangleleft_{A},\mathcal{B})$. Taking \eqref{eq:113} into \eqref{eq:pro3,1} and \eqref{eq:pro3,2} in Proposition \ref{pro1-6}, we see that $R$ given by \eqref{Br,Brt} is a Rota-Baxter operator of weight $-1$.
	Furthermore, \eqref{strong} holds for $\lambda=-1$ by Lemma \ref{lem:bf}. Therefore $(A,\triangleright_{A},\triangleleft_{A},R,\mathcal{B})$ is a quadratic Rota-Baxter \sapp of weight $-1$.
\end{proof}

\delete{ Let $\mathcal{B}$ be a bilinear form on $A$ given by \eqref{rw}, that is,
	\begin{equation}\label{eq:fact to quadratic}
		\mathcal{B}(x,y)=-\lambda\langle {\big(r+\tau(r)\big)^{\sharp}}^{-1}(x),y\rangle,\;\lambda\neq0,\;\forall x,y\in A,
	\end{equation}
	and $\beta$ be a linear map given by \eqref{Br,Brt}. By \eqref{eq:2-tensor} and \eqref{eq:fact to quadratic}, we have $\mathcal{B}^{\natural}=-\lambda {\big(r+\tau(r)\big)^{\sharp}}^{-1}$, and thus $\big(r+\tau(r)\big)^{\sharp}=-\lambda \mathcal{B}^{{\natural}^{-1}}$. Since $r$ satisfies \eqref{eq:pro:co1}, \eqref{eq:invariance} and $\big(r+\tau(r)\big)^{\sharp}$ is nondegenerate, it follows that $\mathcal{B}$ given by \eqref{eq:fact to quadratic} is a nondegenerate symmetric invariant bilinear form on $(A,\circ_{A},\star_{A})$. Moreover, taking $\big(r+\tau(r)\big)^{\sharp}=-\lambda \mathcal{B}^{{\natural}^{-1}}$ into \eqref{eq:pro3,1} and \eqref{eq:pro3,2} in Proposition \ref{pro1-6}, we see that $\beta$ is a Rota-Baxter operator of weight $\lambda$. Again from $\big(r+\tau(r)\big)^{\sharp}=-\lambda \mathcal{B}^{{\natural}^{-1}}$, \eqref{rw} holds, and by Lemma \ref{lem:bf}, \eqref{eq:lem:bf2} holds. Thus $(A,\circ_{A},\star_{A},\mathcal{B},\beta)$ is a quadratic Rota-Baxter perm symmetric algebra of weight $\lambda$.}

\delete{
	\begin{cor}\label{cor:4.20}
		Let $\beta$ be a Rota-Baxter operator of weight $\lambda\neq0$ on a perm symmetric algebra $(A,\circ_{A},\star_{A})$. Let $\{e_{1},\cdots,e_{n}\}$ be a basis of $A$ and $\{e^{*}_{1},\cdots,e^{*}_{n}\}$ be the dual basis.
		Then $(A\ltimes_{\mathcal{L}^{*}_{\circ_{A}},\mathcal{L}^{*}_{\star_{A}},\mathcal{R}^{*}_{\circ_{A}}} A^{*},\theta_{r},\delta_{r})$ is a factorizable perm symmetric bialgebra with $\theta_r$ and $\delta_{r}$ defined by \eqref{eq:comul}, where
		\begin{equation}\label{eq:cor:4.20}
			r=\sum_{i=1}^{n}-(\beta+\lambda)e_{i}\otimes e^{*}_{i}+e^{*}_{i}\otimes\beta(e_{i}).
		\end{equation}
	\end{cor}
	
	\begin{proof}
		By Proposition \ref{pro:4.2}, 	$\big(A\ltimes_{\mathcal{L}^{*}_{\circ_{A}},\mathcal{L}^{*}_{\star_{A}},\mathcal{R}^{*}_{\circ_{A}}} A^{*},\mathcal{B}_{d},\beta-( \beta+\lambda\mathrm{id}_{A} )^{*}\big)$ is a quadratic Rota-Baxter perm symmetric algebra of weight $\lambda$.
		For all $x\in A, a^{*}\in A^{*}$, we have
		\begin{equation*}
			\mathcal{B}_{d}^{\natural}(x+a^{*})=x+a^{*},
		\end{equation*}
		and following \eqref{eq:thm:quadratic to fact}, we obtain a linear transformation $r^{\sharp}$ on $A\oplus A^{*}$ by
		\begin{equation*}
			r^{\sharp}(x+a^{*})=\big(\beta-( \beta+\lambda\mathrm{id}_{A} )^{*}\big)\mathcal{B}_{d}^{{\natural}^{-1}}(x+a^{*})=\beta(x)-(\beta+\lambda\mathrm{id}_{A})^{*}a^{*}.
		\end{equation*}
		Then we have
		\begin{eqnarray*}
			&&\sum_{i,j}\langle r,e_{i}\otimes e^{*}_{j}\rangle=\sum_{i,j}\langle r^{\sharp}(e_{i}),e^{*}_{j} \rangle=\sum_{i,j}\langle \beta(e_{i}),e^{*}_{j} \rangle,\\
			&&\sum_{i,j}\langle r,e^{*}_{i}\otimes e_{j} \rangle=\sum_{i,j}\langle r^{\sharp}(e^{*}_{i}),e_{j} \rangle=\sum_{i,j}\langle -(\beta+\lambda\mathrm{id}_{A})^{*}e^{*}_{i},e_{j} \rangle=\sum_{i,j}\langle e^{*}_{i},-(\beta+\lambda )e_{j} \rangle.
		\end{eqnarray*}
		Hence \eqref{eq:cor:4.20} holds, and by Theorem \ref{thm:quadratic to fact},
		$(A\ltimes_{\mathcal{L}^{*}_{\circ_{A}},\mathcal{L}^{*}_{\star_{A}},\mathcal{R}^{*}_{\circ_{A}}} A^{*},\theta_{r},\delta_{r})$ is a factorizable perm symmetric bialgebra.
\end{proof}}

Synthesizing Corollary \ref{cor:2.24}, Propositions \ref{pro:quasi bialgebras}, \ref{pro:6.31}, \ref{pro:triangular Leib} and Theorems \ref{thm:commute}, \ref{thm:quadratic to fact}, we obtain the following commutative diagram which has already been shown in the Introduction.
{\footnotesize
 	\begin{equation*}
 		\begin{split}
 			\xymatrix{
 				\txt{triangular averaging\\ commutative  and\\ cocommutative  
 					infinitesimal\\ bialgebras}
 				\ar@{<=}[r]^-{{\rm Cor.}~\ref{cor:2.24}
 				}_-{\lambda=0}
 				\ar@{=>}[d]^-{{\rm Prop.}~\ref{pro:quasi bialgebras}
 				}
 				& \txt{symmetric averaging\\ Rota-Baxter Frobenius\\ commutative algebras}
 				\ar@<.4ex>@{<=>}[r]^-{{\rm Thm.}~\ref{thm:commute}
 				}_-{\lambda=-1}
 				\ar@{=>}[d]^-{{\rm Prop.}~\ref{pro:6.31}
 				}
 				& \txt{factorizable averaging\\ commutative  and\\ cocommutative  
 					infinitesimal\\ bialgebras
 				}
 				\ar@{=>}[d]^-{ {\rm Prop.}~\ref{pro:quasi bialgebras}
 				}
 				\\  
 				\txt{triangular special \\  apre-perm bialgebras}
 				\ar@{<=}[r]^-{{\rm Prop.}~\ref{pro:triangular Leib} 
 				}_-{\lambda=0}
 				&\txt{quadratic Rota-Baxter\\ special apre-perm algebras}	\ar@{<=>}[r]^-{{\rm Thm.}~\ref{thm:quadratic to fact} 
 				}_-{\lambda=-1}  
                & \txt{factorizable  special \\ apre-perm bialgebras}
 			}
 		\end{split}
 	\end{equation*}
 }

\begin{ex}
	Let $(A\ltimes_{\mathcal{L}^{*}_{\cdot_{A}}}A^{*},R,\mathcal{B}_{d})$ be the symmetric Rota-Baxter commutative Frobenius algebra of weight $-1$ and $P$ be the averaging operator which commutes with $R$ given in Example \ref{ex:3.25}.
	Then by Proposition \ref{pro:6.31}, there is a  quadratic Rota-Baxter \sapp $(D=A\oplus A^{*},\triangleright_{d},\triangleleft_{d},R,\mathcal{B})$ of weight $-1$ which is exactly given by \eqref{eq:ex:fact0} and \eqref{eq:ex:fact1}. Moreover by Theorem \ref{thm:quadratic to fact}, there is a factorizable \sappb $(A,\triangleright_{A},\triangleleft_{A},\vartheta_{r},\theta_{r})$ whose non-zero co-multiplications are given by \eqref{eq:ex:fact2}. That is, the factorizable \sappb $(A,\triangleright_{A},\triangleleft_{A},\vartheta_{r},\theta_{r})$ is exactly the one given in Example \ref{ex:6.29}.
\end{ex}

\delete{
	
	In this section, we introduce the notion of quadratic Rota-Baxter perm symmetric algebras. We show that a quadratic Rota-Baxter perm symmetric algebra of weight $0$ gives rise to a triangular perm symmetric bialgebra, and there is a one-to-one correspondence between quadratic Rota-Baxter perm symmetric algebras of nonzero weights and factorizable perm symmetric bialgebras.
	
	\begin{defi}
		A \textbf{quadratic Rota-Baxter perm symmetric algebra of weight $\lambda$} is a quintuple  $(A,\circ_{A},\star_{A},\mathcal{B},\beta)$, such that $(A,\circ_{A},\star_{A},\mathcal{B})$ is a quadratic perm symmetric algebra, $\beta$ is a Rota-Baxter operator of weight $\lambda$ on $(A,\circ_{A},\star_{A})$ and the following condition holds:
		\begin{equation}\label{strong2}
			\mathcal{B}\big(\beta (x),y\big)+\mathcal{B}\big(x,\beta (y)\big)+\lambda\mathcal{B}(x,y)=0,\;\;\forall x,y\in A.
		\end{equation}
	\end{defi}
	
	\begin{pro}\label{mir}
		Let $(A,\circ_{A},\star_{A},\mathcal{B})$ be a quadratic perm symmetric algebra and $\beta:A\rightarrow A$ be a linear map. Then $(A,\circ_{A},\star_{A},\mathcal{B},\beta)$ is a quadratic Rota-Baxter perm symmetric algebra of weight $\lambda$ if and only if $\big(A,\circ_{A},\star_{A},-\mathcal{B},-(\lambda\mathrm{id}+\beta)\big)$ is a quadratic Rota-Baxter perm symmetric algebra of weight $\lambda$.
	\end{pro}
	
	\begin{proof}
		It is well-known that $\beta$ is a Rota-Baxter operator of weight $\lambda$ if and only if $-(\lambda\mathrm{id}+\beta)$ is a Rota-Baxter operator of weight $\lambda$. For all $x,y\in A$, we have
		\begin{equation*}
			-\mathcal{B}\big(-(\lambda\mathrm{id}+\beta)x,y\big)-\mathcal{B}\big(x,-(\lambda\mathrm{id}+\beta)y\big)-\lambda\mathcal{B}(x,y)= \mathcal{B}\big(\beta(x),y\big)+\mathcal{B}\big(x,\beta(y)\big)+\lambda\mathcal{B}(x,y).
		\end{equation*}
		Thus $(\mathcal{B},\beta)$ satisfies \eqref{strong} if and only if $\big(-\mathcal{B},-(\lambda\mathrm{id}+\beta)\big)$ satisfies \eqref{strong}.
		Hence the conclusion follows.
	\end{proof}

	\begin{pro}\label{pro:4.2}
		Let $\beta$ be a Rota-Baxter operator of   weight $\lambda$ on a perm symmetric algebra $(A,\circ_{A},\star_{A})$.
		Then $$\big(A\ltimes_{\mathcal{L}^{*}_{\circ_{A}},\mathcal{L}^{*}_{\star_{A}},\mathcal{R}^{*}_{\circ_{A}}} A^{*},\mathcal{B}_{d},\beta-(\beta+\lambda\mathrm{id}_{A} )^{*}\big)$$ is a quadratic Rota-Baxter perm symmetric algebra of weight $\lambda$, where the bilinear form $\mathcal{B}_{d}$ on $A\oplus A^{*}$ is given by \eqref{eq:bfds}.
	\end{pro}
	\begin{proof}
		It follows from a straightforward computation.
	\end{proof}

	\begin{ex}\label{ex:6.13}
		Let $(A,\circ_{A},\star_{A})$ be a perm symmetric algebra.
		Then the identity map $\mathrm{id}_{A}$ is a Rota-Baxter operator of $(A,\circ_{A},\star_{A})$ of weight $-1$.
		By Proposition \ref{pro:4.2},
		\begin{equation*}
			\big(A\ltimes_{\mathcal{L}^{*}_{\circ_{A}},\mathcal{L}^{*}_{\star_{A}},\mathcal{R}^{*}_{\circ_{A}}} A^{*},\mathcal{B}_{d},\mathrm{id}_{A}-(\mathrm{id}_{A}-\mathrm{id}_{A})^{*}\big)= (A\ltimes_{\mathcal{L}^{*}_{\circ_{A}},\mathcal{L}^{*}_{\star_{A}},\mathcal{R}^{*}_{\circ_{A}}} A^{*},\mathcal{B}_{d},\mathrm{id}_{A})
		\end{equation*}
		is a quadratic Rota-Baxter perm symmetric algebra of weight $-1$.
	\end{ex}

	\begin{lem}\label{lem:bf}
		Let $A$ be a vector space and $\mathcal{B}$ be a nondegenerate symmetric bilinear form.
		Let $r\in A\otimes A$ and $\beta:A\rightarrow A$ satisfy \eqref{Br,Brt}.
		Then $r$satisfies
		\begin{equation}\label{rw}
			r+\tau(r)=-\lambda\phi_{\mathcal{B}},\; \lambda\in\mathbb{K}
		\end{equation}
		if and only if $\beta$  satisfies \eqref{strong}.
	\end{lem}
	\begin{proof}
		Let $x,y\in A$ and $a^{*}=\mathcal{B}^{\natural}(x), b^{*}=\mathcal{B}^{\natural}(y)$.
		Then
		\begin{eqnarray*}
			\mathcal{B}\big(\beta (x),y\big)&=& \langle \mathcal{B}^{\natural}(y),r^{\sharp}\mathcal{B}^{\natural}(x)\rangle=\langle b^{*}, r^{\sharp}(a^{*})\rangle=\langle r, a^{*}\otimes b^{*}\rangle,\\
			\mathcal{B}\big(x,\beta (y)\big)&=& \langle\mathcal{B}^{\natural}(x), r^{\sharp}\mathcal{B}^{\natural}(y)\rangle=\langle a^{*}, r^{\sharp}(b^{*})\rangle=\langle \tau(r), a^{*}\otimes b^{*}\rangle,\\
			\lambda\mathcal{B}(x,y)&=& \lambda \langle \mathcal{B}^{\natural^{-1}}\mathcal{B}^{\natural}(x),\mathcal{B}^{\natural}(y)\rangle=\lambda  \langle (\mathcal{B}^{\natural^{-1}})a^{*},b^{*}\rangle=\lambda\langle\phi_{\mathcal{B}}, a^{*}\otimes b^{*}\rangle.
		\end{eqnarray*}
		Hence $r$ satisfies  \eqref{rw} if and only if $\beta$ satisfies \eqref{strong}.
	\end{proof}
	
	\begin{pro}
		Let $(A,\circ_{A},\star_{A},\mathcal{B},\beta)$ be a quadratic Rota-Baxter perm symmetric algebra of weight $0$.
		Then there is a triangular perm symmetric bialgebra $(A,\circ_{A},\star_{A},\theta_{r},\delta_{r})$ with $\theta_r$ and $\delta_{r}$ defined by \eqref{eq:comul},
		where $r\in A\otimes A$ is given through the operator form $r^{\sharp}$ by
		\eqref{Br,Brt}, that is,
		\begin{equation}\label{eq:thm:quadratic to fact}
			r^{\sharp}(a^{*})=\beta\mathcal{B}^{{\natural}^{-1}}(a^{*}),\;\;\forall a^{*}\in A^{*}.
		\end{equation}
	\end{pro}
	\begin{proof}
		It follows from Proposition \ref{pro1-6} and Lemma \ref{lem:bf} by observing $r+\tau(r)=0$.
	\end{proof}

	Next we establish a one-to-one correspondence between quadratic Rota-Baxter perm symmetric algebras of nonzero weights and factorizable perm symmetric bialgebras.

	\begin{thm}\label{thm:quadratic to fact}
		Let $(A,\circ_{A},\star_{A},\mathcal{B},\beta)$ be a quadratic Rota-Baxter perm symmetric algebra of weight $-1$.
		Then there is a factorizable perm symmetric bialgebra $(A,\circ_{A},\star_{A},\theta_{r},\delta_{r})$ with $r$ given  through the operator form $r^{\sharp}$ by \eqref{eq:thm:quadratic to fact}.
		Conversely, let $(A,\circ_{A},\star_{A},\theta_{r},\delta_{r})$ be a factorizable perm symmetric bialgebra.
		Then there is a quadratic Rota-Baxter perm symmetric algebra $(A,\circ_{A},\star_{A},\mathcal{B},\beta)$ of weight $-1$ with $\beta$ given by \eqref{Br,Brt}
		\begin{equation}\label{eq:fact to quadratic0}
			\beta(x)=r^{\sharp}\big(r+\tau(r)\big)^{\sharp^{-1}}(x)
		\end{equation}
		and $\mathcal{B}$ given by
		\begin{equation}\label{eq:fact to quadratic}
			\mathcal{B}(x,y)=\langle {\big(r+\tau(r)\big)^{\sharp}}^{-1}(x),y\rangle,\; \forall x,y\in A.
		\end{equation}
	\end{thm}

	\begin{proof}
		Let $(A,\circ_{A},\star_{A},\mathcal{B},\beta)$ be a quadratic Rota-Baxter perm symmetric algebra of weight $-1$.
		Then by Lemma \ref{lem:bf} we have
		\begin{equation}\label{rw2}
			r+\tau(r)=\phi_{\mathcal{B}}.
		\end{equation}
		Since $\phi_{\mathcal{B}}\in A\otimes A$ is invariant and $\mathcal{B}^ \natural:A\rightarrow A^{*}$ is a linear isomorphism, we see that $r+\tau(r)$ is invariant and $\big(r+\tau(r)\big)^{\sharp}$ is a linear isomorphism. Moreover, since $\beta$ is a Rota-Baxter operator of weight $-1$, \eqref{eq:pro3,1} and \eqref{eq:pro3,2} hold in Proposition \ref{pro1-6} such that $SA(r)=0$. In conclusion, $r$ is a solution of the PSYBE which moreover satisfies that $r+\tau(r)$ is invariant and $\big(r+\tau(r)\big)^{\sharp}$ is a linear isomorphism. Hence   $(A,\circ_{A},\star_{A},\theta_{r},\delta_{r})$ is a factorizable perm symmetric bialgebra with $\theta_{r}$ and $\delta_{r}$ defined by \eqref{eq:comul}.
		is a linear isomorphism.
		
		Conversely,  let $(A,\circ_{A},\star_{A},\theta_{r},\delta_{r})$ be a factorizable perm symmetric bialgebra.
		We first observe that \eqref{eq:fact to quadratic} equivalently gives \eqref{rw2} and
		\begin{equation}\label{eq:113}
			\big(r+\tau(r)\big)^{\sharp}=\mathcal{B}^{{\natural}^{-1}}.
		\end{equation}
		Since $r+\tau(r)$ is invariant and $\big(r+\tau(r)\big)^{\sharp}$ is a linear isomorphism, it follows from Proposition  \ref{pro:2.24} that $\mathcal{B}$ given by \eqref{eq:fact to quadratic} contributes a quadratic perm symmetric algebra $(A,\circ_{A},\star_{A},\mathcal{B})$. Taking \eqref{eq:113} into \eqref{eq:pro3,1} and \eqref{eq:pro3,2} in Proposition \ref{pro1-6}, we see that $\beta$ given by \eqref{Br,Brt} is a Rota-Baxter operator of weight $-1$.
		Furthermore, \eqref{strong} holds for $\lambda=-1$ by Lemma \ref{lem:bf}. Therefore $(A,\circ_{A},\star_{A},\mathcal{B},\beta)$ is a quadratic Rota-Baxter perm symmetric algebra of weight $-1$.
	\end{proof}
	
	\delete{ Let $\mathcal{B}$ be a bilinear form on $A$ given by \eqref{rw}, that is,
		\begin{equation}\label{eq:fact to quadratic}
			\mathcal{B}(x,y)=-\lambda\langle {\big(r+\tau(r)\big)^{\sharp}}^{-1}(x),y\rangle,\;\lambda\neq0,\;\forall x,y\in A,
		\end{equation}
		and $\beta$ be a linear map given by \eqref{Br,Brt}. By \eqref{eq:2-tensor} and \eqref{eq:fact to quadratic}, we have $\mathcal{B}^{\natural}=-\lambda {\big(r+\tau(r)\big)^{\sharp}}^{-1}$, and thus $\big(r+\tau(r)\big)^{\sharp}=-\lambda \mathcal{B}^{{\natural}^{-1}}$. Since $r$ satisfies \eqref{eq:pro:co1}, \eqref{eq:invariance} and $\big(r+\tau(r)\big)^{\sharp}$ is nondegenerate, it follows that $\mathcal{B}$ given by \eqref{eq:fact to quadratic} is a nondegenerate symmetric invariant bilinear form on $(A,\circ_{A},\star_{A})$. Moreover, taking $\big(r+\tau(r)\big)^{\sharp}=-\lambda \mathcal{B}^{{\natural}^{-1}}$ into \eqref{eq:pro3,1} and \eqref{eq:pro3,2} in Proposition \ref{pro1-6}, we see that $\beta$ is a Rota-Baxter operator of weight $\lambda$. Again from $\big(r+\tau(r)\big)^{\sharp}=-\lambda \mathcal{B}^{{\natural}^{-1}}$, \eqref{rw} holds, and by Lemma \ref{lem:bf}, \eqref{eq:lem:bf2} holds. Thus $(A,\circ_{A},\star_{A},\mathcal{B},\beta)$ is a quadratic Rota-Baxter perm symmetric algebra of weight $\lambda$.}

	\begin{cor}\label{cor:4.20}
		Let $\beta$ be a Rota-Baxter operator of weight $\lambda\neq0$ on a perm symmetric algebra $(A,\circ_{A},\star_{A})$. Let $\{e_{1},\cdots,e_{n}\}$ be a basis of $A$ and $\{e^{*}_{1},\cdots,e^{*}_{n}\}$ be the dual basis.
		Then $(A\ltimes_{\mathcal{L}^{*}_{\circ_{A}},\mathcal{L}^{*}_{\star_{A}},\mathcal{R}^{*}_{\circ_{A}}} A^{*},\theta_{r},\delta_{r})$ is a factorizable perm symmetric bialgebra with $\theta_r$ and $\delta_{r}$ defined by \eqref{eq:comul}, where
		\begin{equation}\label{eq:cor:4.20}
			r=\sum_{i=1}^{n}-(\beta+\lambda)e_{i}\otimes e^{*}_{i}+e^{*}_{i}\otimes\beta(e_{i}).
		\end{equation}
	\end{cor}
	
	\begin{proof}
		By Proposition \ref{pro:4.2}, 	$\big(A\ltimes_{\mathcal{L}^{*}_{\circ_{A}},\mathcal{L}^{*}_{\star_{A}},\mathcal{R}^{*}_{\circ_{A}}} A^{*},\mathcal{B}_{d},\beta-( \beta+\lambda\mathrm{id}_{A} )^{*}\big)$ is a quadratic Rota-Baxter perm symmetric algebra of weight $\lambda$.
		For all $x\in A, a^{*}\in A^{*}$, we have
		\begin{equation*}
			\mathcal{B}_{d}^{\natural}(x+a^{*})=x+a^{*},
		\end{equation*}
		and following \eqref{eq:thm:quadratic to fact}, we obtain a linear transformation $r^{\sharp}$ on $A\oplus A^{*}$ by
		\begin{equation*}
			r^{\sharp}(x+a^{*})=\big(\beta-( \beta+\lambda\mathrm{id}_{A} )^{*}\big)\mathcal{B}_{d}^{{\natural}^{-1}}(x+a^{*})=\beta(x)-(\beta+\lambda\mathrm{id}_{A})^{*}a^{*}.
		\end{equation*}
		Then we have
		\begin{eqnarray*}
			&&\sum_{i,j}\langle r,e_{i}\otimes e^{*}_{j}\rangle=\sum_{i,j}\langle r^{\sharp}(e_{i}),e^{*}_{j} \rangle=\sum_{i,j}\langle \beta(e_{i}),e^{*}_{j} \rangle,\\
			&&\sum_{i,j}\langle r,e^{*}_{i}\otimes e_{j} \rangle=\sum_{i,j}\langle r^{\sharp}(e^{*}_{i}),e_{j} \rangle=\sum_{i,j}\langle -(\beta+\lambda\mathrm{id}_{A})^{*}e^{*}_{i},e_{j} \rangle=\sum_{i,j}\langle e^{*}_{i},-(\beta+\lambda )e_{j} \rangle.
		\end{eqnarray*}
		Hence \eqref{eq:cor:4.20} holds, and by Theorem \ref{thm:quadratic to fact},
		$(A\ltimes_{\mathcal{L}^{*}_{\circ_{A}},\mathcal{L}^{*}_{\star_{A}},\mathcal{R}^{*}_{\circ_{A}}} A^{*},\theta_{r},\delta_{r})$ is a factorizable perm symmetric bialgebra.
	\end{proof}

	\begin{ex}
		Let $(A,\cdot_{A})$ be a 2-dimensional commutative  algebra defined with respect to a basis $\{e_{1}$, $e_{2}\}$ by the following nonzero products:
		\begin{equation*}
			e_{1}\cdot_{A} e_{1}=e_{1},\; e_{1}\cdot_{A} e_{2}=e_{2}.
		\end{equation*}
		Then the perm symmetric algebra $A\ltimes _{\mathcal{L}^{*}_{\cdot_{A}},0,\mathcal{L}^{*}_{\cdot_{A}}}A^{*}$ given by \eqref{eq:11} and \eqref{eq:2.21} is defined by the following nonzero products:
		\begin{eqnarray*}
			&&e_{1}\circ e_{1}=e_{1},\; e_{1}\circ e_{2}=e_{2}\circ e_{1}=e_{2},\; e_{1}\circ e^{*}_{2}=e_{1}\star e^{*}_{2}=e^{*}_{2},\\
			&&e_{1}\circ e^{*}_{1}=e_{2}\circ e^{*}_{2}=e_{1}\star e^{*}_{1}=e_{2}\star e^{*}_{2}=e^{*}_{1}.
		\end{eqnarray*}
		Then by Example \ref{ex:6.13}, $(A\ltimes _{\mathcal{L}^{*}_{\cdot_{A}},0,\mathcal{L}^{*}_{\cdot_{A}}}A^{*},\mathcal{B}_{d},\mathrm{id}_{A})$ is a quadratic Rota-Baxter perm symmetric algebra of weight $-1$. By Corollary \ref{cor:4.20}, there is a factorizable perm symmetric bialgebra $(A\ltimes _{\mathcal{L}^{*}_{\cdot_{A}},0,\mathcal{L}^{*}_{\cdot_{A}}}A^{*},\theta_{r},\delta_{r})$, where
		\begin{equation*}
			r=e^{*}_{1}\otimes e_{1}+e^{*}_{2}\otimes e_{2}.
		\end{equation*}
		Note that this perm symmetric bialgebra is isomorphic to the one in Example \ref{ex:5.14} by identifying $e^{*}_{1}$ with $e_{3}$ and $e^{*}_{2}$ with $e_{4}$.
	\end{ex}

	\begin{pro}
		Let $(A,\circ_{A},\star_{A},\theta_{r},\delta_{r})$ be a factorizable perm symmetric bialgebra which corresponds to a  quadratic Rota-Baxter perm symmetric algebra $(A,\circ_{A},\star_{A},\mathcal{B},\beta)$ of weight $\lambda\neq 0$ via Theorems \ref{thm:quadratic to fact}. Then the factorizable perm symmetric bialgebra $(A,\circ_{A},\star_{A},\theta_{-\tau(r)}$,
		$\delta_{-\tau(r)})$ corresponds to the quadratic Rota-Baxter perm symmetric algebra $\big(A,\circ_{A},\star_{A},-\mathcal{B}$,
		$-(\lambda\mathrm{id}+\beta)\big)$ of weight $\lambda$. In conclusion, we have the following commutative diagram.
		
		\begin{equation*}
				\xymatrix@C=3cm@R=2.5cm{
					(A,\circ_{A},\star_{A},\theta_{r},\delta_{r}) \ar@{<->}[r]^-{ 
						{\rm Cor.}~\ref{cor:fact pair}}
					\ar@<-1ex>@{->}[d]_-{ 
						{\rm Thm.}~\ref{thm:quadratic to fact}}
					& (A,\circ_{A},\star_{A},\theta_{-\tau(r)},\delta_{-\tau(r)})
					\ar@<-1ex>@{->}[d]_-{{\rm Thm.}~\ref{thm:quadratic to fact}}\\
					(A,\circ_{A},\star_{A},\mathcal{B},\beta)
					\ar@{<->}[r]^-{{\rm Prop.}~\ref{mir}}
					\ar@<-1ex>[u]_-{{\rm Thm.}~\ref{thm:quadratic to fact}}
					&
					\big(A,\circ_{A},\star_{A},-\mathcal{B},-(\lambda\mathrm{id}+\beta)\big)
					\ar@<-1ex>[u]_-{{\rm Thm.}~\ref{thm:quadratic to fact}}
				}
		\end{equation*}
	\end{pro}
	\begin{proof}
		By Theorem \ref{thm:quadratic to fact},  $(A,\circ_{A},\star_{A},\theta_{-\tau(r)},\delta_{-\tau(r)})$ gives rise to a quadratic Rota-Baxter perm symmetric algebra $(A,\circ_{A},\star_{A},\mathcal{B}',\beta')$ of weight $\lambda$, where
		\begin{equation}\label{omega'}
			\mathcal{B}'(x,y)\overset{\eqref{eq:fact to quadratic}}{=}\lambda\langle {\big(r+\tau(r)\big)^{\sharp}}^{-1}(x),y\rangle=-\mathcal{B}(x,y),
		\end{equation}
		and
		\begin{small}
			\begin{eqnarray*}
				\beta'(x)&\overset{\eqref{Br,Brt}}{=}&
				\big(-\tau(r)\big)^{\sharp}\mathcal{B}'^{\natural}(x)
				\overset{\eqref{omega'}}{=}-\big(-\tau(r)\big)^{\sharp}\mathcal{B}^{\natural}(x)
				=-\lambda{\tau(r)}^{\sharp}{\big(r+\tau(r)\big)^{\sharp}}^{-1}(x)\\
				&=&\lambda \Big(r^{\sharp}-\big(r+\tau(r)\big)^{\sharp}\Big){\big(r+\tau(r)\big)^{\sharp}}^{-1}(x)=-\lambda x+\lambda r^{\sharp}{\big(r+\tau(r)\big)^{\sharp}}^{-1}(x)\\
				&=&-\lambda x-r^{\sharp}\mathcal{B}^{\natural}(x)\overset{\eqref{Br,Brt}}{=}-(\lambda\mathrm{id}+\beta)x,
			\end{eqnarray*}
		\end{small}for all $x,y\in A$. Hence
		\begin{equation*}
			(A,\circ_{A},\star_{A},\mathcal{B}',\beta')=\big(A,\circ_{A},\star_{A},-\mathcal{B},-(\lambda\mathrm{id}+\beta)\big).
		\end{equation*} Similarly, we obtain the converse side that the quadratic Rota-Baxter perm symmetric algebra $\big(A,\circ_{A},\star_{A},-\mathcal{B},-(\lambda\mathrm{id}+\beta)\big)$
		of weight $\lambda$ gives rise to the factorizable perm symmetric bialgebra $(A,\circ_{A},\star_{A},\theta_{-\tau(r)},\delta_{-\tau(r)})$ via Theorem \ref{thm:quadratic to fact}.
\end{proof}}

\noindent{\bf Acknowledgments.} This work is supported by NSFC
(12401031, W2412041),
the Postdoctoral Fellowship Program of
CPSF (GZC20240755, 2024T005TJ, 2024M761507) and Nankai Zhide Foundation.

\noindent{\bf Declaration of interests.} 
 The authors have no conflicts of interest to disclose.
 
 \noindent{\bf Data availability.} 
 Data sharing is not applicable to this article as no new data were created or analyzed.

\end{document}